\documentclass{article}
\pdfoutput=1
\usepackage{xparse}
\usepackage[utf8]{inputenc}
\usepackage{lmodern}
\usepackage{textgreek}
\usepackage{csquotes}
\usepackage{subfiles}
\usepackage{xr-hyper}
\usepackage[backend=biber,style=alphabetic, doi = false, isbn=false]{biblatex}
\renewbibmacro{in:}{}
\addbibresource{library.bib}
\AtEveryBibitem{%
  \clearfield{primaryClass}%
  \clearfield{EDITOR}
   \clearfield{eid}
}
\usepackage{amsmath}
\usepackage{amsthm}
\usepackage{amssymb}
\usepackage{mathtools}
\usepackage{amsfonts}
\usepackage{wasysym}
\usepackage{bbm}
\usepackage{thmtools} 
\usepackage{faktor}
\usepackage{MnSymbol}
\usepackage{float}
\usepackage{xargs}
\usepackage{enumitem}
\usepackage[hidelinks]{hyperref} 
\usepackage[capitalize, nameinlink]{cleveref}

\usepackage{graphicx}
\usepackage{xcolor}

\usepackage{pdfpages}

\usepackage[margin=3.4cm]{geometry}
 
\definecolor{Bnavy}{RGB}{0, 66, 128}
\definecolor{Bdust}{RGB}{140,179,217}
\definecolor{Bsugarpaper}{RGB}{198, 217, 236}
\definecolor{Bgreen}{RGB}{142, 183, 114}
\definecolor{Blimegreen}{RGB}{202, 222, 189}
\definecolor{Bgreentheme}{RGB}{36, 87, 1}

\usepackage{makecell}
\usepackage{stackengine} 

\usepackage{microtype}

\usepackage{tikz} 
\usepackage{tikz-cd}
\usetikzlibrary{ 
	calc,%
	arrows,%
	shapes,
	shapes.geometric,
    positioning,
    fit
} 
\usetikzlibrary{backgrounds, calc, math, patterns, arrows.meta}
\usetikzlibrary{decorations.markings}

\tikzcdset{arrow style=tikz, diagrams={>=stealth}}
\usepackage[labelsep=quad,indention=10pt]{subfig}
\captionsetup*[subfigure]{position=bottom}

\theoremstyle{plain}
\newtheorem{theorem}{Theorem}[subsection]

\newtheorem{lemma}[theorem]{Lemma}
\newtheorem{corollary}[theorem]{Corollary}
\newtheorem{proposition}[theorem]{Proposition}
\newtheorem{construction}[theorem]{Construction}

\newenvironment{ctheorem}[1]
  {\innercustomthm}
  {\endinnercustomthm}

\theoremstyle{definition}
\newtheorem{definition}[theorem]{Definition}
\newtheorem{example}[theorem]{Example}
\newtheorem{recollection}[theorem]{Recollection}
\crefname{recollection}{Recollection}{Recollections}
\newtheorem{remark}[theorem]{Remark}
\newtheorem{notation}[theorem]{Notation}


\newcounter{diagram}  
\crefname{diagram}{Diagram}{Diagrams}
\creflabelformat{diagram}{#2\textup{(#1)}#3}
\newenvironment{diagram}[1][]{%
    \crefalias{equation}{diagram}
    \begin{equation}%
    \begin{tikzcd}[#1]%
}{%
    \end{tikzcd}%
    \end{equation}%
}

\usepackage{notation}
 \newlist{PhiProps}{enumerate}{4}
 \setlist[PhiProps]{label*=(\roman*)}
    \crefname{PhiPropsi}{Observation}{Observations}
    \Crefname{PhiPropsi}{Observation}{Observations}
 \newlist{HProps}{enumerate}{4}
 \setlist[HProps]{label*=(\roman*)}
    \crefname{HPropsi}{Property}{Properties}
    \Crefname{HPropsi}{Property}{Properties}
 \newlist{LinkComp}{enumerate}{4}
 \setlist[LinkComp]{label*=(\roman*)}
    \crefname{LinkCompi}{Description}{Descriptions}
    \Crefname{LinkCompi}{Description}{Descriptions}
   \newlist{LocEx}{enumerate}{4}
 \setlist[LocEx]{label*=(\roman*)}
    \crefname{LocExi}{Item}{Items}
    \Crefname{LocExi}{Item}{Items}
 \newlist{LocExClaim}{enumerate}{4}
 \setlist[LocExClaim]{label*=(\alph*)}
    \crefname{LocExClaimi}{Claim}{Claims}
    \Crefname{LocExClaimi}{Claim}{Claims}
\newlist{AssTranfLem}{enumerate}{4}
 \setlist[AssTranfLem]{label*=(\arabic*)}
    \crefname{AssTranfLem}{Assumption}{Assumptions}
    \Crefname{AssTranfLemi}{Assumption}{Assumptions}
\newlist{FundamentalObs}{enumerate}{4}
 \setlist[FundamentalObs, 1]{label*=\arabic{FundamentalObsi}.}
  
    \crefname{FundamentalObs}{Observation}{Observations}
    \Crefname{FundamentalObsi}{Observation}{Observations}
    \Crefname{FundamentalObsii}{Observation}{Observations}
\newlist{approaches}{enumerate}{4}
 \setlist[approaches]{label*=Ap.\arabic*}
 \setlist[approaches,2]{label=\alph*)}
    \crefname{approaches}{Approach}{Approaches}
    \Crefname{approachesi}{Approach}{Approaches}
    \Crefname{approachesii}{Approach}{Approaches}
\newlist{motivatingQuestions}{enumerate}{4}
 \setlist[motivatingQuestions]{label*=Q.\arabic*}
    \crefname{motivatingQuestions}{Question}{Questions}
    \Crefname{motivatingQuestionsi}{Question}{Questions}
\newlist{conditionsTriang}{enumerate}{4}
 \setlist[conditionsTriang]{label*=C.\arabic*}
    \crefname{conditionsTriang}{Condition}{Conditions}
    \Crefname{conditionsTriangi}{Condition}{Conditions}
\newlist{versionHoHy}{enumerate}{4}
 \setlist[versionHoHy]{label*=V.\arabic*}
    \crefname{versionHoHy}{Version}{Versions}
    \Crefname{versionHoHyi}{Version}{Versions}   
    \title{Homotopy links of stratified cell complexes}
\author{Lukas Waas}
\date{January 2025}
\externaldocument[comb:]{CombMod}
\begin{document}
\maketitle
\begin{abstract}
 Homotopy links have proven to be one of the most powerful tools of stratified homotopy theory. In previous work, we described combinatorial models for the generalized homotopy links of a stratified simplicial set. For many purposes, in particular to investigate the stratified homotopy hypothesis, a more general version of this result pertaining to stratified cell complexes is needed. Here we prove that, given a stratified cell complex $X$, the generalized homotopy links can be computed in terms of certain subcomplexes of a subdivision of $X$. 
 As a consequence, it follows that generalized homotopy links map certain pushout diagrams of stratified cell complexes into homotopy pushout diagrams.
 This result is crucial to the development of (semi)model structures for stratified homotopy theory in which geometric examples of stratified spaces, such as Whitney stratified spaces, are bifibrant.
\end{abstract}
Stratified spaces were first introduced by Whitney, Thom and Mather to investigate spaces with singularities (see \cite{Whitney,mather1970notes,mather1973strat,thom1969ensembles}). One of the central insights of \cite{mather1973strat} was that a key ingredient in the study of stratified spaces with smooth manifold strata was having a theory of tubular neighborhoods of strata available.
These made it possible to study stratified spaces in terms of their strata and the so-called link bundles, connecting the latter.
In a less geometric scenario, such tubular (or regular) neighborhoods may generally not be available. To avoid this difficulty, \cite{quinn1988homotopically} introduced the notion of a homotopy link - a homotopy-theoretic proxy for the boundary of a regular neighborhood.
Given two strata $\utstr_p$ and $\utstr_q$ in a poset-stratified space $\ststr \colon \utstr \to \pos$, with $p<q \in \pos$, the associated homotopy link is the space of paths starting in $\utstr_p$ and immediately exiting into $\utstr_q$. \\
It turns out that much of the homotopy theory of stratified spaces may be understood in terms of the homotopy types of homotopy links and strata (and the structure maps between them). For example, \cite{miller2013} proved that a stratum-preserving map between two sufficiently regular stratified spaces is a stratum-preserving homotopy equivalence, if and only if it induces equivalences on strata and (pairwise) homotopy links. \cite{douteauEnTop,Henriques} built on this insight, and developed a homotopy theory of stratified spaces in which weak equivalences are defined as such stratified maps that induce weak equivalences on all generalized homotopy links, which replace exit-paths by more general stratified singular simplices. We call this theory the Douteau-Henriques homotopy theory, henceforth. It turns out that the Douteau-Henriques homotopy theory is in some sense minimal amongst many stratified homotopy theories (see \cite{douSimp}). Thus, it is not surprising that many other approaches to stratified homotopy theory turn out to be localizations, global versions, or subtheories of the latter (see \cite{ComModelWa}). It follows from this that much about stratified homotopy theory (not only about the Douteau-Henriques one) can be understood in terms of generalized homotopy links. For example, in \cite{douteauwaas2021}, we obtained explicit combinatorial models - in terms of a subobject of a subdivision - for the homotopy link of a stratified simplicial set. We used this to prove a stratified $\infty$-categorical analogue of the Kan-Quillen equivalence between topological spaces and simplicial sets (\cite[Thm. 5.1, Rem. 5.4]{douteauwaas2021}). The latter can be used to prove a Quillen equivalence version of the topological stratified homotopy hypothesis (see \cite{TSHHWa}). \\
The applications in \cite{douteauwaas2021} show the strength of a general paradigm: Homotopy links as a mathematical tool become most powerful when geometric or combinatorial, as well as homotopy-theoretic models, are available.  \\
\\
The main goal of this article is to extend the availability of such models to the case of so-called \textit{stratified cell complexes}. Roughly speaking, a stratified cell complex is a stratified space obtained by inductively gluing in stratified simplices along stratum-preserving maps defined on the boundaries of the simplices.
From a technical point of view, constructing and verifying models for generalized homotopy links in terms of subcomplexes of a subdivision of a stratified cell complex turns out to be significantly more involved than the simplicial set case. This is mainly due to the fact that the case of cell complexes allows arbitrarily complicated gluing maps, rather than only allowing for (piecewise) linear ones (see \cref{ex:aspire_gen_not_ext}). 
Nevertheless, we show the following.\\
We describe a construction that, given a stratified cell complex $\tstr$, stratified over a poset $P$, and $\I = \{p_0 < \dots < p_n\} \subset P$ a finite increasing sequence, produces a stratified subspace $\PsiStanHoods[\I]{\str}$ of $\str$. This construction relies on a choice of (appropriate topological barycentric) subdivision $\Psi$ of $\str$, and is such that $\PsiStanHoods[\I]{\str} \subset \str$ defines a subcomplex of this subdivision that contains the stratum $X_{p_0}$. It can be seen as a generalization of regular neighborhoods in the PL setting, both to the case of more than two strata, as well as to the case of more general stratified cell complexes. Crucially, the construction has the following property:
\begin{ctheorem}{A}[\cref{prop:models_model_globally,prop:ex_nbhd_cell_cplx,thm:cell_stanhood_is_holink}]\label{thm:hol_main_result}
        Let $\tstr$ be a stratified cell complex, stratified over a poset $P$, and $\I = \{p_0 < \dots < p_n\} \subset P$ a finite increasing sequence. Denote by $\HolIP \tstr$ the $\I$-th generalized homotopy link of $\tstr$ (see \cite{douteauEnTop,douteauwaas2021}).
        There exists a subdivision $\Psi$ of $\str$, such that $\PsiStanHoods[\I]{\str}$ defines a well-defined subcomplex with respect to this subdivision. Given such a $\Psi$, there is a canonical weak homotopy equivalence 
        \[
        \HolIP \tstr \simeq \PsiStanHoods[\I]{\str}_{p_n},
        \]
        between the $\I$-th generalized homotopy link and the $p_n$-stratum of $\PsiStanHoods[\I]{\str}$.
        Furthermore, subdivisions can be chosen such that the construction of $\PsiStanHoods[\I]{\str}$ is compatible with stratum-preserving maps and pushouts along inclusions of subcomplexes.
    \end{ctheorem}
    \begin{figure}[h]
    \centering
    \begin{tikzpicture}[witharrow/.style={postaction={decorate}}, scale = 1.5, 
dot/.style = {circle, fill, minimum size=#1, inner sep=0pt, outer sep=0pt},
dot/.default = 2pt  
]
\newcommand{\doubLine}[3]{\draw[color = black, line width = 2.5pt] (#2) -- (#3);
                 \draw[color = #1, line width = 1.5pt] (#2) -- (#3)}
                \coordinate (lm) at (0,0);
                \coordinate (mm) at (1.5,0);
                \coordinate (rm) at (3,0);
                \coordinate (um) at (1.5,1);
                \coordinate (dm) at (1.5,-1);
                \coordinate (blmum) at ($0.5*(lm)+0.5*(um)$);

                \draw[fill = NewBlue]{}(lm) -- (um) -- (rm) -- (dm) -- (lm);
                \doubLine{NewBlue}{lm}{um};
                  \doubLine{NewBlue}{um}{rm};
                   \doubLine{NewBlue}{rm}{dm};
                \doubLine{NewBlue}{dm}{lm};
                \doubLine{NewBlue}{um}{dm};

                 \draw[draw = none, decoration={markings,mark= at position 0.5 with {\arrow[scale = 1.5]{latex}}},witharrow] (lm) -- (um);
                \draw[draw = none, decoration={markings,mark= at position 0.5 with {\arrow[scale = 1.5]{latex}}},witharrow] (lm) -- (dm);
                 \draw[color = black, line width = 2.5pt] (0,0) -- (3,0);
                \draw[color = green, line width = 1.5pt] (0,0) -- (3,0);
                \draw[draw = none, decoration={markings,mark=between positions 0.5 and 0.6 step 0.1 with {\arrow[scale = 1.5]{latex}}},witharrow] (3,0) -- (1.5,-1);
                \draw[draw = none, decoration={markings,mark=between positions 0.5 and 0.6 step 0.1 with {\arrow[scale = 1.5]{latex}}},witharrow] (3,0) -- (1.5,1);
                \draw[color = black, fill = red, ](lm) circle (2pt);
                \node (x) at (0,0){ \textcolor{white}{x}}   ; 
                \draw[color = black, fill = red, ](rm) circle (2pt);
                \node (x) at (3,0){ \textcolor{white}{x}}   ;
                \draw[color = black, fill = green] (mm) circle (2pt);
                \draw[color = black, fill = NewBlue] (um) circle (2pt);
                \node (y) at (um){ \textcolor{white}{y}}   ; 
                \draw[color = black, fill = NewBlue] (dm) circle (2pt);
                \node (y) at (dm){ \textcolor{white}{y}}   ; 
            \end{tikzpicture}    
            \begin{tikzpicture}[witharrow/.style={postaction={decorate}}, scale = 1.5, 
dot/.style = {circle, fill, minimum size=#1, inner sep=0pt, outer sep=0pt},
dot/.default = 2pt  
]
\newcommand{\doubLine}[3]{\draw[color = black, line width = 2.5pt] (#2) -- (#3);
                 \draw[color = #1, line width = 1.5pt] (#2) -- (#3)}
                \coordinate (l) at (0,0);
                \coordinate (m) at (1.5,0);
                \coordinate (r) at (3,0);
                \coordinate (u) at (1.5,1);
                \coordinate (d) at (1.5,-1);
                \coordinate (blu) at ($0.5*(l)+0.5*(u)$);
                \coordinate (bur) at ($0.5*(r)+0.5*(u)$);
                \coordinate (brd) at ($0.5*(r)+0.5*(d)$);
                \coordinate (bld) at ($0.5*(l)+0.5*(d)$);
                \coordinate (blm) at ($0.5*(l)+0.5*(m)$);
                \coordinate (brm) at ($0.5*(r)+0.5*(m)$);
                 \coordinate (bdm) at ($0.5*(d)+0.5*(m)$);
                 \coordinate (bum) at ($0.5*(u)+0.5*(m)$);

                \coordinate(blum) at (${1/3}*(l)+{1/3}*(u)+{1/3}*(m)$);
                \coordinate(bldm) at (${1/3}*(l)+{1/3}*(d)+{1/3}*(m)$);
                \coordinate(brdm) at (${1/3}*(r)+{1/3}*(d)+{1/3}*(m)$);
                \coordinate(brum) at (${1/3}*(r)+{1/3}*(u)+{1/3}*(m)$);
                \draw[fill = NewBlue]{}(l) -- (u) -- (r) -- (d) -- (l);
                \doubLine{NewBlue}{l}{u};
                  \doubLine{NewBlue}{u}{r};
                   \doubLine{NewBlue}{r}{d};
                \doubLine{NewBlue}{d}{l};
                \doubLine{NewBlue}{u}{d};
                \doubLine{green}{l}{r};
                
                \doubLine{NewBlue}{l}{blum};
                \doubLine{NewBlue}{u}{blum};
                \doubLine{NewBlue}{m}{blum};
                 \doubLine{NewBlue}{blu}{blum};
                \doubLine{NewBlue}{bum}{blum};
                \doubLine{NewBlue}{blm}{blum};
                \doubLine{NewBlue}{l}{bldm};
                \doubLine{NewBlue}{d}{bldm};
                \doubLine{NewBlue}{m}{bldm};
                   \doubLine{NewBlue}{bld}{bldm};
                \doubLine{NewBlue}{blm}{bldm};
                \doubLine{NewBlue}{bdm}{bldm};
                \doubLine{NewBlue}{r}{brdm};
                \doubLine{NewBlue}{d}{brdm};
                \doubLine{NewBlue}{m}{brdm};
                \doubLine{NewBlue}{brd}{brdm};
                \doubLine{NewBlue}{bdm}{brdm};
                \doubLine{NewBlue}{brm}{brdm};
                \doubLine{NewBlue}{r}{brum};
                \doubLine{NewBlue}{u}{brum};
                \doubLine{NewBlue}{m}{brum};
                    \doubLine{NewBlue}{bur}{brum};
                \doubLine{NewBlue}{bum}{brum};
                \doubLine{NewBlue}{brm}{brum};
                 \draw[draw = none, decoration={markings,mark=at position 0.6 with {\arrow[scale = 1.5]{latex}}},witharrow] (l) -- (blu);
                \draw[draw = none, decoration={markings,mark=between positions 0.5 and 0.6 step 0.1 with {\arrow[scale = 1.5]{latex}}},witharrow] (blu) -- (u);
                \draw[draw = none, decoration={markings,mark=at position 0.6 with {\arrow[scale = 1.5]{latex}}},witharrow] (l) -- (bld);
                 \draw[draw = none, decoration={markings,mark= between positions 0.5 and 0.6 step 0.1 with {\arrow[scale = 1.5]{latex}}},witharrow] (bld) -- (d);
                 \draw[draw = none, decoration={markings,mark= between positions 0.4 and 0.6 step 0.1 with {\arrow[scale = 1.5]{latex}}}, witharrow] (bur) -- (u);
                \draw[draw = none, decoration={markings,mark= between positions 0.4 and 0.6 step 0.1 with {\arrow[scale = 1.5]{latex}}}, witharrow] (brd) -- (d);
                \draw[draw = none, decoration={markings,mark= between positions 0.4 and 0.8 step 0.1 with {\arrow[scale = 1.5]{latex}}}, witharrow] (r) -- (brd);
                 \draw[draw = none, decoration={markings,mark= between positions 0.4 and 0.8 step 0.1 with {\arrow[scale = 1.5]{latex}}}, witharrow] (r) -- (bur);
                \draw[color = black, fill = red, ](l) circle (2pt);
                \node (x) at (0,0){ \textcolor{white}{x}}   ; 
                \draw[color = black, fill = red, ](r) circle (2pt);
                \node (x) at (3,0){ \textcolor{white}{x}}   ;
                \draw[color = black, fill = green] (m) circle (2pt);
                \draw[color = black, fill = NewBlue] (u) circle (2pt);
                \node (y) at (u){ \textcolor{white}{y}}   ; 
                \draw[color = black, fill = NewBlue] (d) circle (2pt);
                \node (y) at (d){ \textcolor{white}{y}}   ; 
                \draw[color = black, fill = NewBlue] (blu) circle (2pt);
                 \node (a) at (blu){ \textcolor{white}{a}}   ; 
                \draw[color = black, fill = NewBlue] (bur) circle (2pt);
                \node (b) at (bur) { \textcolor{white}{b}}   ; 
                \draw[color = black, fill = NewBlue] (brd) circle (2pt);
                 \node (b) at (brd){ \textcolor{white}{b}}   ; 
                \draw[color = black, fill = NewBlue] (bld) circle (2pt);
                \node (a) at (bld){ \textcolor{white}{a}}   ; 

                \draw[color = black, fill = green] (blm) circle (2pt);
                \draw[color = black, fill = green] (brm) circle (2pt);
                \draw[color = black, fill = NewBlue] (bdm) circle (2pt);
                \draw[color = black, fill = NewBlue] (bum) circle (2pt);
                \draw[color = black, fill = NewBlue] (blum) circle (2pt);
                \draw[color = black, fill = NewBlue] (bldm) circle (2pt);
                \draw[color = black, fill = NewBlue] (brdm) circle (2pt);
                \draw[color = black, fill = NewBlue] (brum) circle (2pt);
            \end{tikzpicture}   
            \begin{center}
\begin{tikzpicture}[witharrow/.style={postaction={decorate}}, scale = 1.3, 
dot/.style = {circle, fill, minimum size=#1, inner sep=0pt, outer sep=0pt},
dot/.default = 2pt  
]
\newcommand{\doubLine}[3]{\draw[color = black, line width = 2.5pt] (#2) -- (#3);
                 \draw[color = #1, line width = 1.5pt] (#2) -- (#3)}
                \begin{scope}[xshift = -3.5cm]
               \coordinate (l) at (0,0);
                \coordinate (m) at (1.5,0);
                \coordinate (r) at (3,0);
                \coordinate (u) at (1.5,1);
                \coordinate (d) at (1.5,-1);
                \coordinate (blu) at ($0.5*(l)+0.5*(u)$);
                \coordinate (bur) at ($0.5*(r)+0.5*(u)$);
                \coordinate (brd) at ($0.5*(r)+0.5*(d)$);
                \coordinate (bld) at ($0.5*(l)+0.5*(d)$);
                \coordinate (blm) at ($0.5*(l)+0.5*(m)$);
                \coordinate (brm) at ($0.5*(r)+0.5*(m)$);
                 \coordinate (bdm) at ($0.5*(d)+0.5*(m)$);
                 \coordinate (bum) at ($0.5*(u)+0.5*(m)$);

                \coordinate(blum) at (${1/3}*(l)+{1/3}*(u)+{1/3}*(m)$);
                \coordinate(bldm) at (${1/3}*(l)+{1/3}*(d)+{1/3}*(m)$);
                \coordinate(brdm) at (${1/3}*(r)+{1/3}*(d)+{1/3}*(m)$);
                \coordinate(brum) at (${1/3}*(r)+{1/3}*(u)+{1/3}*(m)$);
                
                \draw[fill = NewBlue]{}(l) -- (blu) -- (blum) -- (blm) -- (bldm) -- (bld) -- (l);
                 \draw[fill = NewBlue]{}(r) -- (bur) -- (brum) -- (brm) -- (brdm) -- (brd) -- (r);
                \doubLine{NewBlue}{l}{blu};
                \doubLine{NewBlue}{l}{bld};
                \doubLine{NewBlue}{l}{bldm};
                \doubLine{NewBlue}{l}{blum};
                \doubLine{NewBlue}{blum}{blu};
                \doubLine{NewBlue}{bldm}{bld};
                
                \doubLine{NewBlue}{brum}{bur};
                \doubLine{NewBlue}{brdm}{brd};
                \doubLine{NewBlue}{r}{bur};
                \doubLine{NewBlue}{r}{brd};
                \doubLine{NewBlue}{r}{brdm};
                \doubLine{NewBlue}{r}{brum};
                \doubLine{green}{l}{blm};
                  \doubLine{green}{r}{brm};
                   \doubLine{NewBlue}{blum}{blm};
                     \doubLine{NewBlue}{bldm}{blm};
                  \doubLine{NewBlue}{brdm}{brm};
                  \doubLine{NewBlue}{brum}{brm};

                 \draw[draw = none, decoration={markings,mark=at position 0.6 with {\arrow[scale = 1.5]{latex}}},witharrow] (l) -- (blu);
\draw[draw = none, decoration={markings,mark=at position 0.6 with {\arrow[scale = 1.5]{latex}}},witharrow] (l) -- (bld);
 \draw[draw = none, decoration={markings,mark= between positions 0.4 and 0.8 step 0.1 with {\arrow[scale = 1.5]{latex}}}, witharrow] (r) -- (brd);
\draw[draw = none, decoration={markings,mark= between positions 0.4 and 0.8 step 0.1 with {\arrow[scale = 1.5]{latex}}}, witharrow] (r) -- (bur);
                 \draw[color = black, fill = red, ](l) circle (2pt);
                \node (x) at (0,0){ \textcolor{white}{x}}   ; 
                \draw[color = black, fill = red, ](r) circle (2pt);
                \node (x) at (3,0){ \textcolor{white}{x}}   ;
                 \draw[color = black, fill = NewBlue] (blu) circle (2pt);
                 \node (a) at (blu){ \textcolor{white}{a}}   ; 
                \draw[color = black, fill = NewBlue] (bur) circle (2pt);
                \node (b) at (bur) { \textcolor{white}{b}}   ; 
                \draw[color = black, fill = NewBlue] (brd) circle (2pt);
                 \node (b) at (brd){ \textcolor{white}{b}}   ; 
                \draw[color = black, fill = NewBlue] (bld) circle (2pt);
                \node (a) at (bld){ \textcolor{white}{a}}   ; 
  \draw[color = black, fill = NewBlue] (blum) circle (2pt);
 \draw[color = black, fill = NewBlue] (bldm) circle (2pt);
                \draw[color = black, fill = NewBlue] (brdm) circle (2pt);
                \draw[color = black, fill = NewBlue] (brum) circle (2pt);
                 \draw[color = black, fill = green] (blm) circle (2pt);
                \draw[color = black, fill = green] (brm) circle (2pt);
                    \end{scope}

                       \begin{scope}[xshift = 3.5cm]
                      \coordinate (l) at (0,0);
                \coordinate (m) at (1.5,0);
                \coordinate (r) at (3,0);
                \coordinate (u) at (1.5,1);
                \coordinate (d) at (1.5,-1);
                \coordinate (blu) at ($0.5*(l)+0.5*(u)$);
                \coordinate (bur) at ($0.5*(r)+0.5*(u)$);
                \coordinate (brd) at ($0.5*(r)+0.5*(d)$);
                \coordinate (bld) at ($0.5*(l)+0.5*(d)$);
                \coordinate (blm) at ($0.5*(l)+0.5*(m)$);
                \coordinate (brm) at ($0.5*(r)+0.5*(m)$);
                 \coordinate (bdm) at ($0.5*(d)+0.5*(m)$);
                 \coordinate (bum) at ($0.5*(u)+0.5*(m)$);

                \coordinate(blum) at (${1/3}*(l)+{1/3}*(u)+{1/3}*(m)$);
                \coordinate(bldm) at (${1/3}*(l)+{1/3}*(d)+{1/3}*(m)$);
                \coordinate(brdm) at (${1/3}*(r)+{1/3}*(d)+{1/3}*(m)$);
                \coordinate(brum) at (${1/3}*(r)+{1/3}*(u)+{1/3}*(m)$);
                \draw[fill = NewBlue]{}(l) -- (bum) -- (r) -- (bdm) -- (l);
                \doubLine{green}{l}{r};
                
                \doubLine{NewBlue}{l}{blum};
                \doubLine{NewBlue}{m}{blum};
                \doubLine{NewBlue}{bum}{blum};
                \doubLine{NewBlue}{blm}{blum};
                \doubLine{NewBlue}{l}{bldm};
                \doubLine{NewBlue}{m}{bldm};
                \doubLine{NewBlue}{blm}{bldm};
                \doubLine{NewBlue}{bdm}{bldm};
                \doubLine{NewBlue}{r}{brdm};
                \doubLine{NewBlue}{m}{brdm};
                \doubLine{NewBlue}{bdm}{brdm};
                \doubLine{NewBlue}{brm}{brdm};
                \doubLine{NewBlue}{r}{brum};
                \doubLine{NewBlue}{m}{brum};
                \doubLine{NewBlue}{bum}{brum};
                \doubLine{NewBlue}{brm}{brum};
                \doubLine{NewBlue}{m}{bum};
                \doubLine{NewBlue}{m}{bdm};
                \draw[color = black, fill = green] (blm) circle (2pt);
                \draw[color = black, fill = green] (brm) circle (2pt);
                \draw[color = black, fill = NewBlue] (bdm) circle (2pt);
                \draw[color = black, fill = NewBlue] (bum) circle (2pt);
                \draw[color = black, fill = NewBlue] (blum) circle (2pt);
                \draw[color = black, fill = NewBlue] (bldm) circle (2pt);
                \draw[color = black, fill = NewBlue] (brdm) circle (2pt);
                \draw[color = black, fill = NewBlue] (brum) circle (2pt);
                \draw[color = black, fill = green] (m) circle (2pt);
                 \draw[color = black, fill = red, ](l) circle (2pt);
                \node (x) at (0,0){ \textcolor{white}{x}}   ; 
                \draw[color = black, fill = red, ](r) circle (2pt);
                \node (x) at (3,0){ \textcolor{white}{x}}   ;
                \end{scope}

              \begin{scope}[xshift = 0cm]
               \coordinate (l) at (0,0);
                \coordinate (m) at (1.5,0);
                \coordinate (r) at (3,0);
                \coordinate (u) at (1.5,1);
                \coordinate (d) at (1.5,-1);
                \coordinate (blu) at ($0.5*(l)+0.5*(u)$);
                \coordinate (bur) at ($0.5*(r)+0.5*(u)$);
                \coordinate (brd) at ($0.5*(r)+0.5*(d)$);
                \coordinate (bld) at ($0.5*(l)+0.5*(d)$);
                \coordinate (blm) at ($0.5*(l)+0.5*(m)$);
                \coordinate (brm) at ($0.5*(r)+0.5*(m)$);
                 \coordinate (bdm) at ($0.5*(d)+0.5*(m)$);
                 \coordinate (bum) at ($0.5*(u)+0.5*(m)$);

                \coordinate(blum) at (${1/3}*(l)+{1/3}*(u)+{1/3}*(m)$);
                \coordinate(bldm) at (${1/3}*(l)+{1/3}*(d)+{1/3}*(m)$);
                \coordinate(brdm) at (${1/3}*(r)+{1/3}*(d)+{1/3}*(m)$);
                \coordinate(brum) at (${1/3}*(r)+{1/3}*(u)+{1/3}*(m)$);
                
                \draw[fill = NewBlue]{}(l) -- (blum) -- (blm) -- (bldm) --  (l);
                 \draw[fill = NewBlue]{}(r)-- (brum) -- (brm) -- (brdm) -- (r);
                \doubLine{NewBlue}{l}{bldm};
                \doubLine{NewBlue}{l}{blum};
                \doubLine{NewBlue}{r}{brdm};
                \doubLine{NewBlue}{r}{brum};
                \doubLine{green}{l}{blm};
                  \doubLine{green}{r}{brm};
                   \doubLine{NewBlue}{blum}{blm};
                     \doubLine{NewBlue}{bldm}{blm};
                  \doubLine{NewBlue}{brdm}{brm};
                  \doubLine{NewBlue}{brum}{brm};

                 \draw[color = black, fill = red, ](l) circle (2pt);
                \node (x) at (0,0){ \textcolor{white}{x}}   ; 
                \draw[color = black, fill = red, ](r) circle (2pt);
                \node (x) at (3,0){ \textcolor{white}{x}}   ;
  \draw[color = black, fill = NewBlue] (blum) circle (2pt);
 \draw[color = black, fill = NewBlue] (bldm) circle (2pt);
                \draw[color = black, fill = NewBlue] (brdm) circle (2pt);
                \draw[color = black, fill = NewBlue] (brum) circle (2pt);
                 \draw[color = black, fill = green] (blm) circle (2pt);
                \draw[color = black, fill = green] (brm) circle (2pt);
                    \end{scope}
            \end{tikzpicture}   
            \end{center}          
    \caption{The upper left corner shows a stratified cell structure for the pinched torus, stratified over the poset $\{ {\color{red} 0} < {\color{green} 1} < {\color{NewBlue} 2} \}$.
    Vertices with the same name, and edges with the same markings are being identified and the stratification is indicated by the coloring. To its right, a barycentric subdivision $\Psi$ of this cell structure is shown. In the following row there are illustrations of the subcomplexes $\PsiStanHoods[\I]{\str}$ for $\I = \{{\color{red} 0} < {\color{NewBlue} 2}\}, \{ {\color{red} 0} < {\color{green} 1} < {\color{NewBlue} 2} \}, \{{\color{green} 1} < {\color{NewBlue} 2} \}$.}
    \label{fig:enter-label}
\end{figure}
In fact, we show that the subcomplexes $\PsiStanHoods[\I]{\str}$ can even be used to model the whole homotopy link diagram of \cite{douteauEnTop}.
\cref{thm:hol_main_result} has the following corollary, which is central to the construction of semimodel categories of stratified spaces in \cite{TSHHWa}. 
\begin{ctheorem}{B}[\cref{cor:hol_diag_is_ho_pushout}]\label{thm:hol_main_res_B}
    Let $P$ be a poset and $\I = \{p_0 < \dots < p_n\} \subset P$ a finite increasing sequence. Consider a pushout diagram of $P$-stratified cell complexes
        \begin{diagram}\label{diag:lem_mod_for_ho_push_half_intro}
        {\tstr[A]} \arrow[r,"c", hook] \arrow[d, "f"]& {\tstr[B]} \arrow[d] \\
    {\tstr[X]} \arrow[r, hook]  & {{\tstr[Y]}},
    \end{diagram}
    with  $c$ an inclusion of a stratified subcomplex. Then the induced diagram of spaces 
    \begin{diagram}
        \HolIP {\tstr[A]} \arrow[r] \arrow[d]& \HolIP {\tstr[B]} \arrow[d] \\
    \HolIP {\tstr[X]} \arrow[r]  & \HolIP {{\tstr[Y]}},
    \end{diagram}
    obtained by taking generalized homotopy links, is homotopy cocartesian.
\end{ctheorem}
In \cite{TSHHWa} we use the latter result to construct new (semi-)model structures for stratified homotopy theory in which classical examples of stratified spaces such as Whitney stratified spaces are bifibrant. Furthermore, we derive from this semimodel structure a version of the stratified homotopy hypothesis pertaining to a conjecture of \cite{AFRStratifiedHomotopyHypothesis} (\cite[Thm. B]{TSHHWa}).
\subsection{Overview of the article}
It is a well-known classical result that the (pairwise) homotopy link of a piecewise linear two strata stratified space may be computed in terms of the boundary of a regular neighborhood of the lower stratum. Equivalently, one may take the homotopy type of the complement of the lower stratum in the regular neighborhood. In \cite{quinn1988homotopically} the author generalized this result to more general topological notions of regular neighborhood $X_p \subset N \subset X$ of a stratum $X_p$, which admit a so-called \define{nearly stratum-preserving deformation retraction} (this nomenclature was first used in \cite{miller2013}, Quinn speaks of \textit{tame inclusions of strata}). These are given by homotopies 
\[
R \colon N \times [0,1] \to X
\]
such that $R \times (0,1]$ is stratum-preserving, $R_1$ is the inclusion $N \hookrightarrow X$, $R$ is constant on $X_p$ and such that $R_0$ has image entirely in $X_p$. 
In the case of the realization of a stratified-simplicial set, such regular neighborhoods can be obtained by first taking a barycentric subdivision, and then taking the union of closed simplices intersecting the lower stratum (see \cite{quinn1988homotopically}).
The goal here is a two-fold generalization. Firstly, we aim to replace pairwise homotopy links with generalized homotopy links, obtained by replacing the stratified interval with stratified simplices. Secondly, we generalize from stratified simplicial sets to stratified cell complexes.
The case of generalized homotopy links of stratified simplicial sets was studied in detail in \cite{douteauwaas2021}.
To further generalize these results to stratified cell complexes, we need to generalize the notions of neighborhood and nearly stratum-preserving deformation retraction to the $n$-strata case. We proceed to do so in the following steps.
\begin{enumerate}
    \item In \cref{subsec:strata_neighborhoods}, we introduce the notion of a \define{system of strata-neighborhoods} of a stratified space ${\tstr}$. These are defined in a way that they allow for a computation of homotopy links close to a singularity. The ultimate goal of the theory is to identify a class of such neighborhood systems for which the topology of the neighborhoods themselves may be used to compute the homotopy type of homotopy links, and there is no need to pass to path-spaces. Such neighborhood systems are called \define{homotopy link models}. Following from this, our strategy of proof is then to show that every stratified cell complex may be equipped with a homotopy link model, and furthermore that this can be done in a way that is compatible with pushout diagrams. 
    \item A first step lies in constructing the so-called \define{standard neighborhood-systems} for realizations of stratified simplicial sets (\cref{subsec:aspire_of_simplicial}). We show that these standard neighborhoods turn out to be universal in some sense (\cref{prop:universality_of_standard}). This result is crucial to obtain homotopy link models that are compatible with given maps of stratified cell complexes. Furthermore, it provides a new proof of a locality principle for homotopy links of strata-neighborhoods, which ultimately provides a more conceptual proof of \cite[Thm. 4.8]{douteauwaas2021}.
    \item In \cref{subsec:nbhd_for_cell}, we then use the results on strata-neighborhoods of stratified simplicial sets to generalize the construction of standard neighborhood systems to stratified cell complexes.
    \item Having strengthened our understanding of strata-neighborhood systems, we return to the notion of a homotopy link model. At this point, we have all the results necessary available to prove that the \define{regular complement diagram} (\cref{con:nbhd_comp_diag}) associated to a homotopy link model is weakly equivalent to the diagram of homotopy links of the associated stratified space (\cref{prop:models_model_globally}).  
    \item Finally, it remains to prove that the standard neighborhood systems we have constructed for stratified cell complexes are homotopy link models. To accomplish this, we first define an adaptation to the $n$-strata scenario of the notion of nearly stratum-preserving neighborhood retracts (as they were defined in \cite{quinn1988homotopically}) a so-called \aspire{} (see \cref{def:aspire}).
    In \cref{subsec:def_aspire}, we show that the existence of these \aspires{} provides a way to guarantee that a neighborhood system is a homotopy link model (\cref{prop:computing_holinks_via_aspire}).
    \item In \cref{subsec:aspire_of_simplicial}, we then return to the standard neighborhood systems of realization of stratified simplicial sets and show that these can be equipped with \aspires{} (\cref{prop:stan_aspire_is_aspire}). In particular, this result has \cite[Thm. 4.8]{douteauwaas2021} as a corollary.
    \item Finally, in \cref{subsec:aspire_of_cell}, we generalize the results of the previous section to standard neighborhood systems of stratified cell complexes.
    To do so, we first provide a technical gluing lemma that ultimately allows a cell-by-cell construction of \aspires{} (\cref{lem:gluing_aspires}). It then remains to investigate the case of a single cell (\cref{lemma:extending_aspire_on_simplex}), to finish the proof that standard-neighborhood systems provide homotopy link models (\cref{prop:aspire_for_finite_cell}). 
\end{enumerate}
\section{Basic notions: From homotopy links to their models}
The goal of this section is to introduce the basic objects and notions under investigation. We begin by recalling the necessary language and notation from stratified homotopy theory \cref{subsec:language_and_notation} in particular the notion of homotopy link \cref{subsec:generalized_homotopy_links}. We then introduce the central objects of study to this paper: Stratified cell complexes (\cref{subsec:strat_cell_complex}). Our goal is to find convenient models for the homotopy links of such stratified cell complexes. We make this idea rigorous with the notions of strata-neighborhood systems and homotopy link models in \cref{subsec:strata_neighborhoods}.
\subsection{Language and notation}\label{subsec:language_and_notation}
Let us first fix some language and notation pertaining to stratified homotopy theory, mostly lifted from \cite{douSimp,douteauEnTop,douteauwaas2021,haine2018homotopy}.
\begin{notation}
    We will denote simplicial categories in the form $\underline{\cat[S]}$ and their underlying categories in the form $\cat[S]$. Given two categories $\cat[C]$ and $\cat[D]$, we denote by $\FunC (\cat[C], \cat[D])$ the category of functors between the two categories.
\end{notation}
\begin{notation} We use the following notation for categories of simplicial sets.
\begin{itemize}
  \item  We denote by $\Delta$ the category of finite, linearly ordered posets of the form $[n] := \{ 0, \cdots ,n \}$, for $n \in \mathbb N$.
     \item We denote by $\sSet$ the simplicial category of simplicial sets, i.e., the category of set-valued presheaves on $\Delta^{\op}$, equipped with the canonical simplicial structure induced by the product (see \cite{HigherTopos} for all of the standard notation used for simplicial sets).
    \item When we treat $\sSet$ as a (simplicial) model category, this will generally be with respect to the Kan-Quillen model structure (see \cite{Quillen}), unless otherwise noted. When we use Joyal's model structure for quasi-categories (\cite{joyalNotes}) instead, we will denote this model category by $\sSetJoy$. 
\end{itemize}
\end{notation}
\begin{notation}\label{not:notation_top_spaces}
    $\TopN$ is going to denote either of the following three categories of topological spaces. \begin{enumerate}
        \item The category of \define{all topological spaces}, which we will also refer to as \define{general} topological spaces.
        \item The category of compactly generated topological spaces, i.e., such spaces that have the final topology with respect to compact Hausdorff spaces (see, for example, \cite{Rezk2017COMPACTLYGS}).
        \item The category of $\Delta$-generated topological spaces, i.e., such spaces which have the final topology with respect to realizations of simplices, or equivalently just with respect to the unit interval (compare \cite{duggerDelta,GaucherDelta}).
    \end{enumerate}
    We denote by $\real{-} \colon \sSetN \to \TopN$ the realization functor of simplicial sets and by $\Sing \colon \TopN \to \sSetN$ its right adjoint.
    $\TopN$ naturally carries the structure of a simplicial category, tensored and powered over $\sSet$, induced by left Kan extension of the construction
    \[
    T \otimes \Delta^n := T \times \real{\Delta^n}.
    \]
    \end{notation}
    We denote the resulting simplicial category by $\Top$.
    Furthermore, we will always consider $\Top$ to be equipped with the Quillen-model structure \cite{Quillen}, which makes $\real{-} \dashv \Sing$ a simplicial Quillen equivalence, between $\Top$ and $\sSet$, which creates weak equivalences in both directions.
    \begin{remark}
    Note that one commonly only defines the simplicial structure for compactly or $\Delta$-generated spaces. This is, however, mostly due to the fact that, for general topological spaces $T$ and infinite simplicial sets $K$, the tensoring $T\otimes K$ does not agree with the inner product $T \times \real{K}$. Instead, it is given by a colimit of products of $T$ with the simplices of $K$. Similarly, the power $T^{K}$ is not given by an internal mapping space (which does not necessarily exist for arbitrary $K$) but by the limit of the mapping spaces of simplices of $K$, equipped with the compact open topology. The reason we do not take the regular approach of restricting to one fixed convenient category of topological spaces is that much of the literature has been formulated for the $\Delta$-generated case, while we will make several arguments in the category of general topological spaces later on, which seem to lack an internal analogue in the category of $\Delta$-generated spaces. Note that from a homotopy-theoretic perspective these choices in set-theoretic-topological foundations are usually not relevant, as any space is canonically weakly equivalent to its $\Delta$-fication (compactly generated replacement). Furthermore, all our results concern spaces in the $\Delta$-generated category (which is included in the other two categories) and the choice of larger framework is thus mostly inessential.
    \end{remark}
    For the remainder of this section, we fix some category of topological spaces $\TopN$ as in \cref{not:notation_top_spaces}.
    \begin{notation} We are going to use the following terminology and notation for partially ordered sets, drawn partially from \cite{douSimp} and \cite{haine2018homotopy}:
\begin{itemize}
    \item  We denote by $\Pos$ the category of partially ordered sets, with morphisms given by order-preserving maps.
    \item  We consider $\Delta$ as a subcategory of $\Pos$ in the obvious fashion.
    Given $P \in \Pos$, we denote by $\catFlag$ the slice category $\Delta_{/ \pos}$. 
    That is, objects are given by arrows $[n] \to \pos$ in $\Pos$, $n \in \mathbb N$, and morphisms are given by commutative triangles.
    \item We denote by $\catRFlag$ the \define{subdivision of } $\pos$, given by the full subcategory of $\catFlag$ of such arrows $[n] \to \pos$, which are injective.
    \item The objects of $\catFlag$ are called \define{flags of} $\pos$. We represent them by strings $[p_0 \leq \cdots \leq p_n]$, of $p_i \in \pos$. 
    \item Objects of $\catRFlag$ are called regular \define{flags of} $\pos$. We represent them by strings $\standardFlag$, of $p_i \in \pos$.
\end{itemize}
\end{notation}
    \begin{notation}\label{not:top_strat_spaces}
    Having fixed a category of topological spaces $\TopN$, we then use the following notation for stratified topological spaces (all of these constructions can be found in \cite{douteauEnTop} among other places).
    \begin{itemize}
    \item We think of the $1$-category $\Pos$ as naturally embedded in $\TopN$, via the Alexandrov topology functor, equipping a poset $P$ with the topology where the closed sets are given by the downward closed sets. By abuse of notation, we just write $\pos$, for the Alexandrov space corresponding to it (compare \cite[Def. 2.2]{douteauwaas2021}). 
    \item For $\pos \in \Pos$, we denote by $\TopPN$ the slice category $\TopN_{/\pos}$.
    \item Objects of $\TopPN$ are called \define{$\pos$-stratified spaces}. They are given by a tuple $(T,s\colon T \to \pos)$. We will usually use calligraphic letters for stratified spaces and stick to the notational convention $\tstr =(\utstr, \ststr)$ to refer to the underlying space and the stratification.
    \item  Morphisms in $\TopPN$ are called \define{stratum-preserving maps.} 
   \item Given a map of posets $f\colon Q \to \pos$ and $\tstr \in \TopPN$, we denote by $f^*\tstr \in \TopPN[Q]$ the stratified space $\utstr \times_{\pos} Q \to Q$. We are mostly concerned with the case where $f$ is given by the inclusion of a singleton $\{p \}$, of a subset $\{q \sim p \mid q \in \pos \}$, for $p \in \pos$ and $\sim$ some relation on the partially ordered set $\pos$ (such as $\leq$), or more generally a subposet $Q \subset \pos$. We then write $\tstr_{p}$ (or, respectively, $\tstr_{\sim p}$, $\tstr_Q$) instead of $f^*\tstr$. The spaces $\tstr_{p}$, for $p \in \pos$ are called the \define{strata of} $\tstr$.
    \end{itemize}
\end{notation}
\begin{notation}
     Throughout this article, we will consider a series of subspaces of stratified spaces in the category $\TopN$, that is, use maps on them that are not stratum-preserving. We keep following the convention (\cref{not:notation_top_spaces}), which is that calligraphic letters indicate the stratified context, and regular letters the non-stratified one. For example, for $\tstr \in \TopPN$ and $p \in P$, $\tstr_{\leq p}$ is the stratified space over $\{q \in P \mid q \leq p\}$, given by restricting $\tstr$ and $\utstr_{\leq p}$ is its underlying topological space. Note that in the case of the strata there is no notational conflict with using both $\tstr_p$ and $\utstr_p$, if we identify stratified spaces over a poset with one element with topological spaces. This type of notational convention reaches its syntactic limits when applied to expressions such as $\sReal{\Delta^\I}$, which do not have calligraphic counterparts. In this case, we will simply write $\real{\Delta^\I}$ to indicate the underlying topological space.
\end{notation}
\begin{notation}
    We use the following terminology and notation for (stratified) simplicial sets, drawn partially from \cite{douSimp} and \cite{haine2018homotopy}:
    \begin{itemize}
    \item We think of $\Pos$ as being naturally embedded in $\sSetN$, via the nerve functor (compare \cite{haine2018homotopy}). By abuse of notation, we just write $\pos$, for the simplicial set given by the nerve of $\pos \in \Pos$. 
    \item For $\pos \in \Pos$, we denote by $\sSetPN$ the slice category $\sSetN_{/\pos}$, which is equivalently given by the category of set valued presheaves on $\catFlag$. 
    \item Objects of $\sSetPN$ are called \define{$\pos$-stratified simplicial sets}. They are given by a tuple $\str =(\ustr,\sstr \colon \ustr \to \pos)$.
    \item  Morphisms in $\sSetPN$ are called \define{stratum-preserving simplicial maps.} 
    Simplicial homotopies in $\sSetPN$ are called \define{stratified simplicial homotopies.}
    Simplicial homotopy equivalences in $\sSetPN$ are called \define{stratum-preserving simplicial homotopy equivalences}.
    \item Given a map of posets $f\colon Q \to \pos$ and $\str \in \sSetPN$, we denote by $f^*\str \in \sSetPN[Q]$ the stratified simplicial set $\ustr \times_{\pos} Q \to Q$. We are mostly concerned with the case where $f$ is given by the inclusion of a singleton $\{p \}$, of a subset $\{q \sim p \mid q \in \pos \}$, for $p \in \pos$ and $\sim$ some relation on the partially ordered set $\pos$ (such as $\leq$), or more generally, a subposet $Q \subset \pos$. We then write $\str_{p}$ (or, respectively, $\str_{\sim p}$, $\str_Q$) instead of $f^*\str$. The simplicial sets $\str_{p}$, for $p \in \pos$ are called the \define{strata of} $\str$. 
    \item For a flag $\J = [p_0 \leq \cdots \leq p_n] \in \Delta_P$, we write $\Delta^\J$ for the image of $\J$ in $\sSetPN$ under the Yoneda embedding $\catFlag \hookrightarrow \sSetPN$. Equivalently, $\Delta^\J$ is given by the unique simplicial map $\Delta^{n} \to \pos$ mapping $i \mapsto p_i$. $\Delta^\J$ is called the \define{stratified simplex associated to} $\J$.
    \item \label{holCC:not:flag_operations} Using the fully faithful (and continuous) embedding $\catFlag \hookrightarrow \sSetPN$, we extend the base change notation for stratified simplicial sets to flags.  That is, for $f \colon Q \to \pos$ we write $f^*\J$ for the unique flag of $Q$ corresponding to $f^*(\Delta^\J)$. We use the same shorthand notation for subsets $Q \subset \pos$. For example $\J_{\leq p}$ is the flag obtained from $\J$ by removing all entries not lesser equal to $p$.
     \item Given a stratified simplex $\Delta^\J$, for $\J = [p_0 \leq \cdots \leq p_n]$, we write $\partial \Delta^\J$ for its \define{stratified boundary}, given by the composition $\partial \Delta^n \to \Delta^n \to \pos$.
    \end{itemize}
    \end{notation}
     \begin{recollection}[\cite{douSimp}]
        For fixed $P \in \Pos$, the two categories $\TopPN$ and $\sSetPN$ are connected through a singular simplicial set, realization style adjunction, denoted 
        \begin{align*}
            \sReal{-} \colon \sSetPN \rightleftharpoons \TopPN \colon \SingS .
        \end{align*}
        The left adjoint is constructed by mapping a stratified simplex $\Delta^\J \to \pos$, with $\J = [ p_0 \leq \dots \leq p_n]$, to the stratified space
        \begin{align*}
            \real{\Delta^n} &\to \pos \\
            x &\mapsto \sup\{p_i \in \J \mid x_i > 0\},
        \end{align*}
        where we consider $\real{\Delta^n}$ as embedded in $\mathbb R^{n+1} \cong \mathbb R^{\J}$. 
        If we consider $\sSetPN$ as the category of set-valued presheaves on $\Delta_P$, then by the logic of a nerve and realization functor, $\SingS \tstr$ is hence given the stratified simplicial set
        \[
        \SingS\tstr(\J) = \TopPN(\sReal{\Delta^\J}, \tstr)
        \]
        with the obvious structure morphisms.
    \end{recollection}
    \subsection{Generalized homotopy links}\label{subsec:generalized_homotopy_links}
    Homotopy links were originally introduced in \cite{quinn1988homotopically} order to obtain a homotopy-theoretic replacement for the boundary of a regular neighborhood in the piecewise-linear scenario. As functors, they may be understood as the right adjoint to taking products with stratified simplices.
    \begin{notation} 
    In the following subsections, we will make frequent use of the action of $\TopN$ on $\TopPN$, given by 
    \begin{align*}
        \TopN \times \TopPN &\to \TopPN \\
     (T, \str) &\mapsto T \times \str := (T \times X \xrightarrow{\pi_X} X \xrightarrow{\sstr[X]} P).
    \end{align*}
    In case there is any possibility of confusion, the stratification always arises from the second component.
\end{notation}
\begin{recollection}[See \cite{douteauEnTop}]
    Given a (locally compact in the case of general topological spaces) stratified space $\tstr[S]$, the functor 
    \[
    - \times \tstr[S] \colon \TopN \to \TopPN
    \]
    admits a right adjoint. It is given by equipping $\TopPN(\tstr[S], \tstr)$ with the respective subspace topology (depending on the choice of category $\TopN$) of the space of all continuous maps, equipped with the compact-open topology. We are particularly interested in the case $\tstr[S] = \sReal{\Delta^\I}$, for $\I \in \sd(\pos)$ a regular flag. 
    For $\tstr \in \TopPN$, the image under the right adjoint to $- \times \sReal{\Delta^\I}$ is called the \define{$\I$-th (generalized) homotopy link of $\tstr$}.
    Explicitly, it is given by topologizing the set of stratum-preserving maps 
    \[
    \{ \sReal{\Delta^\I} \to \tstr \}
    \]
    as described above. 
    We then can summarize the homotopy links in a global functor 
    \[
        \HolIP[] \colon \TopPN \to \Fun ( \sd(\pos) ^{\op}, \TopN),
    \]
    with structure maps of the diagram $\HolIP (\str)$, $\HolIP[\I](\tstr) \to \HolIP[\I'](\tstr)$ given by restricting along the inclusion $\sReal{\Delta^{\I'}} \subset \sReal{\Delta^\I}$, for $\I' \subset \I$.
\end{recollection}
\begin{example}
    If $\I = [p]$ is a singleton, then $\HolIP{\tstr}$ is naturally homeomorphic to the stratum $\tstr_p$. For $\I = [p_0 < p_1]$ a pair, the homotopy link $\HolIP{\tstr}$ is the space of paths starting in $\tstr_p$ and immediately exiting into $\tstr_q$, so-called \define{exit paths} defined in \cite{quinn1988homotopically}.
\end{example}
\begin{example}
    Let $\I \in \sd(\pos)$ and $\str \in \sSetPN$ be a stratified simplicial set. We can consider the first barycentric subdivision of the underlying simplicial set $\sd \ustr$.
    The vertices of $\sd(\pos)$ correspond to pairs $( \Delta^\J \to \str, \J')$ with $\J' \subset \J \in \Delta_P$ and $\Delta^\J \to \str$ non-degenerate.
    We may then consider the full subsimplicial set of $\sd \ustr$ spanned by such vertices for which $\J'$ degenerates from $\I$.
    The latter is called the \define{simplicial} link, denoted $\LinkI{\str}$.
    In the case of two strata, for $\I = [p < q]$, $\real{\LinkI}$ is precisely the boundary of a regular neighborhood of $(\sReal{\str})_p$.
    In particular, in this case, it is weakly equivalent to the homotopy link of $\sReal{\str}$. In \cite{douteauwaas2021}, we proved the case of general $\I$ of this result obtaining weak homotopy equivalences
    \[
    \HolIP[\I]{\sReal{\tstr}} \simeq \real{ \LinkI{\str}}.
    \]
\end{example}
\subsection{Stratified cell complexes}\label{subsec:strat_cell_complex}
Let us now move on to the case of more general stratified cell complexes
\begin{definition}\label{def:cell_struct}
Let $\str \in \TopPN$. 
A \define{stratified cell structure on $\tstr$} is a family of stratum-preserving maps $(\sigma_i \colon \sReal{\Delta^{\J_{i}}} \to \tstr)_{i \in I}$ such that the following properties hold.
\begin{enumerate}
    \item $\utstr$ has the final topology with respect to the maps $\sigma_i$.
    \item For each $i \in I$, $\sigma_i$ induces a homeomorphism from $\sReal{\Delta^{\J_i}} \setminus \sReal{\partial \Delta^{\J_i}}$ onto its image. Denote these images by $e_i$. Furthermore, denote the image of $\sReal{\partial \Delta^\J_i}$ under $\sigma$ by $\partial e_i$.
    \item $\utstr$ is given by the (set-theoretic) disjoint union of the cells $e_i$.
    \item Denote by $\prec$ the relation on $I$, which is generated under transitivity by
    \[
        i \prec j \iff  e_i \cap \partial e_j \neq \emptyset.
        \]
    Then $\prec$ is irreflexive, and every element of $I$ only has finitely many precursors with respect to $\prec$.
\end{enumerate}
A stratified Hausdorff space $\str$ together with a stratified cell structure $(\sigma_i)_{i \in I}$ on it is called a \define{ stratified cell complex}. A \define{stratified subcomplex} of $(\tstr,(\sigma_i)_{i \in I})$, is a stratified subspace $\str[A] \subset \str$, together with a subset $I' \subset I$ such that $I'$, $\tstr[A] = \bigcup_{i \in I'} e_i$, and such that $I'$ is closed below under $\prec$.
\end{definition}
\begin{remark}
    In many respects, stratified cell complexes behave much like their unstratified counterparts. In particular, it is not hard to see that every stratified subcomplex is closed, and itself a stratified cell complex, with the induced cell structure.
    We will usually refer to a stratified cell complex just by its underlying stratified space, and keep the cell structure implicit. At times, we will say $\str$ is a stratified cell complex, to refer to the fact that it is Hausdorff and admits a stratified cell structure.
\end{remark}
\begin{example}
    Every realization of a stratified simplicial set $\str \in \sSetPN$ naturally inherits the structure of a stratified cell complexes, with cells given by the realizations of non-degenerate simplices $\Delta^\J \to \str$.
\end{example}
If we forget about the cell structure, stratified cell complexes are simply the spaces that arise as absolute cell complexes (in the sense of \cite{hirschhornModel}) from the set of stratified boundary inclusions $\{ \sReal{\partial \Delta^\J} \hookrightarrow \sReal{\Delta^\J} \mid \J \in \Delta_P \}$. 
\begin{proposition}
    Let $\str \in \TopPN$. Then the following are equivalent:
    \begin{enumerate}
        \item \label{item:prop_char_abs_cell_cplx_1} $\tstr$ is an absolute cell complex with respect to the set $\{ \sReal{ \partial \Delta^\J \hookrightarrow \Delta^\J} \mid \J \in \Delta_P \}$.
        \item \label{item:prop_char_abs_cell_cplx_2} $\tstr$ is Hausdorff and admits a stratified cell structure.
    \end{enumerate}
    Furthermore, the following relative version of this result holds. Suppose that $\tstr[A]$ is a stratified cell complex. Then, for a stratified map $i \colon \tstr[A] \to \tstr$, the following are equivalent.
     \begin{enumerate}
        \item \label{item:prop_char_abs_cell_cplx_3} $i$ is a relative cell complex with respect to the set $\{ \sReal{ \partial \Delta^\J \hookrightarrow \Delta^\J} \mid \J \in \Delta_P \}$.
        \item \label{item:prop_char_abs_cell_cplx_4} $\tstr$ is Hausdorff and admits a stratified cell structure, which makes $\tstr[A]$ a stratified subcomplex.
    \end{enumerate}
\end{proposition}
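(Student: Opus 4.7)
The plan is to translate directly between the two descriptions: in one direction I read off characteristic maps from the absolute cell complex presentation, in the other I well-order the cells using the precursor hypothesis and build $\tstr$ as a transfinite colimit of pushouts. Since the relative version only differs from the absolute one by starting the induction at $\tstr[A]$ rather than $\emptyset$, it suffices to treat the absolute equivalence.

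For (\ref{item:prop_char_abs_cell_cplx_1}) $\Rightarrow$ (\ref{item:prop_char_abs_cell_cplx_2}), I write $\tstr$ as the transfinite colimit of stages $\tstr^{(\lambda)}$ given by pushouts along coproducts of generating inclusions $\sReal{\partial \Delta^{\J_i}} \hookrightarrow \sReal{\Delta^{\J_i}}$, and take for $\sigma_i$ the induced characteristic map of each cell. The final-topology condition and the set-theoretic decomposition of $\utstr$ into open cells follow directly from the iterated pushout structure, and each $\sigma_i$ is a homeomorphism on the interior because the generating inclusions are closed embeddings whose complements embed. Irreflexivity of $\prec$ holds because $i \prec j$ forces $e_i$ to be already present when $e_j$ is attached, and each $j$ has only finitely many precursors because $\partial e_j = \sigma_j(\sReal{\partial \Delta^{\J_j}})$ is compact and the open cells are pairwise disjoint. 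Hausdorffness is preserved under each pushout step because the generating maps are closed inclusions of compact Hausdorff spaces, and at limit stages by the final-topology condition.

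For (\ref{item:prop_char_abs_cell_cplx_2}) $\Rightarrow$ (\ref{item:prop_char_abs_cell_cplx_1}), the hypotheses on $\prec$ make it well-founded with rank function $\mathrm{rk}(i) := \sup\{\mathrm{rk}(j) + 1 \mid j \prec i\}$ taking finite values, so I can extend $\prec$ to a well-order on $I$ refining the rank order. Setting $\tstr^{(\alpha)} := \bigcup_{\beta < \alpha} \overline{e_\beta} \subset \tstr$, I need to verify that every successor stage is a pushout
\[
\begin{tikzcd}
\sReal{\partial \Delta^{\J_\alpha}} \arrow[r] \arrow[d, hook] & \tstr^{(\alpha)} \arrow[d, hook] \\
\sReal{\Delta^{\J_\alpha}} \arrow[r, "\sigma_\alpha"] & \tstr^{(\alpha + 1)}
\end{tikzcd}
\]
in $\TopN$, and that $\tstr$ is the resulting transfinite colimit. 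The attaching map lands in $\tstr^{(\alpha)}$ because $\partial e_\alpha$ decomposes as the disjoint union $\bigsqcup_i (e_i \cap \partial e_\alpha)$ over precursors $i \prec \alpha$, which are finite in number by hypothesis. The identification of $\tstr$ with the colimit follows from the final-topology axiom of the cell structure.

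The main obstacle I anticipate is checking that these squares genuinely are pushouts in $\TopN$, not merely bijections of underlying sets. Concretely, one must show that the quotient topology on $\tstr^{(\alpha+1)}$ induced by $\sigma_\alpha$ agrees with its subspace topology inside $\tstr$. This is where Hausdorffness does the essential work: the compactness of $\sReal{\partial \Delta^{\J_\alpha}}$ together with Hausdorffness of $\tstr$ forces $\sigma_\alpha$ on the closed simplex to be a closed map, which yields the desired quotient map property. Once the successor step is settled, iteration plus the parallel limit-stage check via the final-topology axiom delivers the absolute cell complex presentation.
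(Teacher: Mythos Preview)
Your approach is essentially identical to the paper's: it too reads off characteristic maps from the transfinite presentation in one direction, and in the other extends $\prec$ to a well-order via an $\mathbb N$-valued rank function before verifying the pushout squares stage by stage (this is the content of the paper's \cref{con:equ_versions_of_strat_cell_cplx}).

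One genuine gap, however: your justification that each cell has only finitely many precursors---``because $\partial e_j$ is compact and the open cells are pairwise disjoint''---does not work as stated. A compact set can meet infinitely many pairwise disjoint sets (take the singletons $\{1/n\}$ inside $[0,1]$). What is actually needed is the standard fact that every compact subset of an absolute cell complex is contained in a finite subcomplex; this is precisely what the paper invokes (citing \cite[Prop.~10.7.4]{hirschhornModel}). Once you replace your compactness-plus-disjointness claim with this, the argument goes through.
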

\begin{proof}
    Essentially, this argument is identical with the non-stratified case. For some reason, we were unable to find a reference for this in the literature. 
    First, note that being an absolute cell complex as above implies that $\str$ is Hausdorff. Indeed, this already holds for absolute cell complexes in the Quillen model structure, and may be seen by extending disjoint opens cell by cell, via a transfinite inductive argument. The remaining translation of structures is handled by \cref{con:equ_versions_of_strat_cell_cplx} below.
\end{proof}
\begin{construction}\label{con:equ_versions_of_strat_cell_cplx}
    Let $\str \in \TopPN$.
    If we write $\emptyset \to \tstr$ as a transfinite compositions 
    \[
        \tstr^0 \to \tstr^1 \to \dots \to \tstr^{\alpha} = \tstr
    \]
    with pushout diagrams
    \begin{diagram}
        \sReal{\partial \Delta^{\J_{\beta}}} \arrow[r, hook] \arrow[d]& \sReal{\Delta^{\J_{\beta}}} \arrow[d]\\
        \tstr^{\beta} \arrow[r, hook] & \tstr^{\beta +1},
    \end{diagram}
    then the compositions
    \[
    \sReal{\Delta^{\J_{\beta}}} \to \tstr^{\beta} \to \tstr
    \]
    define a cell structure on $\tstr$. The finite precursor conditions follows from the fact that every compactum in an absolute cell complex is contained in a finite subcomplex (see  \cite[Prop. 10.7.4]{hirschhornModel}, for the topological case). Conversely, if $\tstr$ admits a stratified cell structure, $(\sigma_i)_{i \in I}$, then we may extend the precursor order on $I$ to a well-order as follows: By well-founded induction, we obtain an order-preserving map $\nu \colon I \mapsto \mathbb N$, inductively defined via $\tau \mapsto \sup \{\nu(\tau') \mid \tau' \prec \tau \}$. Then, for every fiber $\nu^{-1}(n)$, choose a well order $\prec_n$, and finally equip
    \[
    I = \bigsqcup_{n \in \mathbb N} \nu^{-1}(n)
    \]
    with the lexicographic order \[
   i < p \iff  i \prec j \lor ( \nu(i)= \nu(j) =n \land i \prec_n j).\]
   Under this construction, we can identify $I$ as an ordinal $\alpha_I$ . Then, for $\beta \leq \alpha_I$ denote $\tstr^{\beta} = \bigcup_{j < i} e_\beta \subset \tstr $. By construction, $\sReal{\partial \Delta^{\J_\beta}} \hookrightarrow \sReal{\Delta^{\J_\beta}} \xrightarrow{\sigma_\beta} \tstr$ factors through $\tstr^\beta$, and one may check that the diagrams 
    \begin{diagram}
        \sReal{\partial \Delta^{\J_{\beta}}} \arrow[r, hook] \arrow[d]& \sReal{\Delta^{\J_{\beta}}} \arrow[d]\\
        \tstr^{\beta} \arrow[r, hook] & \tstr^{\beta +1},
    \end{diagram}
    are pushout and that whenever $\beta$ is a limit element, then $\tstr^{\beta} = \colim_{\beta' < \beta} \tstr^{\beta'}$. The relative case is essentially analogous.
\end{construction}
As a consequence of \cite[Prop. 10.7.6]{hirschhornModel}, one obtains the following. 
\begin{lemma}\label{lem:finite_subcomplex}
    If $\tstr \in \TopPN$ is a stratified cell complex, then every compactum $K \subset \tstr$ is contained in a finite subcomplex $\tstr[A]$ of $\tstr$.
\end{lemma}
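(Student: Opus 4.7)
The plan is to reduce to the analogous fact about absolute cell complexes. By \cref{con:equ_versions_of_strat_cell_cplx}, $\emptyset \to \tstr$ admits a presentation as an absolute cell complex with respect to $\{\sReal{\partial \Delta^\J} \hookrightarrow \sReal{\Delta^\J} \mid \J \in \Delta_P\}$, and each of these generating cofibrations has compact domain and codomain. Consequently \cite[Prop.~10.7.6]{hirschhornModel} applies verbatim and yields a finite relative subcomplex containing $K$; translating back through \cref{con:equ_versions_of_strat_cell_cplx}, this corresponds to a finite stratified subcomplex of $\tstr$ in the sense of \cref{def:cell_struct}.

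For a self-contained argument, I would mimic the classical CW proof that every compactum meets only finitely many open cells. Set $I_K := \{i \in I \mid K \cap e_i \neq \emptyset\}$, choose for each $i \in I_K$ a point $x_i \in K \cap e_i$, and let $S := \{x_i\}_{i \in I_K}$. The crucial step is to show that every subset $S' \subset S$ is closed in $\tstr$. Since $\utstr$ carries the final topology with respect to the characteristic maps $\sigma_j$, it suffices to verify that each preimage $\sigma_j^{-1}(S')$ is closed in $\sReal{\Delta^{\J_j}}$. Now $\sigma_j^{-1}(x_i)$ is non-empty only if $x_i \in \overline{e_j}$; since $x_i \in e_i$ and open cells are disjoint, this forces either $i = j$ or $\overline{e_i} \cap \partial e_j \neq \emptyset$, i.e.\ $i \prec j$. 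Because $j$ has only finitely many precursors by \cref{def:cell_struct}, each $\sigma_j^{-1}(S')$ is finite and hence closed. Thus $S$ is closed and discrete in $\tstr$; as a closed discrete subset of the compactum $K$ it is finite, so $I_K$ is finite.

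Closing $I_K$ downward under $\prec$ then yields a finite index set $I'$ (finiteness again by the finite-precursor condition), and $\tstr[A] := \bigcup_{i \in I'} \overline{e_i}$ is the desired finite stratified subcomplex containing $K$. The only non-formal step is the combined closedness-and-discreteness of $S$, which genuinely uses the interaction between the final-topology description of $\utstr$ and the finite-precursor condition simultaneously; everything else is bookkeeping identical to the unstratified case, which is exactly why invoking \cite[Prop.~10.7.6]{hirschhornModel} via \cref{con:equ_versions_of_strat_cell_cplx} suffices.
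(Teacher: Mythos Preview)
Your first paragraph is exactly the paper's proof: the paper simply states ``It follows from \cite[Prop.~10.7.6]{hirschhornModel}'', and you have correctly identified why that citation applies (via \cref{con:equ_versions_of_strat_cell_cplx}, the stratified cell structure yields an absolute cell complex presentation with compact generating cofibrations).

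Your additional self-contained argument is also fine, with one small slip: the claim that each $\sigma_j^{-1}(S')$ is \emph{finite} is not quite right, since an attaching map may collapse a face to a point and hence have infinite point-preimages; what is true is that $\sigma_j^{-1}(S')$ is the preimage of a finite subset of the Hausdorff space $\tstr$ (namely those $x_i$ with $i=j$ or $i \prec j$) and is therefore closed as a finite union of closed sets. This does not affect the conclusion.
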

\subsection{Systems of strata-neighborhoods}\label{subsec:strata_neighborhoods}
One of the central properties of pairwise homotopy links is that they may be computed locally using neighborhoods of a stratum (compare \cite{quinn1988homotopically}). This type of argument was also central in the proof of \cite[Thm 4.8]{douteauwaas2021} .
In this subsection, we introduce the notion of a strata-neighborhood system of a stratified space $\tstr$ which provide a general framework for these type of local computations. We then move on to the question of when the homotopy links may instead be computed entirely in terms of these strata-neighborhood systems, in which case we speak of a homotopy link model.
\begin{definition}\label{def:DeltaP_neighborhood}
    Let $\tstr \in \TopPN$ and $S \subset \tstr$. A \define{$\Delta_P$-neighborhood of $S$ in $\tstr$} is a subset $U \subset \utstr$ such that for any flag $\J$ of $P$ and any stratum-preserving map $\sigma \colon \sReal{\Delta^\J} \to \tstr$ the set $\sigma^{-1}(U)$ is a neighborhood of $\sigma^{-1}(S)$ in $\sReal{\Delta^\J}$.
\end{definition}
The following elementary property follows immediately from the definition of a $\Delta_P$-neighborhood.
\begin{lemma}
    Let $f \colon \tstr \to {\tstr[Y]}$ in $\TopPN$. If $U \subset {\tstr[Y]}$ is a $\Delta_P$-neighborhood of $S \subset {\tstr[Y]}$, then $f^{-1}(U)$ is a $\Delta_P$-neighborhood of $f^{-1}(S)$ in ${\tstr[Y]}$.
\end{lemma}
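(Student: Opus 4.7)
The plan is to verify the defining condition of a $\Delta_P$-neighborhood directly, by pulling back every test map $\sigma$ into $\tstr$ along $f$ to obtain a test map into ${\tstr[Y]}$ and then invoking the hypothesis. The statement is essentially formal, so there should be no real obstacles; the only thing to do is keep track of the preimages correctly and observe that the composition of stratum preserving maps is stratum preserving.

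In detail, I would fix an arbitrary flag $\J \in \Delta_P$ and an arbitrary stratum preserving map $\sigma \colon \sReal{\Delta^\J} \to \tstr$. I then form the composite $f \circ \sigma \colon \sReal{\Delta^\J} \to {\tstr[Y]}$, which is again a morphism in $\TopPN$ because $\TopPN$ is a slice category over $P$ and hence closed under composition. Applying the assumption that $U$ is a $\Delta_P$-neighborhood of $S$ in ${\tstr[Y]}$ to this composite, I conclude that $(f\circ\sigma)^{-1}(U)$ is a neighborhood of $(f\circ\sigma)^{-1}(S)$ in $\sReal{\Delta^\J}$.

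Finally, I rewrite these preimages using functoriality: $(f\circ\sigma)^{-1}(U) = \sigma^{-1}(f^{-1}(U))$ and $(f\circ\sigma)^{-1}(S) = \sigma^{-1}(f^{-1}(S))$. Since this holds for all $\J$ and all such $\sigma$, the subset $f^{-1}(U) \subset \utstr$ satisfies the condition of \cref{def:DeltaP_neighborhood} relative to $f^{-1}(S)$, so it is a $\Delta_P$-neighborhood of $f^{-1}(S)$ in $\tstr$, as claimed. The hard part, if any, is purely notational — namely reading ``neighborhood in ${\tstr[Y]}$'' in the statement as the evident typo for ``neighborhood in $\tstr$.''
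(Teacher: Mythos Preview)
Your proof is correct and is exactly the unwinding of the definition that the paper has in mind; the paper itself does not spell out a proof, merely noting that the property ``follows immediately from the definition of a $\Delta_P$-neighborhood.'' Your observation about the typo is also correct: the conclusion should read ``in $\tstr$'' rather than ``in ${\tstr[Y]}$.''
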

Roughly speaking, we should think of $\Delta_P$-neighborhoods as subsets of $\tstr$ that look like a neighborhood if one takes the perspective of a finite stratified cell complex. This heuristic is made rigorous by the following lemma.
\begin{lemma}\label{lem:neighborhoods_of_finite_complexes}
    If $\tstr \in \TopPN$ is equipped with the structure of a finite stratified cell complex and  $S \subset U \subset \utstr$, then the following are equivalent:
    \begin{enumerate}
        \item $U$ is a $\Delta_P$-neighborhood of $S$.
        \item For every cell $\sigma \colon \sReal{\Delta^\J} \to \tstr$, the set $\sigma^{-1}(U)$ is a neighborhood of $\sigma^{-1}(S)$.
        \item $U$ is a neighborhood of $S$.
    \end{enumerate}
\end{lemma}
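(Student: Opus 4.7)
The plan is to establish the cyclic chain of implications $(1) \Rightarrow (2) \Rightarrow (3) \Rightarrow (1)$. The first and the last are essentially formal; the real content lies in $(2) \Rightarrow (3)$, where the finiteness of the cell set and the Hausdorffness of $\tstr$ both play an essential role.

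For $(1) \Rightarrow (2)$ I would observe that, by \cref{def:cell_struct}, each characteristic map $\sigma_i \colon \sReal{\Delta^{\J_i}} \to \tstr$ is itself a stratum preserving map from a stratified simplex, so the assertion is a direct specialisation of \cref{def:DeltaP_neighborhood}. For $(3) \Rightarrow (1)$, given an open set $V$ with $S \subset V \subset U$ and any stratum preserving map $\sigma \colon \sReal{\Delta^\J} \to \tstr$, continuity of $\sigma$ makes $\sigma^{-1}(V)$ an open subset of $\sReal{\Delta^\J}$ sandwiched between $\sigma^{-1}(S)$ and $\sigma^{-1}(U)$, certifying that $\sigma^{-1}(U)$ is a neighborhood of $\sigma^{-1}(S)$.

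The main step $(2) \Rightarrow (3)$ I would handle by a direct global construction rather than by induction on a skeletal filtration. Using $(2)$, choose for each of the finitely many cells $\sigma_i$ an open set $W_i \subset \sReal{\Delta^{\J_i}}$ with $\sigma_i^{-1}(S) \subset W_i \subset \sigma_i^{-1}(U)$, and put
\[
    V \; := \; \tstr \setminus \bigcup_{i} \sigma_i \bigl( \sReal{\Delta^{\J_i}} \setminus W_i \bigr).
\]
Each $\sReal{\Delta^{\J_i}} \setminus W_i$ is closed in the compact simplex $\sReal{\Delta^{\J_i}}$, so its continuous image is compact in the Hausdorff space $\tstr$ and therefore closed; finiteness of the cell set then makes the displayed union closed and $V$ open. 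For the inclusion $V \subset U$, any $y \in V$ lies in a unique cell $e_i$ and has a unique interior preimage $z \in \sigma_i^{-1}(y)$, which by definition of $V$ must satisfy $z \in W_i \subset \sigma_i^{-1}(U)$, forcing $y = \sigma_i(z) \in U$. For $S \subset V$, if $x \in S$ then $\sigma_i^{-1}(x) \subset \sigma_i^{-1}(S) \subset W_i$ for every $i$, so $x$ avoids each of the removed subsets.

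The Hausdorff and finiteness hypotheses from the definition of a stratified cell complex enter exactly once each in this construction — the former to turn compact images into closed sets, the latter to keep their union closed — and I therefore anticipate no genuine obstacle beyond a careful reading of the quotient topology on $\tstr$. Writing $V$ down globally in the form above sidesteps the delicate issue of extending open sets across cells one at a time, which would have been the main technical difficulty had I attempted a skeletal induction.
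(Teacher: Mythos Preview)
Your proof is correct and follows essentially the same idea as the paper. The paper defers $(2)\Rightarrow(3)$ to \cref{lemma:neighborhoods_of_pushouts}, whose proof constructs the saturated open set $T\setminus\pi^{-1}(\pi(T\setminus O))$ for the quotient map $\pi\colon\bigsqcup_i\sReal{\Delta^{\J_i}}\to\tstr$; your set $V$ is precisely the image of this saturated open, and your appeal to compactness and Hausdorffness matches the paper's reasoning exactly --- the only difference is that you carry out the construction globally in one step rather than packaging it as a pushout lemma to be iterated along the cell attachments.
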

\begin{proof}
    That the first condition implies the second is trivial. Clearly, also the third implies the first. It remains to show that the second condition implies the third. This is the content of \cref{lemma:neighborhoods_of_pushouts}.
\end{proof}
For infinite stratified cell complexes we may still state the following lemma.
\begin{lemma}\label{lem:char_of_nbhd_in_cell_complex}
    If $\tstr$ admits the structure of a cell complex, then $U \subset \tstr$ is a $\Delta_P$-neighborhood of $S \subset U \subset \tstr$, if and only if the inverse image under every cell $\sigma \colon \sReal{\Delta^\J} \to \tstr$ of $U$ is a neighborhood of $\sigma^{-1}(S)$ in $\real{\Delta^\J}$.
\end{lemma}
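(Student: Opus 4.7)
The plan is to reduce the general case to the finite case (\cref{lem:neighborhoods_of_finite_complexes}) via the compactness principle of \cref{lem:finite_subcomplex}.

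First, the forward direction is immediate: if $U$ is a $\Delta_P$-neighborhood of $S$, then by definition $\sigma^{-1}(U)$ is a neighborhood of $\sigma^{-1}(S)$ for every stratum preserving map $\sReal{\Delta^\J} \to \tstr$, in particular for every cell.

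For the reverse direction, suppose $\sigma^{-1}(U)$ is a neighborhood of $\sigma^{-1}(S)$ for every cell $\sigma$ of the fixed cell structure on $\tstr$. Let $\tau \colon \sReal{\Delta^\K} \to \tstr$ be an arbitrary stratum preserving map. The key observation is that $\sReal{\Delta^\K}$ is compact, hence so is its image $\tau(\sReal{\Delta^\K}) \subset \utstr$. By \cref{lem:finite_subcomplex}, this image is contained in a finite subcomplex $\tstr[A] \subset \tstr$, so $\tau$ factors through a stratum preserving map $\tau' \colon \sReal{\Delta^\K} \to \tstr[A]$. The cells of the induced cell structure on $\tstr[A]$ form a subset of the cells of $\tstr$, and for each such cell $\sigma$ the preimage $\sigma^{-1}(U \cap \tstr[A]) = \sigma^{-1}(U)$ is by hypothesis a neighborhood of $\sigma^{-1}(S \cap \tstr[A]) = \sigma^{-1}(S)$. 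Since $\tstr[A]$ is a finite stratified cell complex, \cref{lem:neighborhoods_of_finite_complexes} applies and yields that $U \cap \tstr[A]$ is an actual neighborhood of $S \cap \tstr[A]$ in $\tstr[A]$. Continuity of $\tau'$ then gives that $\tau^{-1}(U) = (\tau')^{-1}(U \cap \tstr[A])$ is a neighborhood of $(\tau')^{-1}(S \cap \tstr[A]) = \tau^{-1}(S)$ in $\sReal{\Delta^\K}$, as desired.

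The only substantive step is the reduction to a finite subcomplex, which is precisely what \cref{lem:finite_subcomplex} provides; once we are in the finite setting, \cref{lem:neighborhoods_of_finite_complexes} does all the work. No obstacle is anticipated — the argument is essentially a transcription of the classical fact that a subset of a CW complex is a neighborhood of a subspace iff its intersection with each closed cell is, adapted to the stratified setting via the distinction between genuine neighborhoods and $\Delta_P$-neighborhoods.
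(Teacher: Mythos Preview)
Your proof is correct and follows essentially the same approach as the paper: reduce to finite subcomplexes via compactness (the paper phrases this as ``any continuous map $\sReal{\Delta^\J} \to \tstr$ factors through a finite subcomplex'' without explicitly citing \cref{lem:finite_subcomplex}), then invoke \cref{lem:neighborhoods_of_finite_complexes}. Your write-up is, if anything, slightly more explicit about the intermediate steps.
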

\begin{proof}
    The only if case is immediate by the definition of a $\Delta_P$-neighborhood. Now, for the converse, note that any continuous map $\sReal{\Delta^\J} \to \tstr$ factors through a finite subcomplex $\tstr[A]$ of $\tstr$. It follows from this that it suffices to show that $U \cap A$ is a $\Delta_P$-neighborhood of $S \cap A$, for any finite subcomplex $\tstr[A]$ of $\tstr$. Therefore, the result follows from \cref{lem:neighborhoods_of_finite_complexes}.
\end{proof}


Next, let us define the notion of a strata-neighborhood system. 
\begin{definition}
    A \define{strata-neighborhood system} of a stratified space $\tstr \in \TopPN$ is a family of subspaces $\snSys =(\snVal(p))_{p \in P}$ such that $\snVal(p)$ is a $\Delta_{P}$-neighborhood of $\utstr_p$ in $\tstr$ that further fulfills $\utstr_{\leq p} \subset \snVal(p)$.
\end{definition}
\begin{remark}
The additional requirement that $\utstr_{\leq p} \subset \snVal(p)$ is only there for the sake of notational convenience, when passing to flags in \cref{not:flag_nbhd}. Aside from this, this is immaterial for the constructions in this section. We decided to add this condition, as it is automatically fulfilled for the strata-neighborhood systems we construct in this article, and can always be achieved by replacing $\snVal(p)$ with $\snVal(p) \cup \utstr_{<p}$.
\end{remark}
The goal of a strata-neighborhood system is ultimately to provide a geometric model for the homotopy link diagram of a stratified space. However, we do not only need to model the homotopy links, but also the functoriality of the latter. This requires the following definition.
\begin{definition}
Let $\tstr,{\tstr[Y]} \in \TopPN$ be equipped with strata-neighborhood systems $\snSys$ and $\snSys[{\tstr[Y]}]$, respectively.  
We say $f \colon {\tstr} \to {{\tstr[Y]}}$ \define{lifts to a map of strata-neighborhood systems $\snSys \to \snSys[{\tstr[Y]}]$}, if $f(\snVal(p)) \subset U_{{\tstr[Y]}}(p)$, for all $p \in \pos$. 
\end{definition}
\begin{notation}
    Denote by $\NCat$ the category which is defined as follows. Objects are given by pairs $({\tstr},\snSys)$ with ${\tstr} \in \TopPN$ and $\snSys$ a strata-neighborhood system of ${\tstr}$. The set of morphisms from $({\tstr},\snSys)$ to $({{\tstr[Y]}},\snSys[{\tstr[Y]}])$ is given by such stratum-preserving maps $f \colon {\tstr} \to {{\tstr[Y]}}$ which lift to a map of neighborhood systems $\snSys \to \snSys[{\tstr[Y]}]$. We will usually just refer to a pair $({\tstr},\snSys)$ just by $\snSys$.
\end{notation}
To extract an object of $\DiagTop$ from a strata-neighborhood system, we need the following construction.
\begin{notation}\label{not:flag_nbhd}
    For $\snSys$ a strata-neighborhood system of ${\tstr} \in \TopPN$ and $\I  \in \sd(\pos)$ a regular flag, we write $\snVal(\I):= \bigcap_{p \in \I} \snVal(p) $.
\end{notation}
\begin{notation}
    Given a strata-neighborhood system $\snSys$ a strata-neighborhood system of ${\tstr} \in \TopPN$, and $\I \in \sd(\pos)$, we are going to follow our conventions on stratified and non-stratified objects (see \cref{not:top_strat_spaces}), and write $\snVals(\I)$, for the $P$-stratified space obtained by equipping $\snVal(\I)$  with the stratification inherited from $\tstr$. In particular, for $p \in P$, it makes sense to use expressions such as $\snVal(\I)_{\leq p}$, which in this case refers to the subspace of $\snVal(\I)$, given by the strata of index lesser or equal to $p$.
\end{notation}
\begin{construction}\label{con:nbhd_comp_diag}
    Let $\snSys$ be a strata-neighborhood system for ${\tstr} \in \TopPN$. We denote by $\NDiagT(\snSys) \in \DiagTop$ the diagram
    \[
    \I = \standardFlag \mapsto \snVal(\I)_{\geq p_n}
    \]
    with structure maps given by inclusions. The resulting object $\NDiagT( \snSys  )\in \DiagTop$ is called the \define{regular complement diagram} of ${\tstr} \in \TopPN$.
    This construction defines a functor 
    \[
    \NDiagT \colon \NCat \to \DiagTop.
    \]
    \end{construction}
    We may now ask the question under which conditions on a neighborhood system $\snSys$, on a stratified space ${\tstr}$, there is a canonical weak equivalence of diagrams between $\HolIP[]({\tstr})$ and $\NDiagT(\snSys)$. 
\begin{definition}
    Let ${\tstr} \in \TopPN$.
    We say that a strata-neighborhood system $\snSys$ of ${\tstr}$ \define{is a model for the homotopy links of ${\tstr}$} (is a homotopy link model) if the natural maps 
    \[
    \HolIP( \snVals(\I)) \xrightarrow{\ev} \HolIP[{p_n}]( \snVals(\I)) = \snVal(\I)_{p_n} ,
    \]
    and
    \[
    \snVal(\I)_{p_n} \hookrightarrow \snVal(\I)_{\geq p_n}
    \]
    are weak equivalences of topological spaces, for each regular flag $\I = \standardFlag \subset P$. \\
    We denote by $\HoModCat$ the full subcategory of $\NCat$ given by pairs $({\tstr}, \snSys)$ with $\snSys$ a homotopy link model for ${\tstr}$.
\end{definition}
\begin{remark}
We should note that the second condition in the definition of a homotopy link model is necessary since (using the notation of \cref{con:nbhd_comp_diag}) $\NDiagT(\snSys)( \I)$ is defined as $\snVal(\I)_{\geq p_n}$ rather than just using $\snVal(\I)_{p_n}$. This in turn is necessary if we want $\NDiagT(\snSys)$ to provide a model for the whole diagram $\HolIP[]({\tstr})$, not just its pointwise values. To obtain the structure maps of $\NDiagT(\snSys)$ we need to have inclusions $\NDiagT(\snSys)(\I_1) \subset \NDiagT(\snSys)(\I_0)$, whenever $\I_0 \subset \I_1$. In particular, if we also want this inclusion to hold when $\I_0$ and $\I_1$ do not share a maximal element, then we need to define $\NDiagT$ as in \cref{con:nbhd_comp_diag}. In any case, the second condition for being a homotopy link model will usually turn out to be the easy one to verify, as it can be verified entirely in the language of classical homotopy theory, unlike the first one which requires stratified considerations.
\end{remark}
Let us now finish this subsection by stating the result that legitimizes the nomenclature of homotopy link models.
\begin{theorem}\label{prop:models_model_globally}
    The diagram
\begin{diagram}
    \HoModCat \arrow[d] \arrow[r, hook] & \NCat \arrow[d, "\NDiagT"] \\
    \TopPN \arrow[r, "{\HolIP[]}"] & \DiagTop
\end{diagram}
commutes up to weak equivalence.
\end{theorem}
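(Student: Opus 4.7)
The plan is to construct, for each $(\tstr, \snSys) \in \HoModCat$ and each regular flag $\I = \standardFlag$, a natural zigzag of weak equivalences in $\DiagTop$ of the form
\[
\HolIP[\I](\tstr) \xleftarrow{\alpha_{\I}} \HolIP[\I](\snVals(\I)) \xrightarrow{\beta_{\I}} \snVal(\I)_{p_n} \xrightarrow{\gamma_{\I}} \snVal(\I)_{\geq p_n} = \NDiagT(\snSys)(\I),
\]
where $\alpha_\I$ is induced by the stratified inclusion $\snVals(\I) \hookrightarrow \tstr$, $\beta_\I$ is the canonical evaluation map $\HolIP[\I] \to \HolIP[p_n]$ (identifying its target with $\snVal(\I)_{p_n}$), and $\gamma_\I$ is the subspace inclusion. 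By the very definition of a homotopy link model, both $\beta_\I$ and $\gamma_\I$ are weak equivalences, so the remaining technical content is to establish that $\alpha_\I$ is a weak equivalence.

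To prove that $\alpha_\I$ is a weak equivalence, I would use the stratum-preserving shrinking $H \colon \sReal{\Delta^\I} \times [0,1) \to \sReal{\Delta^\I}$ defined in barycentric coordinates by $H(x, t) = (1-t)x + t v_0$. A direct check on the stratification of $\sReal{\Delta^\I}$ shows that each $H(-, t)$ for $t \in [0,1)$ is stratum preserving. Given a representative $f \colon K \times \sReal{\Delta^\I} \to \tstr$ of a class in $\pi_n(\HolIP[\I](\tstr))$ for $K$ a compact CW pair, the composite $f \circ (\mathrm{id}_K \times H(-, t))$ is a stratum-preserving deformation of $f$; if I can show that for $t$ close to $1$ this deformation takes values in $\snVal(\I) = \bigcap_i \snVal(p_i)$, then $f$ deforms into the subspace $\HolIP[\I](\snVals(\I))$, yielding simultaneously surjectivity and injectivity of $\alpha_\I$ on all homotopy groups.

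The eventual containment is obtained by exploiting the $\Delta_P$-neighborhood property stratumwise. For any stratum-preserving $\sigma \colon \sReal{\Delta^\I} \to \tstr$ and each $i$, the face $F_i = [v_0, \dots, v_i]$ is stratified over $[p_0 \le \dots \le p_i]$, so $\sigma(F_i) \subset \tstr_{\leq p_i} \subset \snVal(p_i)$; moreover, the $\Delta_P$-neighborhood condition says $\sigma^{-1}(\snVal(p_i))$ is a neighborhood of $\sigma^{-1}(\tstr_{p_i})$ in $\sReal{\Delta^\I}$. Combining these containments along all $i$ with continuity of $f$ and compactness of $K$ should yield a uniform $t_0 < 1$ such that $f \circ (\mathrm{id}_K \times H(-, t))$ factors through $\snVals(\I)$ for all $t \in [t_0, 1)$. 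The main obstacle I anticipate is precisely this uniform-$t_0$ claim: for the intermediate strata $p_i$ with $0 < i < n$, the $\Delta_P$-neighborhood property does not immediately furnish a genuine topological neighborhood of $\sigma(v_0) \in \tstr_{p_0}$ inside $\snVal(p_i)$, so the uniform bound must be extracted by iterating the $\Delta_P$-neighborhood condition across all sub-simplices of $\sReal{\Delta^\I}$ meeting $v_0$, and gluing the local controls obtained via the compactness of $K$.

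Once $\alpha_\I$ is known to be a weak equivalence, naturality of the zigzag becomes formal. Naturality in $\I \in \sd P^{\op}$ holds because restriction of stratum-preserving maps along $\sReal{\Delta^{\I'}} \hookrightarrow \sReal{\Delta^\I}$ and the inclusions $\snVal(\I)_{\geq p_n} \supset \snVal(\I')_{\geq p_n'}$ for $\I \subset \I'$ are compatible with the three maps $\alpha_\I, \beta_\I, \gamma_\I$. Naturality in morphisms $f \colon (\tstr, \snSys) \to (\tstr[Y], \snSys[{\tstr[Y]}])$ of $\HoModCat$ uses precisely the lifting condition $f(\snVal(p)) \subset U_{\tstr[Y]}(p)$ defining such morphisms, which implies $f(\snVal(\I)) \subset U_{\tstr[Y]}(\I)$, so all four terms of the zigzag become functorial and the three natural transformations in it are genuinely natural in $\HoModCat$.
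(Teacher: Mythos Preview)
There is a genuine gap in your claim that naturality in $\I \in \sd P^{\op}$ is formal. The evaluation maps $\beta_\I \colon \HolIP[\I](\snVals(\I)) \to \snVal(\I)_{p_n}$ (and likewise $\gamma_\I \circ \beta_\I$) do \emph{not} assemble into a strict natural transformation of $\sd P^{\op}$-diagrams. Take $\I' = [p_0] \subset \I = [p_0 < p_1]$. The structure map $\HolIP[\I] \to \HolIP[\I']$ restricts $\sigma$ to the vertex $v_{p_0}$, and $\beta_{\I'}$ then returns $\sigma(v_{p_0})$; on the other side, $\beta_\I$ returns $\sigma(v_{p_1})$, which is then included into $\snVal(\I')_{\geq p_0}$. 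There is no reason for $\sigma(v_{p_0})$ and $\sigma(v_{p_1})$ to coincide. The square commutes only up to the homotopy $t \mapsto \sigma((1-t)v_{p_1} + t v_{p_0})$, and as soon as $|\I| \geq 3$ these homotopies must themselves be made coherent. The paper handles exactly this point by building a homotopy coherent transformation $\NDiagH \to \NDiagT$ (Construction~\ref{con:coherent_evalutation}) and then invoking a rigidification result (\cite[Prop.~A.3.4.12]{HigherTopos}) to descend to a zigzag of honest weak equivalences in $\DiagTop$. Without some such argument, you have produced pointwise weak equivalences but not a weak equivalence of diagrams, which is what the theorem asserts.

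A second, smaller gap: your argument for $\alpha_\I$ via the affine retraction $H(x,t) = (1-t)x + t v_0$ does not obviously close. You need the image of the shrunk simplex to lie in every $\snVal(p_i)$, but for $0 < i < n$ the set $\sigma^{-1}(\snVal(p_i))$ is only guaranteed to be a neighborhood of the open $p_i$-face and to contain the closed $\leq p_i$-face; it need not be a neighborhood of $v_0$ (cf.\ Example~\ref{example:standard_hood_of_strat_simplex}, where the $\varphi_1$-neighborhood is tangent to the $1$-stratum at $v_0$). The paper proves $\alpha_\I$ is a weak equivalence (Proposition~\ref{prop:computing_links_using_nbhds}) by a different route: it uses the universality of standard neighborhoods (Proposition~\ref{prop:maps_from_finite_approx}) to replace an arbitrary $(D^{n+1},S^n)$-family by one landing in $\stanHoods[\I]{\Delta^{n+1} \times \Delta^\I}$, and then retracts $\sReal{\Delta^\I}$ onto $\stanHood[\I]{\Delta^\I}$. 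This sidesteps the issue with the intermediate strata entirely.
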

 At this point, we do not yet have the tools necessary for a proof of \cref{prop:models_model_globally}. The proof follows in \cref{subsec:models_are_models}. 
\section[SNSs for stratified simplicial sets and cell complexes]{Strata-neighborhood systems for stratified simplicial sets and cell complexes}
From the perspective of \cref{prop:models_model_globally}, the goal of this paper is to construct homotopy link models for stratified cell complexes. In this section, we provide a general construction for strata-neighborhood systems, first for the simplicial case (\cref{subsec:nbhd_for_simplicial}) and later the case of stratified cell complexes (\cref{subsec:nbhd_for_cell}). Importantly, we prove that this construction can be made compatible with stratum-preserving maps, which ultimately leads to a proof of \cref{prop:models_model_globally}. 
\subsection{Standard strata-neighborhood systems for stratified simplicial sets}\label{subsec:nbhd_for_simplicial}
Before we move on to investigating strata-neighborhood systems on stratified cell complexes, let us first consider the simpler case of the realization of a stratified simplicial set. To do so, we are going to make heavy use of the following coordinates.
\begin{construction}\label{con:strata_coordinates}
    Let $\J$ be a flag in $P$.
    For $p \in P$, we denote $\J_p$ the unique maximal subflag of $\J$ that degenerates from the regular flag $[p]$ (see \cref{holCC:not:flag_operations} for an overview). 
    The inclusion $\J_{p} \subset \J$ induces a natural projection 
    \[
    \mathbb R^\J \to \mathbb R^{\J_p},
    \]
    where $\mathbb R^{\J}$ denotes the vector space spanned by the elements of $\J$ (counted with repetition). Next, consider the canonical embedding $\sReal \Delta^\J \hookrightarrow \mathbb R^{\J}$, which embeds $\sReal{\Delta^\J}$ as the affine hull of the standard basis vectors. For $x \in \sReal{\Delta^\J}$, we write $x_p$ for the image of $x$ under the composition
    \[
    \sReal{\Delta^\J} \hookrightarrow \mathbb R^\J \to \mathbb R^{\J_p}.
    \]
    Now, if $\I = [p_0 < \cdots < p_n ] $ is the regular flag such that all elements of $\J$ are contained in $\I$, then $\J$ is equivalently given by the concatenation
    \[
    \J_{p_0} \cup \cdots \cup \J_{p_n}.
    \]
    (Note that we may indeed allow $p_i$ that are not in $\J$, as then $\J_{p_i}$ is the empty flag.) In particular, we have a canonical isomorphism
    \[
    \mathbb R^{\J} \cong \Pi_{i \in [n]} \mathbb R^{\J_{p_i}},
    \]
    which allows us to make sense of the expression
    \[
    x = (x_{p_0}, \cdots, x_{p_n}).
    \]
    Further, recall from \cref{holCC:not:flag_operations} that $\J_{\leq p}$ denotes the maximal subflag given by such entries of $\J$, of value lesser or equal to $p$. Analogously, we denote by $\J_{\not \leq p}$ the maximal subflag given by such entries of $\J$, of value not lesser equal to $p$, and so on. Just as in the case of a singleton, we denote by $x_{\leq p}$ the image of $x \in \sReal{\Delta^\J}$ under
    \[
    \sReal{\Delta^\J} \hookrightarrow \mathbb R^\J \to \mathbb R^{\J_{\leq p}},
    \]
    and similarly proceed with $x_{\not \leq p}$, $x_{< p}$ and so on.
    \end{construction}
    Next, we need a normalized version of the coordinates defined in \cref{con:strata_coordinates}, so-called join coordinates. The notation here will be somewhat sloppy, in the sense that we are often going to write expressions like $y_p$, when we formally should be writing $y_p(x)$. 
    \begin{construction}\label{con:join_coordinates}
    We again use the setup of \cref{con:strata_coordinates}.
    At the level of underlying simplicial sets, we may then identify $\Delta^\J$ as the join
    \[
    \Delta^\J = \Delta^{\J_{p_0}} * \cdots * \Delta^{\J_{p_n}}. 
    \]
    Doing so, we can equip $\sReal{\Delta^{\J}} \subset \mathbb R^{\J}$ with $n$-fold join coordinates. Explicitly, the topological join 
     \[
    \real{\Delta^{\J_{p_0}}} * \cdots * \real{\Delta^{\J_{p_n}}}
    \]
    can be described as follows. 
    For $I \subset [n]$, denote by $\real{\Delta^{I}} \subset \real{\Delta^n}$ the face corresponding to $I$. Furthermore, for $I' \subset I$, denote by $\pi_{I,I'}$ the obvious projection $\Pi_{i \in I} \real{\Delta^{\J_{p_i}}} \to \Pi_{i \in I'} \real{\Delta^{\J_{p_i}}}$. Then, the $n$-fold join can be described as the quotient space of 
    \[
    \bigsqcup_{I \subset [n]} (\Pi_{i \in I} \real{\Delta^{\J_{p_i}}}) \times \real{\Delta^{I}}
    \]
    where we mod out by the equivalence relation generated by 
    \[
    (y,s) \sim (\pi_{I,I'}(y), s)   
    \]
    whenever $y \in \Pi_{i \in I} \real{\Delta^{\J_{p_i}}}$ and $s \in \real{\Delta^{I'}} \subset \real{\Delta^{I}}$. The homeomorphism to $\real{\Delta^{\J}}$ is then given by mapping 
    \[
    \real{\Delta^{\J_{p_0}}} * \cdots * \real{\Delta^{\J_{p_n}}}  \ni [(y,s)] \mapsto \sum_{i \in  [n]} s_i y_{i} \in \real{\Delta^\J}.
    \]
    Note that if $y_i$ is not defined, then $s_i = 0$ and this expression makes sense. Conversely, an inverse is obtained by 
    \[
    \real{\Delta^{\J}}\ni x \mapsto \big ((\frac{x_{p_0}}{\real{x_{p_0}}}, \cdots, \frac{x_{p_n}}{\real{x_{p_n}}}), (\real{x_{p_0}}, \cdots, \real{x_{p_n}} ) \big ) \in \real{\Delta^{\J_{p_0}}} * \cdots * \real{\Delta^{\J_{p_n}}}.
    \]
    Again, note that this expression makes sense, even if $\real{x_p} = 0$. This leads us to the following change to join coordinates, which we are going to use frequently in this section. For $x \in \sReal{\Delta^\J}$ we denote
    \begin{align*}
        y_p &:= \frac{x_p}{ \real{x_p}}; \\
        s_p &:= \real{x_p}.
    \end{align*}

    \end{construction}
    Finally, we will need another set of coordinates using the decomposition $x = (x_{\leq p} , x_{\not \leq p})$, for $x \in \sReal{\Delta^\J}$ and $p \in P$.
\begin{construction}\label{con:new_coordinates}
     Again, in the setup of \cref{con:strata_coordinates}, we may - just as in \cref{con:join_coordinates} -  identify
    \begin{align*}
        \Delta^{\I} &= \Delta^{\I_{< p}} * \Delta^{\I_{\not < p}}, \\
        \Delta^{\I} &= \Delta^{\I_{\leq p}} * \Delta^{\I_{\not \leq p}}, \\
        \cdots &= \cdots
\end{align*} etc.
    This leads to a change of coordinates
    \begin{align*}
         s_{< p } := \real{x_{< p }} ; \quad 
        s_{ \not <  p} := \real{x_{\not < p}}; \quad 
        \cdots = \cdots \\
        y_{< p } := \frac{x_{< p }}{\real{x_{< p }}}; \quad 
        y_{\not < p } := \frac{x_{\not < p }}{\real{x_{\not < p}}}; \quad
        \cdots = \cdots 
    \end{align*}
\end{construction}
\begin{remark}\label{remark:role_of_coordinates}
    Let us remark on some immediate facts on the $s$-coordinates of \cref{con:join_coordinates} and \cref{con:new_coordinates}.
    First, note that they are indeed invariant under stratified face and degeneracy maps and therefore extend to any realization of a stratified simplicial set $\str \in \sSetPN$. Then, the $s$-coordinates interact with the stratification of $\sReal{\str}$ as follows. In the following, as the notation $s_{\sReal{\str}}(x)$, for $x \in \sReal{X}$, is rather cumbersome, we will use the shortened notation $s(x):=s_{\sReal{\str}}(x) \in \pos$ to refer to the stratum of $x$.
    Let $x \in \sReal{\str}$. Then, we have equivalences
    \begin{itemize}
        \item $s(x) < p \iff s_{< p} =1 \iff s_{\not <p} =0 $.
        \item $s(x) \leq p \iff s_{\leq p} =1 \iff s_{\not \leq p} =0 $.
        \item $\cdots$.
    \end{itemize} 
    It immediately follows that
    \begin{itemize}
        \item $s(x) = p \iff s_{\leq p} =1 \land s_p >0 \iff  s_{\not \leq p} = 0  \land s_p > 0 \iff \cdots $.  
    \end{itemize}
\end{remark}
We are now going to use these coordinates to define strata-neighborhoods for realizations of stratified simplicial sets.
\begin{construction}\label{con:standard_neighborhoods}
    Suppose $p \in P$ and $\varphi_p: [0,1] \to [0,1]$ is a continuous function such that $\varphi_p(s) >0$, whenever $s >0$. Let $\str \in \sSetPN$. We may then consider the following subspaces of $
    \sReal{\str}$. We set 
    \[
    \phiStanHood{\str} := \{ x \in \sReal{\str} \mid s_{\not \leq p} \leq \varphi_p(\s) s_p  \} 
    \]
    and call it the $\varphi_p$-\define{standard neighborhood associated to $\str$}. 
    Note that, since all $s$-coordinates  are invariant under realizations of maps of stratified simplicial sets, this construction extends to a functor
    \begin{align*}
        \phiStanHoods{-} \colon \sSetPN \to \TopPN.
    \end{align*}
\end{construction}
\begin{example}\label{example:standard_hood_of_strat_simplex}Consider the stratified simplex $\sReal{\Delta^{[ {\color{red}0} < {\color{green}1} < {\color{NewBlue}2}]}}$, pictured below with the strata colored in red, green and blue, in ascending order. If we set $\varphi_p = 1$ for $p= 0,1$ we obtain the following standard neighborhoods shaded in red and green respectively for $p=0,1$. 
   \begin{center}
        \begin{tikzpicture}[scale = 3]
        \newcommand{\bartwo}[2]{($(#1)!0.5!(#2)$)}
       \coordinate (l) at (0,0);
       \coordinate (r) at (1,0);
       \coordinate (u) at (0.5,{0.5*sqrt(3)});
       \draw[fill, color = NewBlue, opacity = 0.5] (l) -- (r) -- (u) -- (l);
         \draw[color = green] (l) to (r);
       \draw[fill, color = red, opacity = 0.3] (l) -- \bartwo{l}{r} -- \bartwo{l}{u} -- (l);
               \draw[color = red, fill = red] (l) circle (0.3pt);
        \draw[fill, color = green, opacity = 0.3] (l) -- \bartwo{r}{u} -- (r) -- (l);
        \draw[color = green] (l) to (r);
   \end{tikzpicture}
   \end{center}
   For a smaller choice of $\varphi_1$, here with $\varphi_1(0) = 0$, we obtain a $\varphi_1$-standard neighborhood whose boundary is tangential to the $1$-stratum at the $0$-stratum:
    \begin{center}
        \begin{tikzpicture}[scale = 3]
       \def\bartwo(#1)(#2){($(#1)!0.5!(#2)$)}
       \coordinate (l) at (0,0);
       \coordinate (r) at (1,0);
       \coordinate (u) at (0.5,{0.5*sqrt(3)});
       \draw[fill, color = NewBlue, opacity = 0.5] (l) -- (r) -- (u) -- (l);
         \draw[color = green] (l) to (r);
           \draw[color = red, fill = red] (l) circle (0.3pt);
        \draw[fill, color = green, opacity = 0.3] (l) to[out = 0, in=-130] ($(u)!0.7!(r)$) -- (r) -- (l);
        \draw[color = green] (l) to (r);
        \end{tikzpicture}
   \end{center}
\end{example}
\begin{proposition}\label{prop:stanhood_is_hood}
    Let $\varphi_p$ be as in \cref{con:standard_neighborhoods}.
    For any stratified simplicial set $\str \in \sSetPN$ the space $\phiStanHood{\str} \subset \sReal{\str}$ defines a $p$-stratum neighborhood.
\end{proposition}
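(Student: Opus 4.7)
The plan is to reduce the claim to the case of a single stratified simplex and then conclude with a routine continuity argument. By \cref{def:DeltaP_neighborhood}, I must show that for every flag $\J$ of $P$ and every stratum preserving map $\sigma \colon \sReal{\Delta^\J} \to \sReal{\str}$, the preimage $\sigma^{-1}(\phiStanHood{\str})$ is a topological neighborhood of $\sigma^{-1}((\sReal{\str})_p)$ in $\sReal{\Delta^\J}$.

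The first (and essential) observation, already indicated in \cref{remark:role_of_coordinates}, is that the coordinate functions $s_p$ and $s_{\not\leq p}$ appearing in the defining inequality of $\phiStanHood{-}$ are continuous functions on every realization $\sReal{\str[Z]}$ of a stratified simplicial set, and they are preserved under realizations of stratum preserving simplicial maps. Consequently, one has the set equality
\[
\sigma^{-1}\bigl(\phiStanHood{\str}\bigr) = \bigl\{x \in \sReal{\Delta^\J} \,\big|\, s_{\not\leq p}(x) \leq \varphi_p(s_p(x))\, s_p(x)\bigr\} = \phiStanHood{\Delta^\J},
\]
and similarly $\sigma^{-1}((\sReal{\str})_p) = (\sReal{\Delta^\J})_p$ since stratum preserving maps respect strata set-theoretically. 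The problem thus reduces to showing that $\phiStanHood{\Delta^\J} \subset \sReal{\Delta^\J}$ is a topological neighborhood of $(\sReal{\Delta^\J})_p$, for every flag $\J$ of $P$.

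For the reduced statement, if $p \notin \J$ then the $p$-stratum is empty and the claim is vacuous. Otherwise, fix $x_0 \in (\sReal{\Delta^\J})_p$. By \cref{remark:role_of_coordinates} this means $s_p(x_0) > 0$ and $s_{\not\leq p}(x_0) = 0$. The assumption that $\varphi_p(t) > 0$ whenever $t > 0$ then gives $\varphi_p(s_p(x_0))\, s_p(x_0) > 0$, so that the continuous function
\[
F \colon \sReal{\Delta^\J} \to \mathbb R, \qquad F(x) := \varphi_p(s_p(x))\, s_p(x) - s_{\not\leq p}(x),
\]
satisfies $F(x_0) > 0$. Hence there is an open neighborhood $V$ of $x_0$ in $\sReal{\Delta^\J}$ on which $F$ remains strictly positive, and such a $V$ is contained in $\phiStanHood{\Delta^\J}$. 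This exhibits $\phiStanHood{\Delta^\J}$ as a neighborhood of every point of $(\sReal{\Delta^\J})_p$, as required.

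The one point that merits care is the functoriality claim underpinning the reduction: that the $s$-coordinates defined simplex-wise in \cref{con:strata_coordinates} and \cref{con:join_coordinates} genuinely descend to continuous functions on $\sReal{\str}$ that are invariant under realizations of stratum preserving simplicial maps. Once this is granted, as noted in \cref{remark:role_of_coordinates}, everything else is a direct continuity argument on an affinely embedded simplex.
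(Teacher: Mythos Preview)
Your reduction step contains a gap. The equality
\[
\sigma^{-1}\bigl(\phiStanHood{\str}\bigr) = \phiStanHood{\Delta^\J}
\]
holds only when $\sigma$ is the realization of a stratum preserving \emph{simplicial} map, since the $s$-coordinates are invariant only under such maps (as you yourself acknowledge in your final paragraph). The definition of a $\Delta_P$-neighborhood, however, quantifies over \emph{all} stratum preserving continuous maps $\sigma \colon \sReal{\Delta^\J} \to \sReal{\str}$, and for a general such $\sigma$ the $s$-coordinates of $x$ in $\sReal{\Delta^\J}$ bear no relation to those of $\sigma(x)$ in $\sReal{\str}$. Any non-affine stratum preserving self-map of $\sReal{\Delta^{[p_0<p_1]}}$ already furnishes a counterexample to the displayed equality.

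The repair is immediate and is exactly what the paper does: run your continuity argument directly on $\sReal{\str}$ rather than on a single simplex. Since the $s$-coordinates descend to continuous functions on all of $\sReal{\str}$, the set $O = \{x \in \sReal{\str} \mid F(x) > 0\}$ (with $F$ defined as in your argument) is open in $\sReal{\str}$, contains $(\sReal{\str})_p$ by the computation you give, and is contained in $\phiStanHood{\str}$. Hence $\phiStanHood{\str}$ is an honest topological neighborhood of the $p$-stratum, which is strictly stronger than being a $\Delta_P$-neighborhood, and no reduction to simplices is needed. (A minor aside: in the paper's defining inequality the argument of $\varphi_p$ is $s_{\not < p}$ rather than $s_p$; this does not affect the argument, since both are positive on the $p$-stratum.)
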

\begin{proof}
    Consider the open subset of $O \subset \phiStanHood{\str} \subset (\sReal{\str})_{\not < p}$ defined by the condition 
    \[
    s_{\not \leq p} < \varphi_p(\s) s_p.
    \]
     By \cref{remark:role_of_coordinates}, for any $x \in (\sReal{\str})_{p}$ the value of $s_{\not \leq p}$ is $0$, $s_p >0$ and $\s \geq s_p >0$. As $\varphi_p(s) >0$ for $s>0$, it follows that 
    \[
     s_{\not \leq p} = 0 < \varphi_p(\s) s_p.
    \]
    Consequently, $(\sReal{\str})_p \subset O \subset \phiStanHood{\str}$ and $\phiStanHood{\str}$ is even a neighborhood of the $p$-stratum in the strong sense.
\end{proof}
In particular, standard neighborhoods allow us to factor stratified realization through the category of strata-neighborhood systems:
\begin{corollary}\label{cor:factorization_through_system}
    Let $\varphi = (\varphi_p)_{p \in P}$ be a family of functions as in \cref{con:standard_neighborhoods}. Then, $\str \mapsto \phiStanHoodSys = (\sReal{\str}, (\phiStanHood{\str})_{p \in P})$ defines a factorization \begin{diagram}
        \sSetPN \arrow[rd, "\sReal{-}"'] \arrow[r,"{\phiStanHoodSys[-]}" ]& \NCat \arrow[d] \\
        & \TopPN \spaceperiod
    \end{diagram}
\end{corollary}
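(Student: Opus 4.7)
The plan is to verify two things: first, that for every $\str \in \sSetPN$, the pair $(\sReal{\str}, (\phiStanHood{\str})_{p \in P})$ is an object of $\NCat$; second, that every stratum preserving simplicial map $f \colon \str \to \str'$ lifts to a morphism in $\NCat$. The commutativity of the factorization diagram is then automatic, since by construction the underlying stratified space of the system is precisely $\sReal{\str}$, and the forgetful functor $\NCat \to \TopPN$ remembers only this underlying space.

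For the first point, I must check that each $\phiStanHood{\str}$ is a $\Delta_P$-neighborhood of $(\sReal{\str})_p$ and contains $(\sReal{\str})_{\leq p}$. \cref{prop:stanhood_is_hood} already establishes that $\phiStanHood{\str}$ contains the open subset cut out by the strict inequality $s_{\not \leq p} < \varphi_p(\s)\, s_p$, and that this open set in turn contains $(\sReal{\str})_p$; hence $\phiStanHood{\str}$ is a genuine topological neighborhood of the stratum. This is in particular a $\Delta_P$-neighborhood, since for any stratum preserving $\sigma \colon \sReal{\Delta^\J} \to \sReal{\str}$ the preimage $\sigma^{-1}(\phiStanHood{\str})$ is a topological preimage of a neighborhood, hence a neighborhood of $\sigma^{-1}((\sReal{\str})_p)$. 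The containment $(\sReal{\str})_{\leq p} \subset \phiStanHood{\str}$ is immediate: for $x$ with $s(x) \leq p$ one has $s_{\not \leq p}(x) = 0$ by \cref{remark:role_of_coordinates}, so the defining inequality holds trivially.

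For functoriality, the essential observation is that each of the $s$-coordinates $s_p$, $s_{\leq p}$, $s_{\not \leq p}$, and $\s$ is natural under realizations of stratum preserving simplicial maps. Locally on a simplex, $\sReal{f}$ is an affine map sending every vertex to a vertex of the same stratum; consequently the total barycentric weight carried by vertices in any fixed stratum, or union of strata, is preserved. In particular $s_p \circ \sReal{f} = s_p$, $s_{\not \leq p} \circ \sReal{f} = s_{\not \leq p}$, and $\s \circ \sReal{f} = \s$. Hence, for any $x \in \phiStanHood{\str}$, the inequality $s_{\not \leq p} \leq \varphi_p(\s)\, s_p$ holds also at $\sReal{f}(x)$, and therefore $\sReal{f}(x) \in \phiStanHood{\str'}$, as required.

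No single step presents a substantial obstacle; the whole statement is essentially a bookkeeping consequence of the definitions together with \cref{prop:stanhood_is_hood}. The only point worth a moment of care is ensuring that the naturality of the $s$-coordinates truly extends beyond face and degeneracy maps to arbitrary stratum preserving simplicial maps between arbitrary stratified simplicial sets. This is handled by reducing, via the colimit presentation of $\sReal{\str}$ as a colimit of stratified simplices, to the case of a stratified simplicial map $\Delta^\J \to \Delta^{\J'}$, on which the affine and stratum preserving description above applies directly.
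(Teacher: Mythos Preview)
Your proof is correct and follows essentially the same approach as the paper. The paper's own proof is even shorter because functoriality of $\phiStanHoods{-}$ was already recorded in \cref{con:standard_neighborhoods} (via the invariance of the $s$-coordinates under stratified simplicial maps), so the corollary only needs to check the inclusion $(\sReal{\str})_{\leq p}\subset \phiStanHood{\str}$ via \cref{remark:role_of_coordinates}; your re-verification of functoriality is redundant but harmless.
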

\begin{proof}
    By \cref{prop:stanhood_is_hood}, it suffices to verify that $(\sReal{\str})_{\leq p} \subset \phiStanHood{\str}$, for $\str \in \sSetPN$ and $p \in P$. This is immediate from \cref{remark:role_of_coordinates}.
\end{proof}
\begin{example}
    The most important case to consider is the case where all functions $\varphi_p$ are given by the constant function with value $1$. In this case, the condition for a point $x$ to lie in $\phiStanHood{\str}$ is simply that 
    \[
    s_{\not \leq p} \leq s_p .
    \]
    In this case, we denote the resulting neighborhood system by $\stanHoodSys$ and call it the \define{standard neighborhood system}.
\end{example}
Let us also remark on some of the more degenerate examples of standard neighborhoods: 
\begin{lemma}\label{prop:desc_neighb_notin}
Let $\varphi_p$ be as in \cref{con:standard_neighborhoods}.
    If $p \notin \J$, then $\phiStanHoods{\Delta^{\J}} = \sReal{\Delta^{\J_{<p}}}$.
\end{lemma}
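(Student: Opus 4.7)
The plan is to unpack the defining inequality of $\phiStanHoods{\Delta^\J}$ from \cref{con:standard_neighborhoods} and observe that the hypothesis $p \notin \J$ trivializes one side of it. Specifically, the key input is the definition of $\J_p$ as the maximal subflag of $\J$ degenerating from $[p]$: since $p$ does not appear in $\J$, the subflag $\J_p$ is empty, and so the coordinate $s_p$ from \cref{con:join_coordinates} is identically zero on $\sReal{\Delta^\J}$.

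With $s_p \equiv 0$, the condition defining $\phiStanHood{\Delta^\J}$, namely $s_{\not \leq p} \leq \varphi_p(\s) s_p$, becomes $s_{\not \leq p} \leq 0$. Because $s_{\not \leq p}$ is defined as a norm (the total mass of $x_{\not \leq p}$), it is nonnegative, so this forces $s_{\not \leq p} = 0$ on all of $\phiStanHoods{\Delta^\J}$.

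By the equivalences collected in \cref{remark:role_of_coordinates}, $s_{\not \leq p}(x) = 0$ is equivalent to $s(x) \leq p$. Since, on the other hand, the stratification map of $\sReal{\Delta^\J}$ takes values only in the entries of $\J$, and $p \notin \J$, the condition $s(x) \leq p$ is equivalent to $s(x) < p$. The locus of points with $s(x) < p$ in $\sReal{\Delta^\J}$ is, by the formula for the stratification in the Recollection above, precisely the face $\sReal{\Delta^{\J_{<p}}}$ spanned by the vertices of $\J$ of value strictly less than $p$, which yields the claimed identification of subsets (and hence of stratified subspaces, since both carry the subspace stratification).

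No serious obstacle is expected: the proof is essentially a direct unwinding of definitions once $\J_p = \emptyset$ has been noted. The only small care required is to invoke \cref{remark:role_of_coordinates} to translate between the vanishing of the $s$-coordinate $s_{\not \leq p}$ and the condition on the stratification value $s(x)$, and to remember that because $p$ is not an element of $\J$, $\leq p$ and $< p$ cut out the same subsimplex.
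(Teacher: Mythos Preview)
Your proposal is correct and follows essentially the same argument as the paper's proof: both observe that $p \notin \J$ forces $s_p \equiv 0$, reduce the defining inequality to $s_{\not \leq p} = 0$, and identify this locus with $(\sReal{\Delta^\J})_{\leq p} = \sReal{\Delta^{\J_{<p}}}$. You are slightly more explicit about invoking \cref{remark:role_of_coordinates} and about why $\leq p$ and $< p$ coincide on $\J$, but the substance is identical.
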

\begin{proof}
    Indeed, when $p \notin \J$, then for any $x \in \sReal{\Delta^\J}$ it holds that $s_p = 0$. Hence, the defining condition for $\phiStanHood{\Delta^{\J}}$ is fulfilled if and only if $s_{\not \leq p} = 0$, that is, when $x \in (\sReal{\Delta^\J})_{\leq p} = \sReal{\Delta^{\J_{<p}}}$.
\end{proof}
Next, we give a purely combinatorial description of the standard neighborhood in the special case where $\str$ is a stratified simplicial complex, by making use of barycentric subdivisions. By a \define{stratified simplicial complex}, we mean a stratified simplicial set, such that its underlying simplicial set has the property that every non-degenerate simplex is uniquely determined by its set of vertices. In particular, this means that every face of every non-degenerate simplex is non-degenerate.
\begin{construction}\label{con:simplicial_model_of_standard_prec}
    For $\str \in \sSetPN$, consider the first barycentric subdivision of $\ustr[\str]$, equipped with the stratification induced by the last vertex map $\sd \ustr \to \ustr$, and denote it $\sd \str$ (see \cite[Def. 3.7]{douteauwaas2021}).
    If $\str = \Delta^{\J}$, for some flag $\J$ of $P$, then we denote by $\simStanHood{\Delta^\J} \subset \sd \Delta^\J$ the subcomplex, spanned by those vertices that correspond to the subflags $\J'$ of $\J$ such that \[
     p \in \J' \quad \lor \quad \forall q \in \J': q < p.   
    \]
    We denote by $\simStanHoods{\Delta^\J}$ the stratified simplicial set obtained by equipping $\simStanHood{\Delta^\J}$, with the stratification inherited from $\sd \Delta^\J$.
    This construction is functorial with respect to maps of stratified simplices and hence extends to a functor 
    \[
    \simStanHoods{-} \colon \sSetPN \to \sSetPN,
    \]
    via left Kan extension, together with a natural transformation $\simStanHoods{-} \hookrightarrow \sd$, identifying $\simStanHoods{\str}$ with a stratified subsimplicial set of $\sd \str$, for $\str \in \sSetPN$.
\end{construction}
\begin{construction}\label{con:simplicial_model_of_standard}
    If $\str \in \sSetPN$ is a stratified simplicial complex (i.e., every non-degenerate simplex in $\str$ is uniquely determined by its set of vertices), then we may identify $\stanHoods{\str}$ with the realization of $\simStanHoods{\str}$\footnote{This also works in the general simplicial set case. However, the constructions become significantly more involved, and we have no need for this case here.}.
    A stratum-preserving homeomorphism $\sReal{\simStanHoods{\str}} \to \stanHoods{\str}$ is constructed as follows.
    On each stratified simplex $\sReal{\Delta^\J}$, where $\J$ degenerates from a regular flag $\standardFlag$, consider the weighted barycenter $b_{\J}$, given in join coordinates by 
    \[ 
    b_{\J} := [ (b_{p_0} , \cdots, b_{p_n}), (\frac{1}{2}, \frac{1}{4}, \cdots, \frac{1}{2^n}, \frac{1}{2^n} )],
    \] 
    where $b_{p_i}$ is the barycenter of $\sReal{\Delta^{\J_{p_i}}}$. 
    We denote \[\Psi: \sReal{\sd \str} \to \sReal{\str} \] the stratum-preserving homeomorphism which is affinely extended from the map on vertices
     \[
     v \mapsto \sReal{\sigma_v}(b_{\J}),
     \]
    where $v$ corresponds to a non-degenerate simplex $\sigma_{v} \colon \Delta^\J \to \str$ with flag $\J$. Then, the content of \cref{prop:simplicial_standard_model} is that $\Psi$ restricts to a homeomorphism from $\sReal{\simStanHoods{\str}}$ to $\stanHoods{\str}$. 
\end{construction}
\begin{example}
In the context of \cref{ex:standardhood_of_regular_simplex}, the following figure shows the weighted barycentric subdivision given by \cref{con:simplicial_model_of_standard}. As indicated in the following picture, the standard neighborhoods of the $0$ and $1$-stratum are precisely spanned by such vertices in the subdivision fulfilling the condition of \cref{con:simplicial_model_of_standard_prec}.
  \begin{center}
        \begin{tikzpicture}[scale = 3]
        \newcommand{\bartwo}[2]{($(#1)!0.5!(#2)$)}
       \coordinate (l) at (0,0);
       \coordinate (r) at (1,0);
       \coordinate (u) at (0.5,{0.5*sqrt(3)});
        \coordinate (lr) at ($(l)!0.5!(r)$); 
    \coordinate (lu) at ($(l)!0.5!(u)$); 
    \coordinate (ru) at($(r)!0.5!(u)$); 
    \coordinate (lru) at ($(l)!0.5!(ru)$); 
       \draw[fill, color = NewBlue, opacity = 0.5] (l) -- (r) -- (u) -- (l);
         \draw[color = green] (l) to (r);
         \draw[color = NewBlue] (l) -- (lru);
        \draw[color = NewBlue] (r) -- (lru);
         \draw[color = NewBlue] (u) -- (lru);
          \draw[color = NewBlue] (lr) -- (lru);
           \draw[color = NewBlue] (lu) -- (lru);
             \draw[color = NewBlue] (ru) -- (lru);
       \draw[fill, color = red, opacity = 0.3] (l) -- \bartwo{l}{r} -- \bartwo{l}{u} -- (l);
               \draw[color = red, fill = red] (l) circle (0.3pt);
        \draw[fill, color = green, opacity = 0.3] (l) -- \bartwo{r}{u} -- (r) -- (l);
        \draw[color = green] (l) to (r);
   \end{tikzpicture}
   \end{center}
\end{example}
\begin{example}\label{ex:standardhood_of_regular_simplex}
    For a non-degenerate flag $\I = \standardFlag$ and $\str \in \sSetPN$, we denote by $\simStanHoods[\I]{\str}$ the intersection $\bigcap_{p \in \I} \simStanHoods{\str}$.
    If $\str= \Delta^\I$, then $\simStanHoods[\I]{\str}$ is given by the image of the unique embedding $\Delta^\I \hookrightarrow \sd \Delta^\I$.
\end{example}
The following proposition then shows that the construction in \cref{con:simplicial_model_of_standard} does indeed provide a combinatorial model for standard neighborhoods.
\begin{proposition}\label{prop:simplicial_standard_model}
    Let $p \in \pos$ and $\str \in \sSetPN$ be a stratified simplicial complex. Then $\Psi \colon  \sReal{\sd \str} \to \sReal{\str} $ - as defined in \cref{con:simplicial_model_of_standard} - restricts to a stratum-preserving homeomorphism $\sReal{\simStanHoods{\str}} \xrightarrow{\sim} \stanHoods{\str}$.
\end{proposition}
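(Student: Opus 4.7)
The plan is to reduce to a single stratified simplex $\str = \Delta^\J$, for which both $\simStanHoods{\Delta^\J}$, $\stanHoods{\Delta^\J}$, and the map $\Psi$ are defined simplex-by-simplex; functoriality in $\str$ then assembles the proposition from this local case. The key observation is that the defining condition $s_{\not\leq p}(x) \leq s_p(x)$ of $\stanHoods{\Delta^\J}$ is linear in the barycentric coordinates of $\sReal{\Delta^\J}$ and therefore cuts out a closed convex half-space, while $\Psi$ is affine on each simplex of the subdivision $\sd\Delta^\J$.

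The first step is a direct coordinate computation at each vertex of $\sd \Delta^\J$. For a non-degenerate subflag $\J'\subset \J$ degenerating from a regular flag $[q_0 < \cdots < q_m]$, the weighted barycenter $\Psi(b_{\J'})$ has join weights $(1/2,1/4,\ldots,1/2^m,1/2^m)$, and one reads off four cases: if all $q_i < p$ then $s_p = s_{\not\leq p} = 0$; if $p = q_k$ with $k < m$ then $s_p = s_{\not\leq p} = 2^{-(k+1)}$; if $p = q_m$ then $s_p = 2^{-m}$ and $s_{\not\leq p} = 0$; and if $p \notin \J'$ but some $q_i \not< p$, then $s_p = 0 < s_{\not\leq p}$. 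The first three cases correspond to $\J'$ being \emph{good} in the sense of \cref{con:simplicial_model_of_standard_prec}, and the fourth to being \emph{bad}. Convexity of the half-space then immediately gives the inclusion $\Psi(\sReal{\simStanHoods{\Delta^\J}}) \subseteq \stanHoods{\Delta^\J}$.

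The main obstacle is the reverse inclusion. I would fix a simplex $\tau$ of $\sd\Delta^\J$ spanned by a chain $\J'_0 \subsetneq \cdots \subsetneq \J'_k$ and introduce the indices $i^\circ := \min\{i : \J'_i \text{ contains an element } \not\leq p\}$ and $i^{**} := \min\{i : p \in \J'_i\}$. A short combinatorial analysis shows that the chain decomposes into an initial good segment (with $\J'_i \subset P_{<p}$), a middle bad segment (for $i^\circ \leq i < i^{**}$), and a final good segment (for $i \geq i^{**}$). The subtle point is that whenever the bad segment is non-empty, every $\J'_i$ in the final segment still inherits a strictly $>p$ element from $\J'_{i^{**}-1}$, so $p$ cannot be maximal in $\J'_i$; consequently case (ii) of the coordinate computation applies at $b_{\J'_i}$. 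Thus every good vertex contributes $0$ to the sum
\[
s_{\not\leq p}(\Psi(\tilde x)) - s_p(\Psi(\tilde x)) = \sum_{i} t_i\bigl[s_{\not\leq p}(\Psi(b_{\J'_i})) - s_p(\Psi(b_{\J'_i}))\bigr],
\]
while each bad index contributes a strictly positive quantity whenever its weight $t_i$ is nonzero.

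It follows that the interior of any such $\tau$ with non-empty bad segment is mapped by $\Psi$ strictly outside $\stanHoods{\Delta^\J}$, so $\Psi^{-1}(\stanHoods{\Delta^\J})\cap \tau$ is exactly the face spanned by the good vertices of $\tau$. Taking the union over all $\tau$ identifies $\Psi^{-1}(\stanHoods{\Delta^\J})$ with $\sReal{\simStanHoods{\Delta^\J}}$. Since $\Psi$ is already a stratum-preserving homeomorphism of the ambient simplex, its restriction is the sought stratum-preserving homeomorphism, and gluing via the cell structure of $\str$ promotes the claim from simplices to arbitrary stratified simplicial complexes.
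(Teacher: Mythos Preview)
Your proof is correct and follows essentially the same approach as the paper: reduce to a single simplex, compute $s_p$ and $s_{\not\leq p}$ at the weighted barycenters in the same four cases, use convexity for the forward inclusion, and for surjectivity exploit that the defining inequality is linear together with the sign of each vertex contribution. Your explicit good--bad--good decomposition of the chain (and the observation that the final good segment inherits an element $>p$, forcing case (ii) rather than case (i)) is precisely what justifies the paper's step of cancelling the good vertices using only the equalities \eqref{equ:proof_con_simplicial_nbhd_1} and \eqref{equ:proof_con_simplicial_nbhd_2}; the paper leaves this point implicit, so your write-up is if anything slightly more complete.
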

\begin{proof}
    The statement is easily reduced to the case where $\str = \Delta^\J$, for $\J$ a flag of $\pos$ that degenerates from a non-degenerate flag $\standardFlag$.
    Let us begin by computing the value of $s_{\not \leq p}(b_{\J})$ and $s_p( b_{\J})$ : If $p = p_n$, then
    \begin{equation}\label{equ:proof_con_simplicial_nbhd_0}
        s_{\not \leq p}(b_{\J}) = 0 < 2^{-n} = s_p( b_{\J}).
    \end{equation}
    If $p = p_k$, for some $k \in [n-1]$, then $b_\J$ fulfills 
    \begin{equation}\label{equ:proof_con_simplicial_nbhd_1}
        s_{\not \leq p}(b_{\J}) = 2^{-(k+1)} = s_p( b_{\J}).
    \end{equation}
    If $p_k < p$, for all $k \in [n]$, then
    \begin{equation}\label{equ:proof_con_simplicial_nbhd_2}
        s_{\not \leq p}( b_{\J} ) = 0 \leq 0 = s_p( b_{\J}).
    \end{equation}
    Finally, if $p \notin \J$ and $k \in [n]$ is minimal with the property that $p_k \not \leq p$, then
     \begin{equation}\label{equ:proof_con_simplicial_nbhd_3}
        s_{\not \leq p}( b_{\J} ) = 2^{-k} > 0 = s_p( b_{\J}).
    \end{equation}
    It follows, from the inequalities \labelcref{equ:proof_con_simplicial_nbhd_0,equ:proof_con_simplicial_nbhd_1,equ:proof_con_simplicial_nbhd_2} that $\Psi$ does indeed restrict to an embedding $\sReal{\simStanHoods{\str}} \to \stanHoods{{\str}}$.
    It remains to show surjectivity of this restriction. So, let $x \in \stanHood{\Delta^\J} \subset \sReal{\Delta^\J}$ be a point in $\sReal{\Delta^\J}$.
    Let $\{ \J_0 \subset \cdots \subset \J_m \}$, be the minimal set of subflags of $\J$ such that $x$ lies in the affine span of $(b_{\J_i})_{i \in m}$, i.e., we have 
    \[
    x = t_0 b_{\J_0} + \cdots + t_m b_{\J_m}
    \]
    with $t_i >0$, for all $i \in [m]$. We need to show that for each $i \in [m]$ either $p \in \J_i$ or $q < p$, for all $q \in \J_i$. In other words, we need to show that the set 
    \[
    S = \{ i \in [m] \mid p \notin \J_i \land \big(\exists q \in \J_i \colon q \not\leq p \big )\}
    \]
    is empty.
    Since $x \in \stanHood{\Delta^\J}$, we have 
    \[
    s_{\not \leq p} (x) \leq s_p(x)
    \]
    and thus 
    \[
    t_0 s_{\not \leq p}(b_{\J_0}) + \cdots + t_m s_{\not \leq p}( b_{\J_m}) =  s_{\not \leq p} (x) \leq s_p(x) =  t_0 s_{p}(b_{\J_0}) + \cdots + t_m s_{p}( b_{\J_m})
    \]
    Using \cref{equ:proof_con_simplicial_nbhd_1,equ:proof_con_simplicial_nbhd_2} it follows that 
    \begin{equation}\label{equ:proof_con_simplicial_nbhd_4}
        \sum_{i \in S} t_i s_{\not \leq p}(b_{\J_i})  \leq \sum_{i \in S} t_is_{p}(b_{\J_i}).
    \end{equation}
    By \cref{equ:proof_con_simplicial_nbhd_3}, the right-hand side of this equality is $0$ and the left-hand side is a (possibly empty) sum of strictly positive numbers. 
    It follows that $S = \emptyset$, as was to be shown.
\end{proof}
We will need the following technical lemma. Roughly speaking, it states that, for a finite simplicial set $\str$, the strata-neighborhood system $\phiStanHoodSys$ of \cref{con:standard_neighborhoods} are universal.
\begin{lemma}\label{prop:universality_of_phi_stan_hood}
    Let $\str \in \sSetPN$ be a finite stratified simplicial set. Let $\snSys[{\sReal{\str}}]$ be any neighborhood system on $\sReal{\str}$. Then there exists a family of functions $\varphi$ as in \cref{cor:factorization_through_system} such that, for any $p \in P$, we have
    \[
    \phiStanHood{\str} \subset  \snVal[\sReal{\str}](p).
    \]
    In other words, the identity on $\sReal{\str}$ lifts to a map of neighborhood systems $\phiStanHoodSys \to  \snSys[\sReal{\str}]$.
\end{lemma}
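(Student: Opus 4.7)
Since $\str$ is finite, $\sReal{\str}$ is a finite stratified cell complex with finitely many non-degenerate cells $\sigma_i \colon \sReal{\Delta^{\J_i}} \to \sReal{\str}$, $i \in I$. Fix $p \in P$. By \cref{lem:neighborhoods_of_finite_complexes}, each pullback $W_i := \sigma_i^{-1}(\snVal[\sReal{\str}](p))$ is a classical neighborhood of $(\sReal{\Delta^{\J_i}})_p$ in $\sReal{\Delta^{\J_i}}$; by the containment $\utstr_{\leq p} \subset \snVal[\sReal{\str}](p)$ built into the definition of a strata-neighborhood system, it also contains $(\sReal{\Delta^{\J_i}})_{\leq p}$. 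Since the $s$-coordinates are natural under stratum preserving maps and $I$ is finite, it suffices to produce, for each $i$, a continuous function $\varphi_{p,i}\colon [0,1]\to[0,1]$, strictly positive on $(0,1]$, such that the corresponding standard neighborhood of $\sReal{\Delta^{\J_i}}$ is contained in $W_i$. Then $\varphi_p := \min_{i \in I} \varphi_{p,i}$ is again continuous and strictly positive on $(0,1]$, and functoriality of $\phiStanHoods{-}$ yields the required inclusion $\phiStanHood{\str} \subset \snVal[\sReal{\str}](p)$.

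Now fix $i$. If $p \notin \J_i$, then \cref{prop:desc_neighb_notin} gives $\phiStanHood{\Delta^{\J_i}} = \sReal{\Delta^{(\J_i)_{<p}}} \subset (\sReal{\Delta^{\J_i}})_{\leq p} \subset W_i$ for every choice of $\varphi_{p,i}$, and we set $\varphi_{p,i} \equiv 1$. Otherwise, let $C := \sReal{\Delta^{\J_i}} \setminus \mathring W_i$, which is compact. By \cref{remark:role_of_coordinates}, any point $x$ with $s_p(x) > 0$ and $s_{\not \leq p}(x) = 0$ lies in the open $p$-stratum $(\sReal{\Delta^{\J_i}})_p \subset \mathring W_i$, so every $x \in C$ with $s_p(x) > 0$ satisfies $s_{\not \leq p}(x) > 0$. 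Consequently, the function
\[
    \rho \colon C \to (0, +\infty], \qquad \rho(x) := \begin{cases} s_{\not \leq p}(x)/s_p(x) & \text{if } s_p(x) > 0, \\ +\infty & \text{if } s_p(x) = 0, \end{cases}
\]
is lower semicontinuous and strictly positive. Setting $\mu(t) := \inf\{\rho(x) \mid x \in C,\ \s(x) = t\}$ for $t \in (0,1]$ (with $\inf \emptyset := +\infty$), compactness of $C \cap \s^{-1}([t_0,1])$ for every $t_0 > 0$ together with lower semicontinuity of $\rho$ forces $\mu$ to be bounded below by a positive constant on each compact subinterval of $(0,1]$. We may therefore choose a continuous $\varphi_{p,i}\colon [0,1] \to [0,1]$ with $0 < \varphi_{p,i}(t) < \mu(t)$ for all $t \in (0,1]$, for instance by dominating $\mu$ from below by a continuous positive function.

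To verify the desired inclusion: given $x \in \sReal{\Delta^{\J_i}}$ with $s_{\not \leq p}(x) \leq \varphi_{p,i}(\s(x))\, s_p(x)$, either $s_p(x) = 0$, which also forces $s_{\not \leq p}(x) = 0$ and places $x$ in $(\sReal{\Delta^{\J_i}})_{\leq p} \subset W_i$; or $s_p(x) > 0$, in which case $x \notin W_i$ would give $x \in C$ and yield the contradiction $\rho(x) \leq \varphi_{p,i}(\s(x)) < \mu(\s(x)) \leq \rho(x)$. The main obstacle in this argument is the strict positivity of $\rho$ on $C$, which rests essentially on the fact --- supplied by \cref{lem:neighborhoods_of_finite_complexes} for finite complexes --- that $W_i$ is a classical neighborhood of $(\sReal{\Delta^{\J_i}})_p$, so that the complement $C$ is kept a uniform positive distance from that stratum in $s$-coordinate terms. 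Without the finiteness of $\str$, this compactness argument (and the subsequent minimum over $i \in I$) breaks down, which is why the statement is formulated for finite $\str$.
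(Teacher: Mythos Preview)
Your approach is essentially the same as the paper's: reduce to individual simplices via finiteness and take minima, invoke \cref{lem:neighborhoods_of_finite_complexes} to upgrade $W_i$ to a genuine neighborhood, and then exploit compactness of the slabs $\{\s \geq t_0\}$ to control how small $\varphi_p$ must be. The paper packages the compactness step slightly differently---it bounds $s_{\not\leq p}$ directly rather than the ratio $s_{\not\leq p}/s_p$, and assembles $\varphi_p$ from these bounds via a partition of unity---but the content is the same.

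There is, however, a small gap: your claim that $\rho$ is lower semicontinuous on all of $C$ is false in general. The set $C=\sReal{\Delta^{\J_i}}\setminus\mathring W_i$ can contain points of $(\sReal{\Delta^{\J_i}})_{<p}$, since the definition of a strata-neighborhood system only requires $W_i\supset(\sReal{\Delta^{\J_i}})_{\leq p}$, not that $W_i$ be a neighborhood of the lower strata. At such a point one has $s_p=0$ and hence $\rho=+\infty$, yet it can be approached within $C$ by points where $\rho\to 0$. Concretely, take $\J=[p_0<p_1<p_2]$, $p=p_1$, and $W=\{s_{p_2}<s_{p_1}^2\}\cup\{s_{p_2}=0\}$: the vertex $v_{p_0}$ lies in $C$, and the points $x_n\in C$ with $s_{p_1}(x_n)=1/n$, $s_{p_2}(x_n)=1/n^2$ converge to $v_{p_0}$ while $\rho(x_n)=1/n\to 0$.

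The fix is immediate and leaves the rest of your argument intact: what you actually use is lower semicontinuity of $\rho$ on each compact slab $C\cap\s^{-1}([t_0,1])$, and this \emph{is} true. If $x_n\to x$ in such a slab with $\rho(x_n)\leq\alpha$ and $s_p(x)=0$, then continuity forces $s_{\not\leq p}(x)\leq\alpha\cdot 0=0$, whence $\s(x)=s_p(x)+s_{\not\leq p}(x)=0<t_0$, a contradiction. So $\rho$ attains a positive minimum on each slab, $\mu$ is bounded below on every $[t_0,1]$, and your construction of $\varphi_{p,i}$ goes through.
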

\begin{proof}
    Note first that it suffices to solve the problem on each simplex and then define $\varphi_p$ by passing to minima. Hence, without loss of generality we may assume that $\str = \Delta^{\J}$, for some flag $\J$. Fix some $p \in P$ and denote $U = U_{\sReal{\Delta^\J}}(p)$. By \cref{lem:neighborhoods_of_finite_complexes}, we may therefore assume that $U$ is actually a neighborhood of the $p$-stratum.
    Next, note that for any $0 < \alpha \leq  1$ the set \[S_{\alpha}:=\{ x \in (\sReal{\Delta^{\J}})_p \mid \s \geq \alpha \}\] is compact. A neighborhood basis for $S_{\alpha}$ in $\sReal{\Delta^\J}$ is given by the sets \[S_{\alpha,\beta}:= \{ x \in \sReal{\Delta^{\J}} \mid \s \geq \alpha - \beta \land s_{\not \leq p} \leq \beta \},\] where $\beta >0$.  
    Hence, for any $n \in \mathbb N$ there exists some $\beta_n >0$ such that for any $x \in \sReal{\str}$, the implication
    \[
    \s \geq \frac{1}{n}  \land s_{\not \leq p} \leq \beta_n \implies x \in U
    \]
    holds. Without loss of generality, we may assume that the sequence $\beta_n$ is decreasing. Choosing a partition of unity $\sigma_n$ on the family $( (\frac{1}{n},1] )_{n \in \mathbb N}$ covering $(0,1]$, we set
    \[ \varphi^{p}(s) = \sum_{n \in \mathbb N} \sigma_n(s) \beta_n.
\]    
    In this fashion, we obtain a continuous function $\varphi^{p} \colon [0,1] \to [0,1]$, which is positive outside of $0$. Now, let $x \in \sReal{\str}$ be such that $\s \in ( \frac{1}{m}, \frac{1}{m-1}]$, for some $m >1$, and suppose that $s_{\not \leq p} \leq \varphi_p(\s)$. Then, since $\sigma_n(s) = 0$ for $s <\frac{1}{n}$, we obtain
    \begin{align*}
         s_{\not \leq p} \leq \varphi_p(\s) &= \sum_{n \in \mathbb N} \sigma_n(\s) \beta_n \\
                                            &= \sum_{n \geq m} \sigma_n(\s) \beta_n  \\
                                            &\leq \beta_m \spacecomma
    \end{align*}
    where the last inequality follows as $\sum_{n \geq m} \sigma_n(\s) \beta_n $ is a convex combination and $(\beta_n)_{n \in \mathbb N}$ is a decreasing sequence.
Thus, it follows that
\[
\s >0 \land s_{\not \leq p} \leq \varphi_p(\s) \implies x \in U,
\]
and in particular also 
\[
\s >0 \land s_{\not \leq p}  \leq \varphi^{p}(\s)s_{p} \implies x \in U,
\]
for any $x \in \sReal{\Delta^{\J}}$. We deduce that 
\[
(\phiStanHood{\Delta^\J})_{\not < p} \subset U.
\]
Since $U$ contains $(\sReal{\str})_{\leq p}$ by assumption, it follows that
\[
\phiStanHood{\Delta^\J}  \subset U
\]
as was to be shown.
\end{proof}
In the next step, we show that (up to a stratum-preserving homeomorphism) we may really replace $\phiStanHoodSys$ by $\stanHoodSys$, making the latter universal among strata-neighborhood systems in this sense.
 Before we do so, let us introduce another set of coordinates, to simplify notation. 
\begin{construction}\label{con:final_coordinates}
    In the framework of \cref{con:new_coordinates} we may repeat the procedure described there with $\Delta^{\J_{\not <p }}$, and decompose 
    \[
    \Delta^{\J_{\not < p}} = \Delta^{\J_p} \ast \Delta^{\J_{\not \leq p}}.
    \]
    We denote the resulting coordinates by 
    \begin{align*}
    t_{p} := \real{y_{p}} ; \quad
    t_{\not \leq  p} := \real{y_{\not \leq  p}} ; \quad
    z_{p} := \frac{y_{p}}{\real{y_{p}}} ; \quad
    z_{\not \leq p} := \frac{y_{\not \leq p}}{\real{y_{\not \leq p}}}. 
    \end{align*}
    Using the affine relations between the several variables (such as $1= s_{<p} + s_{\not < p}$), we may then express $x$ entirely in terms of $y_{<p}$, $z_{p}$, $z_{\not \leq p}$ and $s_{\not < p}$, $t_{ \not \leq p}$. Explicitly, we have
    \[
    x = (1- \s)\yp + \s (\tp \zp + (1-\tp) \zop  ).
    \]
\end{construction}
\begin{remark}
    By construction, whenever $\tp$ is defined, we have an equality
    \[
    \tp = \frac{s_{\not \leq p}}{\s}.
    \]
    Using this, the condition for $x \in \sReal{\str}$ to lie in $\phiStanHood{\str}$ may equivalently be rewritten as
    \[
    \s = 0 \quad \lor  \quad \tp \leq \frac{\varphi_p(\s)}{1+\varphi_p(\s)}.
    \]
\end{remark}
\begin{proposition}\label{prop:universality_of_standard}
    Let $\varphi$ be a system of functions as in \cref{con:standard_neighborhoods}. Then there exists a natural stratum-preserving automorphism 
    \[
    \Phi \colon \sReal{-} \to \sReal{-}
    \]
    that, for each $\str \in \sSetPN$, lifts to a map $\stanHoodSys[\str] \to \phiStanHoodSys[\str]$.
    In particular, $\Phi$ induces a natural transformation
        \[
     \stanHoodSys[-]  \to \phiStanHoodSys[-]
        \]
    in $\NCat$.
    Furthermore, $\Phi$ can be taken naturally stratum-preserving homotopic to the identity, through a family of natural homeomorphisms which lift to maps $\stanHoodSys[\str] \to \stanHoodSys[\str]$. 
\end{proposition}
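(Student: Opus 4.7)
The plan is to construct $\Phi$ simplex-wise by modifying the strata weights of $x \in \sReal{\Delta^\J}$ (keeping the normalized $y$-coordinates fixed), and then extending to $\sSetPN$ by functoriality of realization.

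Without loss of generality we may assume $\varphi_p \leq 1$ on $[0,1]$ for all $p \in P$: otherwise, replace $\varphi_p$ by $\min(\varphi_p, 1)$, which only shrinks $\phiStanHood{\str}$, and any $\Phi$ produced for the smaller system still lifts to a map $\stanHoodSys[\str] \to \phiStanHoodSys[\str]$ for the original $\varphi$. Set $\alpha_p(s) := \varphi_p(s)/(1+\varphi_p(s))$, so $\alpha_p(s) \in [0,1/2]$, and define $\psi_p \colon [0,1] \times [0,1] \to [0,1]$ piecewise linearly in the second variable by $\psi_p(s,0) = 0$, $\psi_p(s, 1/2) = \alpha_p(s)$, $\psi_p(s,1) = 1$. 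Then $\psi_p(s,\cdot)$ is a homeomorphism of $[0,1]$ fixing the endpoints and satisfies $\psi_p(s,t) \leq t$ for $t \leq 1/2$.

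For a stratified simplex $\sReal{\Delta^\J}$ with regular flag $\I = [p_0 < \cdots < p_n]$ such that $\J$ degenerates from $\I$, identify $x$ with its join data $((y_{p_0}, \ldots, y_{p_n}), (s_{p_0}, \ldots, s_{p_n}))$ as in \cref{con:join_coordinates}. Write $\sigma_i := s_{p_i} + \cdots + s_{p_n}$ and $t_i := \sigma_{i+1}/\sigma_i$ (wherever $\sigma_i > 0$); these are precisely the values $s_{\not < p_i}$ and $s_{\not \leq p_i}/s_{\not < p_i}$ of \cref{con:final_coordinates}. Define modified values iteratively: $\sigma_0' := 1$, $t_i' := \psi_{p_i}(\sigma_i', t_i)$, $\sigma_{i+1}' := t_i' \sigma_i'$. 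Let $\Phi(x)$ be the point of $\sReal{\Delta^\J}$ with the same $y_{p_i}$ and strata weights $s_{p_i}' := \sigma_i' - \sigma_{i+1}'$. Define the homotopy $H_r$ identically, with $\psi_{p_i}$ replaced by $(s,t) \mapsto (1-r)t + r\psi_{p_i}(s,t)$, so $H_0 = \mathrm{id}$ and $H_1 = \Phi$. Since each replacement is a homeomorphism of $[0,1]$ fixing the endpoints, $\Phi$ and every $H_r$ restrict to stratum preserving self-homeomorphisms of $\sReal{\Delta^\J}$. The inclusion $\Phi(\stanHood{\Delta^\J}) \subset \phiStanHood{\Delta^\J}$ reduces, via \cref{prop:desc_neighb_notin} and the formulation of the $\phi$-standard condition in the remark following \cref{con:final_coordinates}, to the implication $t_k \leq 1/2 \Rightarrow t_k' = \psi_{p_k}(\sigma_k', t_k) \leq \alpha_{p_k}(\sigma_k')$, which is immediate from $\psi_{p_k}(\sigma_k', 1/2) = \alpha_{p_k}(\sigma_k')$; the analogous inclusion $H_r(\stanHood{\Delta^\J}) \subset \stanHood{\Delta^\J}$ uses $\alpha_p \leq 1/2$ to guarantee the bound $(1-r)t_k + r \psi_{p_k}(\sigma_k', t_k) \leq 1/2$.

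The main obstacle is establishing naturality in $\str$, which by left Kan extension reduces to naturality along stratum preserving maps $f \colon \Delta^{\J'} \to \Delta^{\J}$ of stratified simplices. The key observation is that the invariants entering the definition of $\Phi$ are themselves natural: a direct computation in barycentric coordinates yields $s_p(\sReal{f}(x')) = s_p(x')$ for each $p \in P$, and $y_p(\sReal{f}(x')) = \sReal{f_p}(y_p(x'))$, where $f_p$ denotes the restriction of $f$ to $p$-labeled vertices. When $f$ is a face inclusion and the regular flag $\I'$ of $\J'$ is a strict subflag of $\I$, every index $p_i \in \I \setminus \I'$ satisfies $s_{p_i}(x') = 0$, forcing $t_i(x') = 1$ and hence $\psi_{p_i}(\sigma_i', 1) = 1$; thus the corresponding iterative step acts trivially, and the resulting weights agree with those produced by the analogous construction in $\I'$. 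Degeneracies preserve $\I$ entirely and cause no difficulty. Hence $\Phi \circ \sReal{f} = \sReal{f} \circ \Phi$, and similarly for each $H_r$, yielding the required natural transformation on all of $\sSetPN$.
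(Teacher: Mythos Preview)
Your construction is essentially the paper's: the paper defines separate automorphisms $\Phi_p$ (each reparametrizing only the coordinate $t_{\not\leq p}$ via exactly your piecewise-linear $\psi_p$) and sets $\Phi = \Phi_{p_n}\circ\cdots\circ\Phi_{p_0}$, which unravels to precisely your iterative formula on the $(\sigma_i,t_i)$; your naturality argument (indices $p_i$ absent from the source simplex have $t_i=1$ and hence contribute trivially) is the paper's Observation~(i). The one useful clarification you add is the explicit reduction to $\varphi_p\leq 1$: the paper's Observation~(iv) for $q=p$ tacitly relies on $\phiStanHood{\Delta^\J}\subset\stanHood{\Delta^\J}$, which needs this bound.
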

\begin{proof}
    We use the coordinates as in \cref{con:final_coordinates}. We first define separate homeomorphisms for each $p \in P$, $\Phi_p$.
    Note that by left Kan extension it suffices to construct the natural transformation $\Phi_p$ for stratified simplices. On $\sReal{\Delta^\J}$, we define $\Phi_p$ via 
    \[
    [ (y_{<p}, z_p, z_{\not \leq p}), (\s, \tp) ] \mapsto [ (y_{<p}, z_p, z_{\not \leq p}), (\s, \altp) ] 
    \]
    where 
    \[
     \altp := \begin{cases}
        2\tp - 1 + (2-2\tp)\frac{\varphi_p(\s)}{1+ \varphi_p(\s)} & \textnormal{, for }\tp \geq  \frac{1}{2} \\
        2\tp\frac{\varphi_p(\s)}{1+ \varphi_p(\s)} &\textnormal{, for } \tp \leq  \frac{1}{2}.
    \end{cases}
    \]
    One may easily verify that this assignment is well defined (under the identifications in the join), using the fact that the only coordinate that changes is $\tp$ that $\tp=1 \iff \altp = 1$ and that $\tp=0 \iff \altp = 0$. Similarly, one can easily verify that the resulting map 
    \[
    \Phi_p: \sReal{\Delta^\J} \to \sReal{\Delta^\J}
    \]
    is stratum-preserving. Naturality follows from the fact that both $\s$ and $\tp$ are invariant under stratified face inclusions and degeneracy maps. 
    Let us assume for a second that $p \in \J$. Then, if $\s >0$ and all coordinates except $\tp$ remain fixed, the $\altp$ component of $\Phi_p$ is given by gluing affine homeomorphisms $[0, \frac{1}{2}] \xrightarrow{\sim} [0, \frac{\varphi_p(\s)}{1+ \varphi_p(\s)}] $ and $[\frac{1}{2},1] \xrightarrow{\sim} [\frac{\varphi_p(\s)}{1+ \varphi_p(\s)},1]$. It follows from this fiberwise decomposition that $\Phi_p$ does indeed define a bijection (which is clearly continuous). Since source and target are compact Hausdorff spaces, this already shows that $\Phi_p$ defines a stratum-preserving homeomorphism. Next, let us agglomerate some more observations about $\Phi_p$, the first of which verifies that $\Phi_p$ is a homeomorphism if $p \notin \J$.  
    \begin{PhiProps}
        \item\label{proof:prop_univ_stan_ob_1} Whenever $p \notin \J$, then $\tp = 1$ for all $x \in \sReal{\Delta^{\J}}$. Hence, then $\Phi_p$ is given by the identity. 
        \item\label{proof:prop_univ_stan_ob_2} For any $p \in  \pos$, we have $\Phi_p( \stanHood{\Delta^\J}) \subset \phiStanHood{\Delta^\J}$. 
        \item\label{proof:prop_univ_stan_ob_3} For $q \leq p$, $\Phi_p(\phiStanHood[q]{\Delta^\J})) \subset \Phi_p(\phiStanHood[q]{\Delta^\J}))$.
        \item\label{proof:prop_univ_stan_ob_4} For any $q \in P$, $\Phi_p(\stanHood[q]{\Delta^\J})) \subset \Phi_p(\stanHood[q]{\Delta^\J})$.
    \end{PhiProps}
    \cref{proof:prop_univ_stan_ob_1,proof:prop_univ_stan_ob_2} are immediate from the construction of $\Phi_p$.
    Let us verify \cref{proof:prop_univ_stan_ob_3,proof:prop_univ_stan_ob_4}.
    Note first that by \cref{proof:prop_univ_stan_ob_1}, we may assume that $p \in \J$. Furthermore, by \cref{prop:desc_neighb_notin} we may assume that $q \in \J$. 
    Therefore, since $\J$ is a flag, we may proceed with the remaining cases $q < p$, $q=p$, and $q>p$. Furthermore, since any of the relevant $q$-strata 
    neighborhoods contains $(\sReal{\Delta^{\J}})_{<q}$ and $\Phi_p$ is stratum-preserving, we may always assume $s_{\not < q} >0$. 
    For $q < p$, note that $\Phi_p(x)_{\leq q} = x_{\leq q}$, for all $x \in \sReal{\Delta^\J}$ (this follows from the computation of $x$ in \cref{con:final_coordinates}). From this it follows that \[
    s_{\not < q}(\Phi_p(x)) = 1- s_{<p}(\Phi_p(x)) =1- s_{<p}(x) = s_{\not <q}(x) \]
    and similarly
    \[
    t_{\not \leq q}(\Phi_p(x)) = t_{\not \leq q}(x)
    \]
    (whenever the latter is defined). In particular, this immediately implies \cref{proof:prop_univ_stan_ob_3,proof:prop_univ_stan_ob_4}. If $q = p$, then by \cref{proof:prop_univ_stan_ob_2} and since $\phiStanHood{\Delta^\J} \subset \stanHood{\Delta^\J}$, we have
    \[
    \Phi_p( \phiStanHood{\Delta^\J}) \subset \Phi_p( \stanHood{\Delta^\J}) \subset \phiStanHood{\Delta^\J} \subset \stanHood{\Delta^\J}.
    \]
    It remains to consider the case where $q>p$ for \cref{proof:prop_univ_stan_ob_4}. In this case, one can compute from the description of $x \in \sReal{\Delta^\J}$ in \cref{con:final_coordinates} the equalities
    \begin{align}
        s_{\not < q}( \Phi_p(x) ) &= \frac{\tp( \Phi_p(x))}{\tp (x)} s_{\not <q} (x) \\
        s_{\not \leq q}( \Phi_p(x) ) &= \frac{\tp( \Phi_p(x))}{\tp (x)} s_{\not \leq q} (x)
    \end{align}
     whenever these expressions are defined. An elementary verification shows that this is indeed the case whenever $t_{\not \leq q} ( \Phi_p(x)) >0$.
     It follows that in this case
     \[
     t_{\not \leq q}( \Phi_p(x) ) = \frac{ s_{\not \leq q}( \Phi_p(x) )}{s_{\not < q}( \Phi_p(x) )}=  \frac{s_{\not \leq q} (x)}{s_{\not < q} (x)} = t_{\not \leq q} (x).
     \]
     In particular, \cref{proof:prop_univ_stan_ob_4} holds for the remaining case $q >p$ .
     This finishes the verification of the properties of the natural transformation $\Phi_p$.
     Next, for a flag $\J$ degenerating from a regular flag $\standardFlag$, we set 
     \[
     \Phi = \Phi_{p_n } \circ \cdots \circ \Phi_{p_0} \colon \sReal{\Delta^\J} \to \sReal{\Delta^\J}.
     \]
     
     Note that this still defines a natural transformation of the realization functor (on the stratified simplex category). Indeed, whenever there is a stratum-preserving simplicial map 
     \[
     \Delta^\J \to \Delta^{\J'}
     \]
     and $\J'$ degenerates from $[q_0 < \cdots < q_m ]$, then $\standardFlag \subset [q_0 < \cdots < q_m ]$. Thus, it follows from \cref{proof:prop_univ_stan_ob_1} that on $\sReal{\Delta^\J}$
     \[
     \Phi_{p_n } \circ \cdots \circ \Phi_{p_0} = \Phi_{q_m } \circ \cdots \circ \Phi_{q_0}.
     \]
      Using this equation, the naturality of $\Phi$ follows from the naturality of the $\Phi_{q_n}$. By left Kan extension $\Phi$ extends to a natural automorphism of $\sReal{-}$. It remains to verify that $\Phi_{\str}$ lifts to a map $\stanHoodSys \to \phiStanHoodSys$. By construction of these neighborhood systems, it suffices to verify this on stratified simplices $\Delta^\J$, where $\J$ degenerates from the regular flag $\standardFlag$. As before, we may assume that $p \in \J$, i.e., $p= p_k$, for some $k \in [n]$. Then, we have
     \begin{align*}
         \Phi( \stanHood{\Delta^\J} ) &=  \Phi_{p_n} \circ \cdots \circ \Phi_{p_0}( \stanHood{\Delta^\J} ) \\
         &\subset \Phi_{p_n} \circ \cdots \circ \Phi_{p_k}( \stanHood{\Delta^\J} ) \\
         &\subset \Phi_{p_n} \circ \cdots \circ \Phi_{p_{k+1}}( \phiStanHood{\Delta^\J} ) \\
         & \subset \phiStanHood{\Delta^\J},
     \end{align*}
     where the first inclusion follows by \cref{proof:prop_univ_stan_ob_4}, the second inclusions follows by \cref{proof:prop_univ_stan_ob_2}, and the final inclusion follows by \cref{proof:prop_univ_stan_ob_3}. \\
     It remains to see that $\Phi$ is stratum-preserving homotopic to the identity.  
     Given any family of functions $\varphi$ as in the assumption, we write $\Phi(\varphi)$ for the corresponding $\Phi$, constructed as above. Note that $\Phi$ varies continuously in each $\varphi_p$ (with respect to the supremum distance on $C^0([0,1],[0,1])$ and that $\Phi(\varphi) =1$, if $\varphi$ is given by the constant functions of value $1$. For $t \in [0,1]$, define $\varphi^t$ via $\varphi_p^t(s) = (1-t) + t\varphi_p$, for $p \in P$. Then, $t \mapsto \Phi(\varphi^t)$ defines the required natural homotopy. 
\end{proof}
We may combine \cref{prop:universality_of_phi_stan_hood,prop:universality_of_standard} as the following result, which will be central to generalizing our construction of strata-neighborhood systems to stratified cell complexes. 
\begin{proposition}\label{prop:maps_from_finite_approx}
    Let $\str \in \sSetPN$ be a finite stratified simplicial set. Let $\tstr[U]_{{\str[Y]}}$ be a strata-neighborhood system on ${\str[Y]} \in \TopPN$ and $f\colon \sReal{\str} \to {\str[Y]}$ be any stratum-preserving map. Then, there exists a natural stratum-preserving automorphism $\Phi$ of $\sReal{-}$, naturally stratified homotopic to the identity through a family of automorphisms which lift to maps $\stanHoodSys \to \stanHoodSys$ such that $f \circ \Phi_{\str}: \sReal{\str} \to {\tstr[Y]}$ lifts to a map $\stanHoodSys \to \snVals[{\str[Y]}]$.
\end{proposition}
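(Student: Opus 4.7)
The plan is to combine the previous two universality results (\cref{prop:universality_of_phi_stan_hood} and \cref{prop:universality_of_standard}) by pulling back the given neighborhood system along $f$.

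First I would construct the pullback neighborhood system $f^{-1}(\snSys[{\tstr[Y]}])$ on $\sReal{\str}$, defined by $f^{-1}(\snSys[{\tstr[Y]}])(p) := f^{-1}(\snVal[{\tstr[Y]}](p))$. By the lemma recording that inverse images of $\Delta_P$-neighborhoods are $\Delta_P$-neighborhoods (which follows immediately from the definition by precomposition with cells $\sReal{\Delta^\J} \to \sReal{\str} \to {\tstr[Y]}$), each $f^{-1}(\snVal[{\tstr[Y]}](p))$ is a $\Delta_P$-neighborhood of $(\sReal{\str})_p$. The containment $(\sReal{\str})_{\leq p} \subset f^{-1}(\snVal[{\tstr[Y]}](p))$ follows from the corresponding property of $\snSys[{\tstr[Y]}]$ together with the stratum-preservation of $f$. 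Hence $f^{-1}(\snSys[{\tstr[Y]}])$ is a strata-neighborhood system on $\sReal{\str}$, and by construction $f$ itself lifts to a map $f^{-1}(\snSys[{\tstr[Y]}]) \to \snSys[{\tstr[Y]}]$ in $\NCat$.

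Next, since $\str$ is finite, \cref{prop:universality_of_phi_stan_hood} provides a family $\varphi = (\varphi_p)_{p \in P}$ of functions as in \cref{con:standard_neighborhoods} such that the identity on $\sReal{\str}$ lifts to a map $\phiStanHoodSys[\str] \to f^{-1}(\snSys[{\tstr[Y]}])$, i.e. $\phiStanHood{\str} \subset f^{-1}(\snVal[{\tstr[Y]}](p))$ for every $p \in P$. Composing with $f$ gives $f(\phiStanHood{\str}) \subset \snVal[{\tstr[Y]}](p)$, i.e. $f$ lifts to $\phiStanHoodSys[\str] \to \snSys[{\tstr[Y]}]$.

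Now apply \cref{prop:universality_of_standard} to this $\varphi$: it produces a natural stratum preserving automorphism $\Phi$ of $\sReal{-}$, naturally stratum preserving homotopic to the identity through a family of natural homeomorphisms lifting to maps $\stanHoodSys \to \stanHoodSys$, such that $\Phi_\str$ itself lifts to a map $\stanHoodSys \to \phiStanHoodSys[\str]$. Composing yields
\[
f \circ \Phi_\str \colon \stanHoodSys \xrightarrow{\Phi_\str} \phiStanHoodSys[\str] \xrightarrow{f} \snSys[{\tstr[Y]}],
\]
which is the required lift. Since nothing in the construction beyond the choice of $\varphi$ depends on $f$ or on $\snSys[{\tstr[Y]}]$, the naturality and homotopy-to-identity properties of $\Phi$ are inherited verbatim from \cref{prop:universality_of_standard}. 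The main (and essentially only) subtlety is verifying that pullback preserves the two defining conditions of a strata-neighborhood system, which, as noted, is a direct consequence of the preceding basic lemma and stratum-preservation of $f$; after that the proof is purely a concatenation of the two universality results.
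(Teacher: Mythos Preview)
Your proof is correct and is exactly the combination the paper has in mind: the paper does not spell out a proof but simply presents the proposition as the result of combining \cref{prop:universality_of_phi_stan_hood} and \cref{prop:universality_of_standard}, and your pullback-then-compose argument is precisely how that combination goes through. The only point worth noting is that since $f$ is a morphism over $P$ you even have $(\sReal{\str})_p = f^{-1}(\utstr[Y]_p)$ on the nose, so the verification that the pullback is a strata-neighborhood system is immediate.
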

 A first consequence of \cref{prop:maps_from_finite_approx} is that strata-neighborhood systems may be used to compute homotopy links using only data close to a stratum. Such an argument was essentially the decisive step in the proof of \cite[Theorem 4.8]{douteauwaas2021}, where we gave an elementary proof of a special case of the following more general statement.
 \begin{proposition} \label{prop:computing_links_using_nbhds}
    Let $(\tstr,\snSys) \in \NCat$ and $\I\subset \pos$ be a regular flag. Then the inclusion $ \snVals(\I) \hookrightarrow \tstr$ induces a weak equivalence
    \[
    \HolIP(\snVals(\I)) \to \HolIP(\tstr).
    \]
\end{proposition}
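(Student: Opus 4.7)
The plan is to check that $\iota_* \colon \HolIP(\snVals(\I)) \to \HolIP(\tstr)$ induced by inclusion is a bijection on $[S^n,-]$ for every $n \geq 0$, which is enough for a weak equivalence since both homotopy links have the homotopy type of CW complexes. I will detail the surjectivity argument; injectivity is obtained by applying the same method to stratum preserving homotopies.

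A class in $[S^n, \HolIP(\tstr)]$ is represented by a stratum preserving map $\tilde\phi \colon \sReal{L \times \Delta^\I} \to \tstr$, where $L$ is a finite simplicial model of $S^n$ viewed as trivially $P$-stratified, so that $L \times \Delta^\I$ inherits its stratification from the projection to $\Delta^\I$ (and realization commutes with finite products). The goal is to deform $\tilde\phi$, through stratum preserving maps, into one whose image is contained in $\snVal(\I)$. The proof combines two deformations. Firstly, apply \cref{prop:maps_from_finite_approx} to $\tilde\phi$ and the neighborhood system $\snSys$ to obtain a natural stratum preserving automorphism $\Phi$ of $\sReal{-}$, stratified homotopic to the identity through a family of automorphisms lifting to $\stanHoodSys \to \stanHoodSys$, such that
\[
(\tilde\phi \circ \Phi_{L \times \Delta^\I})\bigl(\stanHood[p]{L \times \Delta^\I}\bigr) \subset \snVal(p) \quad \text{for every } p \in P.
\]
Since $L$ is trivially stratified, the $s$-coordinates on $\sReal{L \times \Delta^\I} \cong \sReal{L} \times \sReal{\Delta^\I}$ depend only on the second factor, which gives $\stanHood[p]{L \times \Delta^\I} = \sReal{L} \times \stanHood[p]{\Delta^\I}$. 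Secondly, construct a stratum preserving self-map $\rho \colon \sReal{\Delta^\I} \to \sReal{\Delta^\I}$ with image in $\stanHood[\I]{\Delta^\I} := \bigcap_{p \in \I} \stanHood[p]{\Delta^\I}$ and homotopic to the identity through stratum preserving maps. By \cref{ex:standardhood_of_regular_simplex,prop:simplicial_standard_model}, $\stanHood[\I]{\Delta^\I}$ is the $n$-simplex spanned by the weighted barycenters $b_{[p_0 \leq \dots \leq p_i]}$ for $i = 0, \dots, n$; define $\rho$ as the affine extension sending vertex $i$ to $b_{[p_0 \leq \dots \leq p_i]}$. A direct coordinate check shows the straight-line homotopy $H_t = (1-t)\,\mathrm{id} + t\,\rho$ is stratum preserving: for $x$ in stratum $p_i$, one verifies that $\rho(x)$ also has all coordinates of index $>i$ vanishing and its $i$-th coordinate strictly positive, so $H_t(x)$ remains in stratum $p_i$.

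Combining, the map $\tilde\phi' := \tilde\phi \circ \Phi_{L \times \Delta^\I} \circ (\mathrm{id}_{\sReal{L}} \times \rho)$ is stratum preserving homotopic to $\tilde\phi$ and its image satisfies
\[
\mathrm{im}(\tilde\phi') \subset (\tilde\phi \circ \Phi_{L \times \Delta^\I})\bigl(\sReal{L} \times \stanHood[\I]{\Delta^\I}\bigr) \subset \bigcap_{p \in \I} \snVal(p) = \snVal(\I),
\]
so $\tilde\phi'$ factors through $\snVals(\I)$ and exhibits the prescribed pre-image, proving surjectivity. For injectivity, the same two-step construction is applied to a stratum preserving homotopy $K \colon \sReal{L \times \Delta^1 \times \Delta^\I} \to \tstr$ between representatives $\tilde\phi_0, \tilde\phi_1$ that already factor through $\snVals(\I)$; this yields a homotopy in $\snVal(\I)$ connecting modified endpoints $\tilde\phi_i \circ \Phi_{L \times \Delta^\I} \circ (\mathrm{id} \times \rho)$ (naturality of $\Phi$ in \cref{prop:maps_from_finite_approx} is used here to ensure the $\Delta^1$-ends of $\Phi$ agree with $\Phi_{L \times \Delta^\I}$). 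The final ingredient is the observation that, because $\tilde\phi_i$ already has image in $\snVal(\I)$, the canonical homotopy from $\tilde\phi_i$ to its modified endpoint (the concatenation of the stratified homotopies $\mathrm{id}\leadsto \Phi$ and $\mathrm{id} \leadsto \rho$, post-composed with $\tilde\phi_i$) also has image contained in $\tilde\phi_i(\sReal{L \times \Delta^\I}) \subset \snVal(\I)$, since $\Phi$ and $\rho$ are self-maps of the source.

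The principal technical obstacle is verifying stratum preservation of the straight-line homotopy for $\rho$, which requires the coordinate computation at the weighted barycenters described above; once this is in place, \cref{prop:maps_from_finite_approx} carries the stratum preserving guarantees for $\Phi$ and its naturality automatically provides the compatibility needed with the extra $\Delta^1$-coordinate for injectivity.
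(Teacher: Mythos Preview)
Your two-step deformation (first apply \cref{prop:maps_from_finite_approx}, then retract $\sReal{\Delta^\I}$ into $\stanHood[\I]{\Delta^\I}$) is exactly the argument the paper uses, and the details you give for the stratum-preserving retraction $\rho$ and its straight-line homotopy are correct. The paper phrases the second step slightly differently, as a strong deformation retraction of $\sReal{\Delta^\I}\cong\sReal{\sd\Delta^\I}$ onto the embedded copy $\sReal{\Delta^\I}\hookrightarrow\sReal{\sd\Delta^\I}$, but this is the same map up to the identification of \cref{con:simplicial_model_of_standard}.

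The genuine gap is in your outer framing. You assert that it suffices to check bijectivity on $[S^n,-]$ because ``both homotopy links have the homotopy type of CW complexes''. This is unjustified: the proposition is stated for an arbitrary $(\tstr,\snSys)\in\NCat$, with no cellularity hypothesis on $\tstr$, and there is no reason for $\HolIP(\tstr)$ or $\HolIP(\snVals(\I))$ to have CW homotopy type. Moreover, even granting CW type, bijectivity on free homotopy sets $[S^n,-]$ is not the same as being a weak equivalence (for $n\geq 1$ these are $\pi_n$ modulo the $\pi_1$-action). The paper avoids this by invoking the relative lifting criterion of \cref{lem:criterion_for_weak_equivalence}: one must show that any $g_0\colon D^{n+1}\times\sReal{\Delta^\I}\to\tstr$ sending $S^n\times\sReal{\Delta^\I}$ into $\snVals(\I)$ can be homotoped into $\snVals(\I)$ \emph{through maps keeping $S^n\times\sReal{\Delta^\I}$ inside $\snVals(\I)$}. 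Your deformation already does this --- naturality of $\Phi$ guarantees the boundary stays in $\snVals(\I)$ during the first homotopy, and the second homotopy $g_0'\circ(1\times H_t)$ keeps the boundary inside because $g_0'$ already maps $S^n\times\sReal{\Delta^\I}$ into $\snVals(\I)$ --- so the fix is simply to replace your surjectivity/injectivity-on-$[S^n,-]$ scaffolding with this relative lifting statement and run the identical deformation once.
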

\begin{proof}
    We use \cref{lem:criterion_for_weak_equivalence}. Under the adjunction
    \[- \times \sReal{\Delta^\I} \colon \TopN \rightleftharpoons \TopPN \colon \HolIP,\] 
    we may thus equivalently show that every stratum-preserving map $g_0:D^{n+1} \times \sReal{\Delta^\I} \to {\tstr}$ which maps $S^n \times \sReal{\Delta^\I}$ to $\snVals(\I)$ is stratum-preserving homotopic to a map $g_1:D^{n+1} \times \sReal{\Delta^\I} \to {\tstr}$ with image in $\snVals(\I)$ through a homotopy mapping $S^n \times \sReal{\Delta^\I}$ to $\snVals(\I)$. 
    Now, fix some identification $\real{\Delta^{n+1}} \cong D^{n+1}$. Under this identification, we obtain a canonical isomorphism
        \[
        \sReal{\Delta^{n+1} \times \Delta^\I} \cong \real{\Delta^{n+1}} \times \sReal{\Delta^\I} \cong D^{n+1} \times \sReal{\Delta^\I},
        \]
    identifying $\sReal{\partial \Delta^{n+1} \times \Delta^\I}$ with $S^n \times \sReal{\Delta^\I}$. 
    We can now apply \cref{prop:maps_from_finite_approx} to
    \[
    g_0 \colon \sReal{\Delta^{n+1} \times \Delta^\I} \cong D^{n+1} \times \sReal{\Delta^\I} \to \tstr,
    \]
    from which it follows that $g_0$ is stratum-preserving homotopic to a stratum-preserving map $g'_0 \colon \sReal{\Delta^{n+1} \times \Delta^\I} \cong D^{n+1} \times \sReal{\Delta^\I} \to \tstr$ that maps $\stanHood[\I]{\Delta^{n+1} \times \Delta^\I}$ into $\snVals(\I)$.
    Furthermore, by naturality of the homotopy in \cref{prop:maps_from_finite_approx}, it follows that the homotopy between $g_0$ and $g'_0$ maps $S^n \times \sReal{\Delta^\I}$ into $\snVals(\I)$. Next, note that the identification $\sReal{\Delta^{n+1} \times \Delta^\I} \cong D^{n+1} \times \sReal{\Delta^\I}$ restricts to an identification \[\stanHood[\I]{\Delta^{n+1} \times \Delta^\I}  \cong D^{n+1} \times \stanHood[\I]{\Delta^\I}. \] Hence, we may assume without loss of generality that $g_0$ maps $D^{n+1} \times \stanHood[\I]{\Delta^\I}$ into $\snVals(\I)$.
    Finally, note that, by \cref{ex:standardhood_of_regular_simplex},  the inclusion $ \stanHood[\I]{\Delta^\I} \hookrightarrow  \sReal{\Delta^\I}$ , equivalently given by  \[
    \sReal{\Delta^\I} \hookrightarrow  \sReal{\sd \Delta^\I} \cong \sReal{\Delta^\I}.
    \]
    Choose any strong (stratum-preserving) deformation retraction $R: \sReal{\Delta^\I} \times [0,1] \to \sReal{\Delta^\I}$ of the inclusion $ \sReal{\Delta^\I} \hookrightarrow \sReal{\sd \Delta^\I} \cong \sReal{\Delta^\I}$ (given, for example, by affine interpolation between the identity and the last vertex map). Then the homotopy \[ g_0 \circ (1_{D^{n+1}} \times R ) \colon D^{n+1} \times \sReal{\Delta^{\I}} \times [0,1] \to D^{n+1} \times \sReal{\Delta^{\I}} \] has the required properties.    
\end{proof}
\subsection{Strata-neighborhood systems for stratified cell complexes}\label{subsec:nbhd_for_cell}
Next, let us generalize the construction of standard neighborhood systems to stratified cell complexes. The obvious issue at hand is that we may generally not expect the choices of standard neighborhood on cells to be compatible with attaching maps. To amend this difficulty, we first need a notion of subdivision of a stratified cell complex. For the remainder of this section, by a stratified cell complex we will always mean a $P$-stratified space, together with a fixed choice of cell structure $(\sigma_i \colon \sReal{\Delta^{\J_i}} \to {\tstr})_{i \in I}$. By a slight abuse of notation, we will often just refer to the underlying space.
\begin{definition}\label{def:subdivision}
    Let ${\tstr}$ be a stratified cell complex, defined by cells $(\sigma_i \colon \sReal{\Delta^{\J_i}} \to \tstr)_{i \in I}$. By a barycentric subdivision of $\tstr$ we mean a family of stratum-preserving homeomorphisms $\Psi_i \colon \sReal{\sd \Delta^{\J_i}}  \xrightarrow{\sim} \sReal{\Delta^{\J_i}}$, for $i \in I $, which fulfill $ \Psi_i(\sReal{\sd \Delta^{\J}}) \subset \sReal{\Delta^{\J}}$, for $\J \subset \J_i$. 
\end{definition}
\begin{remark}
    Note that any choice of barycentric subdivision $(\Psi_i)_{i \in I}$ on a stratified cell complex $\tstr$ naturally induces a new cell structure on $\tstr$ which is indexed over \[
\{ (i, \tau) \mid i \in I, \tau \textnormal{ simplex of } \sd \Delta^{\J_i} \textnormal{ s.t. }\J_i \in \tau \}.
    \] 
    We write $\sd_{\Psi} \tstr$, for the underlying space of $\tstr$ equipped with this new cell structure.
\end{remark}
\begin{example}
    If $\tstr \in \TopPN$ is the realization of a stratified simplicial complex $\tstr[K]$, equipped with the induced cell structure, and we take $\Psi_{i}\colon \sReal{\sd \Delta^{\J_i}} \to \sReal{\Delta^{\J_i}}$ to be the barycentric subdivision homeomorphism, then the cell structure induced by the subdivision $\Psi$ is the one coming from the barycentric subdivision homeomorphism $\sReal{\sd \str[K]} \cong \sReal{\str[K]} = \str$.
\end{example}
\begin{definition}\label{def:defines_stratum_nbhd_system}
    Let $\tstr$ be a stratified cell complex, and $\Psi$ a choice of subdivision of $\tstr$. We say that $\Psi$ \define{defines a standard neighborhood system on $\tstr$} if for every $p \in P$ and every $i \in I$ the inclusion
    \[
    \sigma_i \circ \Psi_i(\sReal{\simStanHoods{\partial \Delta^{\J_i}}}) \subset \bigcup_{j \prec i} \sigma_j \circ \Psi_j(\sReal{\simStanHoods{ \Delta^{\J_j}}}),
    \]
    holds (here $\prec$ denotes the precursor order of \cref{def:cell_struct}).
\end{definition}
\begin{example}
Consider the pinched torus $S^1 \times S^1 /(S^1 \times {x_0})$, stratified over $ \{ {\color{red}0}< {\color{NewBlue}2} \}$ taking the equivalence class $S^1 \times {x_0}$ as the $0$-stratum. To the left, a stratified cell structure induced by a simplicial model is shown, with the stratification indicated by the coloring. To the right, we show a subdivision of this cell structure, which defines a standard neighborhood system on the pinched torus. The standard-neighborhood of the $p$-stratum induced by this subdivision is shown shaded in red.
\begin{center}

\begin{tikzpicture}[witharrow/.style={postaction={decorate}}, scale = 1.5, 
dot/.style = {circle, fill, minimum size=#1, inner sep=0pt, outer sep=0pt},
dot/.default = 2pt  
]
    \newcommand{\doubLine}[3]{\draw[color = black, line width = 2.5pt] (#2) -- (#3);
                 \draw[color = #1, line width = 1.5pt] (#2) -- (#3)}
                \coordinate (l) at (0,0);
                \coordinate (r) at (3,0);
                \coordinate (u) at (1.5,1);
                \coordinate (d) at (1.5,-1);
                \coordinate (blu) at ($0.5*(l)+0.5*(u)$);
                \draw[fill = NewBlue]{}(l) -- (u) -- (r) -- (d) -- (l);
                \doubLine{NewBlue}{l}{u};
                  \doubLine{NewBlue}{u}{r};
                   \doubLine{NewBlue}{r}{d};
                \doubLine{NewBlue}{d}{l};
                \doubLine{NewBlue}{u}{d};
                 \draw[draw = none, decoration={markings,mark= at position 0.5 with {\arrow[scale = 1.5]{latex}}},witharrow] (l) -- (u);
                \draw[draw = none, decoration={markings,mark= at position 0.5 with {\arrow[scale = 1.5]{latex}}},witharrow] (l) -- (d);
                \draw[draw = none, decoration={markings,mark=between positions 0.5 and 0.6 step 0.1 with {\arrow[scale = 1.5]{latex}}},witharrow] (3,0) -- (1.5,-1);
                \draw[draw = none, decoration={markings,mark=between positions 0.5 and 0.6 step 0.1 with {\arrow[scale = 1.5]{latex}}},witharrow] (3,0) -- (1.5,1);
                \draw[color = black, fill = red, ](l) circle (2pt);
                \node (x) at (0,0){ \textcolor{white}{x}}   ; 
                \draw[color = black, fill = red, ](r) circle (2pt);
                \node (x) at (3,0){ \textcolor{white}{x}}   ;
                \draw[color = black, fill = NewBlue] (u) circle (2pt);
                \node (y) at (u){ \textcolor{white}{y}}   ; 
                \draw[color = black, fill = NewBlue] (d) circle (2pt);
                \node (y) at (d){ \textcolor{white}{y}}   ; 
            \end{tikzpicture}  
    \begin{tikzpicture}[witharrow/.style={postaction={decorate}}, scale = 1.5, 
dot/.style = {circle, fill, minimum size=#1, inner sep=0pt, outer sep=0pt},
dot/.default = 2pt  
]
    \newcommand{\doubLine}[3]{\draw[color = black, line width = 2.5pt] (#2) -- (#3);
                 \draw[color = #1, line width = 1.5pt] (#2) -- (#3)}
                \coordinate (l) at (0,0);
                \coordinate (r) at (3,0);
                \coordinate (u) at (1.5,1);
                \coordinate (d) at (1.5,-1);
                \coordinate (blu) at ($0.5*(l)+0.5*(u)$);
                \coordinate (altblu) at ($0.7*(l)+0.3*(u)$);
                \coordinate (bld) at ($0.5*(l)+0.5*(d)$);
                \coordinate (altbld)  at ($0.7*(l)+0.3*(d)$);
                \coordinate (brd) at ($0.5*(r)+0.5*(d)$);
                \coordinate (altbrd)  at ($0.7*(r)+0.3*(d)$);
                \coordinate (bru) at ($0.5*(r)+0.5*(u)$);
                \coordinate (altbru)  at ($0.7*(r)+0.3*(u)$);
                 \coordinate (bud) at ($0.5*(d)+0.5*(u)$);
                \coordinate (altbud)  at ($0.7*(d)+0.3*(u)$);
                \coordinate(blud) at ($0.6*(l)+0.4*(bud)$);
                \coordinate(brud) at ($0.6*(r)+0.4*(bud)$);
                \draw[fill = NewBlue]{}(l) -- (u) -- (r) -- (d) -- (l);
                 \draw[fill = red, opacity = 0.3] (l) -- (altblu) -- (blud) -- (altbld) --(l);
                 \draw[fill = red, opacity = 0.3] (r) -- (altbru) -- (brud) -- (altbrd) --(r);
                \doubLine{NewBlue}{l}{u};
                  \doubLine{NewBlue}{u}{r};
                   \doubLine{NewBlue}{r}{d};
                \doubLine{NewBlue}{d}{l};
                \doubLine{NewBlue}{u}{d};
            \doubLine{red, opacity = 0.3}{l}{blud};
            \doubLine{red, opacity = 0.3}{altblu}{blud};
            \doubLine{red, opacity = 0.3}{altbld}{blud};
            \doubLine{red, opacity = 0.3}{l}{blu};
            \doubLine{red, opacity = 0.3}{l}{bld};
            \doubLine{NewBlue}{bud}{blud};
            \doubLine{NewBlue}{u}{blud};
            \doubLine{NewBlue}{d}{blud};
            %
            \doubLine{red, opacity = 0.3}{r}{brud};
            \doubLine{red, opacity = 0.3}{altbru}{brud};
            \doubLine{red, opacity = 0.3}{altbrd}{brud};
            \doubLine{red, opacity = 0.3}{r}{bru};
            \doubLine{red, opacity = 0.3}{r}{brd};
            \doubLine{NewBlue}{bud}{brud};
            \doubLine{NewBlue}{u}{brud};
            \doubLine{NewBlue}{d}{brud};
               \draw[draw = none, decoration={markings,mark=at position 0.6 with {\arrow[scale = 1.5]{latex}}},witharrow] (l) -- (blu);
                 \draw[draw = none, decoration={markings,mark=between positions 0.5 and 0.6 step 0.1 with {\arrow[scale = 1.5]{latex}}},witharrow] (blu) -- (u);
                 \draw[draw = none, decoration={markings,mark=at position 0.6 with {\arrow[scale = 1.5]{latex}}},witharrow] (l) -- (bld);
                  \draw[draw = none, decoration={markings,mark= between positions 0.5 and 0.6 step 0.1 with {\arrow[scale = 1.5]{latex}}},witharrow] (bld) -- (d);
                  \draw[draw = none, decoration={markings,mark= between positions 0.4 and 0.6 step 0.1 with {\arrow[scale = 1.5]{latex}}}, witharrow] (bru) -- (u);
                 \draw[draw = none, decoration={markings,mark= between positions 0.4 and 0.6 step 0.1 with {\arrow[scale = 1.5]{latex}}}, witharrow] (brd) -- (d);
                 \draw[draw = none, decoration={markings,mark= between positions 0.4 and 0.8 step 0.1 with {\arrow[scale = 1.5]{latex}}}, witharrow] (r) -- (brd);
                  \draw[draw = none, decoration={markings,mark= between positions 0.4 and 0.8 step 0.1 with {\arrow[scale = 1.5]{latex}}}, witharrow] (r) -- (bru);
                 \draw[color = black, fill = red, ](l) circle (2pt);
                 \node (x) at (l){ \textcolor{white}{x}}   ; 
                 \draw[color = black, fill = red, ](r) circle (2pt);
                 \node (x) at (r){ \textcolor{white}{x}}   ;
                 \draw[color = black, fill = NewBlue] (u) circle (2pt);
                 \node (y) at (u){ \textcolor{white}{y}}   ; 
                 \draw[color = black, fill = NewBlue] (d) circle (2pt);
                 \node (y) at (d){ \textcolor{white}{y}}   ; 
                \draw[color = black, fill = NewBlue] (bud) circle (2pt);
                \draw[color = black, fill = NewBlue] (blud) circle (2pt);
                 \draw[color = black, fill = red, opacity = 0.3] (blud) circle (2pt);
                \draw[color = black, fill = NewBlue] (brud) circle (2pt);
                 \draw[color = black, fill = red, opacity = 0.3] (brud) circle (2pt);
                \draw[color = black, fill = NewBlue] (blu) circle (2pt);
                \draw[color = black, fill = red, opacity = 0.3] (blu) circle (2pt);
                  \node (a) at (blu){ \textcolor{white}{a}}   ; 
                 \draw[color = black, fill = NewBlue] (bru) circle (2pt);
                \draw[color = black, fill = red, opacity = 0.3] (bru) circle (2pt);
                 \node (b) at (bru) { \textcolor{white}{b}}   ; 
                 \draw[color = black, fill = NewBlue] (brd) circle (2pt);
                 \draw[color = black, fill = red, opacity = 0.3] (brd) circle (2pt);
                  \node (b) at (brd){ \textcolor{white}{b}}   ; 
                 \draw[color = black, fill = NewBlue] (bld) circle (2pt);
                \draw[color = black, fill = red, opacity = 0.3] (bld) circle (2pt);
                 \node (a) at (bld){ \textcolor{white}{a}}   ; 
            \end{tikzpicture}      
\end{center}
Note that the definition of a stratified cell complex allows for the attachment of $n$-cells to $n$-cells, as long as this does not lead to cycles in attachment. 
\end{example}
\begin{remark}\label{rem:alternative_v_prec}
   Note that the condition in \cref{def:defines_stratum_nbhd_system} can be equivalently defined by replacing the precursor order by any ordinal structure on $I$, which exposes $\tstr$ as an absolute cell complex with respect to stratified boundary inclusions of simplices (see \cref{con:equ_versions_of_strat_cell_cplx}). 
\end{remark}
\begin{construction}\label{con:standard_nbhd_system_of_cell_complex}
     The condition in \cref{def:defines_stratum_nbhd_system} precisely guarantees that when a subdivision $\Psi$ of a cell complex $\str$ defines a standard neighborhood system on $\str$, then the indexing set
    \[ \{ (i, \tau) \mid i \in I,\J_i \in \tau ; \tau \subset \simStanHoods{ \Delta^{\J_i}} \} \]
    corresponding to such cells in $\sd \str$ that lie in standard neighborhoods in the stratified cells define a subcomplex of $\sd_{\Psi} \str$.
    As a stratified space, it is given by the union \[
    \bigcup_{i \in I} \sigma_i \circ \Psi_i (\sReal{\simStanHoods{\Delta^{\J_i}}}) \subset \tstr.\] 
    We denote this subcomplex by $\PsiStanHood{\tstr}$. We call $\PsiStanHood{\tstr}$ the \define{$p$-th standard neighborhood associated to the subdivision $\Psi$}. Furthermore, we denote \[ \PsiStanHoodSys:= ( \PsiStanHood[p]{\tstr})_{p \in P} \] and call this family the \define{standard neighborhood system of $\tstr$ associated to the subdivision $\Psi$}. 
\end{construction}
Let us verify that the nomenclature of \cref{con:standard_nbhd_system_of_cell_complex} does make sense, that is, that we have indeed defined a strata-neighborhood system. Before we do so, note the following remark.
\begin{remark}\label{rem:Psi_stan_hood_for_cplx}
    For any stratified simplicial complex $\tstr \in \sSetPN$ we may use \cref{prop:simplicial_standard_model} to  identify $\sReal{\simStanHoods{\tstr}}$ with $\PsiStanHoods{\sReal{\tstr}}$, where $\Psi$ is the barycentric subdivision of \cref{con:simplicial_model_of_standard}.
\end{remark}
\begin{proposition}\label{prop:stan_hood_is_hood_cell}
    In the setting of \cref{con:standard_nbhd_system_of_cell_complex}, the family $\PsiStanHoodSys$ defines a strata-neighborhood system on $\tstr$. Furthermore, $\PsiStanHoodSys$ has the property that $\PsiStanHoods[\I]{{\tstr}} \subset {\tstr}$ is a subcomplex of $\sd_{\Psi} {\tstr}$, for every regular flag $\I \subset P$.
\end{proposition}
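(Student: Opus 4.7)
The plan is to verify both claims by reducing them to the already-established properties of standard neighborhoods on single stratified simplices, and then gluing along the cell structure. In both cases the key tool is the explicit description $\PsiStanHood[p]{\tstr} = \bigcup_{i \in I} \sigma_i \circ \Psi_i(\sReal{\simStanHoods{\Delta^{\J_i}}})$ from \cref{con:standard_nbhd_system_of_cell_complex}, together with the identification $\Psi_i(\sReal{\simStanHoods{\Delta^{\J_i}}}) = \stanHood[p]{\Delta^{\J_i}}$ furnished by \cref{prop:simplicial_standard_model} and \cref{rem:Psi_stan_hood_for_cplx}.

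For the first claim, I would invoke \cref{lem:char_of_nbhd_in_cell_complex}, which reduces the task to checking that for every cell $\sigma_i \colon \sReal{\Delta^{\J_i}} \to \tstr$ the preimage $\sigma_i^{-1}(\PsiStanHood[p]{\tstr})$ is a neighborhood of $\sigma_i^{-1}(\utstr_p) = (\sReal{\Delta^{\J_i}})_p$. By construction this preimage contains $\Psi_i(\sReal{\simStanHoods{\Delta^{\J_i}}}) = \stanHood[p]{\Delta^{\J_i}}$, and by \cref{prop:stanhood_is_hood} the latter is already a neighborhood of the $p$-stratum in $\sReal{\Delta^{\J_i}}$. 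The containment $\utstr_{\leq p} \subset \PsiStanHood[p]{\tstr}$ then falls out of a cell-by-cell application of \cref{cor:factorization_through_system}: one has $(\sReal{\Delta^{\J_i}})_{\leq p} \subset \stanHood[p]{\Delta^{\J_i}}$, hence $\sigma_i((\sReal{\Delta^{\J_i}})_{\leq p}) \subset \PsiStanHood[p]{\tstr}$, and taking the union over $i$ yields the claim.

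For the subcomplex property, my plan is to first argue that each individual $\PsiStanHood[p]{\tstr}$ is a subcomplex of $\sd_\Psi \tstr$, and then observe that finite intersections of subcomplexes remain subcomplexes, so that $\PsiStanHoods[\I]{\tstr} = \bigcap_{p \in \I} \PsiStanHood[p]{\tstr}$ inherits the property. By construction $\PsiStanHood[p]{\tstr}$ is already a union of images of cells of $\sd_\Psi \tstr$; closure under faces within a single simplex $\Delta^{\J_i}$ holds because $\simStanHoods{\Delta^{\J_i}}$ is a sub-simplicial set of $\sd \Delta^{\J_i}$, while closure under attaching faces between different cells is exactly what the defining condition of \cref{def:defines_stratum_nbhd_system} encodes, namely the inclusion $\sigma_i \circ \Psi_i(\sReal{\simStanHoods{\partial \Delta^{\J_i}}}) \subset \bigcup_{j \prec i} \sigma_j \circ \Psi_j(\sReal{\simStanHoods{\Delta^{\J_j}}})$. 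A transfinite induction along the well-founded precursor order of \cref{def:cell_struct} then globalises this into the desired subcomplex property. The step requiring the most care is this last cross-cell closure check: since stratified cell complexes permit the attachment of $n$-cells along $n$-cells (see \cref{rem:alternative_v_prec}), the compatibility of $\Psi$ with the attaching maps is genuinely needed and must be imported from \cref{def:defines_stratum_nbhd_system} rather than being automatic from the simplex-level picture. Once this is recognised, everything else amounts to routine closure checks and a direct appeal to the neighborhood characterisation on each cell.
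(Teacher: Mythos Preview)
Your proposal is correct and follows essentially the same route as the paper. One imprecision worth flagging: you write $\Psi_i(\sReal{\simStanHoods{\Delta^{\J_i}}}) = \stanHood[p]{\Delta^{\J_i}}$, but \cref{prop:simplicial_standard_model} and \cref{rem:Psi_stan_hood_for_cplx} only furnish this identification for the \emph{specific} weighted barycentric homeomorphism of \cref{con:simplicial_model_of_standard}, not for the arbitrary $\Psi_i$ in the cell-complex data. The repair is immediate and is what the paper does: since $\Psi_i$ is a stratum-preserving homeomorphism, it suffices that $\sReal{\simStanHoods{\Delta^{\J_i}}}$ itself be a neighborhood of the $p$-stratum in $\sReal{\sd\Delta^{\J_i}}$, and this does follow from \cref{prop:simplicial_standard_model} (now applied with the specific $\Psi$) combined with \cref{prop:stanhood_is_hood}. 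The paper records this by saying the identification holds ``up to a stratum preserving homeomorphism''. Your treatment of the subcomplex claim is more explicit than the paper's (which simply defers to \cref{con:standard_nbhd_system_of_cell_complex} for each $\PsiStanHood[p]{\tstr}$ and then intersects), but correct.
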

\begin{proof}
    That, for any $p \in P$, it holds that ${\utstr}_{\leq p} \subset \PsiStanHood[p]{{\tstr}}$, is immediate from \cref{rem:Psi_stan_hood_for_cplx}, \cref{prop:simplicial_standard_model} and \cref{cor:factorization_through_system}. Next, let us verify that $\PsiStanHood{\tstr}$ does indeed define a $p$-stratum neighborhood.
    By \cref{lem:char_of_nbhd_in_cell_complex}, it suffices to show that, for every $p\in P$ and every cell $\sigma \colon \sReal{\Delta^{\J_i}} \to {\tstr}$ of ${\tstr}$, the set $\sigma^{-1}(\PsiStanHood{{\tstr}})$ defines a neighborhood of the $p$-stratum. Since $\Psi_i$ is a stratum-preserving homeomorphism, we may equivalently show that $(  \sigma \circ \Psi_i)^{-1}(\PsiStanHood{{\tstr}})$ has this property. Note that by construction we have 
    \[\real{\simStanHoods{\Delta^{\J_i}}} \subset (\sigma \circ \Psi_i)^{-1}(\PsiStanHood{{\tstr}}).\]
    By \cref{prop:simplicial_standard_model}, up to a stratum-preserving homeomorphism of $\sReal{\Delta^\J_i}$ we have \[\sReal{\simStanHoods{\Delta^{\J_i}}} = \stanHoods{\Delta^{\J_i}},\] which shows that both $\real{\simStanHoods{\Delta^{\J_i}}}$ and thus also $(\Psi \circ \sigma)^{-1}(\PsiStanHood{{\tstr}})$ is a neighborhood of the $p$-stratum. The statement on subcomplexes is immediate from the fact that the intersection of subcomplexes is again a subcomplex.
\end{proof}
\begin{example}
    If $\tstr \in \TopPN$ is the realization of a stratified simplicial complex $\str[K]$,
    equipped with the induced cell structure, and we take $\Psi_{i}\colon \sReal{\sd \Delta^{\J_i}} \to \sReal{\Delta^{\J_i}}$ to be the subdivision homeomorphism of \cref{con:simplicial_model_of_standard}, then $\PsiStanHood{\str}= \stanHood[\I]{\str[K]}$.
\end{example}
Next, let us show that there always exists a subdivision which defines a standard neighborhood system on a stratified cell complex ${\tstr}$. This is ultimately a consequence of \cref{prop:universality_of_standard}, and provides a first step towards \cref{thm:hol_main_result}.
\begin{proposition}\label{prop:ex_nbhd_cell_cplx}
    For every stratified cell complex $\tstr$, there exists a subdivision $\Psi$ of ${\tstr}$ such that $\Psi$ defines a standard neighborhood system on ${\tstr}$. Additionally, for any $P$-stratified space ${{\tstr[Y]}}$ equipped with a strata-neighborhood system $\snSys[{\tstr[Y]}]$ and any stratum-preserving map $f \colon {\tstr} \to {{\tstr[Y]}}$, $\Psi$ may be chosen such that $f$ lifts to a map $\PsiStanHoodSys \to \snSys[{\tstr[Y]}]$.
    Furthermore, if such a subdivision $\Psi_{\tstr[A]}$ has already been chosen on a subcomplex of ${\tstr[A]} \subset {\tstr}$, then $\Psi$ may be taken such that 
    \[
    \Psi_{{\tstr[A]},i} = \Psi_i
    \]
    whenever $i \in I$ defines a cell of ${\tstr[A]}$. 
\end{proposition}
\begin{proof}
    Via transfinite induction, it suffices to show the following. For any commutative diagram
    \begin{diagram}
        \sReal{\partial \Delta^{\J_i}} \arrow[d] \arrow[r]& {\tstr[A]} \arrow[d] \\
        \sReal{\Delta^{\J_i}} \arrow[r] & {{\tstr[Y]}} \spacecomma
    \end{diagram}
    where ${\tstr[A]} \to {{\tstr[Y]}}$ lifts to a map of neighborhood systems $\snSys[{\tstr[A]}]\to\snSys[{\tstr[Y]}]$, there exists a stratum-preserving homeomorphism $\Psi_i \colon \sReal{\sd \Delta^{\J_i}} \to \sReal{\Delta^{\J_i}}$ (which is compatible with faces) with the following properties. 
    \begin{enumerate}
        \item The composition  $\sReal{ \sd \partial \Delta^{\J_i}} \xrightarrow{\Psi_i \mid_{\sReal{ \sd \partial \Delta^{\J_i}}}} \sReal{\partial \Delta^{\J_i}} \to {\tstr[A]}$ lifts to a map of neighborhood systems  \[ (\sReal{ \simStanHoods{\partial \Delta^{\J_i}} })_{p \in P} \to \snSys[{\tstr[A]}] .\]
        \item    The composition  $\sReal{ \sd  \Delta^{\J_i}} \xrightarrow{\Psi_i} \sReal{ \Delta^{\J_i}} \to {{\tstr[Y]}}$ lifts to a map of neighborhood systems  \[ (\sReal{ \simStanHoods{\Delta^{\J_i}} })_{p \in P} \to \snSys[{\tstr[Y]}] .\]
    \end{enumerate}
    We may first apply \cref{prop:simplicial_standard_model} and instead show the analogous statement with $\sd \Delta^{\J_i}$ replaced by $\Delta^{\J_i}$ and $(\sReal{ \simStanHoods{\Delta^{\J_i}} })_{p \in P}$ replaced by $\stanHoodSys[\Delta^{\J_i}]$.
    Next, apply \cref{prop:maps_from_finite_approx} twice, first to $\sReal{\Delta^{\J_i}} \to {{\tstr[Y]}}$, obtaining a natural stratum-preserving automorphism $\Phi_{{\tstr[Y]}}$ of $\sReal{-}$, and then to the composition $\sReal{\partial \Delta^{\J_i}} \xrightarrow{\Phi_{{\tstr[Y]}}} \sReal{\partial \Delta^{\J_i}} \to {\tstr[A]}$, obtaining a natural stratum-preserving automorphism $\Phi_{\tstr[A]}$ of $\sReal{-}$. By construction, these have the following properties:
    \begin{enumerate}
        \item The composition $\sReal{\partial \Delta^{\J_i}} \xrightarrow{\Phi_{{\tstr[Y]}} \circ \Phi_{\tstr[A]}} \sReal{\partial \Delta^{\J_i}} \to {\tstr[A]} $ lifts to a map $\stanHoodSys[\partial \Delta^{\J_i}] \to \snSys[{\tstr[A]}]$.
            \item $\sReal{\Delta^{\J_i}} \xrightarrow{\Phi_{\tstr[A]}} \sReal{ \Delta^{\J_i}}$ lifts to a map $\stanHoodSys[ \Delta^{\J_i}] \to \stanHoodSys[\Delta^{\J_i}]$.
        \item The composition $\sReal{ \Delta^{\J_i}} \xrightarrow{\Phi_{{\tstr[Y]}}} \sReal{ \Delta^{\J_i}} \to {{\tstr[Y]}} $ lifts to a map $\stanHoodSys[ \Delta^{\J_i}] \to \snSys[{\tstr[Y]}]$.
    \end{enumerate}
    In particular, by the composability of morphisms of strata-neighborhood systems, it follows that the composition $\Psi_i := \Phi_{{\tstr[Y]}} \circ \Phi_{\tstr[A]}$ also has the property that $ \sReal{ \Delta^{\J_i}} \xrightarrow{\Psi_i} \sReal{ \Delta^{\J_i}} \to {{\tstr[Y]}}$ also lifts to a map $\stanHoodSys[ {\Delta^{\J_i}}] \to \snSys[{\tstr[Y]}]$.
\end{proof}
Furthermore, we can now prove a first result towards \cref{thm:hol_main_res_B}.
\begin{corollary}
 \label{cor:models_for_homotopy_pushouts_half}
    Suppose, we are given a pushout square in $\TopPN$ \begin{diagram}\label{diag:lem_mod_for_ho_push_half}
        {\tstr[A]} \arrow[r,"c", hook] \arrow[d, "f"]& {\tstr[B]} \arrow[d] \\
    {\tstr[X]} \arrow[r, hook]  & {{\tstr[Y]}},
    \end{diagram}
    with ${\tstr[A]},{\tstr[X]}$ stratified cell complexes and $c$ an inclusion of a stratified subcomplex. Let $(\sigma_i)_{i \in I}$ be the cell structure on ${\tstr[X]}$, $(\sigma_{j})_{j \in J}$ be the cell structure on ${\tstr[A]}$ and $(\sigma_{j})_{j \in J \sqcup J'}$ be the cell structure on $\tstr[B]$, extending the one on ${\tstr[A]}$ along $c$. Then, there exist barycentric subdivisions $\Psi$ of ${\tstr[X]}$ and $\hat \Phi$ of ${\tstr[B]}$ such that the following holds:
    \\ Denote by $\Phi$ the restriction of $\hat \Phi$ to ${\tstr[A]}$.
    Denote by $\hat \Psi$ the subdivision of the induced cell structure on ${{\tstr[Y]}}$, given by $(\Psi_i)_{i \in I} \cup (\Phi_j)_{j \in J'}$. Then, \cref{diag:lem_mod_for_ho_push_half} lifts to a diagram of strata-neighborhood systems
    \begin{diagram}\label{diag:lift_of_pushout_diag}
        \mathfrak U^{\Phi}_{\tstr[A]} \arrow[r, "\tilde c"] \arrow[d] & \mathfrak U^{\hat \Phi}_{\tstr[B]} \arrow[d] \\
        \mathfrak U^{\Psi}_{\tstr[X]} \arrow[r] & \mathfrak U^{\hat \Psi} _Y \spaceperiod
    \end{diagram}
\end{corollary}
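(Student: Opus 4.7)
The plan is to apply \cref{prop:ex_nbhd_cell_cplx} three times (once each for $\tstr[X]$, $\tstr[A]$, $\tstr[B]$), combine the resulting subdivisions into a subdivision $\hat{\Psi}$ on the pushout $\tstr[Y]$, and verify the four compatibilities in \cref{diag:lift_of_pushout_diag}. First, apply \cref{prop:ex_nbhd_cell_cplx} to $\tstr[X]$ with no constraints to produce a subdivision $\Psi$ with associated standard neighborhood system $\mathfrak{U}^{\Psi}_{\tstr[X]}$. Second, apply \cref{prop:ex_nbhd_cell_cplx} to $\tstr[A]$ invoking its compatibility-with-a-map clause for $f \colon \tstr[A] \to (\tstr[X], \mathfrak{U}^{\Psi}_{\tstr[X]})$; this yields a subdivision $\Phi$ on $\tstr[A]$ such that $f$ lifts to a map $\mathfrak{U}^{\Phi}_{\tstr[A]} \to \mathfrak{U}^{\Psi}_{\tstr[X]}$ of neighborhood systems. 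Third, apply \cref{prop:ex_nbhd_cell_cplx} to $\tstr[B]$ invoking its subcomplex-extension clause, leaving $\Phi$ fixed on the subcomplex $\tstr[A] \hookrightarrow \tstr[B]$; this produces a subdivision $\hat{\Phi}$ on $\tstr[B]$ extending $\Phi$ with associated standard neighborhood system $\mathfrak{U}^{\hat{\Phi}}_{\tstr[B]}$.

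Next, define $\hat{\Psi}$ on the cell structure of $\tstr[Y]$ indexed by $I \sqcup J'$ by setting $\hat{\Psi}_i = \Psi_i$ for $i \in I$ and $\hat{\Psi}_j = \hat{\Phi}_j$ for $j \in J'$. Since \cref{def:subdivision} is a per-cell condition, $\hat{\Psi}$ is automatically a valid subdivision of this cell structure. The main obstacle is to check that $\hat{\Psi}$ defines a standard neighborhood system on $\tstr[Y]$ (\cref{def:defines_stratum_nbhd_system}). For this I would appeal to \cref{rem:alternative_v_prec}, using the ordering on $I \sqcup J'$ that places all of $I$ (ordered compatibly with $\tstr[X]$) before all of $J'$ (ordered compatibly with $\tstr[B]$); this exposes $\tstr[Y]$ as the natural pushout-type absolute cell complex. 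For cells $i \in I$, the required inclusion is inherited from the one for $\Psi$ in $\tstr[X]$ by transport along the closed inclusion $\tstr[X] \hookrightarrow \tstr[Y]$. For cells $j \in J'$, start from the inclusion provided by $\hat{\Phi}$ in $\tstr[B]$ and push it forward along $\tstr[B] \to \tstr[Y]$: precursors $l \in J'$ contribute the desired $J'$-indexed terms in the target, while precursors $l \in J$ (cells of $\tstr[A]$) pass through the pushout identification $\mathrm{incl}_B \circ \sigma^B_l = \mathrm{incl}_X \circ f \circ \sigma^A_l$ and, since $f$ lifts to a map of neighborhood systems, their contributions land in $f(\mathfrak{U}^{\Phi}_{\tstr[A]}(p)) \subset \mathfrak{U}^{\Psi}_{\tstr[X]}(p) = \bigcup_{i \in I} \sigma^X_i \circ \Psi_i(\sReal{\simStanHoods{\Delta^{\J_i}}})$, which under $\mathrm{incl}_X$ becomes exactly the $I$-indexed terms of the RHS under the chosen ordering.

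With this in hand, the four lifts of \cref{diag:lift_of_pushout_diag} follow with no additional work: $f$ lifts by the second step of the construction; $c$ lifts because $\hat{\Phi}$ extends $\Phi$, yielding a termwise inclusion $\mathfrak{U}^{\Phi}_{\tstr[A]}(p) \subset \mathfrak{U}^{\hat{\Phi}}_{\tstr[B]}(p)$; finally, the two inclusions $\tstr[X] \hookrightarrow \tstr[Y]$ and $\tstr[B] \hookrightarrow \tstr[Y]$ lift because $\hat{\Psi}$ was defined to agree with $\Psi$ on $I$-cells and with $\hat{\Phi}$ on $J'$-cells, so the standard neighborhoods of $\tstr[X]$ and $\tstr[B]$ map termwise into that of $\tstr[Y]$. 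Commutativity of the resulting square in $\NCat$ is inherited from the underlying pushout square of stratified spaces.
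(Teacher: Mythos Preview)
Your proposal is correct and takes essentially the same approach as the paper: apply \cref{prop:ex_nbhd_cell_cplx} to obtain $\Psi$, $\Phi$, and $\hat\Phi$ with the required compatibilities, then verify that the combined subdivision $\hat\Psi$ on $\tstr[Y]$ defines a standard neighborhood system by routing the boundary of each $J'$-cell through $f(U^{\Phi}_{\tstr[A]}(p)) \subset U^{\Psi}_{\tstr[X]}(p)$. The only cosmetic difference is that the paper phrases the verification via a transfinite induction reducing to a single added cell, whereas you argue directly with a chosen ordering on $I \sqcup J'$ via \cref{rem:alternative_v_prec}; both are the same computation.
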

\begin{proof}
    By \cref{prop:ex_nbhd_cell_cplx}, we obtain subdivisions $\hat \Phi$ and $\Psi$ as in the claim such that ${\tstr[A]} \to {\tstr[X]}$ lift to maps of strata-neighborhood systems $\mathfrak U^{\Phi}_{\tstr[A]}  \to \mathfrak U^{\Psi}_{\tstr[X]}$. Now, define $\hat \Psi$ as above. Let us show that $\hat \Psi$ does define a strata-neighborhood system on the induced cell structure on ${{\tstr[Y]}}$. Via transfinite induction, we may without loss of generality assume that ${\tstr[B]}$ is given by gluing a single cell $\sigma_m \colon \sReal{\Delta^{\J}} \to {\tstr[B]}$ to ${\tstr[A]}$. Then, ${{\tstr[Y]}}$ is given by gluing $\sReal{\Delta^\J}$ to ${\tstr[X]}$ along $\sReal{\partial \Delta^\J} \to {\tstr[A]} \to {\tstr[X]}$. Denote the resulting cell of ${{\tstr[Y]}}$, by $\hat \sigma_m\colon \sReal{\Delta^{\J}} \to {{\tstr[Y]}}$. Then, by construction, we have 
    \begin{align*}
    \hat \sigma_m \circ \hat \Psi_m (\sReal{ \simStanHoods{\partial \Delta^J}})  &= f \circ \sigma_m \circ \Phi_m (\sReal{ \simStanHoods{\partial \Delta^J}}) \\
                                                                                &\subset f (U_{\tstr[A]}^{\Phi}(p)) \\
                                                                                &\subset U^\Psi_{\tstr[X]}(p) \\
                                                                                & = \bigcup_{i \in I} \sigma_j \circ \Psi_j(\sReal{\simStanHoods{ \Delta^{\J_j}}})
    \end{align*}        
    for all $p\in P$, which (by \cref{rem:alternative_v_prec}) was to be shown. It is then immediate by construction that ${\tstr[B]} \to {{\tstr[Y]}}$ also lifts to a map of strata-neighborhood systems. 
\end{proof}
Next, let us verify that the functor $\NDiagT \colon \NCat \to \DiagTop$ sends \cref{diag:lift_of_pushout_diag} to a homotopy pushout square.
\begin{lemma}\label{lem:comp_nbhd_preserves_cell_cplx}
    In the situation of \cref{cor:models_for_homotopy_pushouts_half}, the image of \cref{diag:lift_of_pushout_diag} under $\NDiagT$ has the following property. For each $\I \in \sd(\pos)$, the resulting square
        \begin{diagram}\label{diag:NDiagTModel}
          \NDiagT (  \mathfrak U^{\Phi}_{\tstr[A]})(\I) \arrow[r, hook] \arrow[d] &  \NDiagT (\mathfrak U^{\hat \Phi}_{\tstr[B]}(\I)) \arrow[d] \\
        \NDiagT (\mathfrak U^{\Psi}_{\tstr[X]}(\I)) \arrow[r, hook] &  \NDiagT (\mathfrak U^{\hat \Psi}_{{\tstr[Y]}}(\I))
        \end{diagram}
        is such that:
    \begin{enumerate}
        \item All objects of the square are cell complexes in $\TopN$;
        \item The square is a pushout in $\TopN$;
        \item The horizontals are relative cell complexes in $\TopN$.
    \end{enumerate}
    In particular, \cref{diag:NDiagTModel} is a homotopy pushout diagram in $\TopN$.
\end{lemma}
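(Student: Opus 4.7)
My plan is to analyze each vertex of \cref{diag:NDiagTModel} directly through the subdivided cell structures provided by the subdivisions $\Psi, \Phi, \hat\Psi, \hat\Phi$. By \cref{prop:stan_hood_is_hood_cell}, each standard neighborhood $\mathfrak U^?(\I)$ is already a subcomplex of the corresponding subdivided stratified cell complex. Since every open cell of a stratified cell complex lies in a single stratum — namely the maximum of its flag — the subspace $\mathfrak U^?(\I)_{\geq p_n}$ consists precisely of those open cells whose top flag contains $p_n$. Equivalently, it is the open star (inside $\mathfrak U^?(\I)$) of the vertex subset of the subdivision corresponding to flags containing $p_n$.

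The first step is to exhibit a cell complex structure on $\mathfrak U^?(\I)_{\geq p_n}$ in the sense of $\TopN$. For each open cell $\tau$ of the subdivision lying in $\mathfrak U^?(\I)$, the restriction $\tau_{\geq p_n}$ admits a canonical join decomposition $\tau = A * B$ where $A$ is the subsimplex spanned by vertices whose flag does not contain $p_n$ and $B$ is the subsimplex spanned by vertices whose flag contains $p_n$; then $\tau_{\geq p_n}$ is the join minus its $A$-end, and retracts via the join parameter onto $B$. These local pieces are to be assembled cell by cell into a cell complex structure on $\mathfrak U^?(\I)_{\geq p_n}$. (One clean way to organize this is to take a sufficiently fine further subdivision after which $\mathfrak U^?(\I)_{\geq p_n}$ becomes a genuine subcomplex of the subdivision.)

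Next comes the pushout property. The key input is the compatibility of subdivisions from \cref{cor:models_for_homotopy_pushouts_half}: $\hat\Psi$ is built from $\Psi$ on cells coming from $\tstr[X]$ and from $\hat\Phi$ on cells coming from $\tstr[B]$, with agreement on $\tstr[A]$ given by $\Phi$. This implies that the cells of $\NDiagT(\mathfrak U^{\hat\Psi}_{\tstr[Y]})(\I)$ decompose as those of $\NDiagT(\mathfrak U^{\Psi}_{\tstr[X]})(\I)$ together with those of $\NDiagT(\mathfrak U^{\hat\Phi}_{\tstr[B]})(\I)$, intersecting exactly in $\NDiagT(\mathfrak U^{\Phi}_{\tstr[A]})(\I)$. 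Hence \cref{diag:NDiagTModel} is a pushout of cell complexes along an inclusion of a sub-cell-complex; in particular the horizontals are relative cell complexes.

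The final homotopy pushout claim then follows formally: relative cell complexes are cofibrations in the Quillen model structure on $\TopN$, and a strict pushout along a cofibration between cofibrant objects is automatically a homotopy pushout. The main obstacle I expect is the first step — producing the cell complex structure on $\mathfrak U^?(\I)_{\geq p_n}$ from the local join decompositions and verifying that it assembles coherently with the attaching maps of the ambient subdivided complex. Once that is in place, the pushout, relative-cell-complex, and homotopy-pushout assertions follow by essentially bookkeeping the compatibility of the subdivisions.
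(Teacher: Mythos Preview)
Your approach is correct in outline but takes a more hands-on route than the paper. The paper first observes that the square of \emph{stratified} spaces
\[
\begin{tikzcd}
\mathcal U^{\Phi}_{\tstr[A]}(\I) \arrow[r, hook] \arrow[d] & \mathcal U^{\hat\Phi}_{\tstr[B]}(\I) \arrow[d] \\
\mathcal U^{\Psi}_{\tstr[X]}(\I) \arrow[r, hook] & \mathcal U^{\hat\Psi}_{\tstr[Y]}(\I)
\end{tikzcd}
\]
is a pushout of stratified cell complexes along a subcomplex inclusion (this is your cell-matching argument), and then reduces everything to two general facts about the functor $(-)_{\geq p_n}\colon \TopPN\to\TopN$: it preserves all colimits (\cref{lem:colim_of_upperset}, because $\{q\geq p_n\}\subset P$ is Alexandrov-open, so restriction to an open set commutes with colimits), and it sends relative stratified cell complexes to relative cell complexes (\cref{lem:cell_cplxs_are_preserved}, via a PL triangulation argument applied to a single simplex with a face removed). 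This sidesteps entirely the issue you flag as the main obstacle, namely assembling the local join decompositions into a global cell structure compatible with attaching maps. Your route via explicit join decompositions and further subdivision would also work, but you would essentially be reproving \cref{lem:cell_cplxs_are_preserved} by hand inside the argument; the paper's factorization through these two abstract lemmas is both shorter and reusable.
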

\begin{proof}
    It is immediate from the construction of the standard neighborhoods of a stratified cell complex that the diagrams
    \begin{diagram}
        \mathcal U^{\Phi}_{\tstr[A]}(\I) \arrow[r, hook] \arrow[d] &  \mathcal 
 U^{\hat \Phi}_{\tstr[B]}(\I) \arrow[d] \\
         \mathcal  U^{\Psi}_{\tstr[X]}(\I) \arrow[r, hook] & \mathcal  U^{\hat \Psi}_{{\tstr[Y]}}(\I)
    \end{diagram}
    are pushout diagrams of stratified cell complexes (see also \cref{prop:stan_hood_is_hood_cell}), with the upper vertical a relative (stratified) cell complex, for every $\I \in \sd(\pos)$. Indeed, note that the cells missing in $\mathcal U^{\Psi}_{\tstr[X]}(\I)$ from $\mathcal U^{\hat \Psi}_{{\tstr[Y]}}(\I)$ correspond precisely to the respective cells missing in $\mathcal U^{\Phi_0}_{\tstr[A]}(\I)$ from $\mathcal U^{\Phi}_{\tstr[B]}(\I)$. What remains to be shown is that these properties are preserved under applying the functor $(-)_{\geq p} \colon \TopPN \to \TopN$. This is an immediate consequence of \cref{lem:cell_cplxs_are_preserved} and \cref{lem:colim_of_upperset}.
    We can thus summarize that \cref{diag:NDiagTModel} is a pushout diagram of cell complexes where the upper horizontal is given by a relative cell complex, in particular a cofibration. It follows from \cite[A.2.4.4]{HigherTopos} that the diagram is homotopy cocartesian.
\end{proof}
\subsection{The proof \texorpdfstring{of \cref{prop:models_model_globally}}{that standard neighborhood models are homotopy link models.}}\label{subsec:models_are_models}
As a consequence of \cref{prop:computing_links_using_nbhds} we are now ready to give a proof of \cref{prop:models_model_globally}, which tells us that we may indeed use homotopy link models to compute homotopy links. 
Precisely, \cref{prop:computing_links_using_nbhds} guarantees us that for $\tstr[X]\in \TopPN$ the diagram $\HolIP[](\tstr[X])$ may equivalently computed via the diagram given by $\I \mapsto \HolIP(\snVals(\I))$, where $\snSys$ is any strata-neighborhood system of $\tstr[X]$.
\begin{notation}
    Let $({\tstr[X]}, \snSys) \in \NCat$. We denote by $\NDiagH(\snSys)$ the element of $\DiagTop$ given by
    \[
    \I \mapsto \HolIP(\snVals(\I))
    \]
    with the obvious structure maps induced by the ones on $\HolIP(\tstr[X])$. We denote 
    \[
    \NDiagH \colon \NCat \to \DiagTop
    \]
    the functor induced by this construction.
\end{notation}
We may then rephrase \cref{prop:computing_links_using_nbhds} as follows.
\begin{corollary}\label{cor:computing_link_using_nbhds}
    The inclusions $\snVals(\I) \hookrightarrow {\tstr[X]}$, for $({\tstr[X]},\snSys) \in \NCat$ and $\I \in \sd(\pos)$, induce a natural weak equivalence of functors
    \[
    \NDiagH \xrightarrow{\simeq} \HolIP[].
    \]
\end{corollary}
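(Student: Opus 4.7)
The corollary is a functorial reformulation of \cref{prop:computing_links_using_nbhds}, so the plan is essentially to extract that proposition at each point and to check the compatibilities needed to upgrade the pointwise weak equivalences to a natural transformation of diagram-valued functors.

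First, I would fix $(\tstr[X], \snSys) \in \NCat$ and $\I \in \sd P$ and observe that the stratified inclusion $\snVals(\I) \hookrightarrow \tstr[X]$ induces, by functoriality of $\HolIP$, a map $\eta_{\snSys, \I} \colon \HolIP(\snVals(\I)) \to \HolIP(\tstr[X])$. By \cref{prop:computing_links_using_nbhds}, this map is a weak equivalence for every such $(\snSys, \I)$, which takes care of the pointwise statement. The remaining work is to verify that the $\eta_{\snSys, \I}$ assemble into a natural transformation $\NDiagH \Rightarrow \HolIP[]$ of functors $\NCat \to \DiagTop$.

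To see that $\eta_{\snSys, -}$ is a morphism in $\DiagTop$, I would take $\I' \subset \I$ in $\sd P$ and note that by \cref{not:flag_nbhd} we have $\snVal(\I) \subset \snVal(\I')$, so the diagram of stratified inclusions
\begin{equation*}
\snVals(\I) \hookrightarrow \snVals(\I') \hookrightarrow \tstr[X], \qquad \sReal{\Delta^{\I'}} \hookrightarrow \sReal{\Delta^{\I}}
\end{equation*}
gives, after applying $\TopPN(\sReal{\Delta^{(-)}}, -)$ with the subspace topology, a commuting square relating $\HolIP(\snVals(\I)) \to \HolIP[\I'](\snVals(\I'))$ to $\HolIP(\tstr[X]) \to \HolIP[\I'](\tstr[X])$. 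This is a purely formal check using the bifunctoriality of the pairing defining the generalized homotopy links, with no stratified input beyond the inclusion $\snVal(\I) \subset \snVal(\I')$.

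For naturality in $\NCat$, I would take a morphism $f \colon (\tstr[X], \snSys) \to (\tstr[Y], \snSys')$, i.e.\ a stratum preserving map with $f(\snVal(p)) \subset \snVal'(p)$ for every $p \in P$. Intersecting over $p \in \I$ yields $f(\snVal(\I)) \subset \snVal'(\I)$, so $f$ restricts to a stratum preserving map $\snVals(\I) \to \snVals'(\I)$ compatible with the inclusions into $\tstr[X]$ and $\tstr[Y]$; applying $\HolIP$ gives the required commuting square. Combining these two compatibilities, $\eta$ is a natural transformation, and the pointwise statement of \cref{prop:computing_links_using_nbhds} then upgrades it to a natural weak equivalence. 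There is no real obstacle here — the content is entirely in the already established \cref{prop:computing_links_using_nbhds}, and the remaining arguments are formal verifications of functoriality.
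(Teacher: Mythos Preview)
Your proposal is correct and matches the paper's treatment: the paper presents this corollary as an immediate rephrasing of \cref{prop:computing_links_using_nbhds}, without spelling out a separate proof. The naturality checks you write out are the routine verifications the paper leaves implicit.
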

The obvious next step to prove \cref{prop:models_model_globally} is to show that $\NDiagH$ is in turn weakly equivalent to $\NDiagT$. The definition of a homotopy link model suggests to use the maximal vertex evaluation maps 
\[
\HolIP(\snVals(\I)) \xrightarrow{\ev_{p_n}} \snVal(\I)_{p_n} \hookrightarrow \snVal(\I)_{\geq p_n}
\]
However, there is a technical difficulty to overcome first. In fact, these maps do not induce a morphisms of diagrams. Already in the case where $\I = [p_0 < p_1]$ the diagram 
\begin{diagram}
    \HolIP[p_0 < p_1](\snVals(p_0 < p_1)) \arrow[r, "\ev_{p_1}"] \arrow[d] & \snVal(p_0 < p_1)_{\geq p_1} \arrow[d, hook]\\
    \HolIP[p_0](\snVals(p_0)) = (U_{\tstr[X]}(p_0))_{p_0} \arrow[r, hook] & \snVal(p_0)_{\geq p_0} 
\end{diagram}
is only commutative up to homotopy. What we may do instead is to construct a natural transformation $\NDiagH \to \NDiagT$ only up to homotopy coherence. We may then use rigidification results such as \cite[Prop. A.3.4.12]{HigherTopos} to obtain a weak equivalence of functors.
\begin{remark}
    There will occur a slight set-theoretical difficulty when using \cite[Prop. A.3.4.12]{HigherTopos}. Namely, we will want to consider the homotopy coherent nerve of $\Top$ as an element of $\sSet$. Size issues require us to pass to a larger Grothendieck universe. To make this rigorous, we need to assume large cardinals $\kappa < \kappa'$, and denote by $\sSet$ the category of simplicial sets of size smaller than $\kappa$ some fixed large cardinal, and by $\widetilde{\sSet}$ the category of simplicial sets of cardinality smaller than $\kappa'$. 
\end{remark}
\begin{definition}
       In the case where $\TopN$ denotes either $\Delta$-generated or topologically generated spaces (i.e., $\TopPN$ is cartesian closed). We denote by $X^Y$ the internal mapping space of $Y,X \in \TopN$. For any $X \in \Top$, this constructions defines a simplicial functor 
    \[
    X^{-} \colon \Top^{\op} \to \Top,
    \]
    by mapping an $n$-simplex
    \[\sigma \colon \real{\Delta^n}  \times Z \to Y\]
    of $\Top(Z,Y)$ to the adjoint map of
    \[
    X^Y \times \real{\Delta^n} \times Z \xrightarrow{1 \times \sigma } X^Y \times Y \xrightarrow{\textnormal{ev}} X, 
    \]
    which indeed defines an $n$-simplex of $\Top(X^Y, X^Z)$.
\end{definition}

\begin{construction}\label{con:coherent_evalutation}
    We only construct the weak equivalence for a fixed $({\tstr[X]},\snSys) \in \NDiagT$. Generalizing to the case of a whole natural transformation essentially just comes down to an increase in notation. Furthermore, we only prove the case where $\Top$ is cartesian closed. The general case follows from this using that every space is weakly equivalent to its kelleyfication with respect to $\Delta$-generated spaecs. We denote by $\mathcal{N}$ the homotopy coherent nerve functor from the category of ($\kappa'$) small simplicial categories $\widetilde{\sCat}$ to $\widetilde \sSetN$. We denote its leftadjoint by $\mathcal{S}$.  \\
    Denote by $\underline{\Pos}$ the category of all ($\kappa$ small) posets, with its simplicial structure inherited from $\sSet$. 
    Furthermore, consider the posets $Q=\sd(\pos) \times[1] ^{\op}$ as a category. Now, consider the assigment
    $
    E \colon Q \to \Pos
    $
    by mapping 
    \begin{align*}
         (\I, 1) &\mapsto [0]  \\
         (\I,0) &\mapsto \I
    \end{align*}
    and 
    \begin{align*}
        (\I, 1) \leq (\I',1) &\mapsto ( [0] \to [0]])  \\
           (\I, 1) \leq (\I',0) &\mapsto \{ 0 \mapsto \max \I ' \} \\
            (\I, 0) \leq (\I',0) &\mapsto ( \I \hookrightarrow \I').
    \end{align*}
    This assignment does not define a functor! However, we can turn it into a homotopy coherent functor. This is due to the fact that $E$ has the property \begin{align}{}\label{equ:E_almost_functor}
        E(f \circ g)(p) \geq E(f) \circ E(g)(p),
    \end{align} for composable $f,g \in Q$ and $p$ in the source of $E(g)$.
    For $\alpha_1, \alpha_0 \in Q$, denote by $Q_{\alpha_1, \alpha_0} \subset  Q$ the poset of all regular flags $S \subset Q$, with $\min S= \alpha_1$ and $\max S = \alpha_0$ ordered by reverse inclusion. Next, consider map
    \begin{align*}
        \mathcal{E} \colon Q_{\alpha_1, \alpha_0} \times E(\alpha_1) &\to E(\alpha_0) \\
    ([S_0 < \cdots < S_n],p) &\mapsto E(S_{n-1} \leq S_n) \circ \cdots \circ E(S_{0} \leq S_1) (p).
    \end{align*}
    It follows by \cref{equ:E_almost_functor} that $\mathcal{E}$ defines a map of posets.
    Thus, equivalently $\mathcal{E}$ specifies a simplicial map
    \[
    \nerve (Q_{\alpha_1, \alpha_0}) \to \sSet( \nerve (E(\alpha_1)), \nerve (E(\alpha_0))).
    \]
    In this manner, we have defined a simplicial functor
    \[
    \mathcal E \colon \mathcal{S}( \sd(\pos) \times [1]^{\op} ) \to \underline{\Pos},
    \]
    where $\mathcal{S}$ is the left adjoint to the homotopy coherent nerve (see for example \cite[Sec. 1.1.5]{HigherTopos}) and
    where the simplicial structure on the right hand side is inherited from the one on $\sSet$. 
    Next, consider the composition of simplicial functors
    \[
     \mathcal{S}( \sd(\pos)^{\op} \times [1]) \xrightarrow{\mathcal E} \underline{\Pos}^{\op} \xrightarrow{\nerve} \sSet^{\op} \xrightarrow{\real{-}} \Top^{\op} \xrightarrow{\utstr[X]^{-}} \Top.
    \]

    It specifies a homotopy coherent diagram $D$ in $\Top$, indexed over $\sd(\pos)^{\op} \times [1]$, which restricts to the constant diagram of value ${\utstr[X]}$ at $1$ and to the diagram $D_0$ given by $\I \mapsto \utstr[X]^{\real{\Delta^{\I}}}$ at $0$. 
    We may then consider $\NDiagH(\snSys)$ as a subdiagram of $D_0$ and $\NDiagT(\snSys)$ as a subdiagram of $D_1$. For $\alpha_0= (\I_0, 0)$ and $\alpha_1=(\I_1,1)$, and $p \in \I_1$, $\mathcal{E}_{\alpha_1, \alpha_0}$ has the property that $\mathcal{E}(-,p)$ has image in $\{q \in \I_0 \mid q \geq \max \I_0 \}$.
    It follows from this that restricting to $\NDiagH(\snSys)$ at $0$ and $\NDiagT(\snSys)$ at $1$ defines a homotopy coherent subdiagram of $D$.
    To summarize, we have constructed a simplicial functor
    \begin{align*}
          \ev \in \widetilde{{\sCat}}( \mathcal{S}( \sd(\pos)^{\op} \times [1]), \Top) &\cong \widetilde{\sSetN} ( \sd(\pos)^{\op} \times [1], \mathcal{N} (\Top))
    \end{align*}
    which restricts to $\NDiagH$ at $0$ and $\NDiagT$ at $1$, or in other words by the identity
    \begin{align*} 
    \widetilde{ \sSetN} ( \sd(\pos)^{\op} \times [1], \mathcal N(\Top))\cong \textnormal{Fun}( \sd(\pos)^{\op}, \mathcal N(\Top))_1. 
    \end{align*}
    a natural transformation of functors of quasi categories between
    \[
     \mathcal{N}(\sd(\pos)^{\op} \xrightarrow{\NDiagH(\snSys)} \Top) \]
     and
     \[
         \mathcal{N}(\sd(\pos)^{\op}  \xrightarrow{\NDiagT(\snSys)} \Top ) .
    \]
    For any fixed flag $\I = \standardFlag$ this natural transformation is given by 
    \[
    \HolIP(U_{\tstr[X]}(\I)) \xrightarrow{\ev _{p_n}} U_{\tstr[X]}(\I)_{\geq p_n}.
    \]
    Now, if $\snSys$ is a homotopy link model for ${\tstr[X]}$, then the latter map is a weak equivalence. Hence, if we pass to Kan-complex ($\sSet^o$) by applying singular simplicial sets, then this natural transformation is given pointwise by an isomorphism in the quasi-category $\mathcal{N}(\sSet^o)$. We have thus defined an isomorphism
    between the functors of quasi-categories
    \begin{align*}
        \mathcal{N}(\sd(\pos)^{\op}  \xrightarrow{\NDiagH(\snSys)} \Top \to \sSet^{o} ),  \\
         \mathcal{N}(\sd(\pos)^{\op}  \xrightarrow{\NDiagT(\snSys)} \Top \to \sSet^{o} ).
    \end{align*}
\end{construction}
We may now finish the proof of \cref{prop:models_model_globally}.
\begin{proof}[Proof of \cref{prop:models_model_globally}]
    We only provide a weak equivalence for some fixed homotopy link model $\snSys$. 
    The global case is essentially analogous.
    We consider $\widetilde \sCat$ as equipped with the model structure for simplicial categories (see \cite{BergnerSimCat}) making the adjunction $\mathcal{S} \dashv \mathcal{N}$ a Quillen equivalence between $\widetilde \sCat$ and simplicial sets equipped with the Joyal model structure, $ \sSetJoyL$ (\cite[Thm. 1.21]{QuasiCatAndsSCat}).
    If not indicated otherwise by an superscript $\mathfrak J$, we consider $\sSet$ to be equipped with the Kan-Quillen model structure.
    Let $\snSys$ be a homotopy link model for a stratified space ${\tstr[X]} \in \TopPN$. 
    We need to show that $\HolIP[]{\tstr[X]}$ and $\NDiagT(\snSys)$ are weakly equivalent. 
    By \cref{cor:computing_link_using_nbhds}, we may instead show that $\NDiagT(\snSys)$ and $\NDiagH(\snSys)$ are weakly equivalent. 
    Using the Quillen equivalence between $\Top$ and $\sSet$, we may equivalently show that $\Sing\circ \NDiagT(\snSys) \colon \sd(\pos) ^{\op} \to \sSet$ and $\Sing\circ \NDiagH(\snSys) \colon \sd(\pos)^{\op} \to \sSet$ are weakly equivalent. In other words, we need to show that these two functors present the same path component in $\pi_0(\DiagS)$ (using the notation of \cite[Prop. A.3.4.12]{HigherTopos}.) By \cite[Prop. A.3.4.12]{HigherTopos} there is a canonical bijection:
   \begin{align*}
        \pi_0(\DiagS) = \ho \widetilde{\sCat} ( \sd(\pos)^{\op}, \sSet^o ).
    \end{align*}
    Furthermore, under the Quillen equivalence between simplicial and quasi-categories \cite[Thm. 1.21]{QuasiCatAndsSCat}, this bijection extends to 
    \[
      \pi_0(\DiagS) = \ho \widetilde{\sCat} ( \sd(\pos)^{\op}, \sSet^o ) = \ho \sSetJoyL ( \sd(\pos)^{\op}, \mathcal N ( \sSet ^o) ).
    \]
     $\ho \sSetJoyL ( \sd(\pos)^{\op}, \mathcal N ( \sSet ^o) )$ is the set of isomorphism classes of functors of quasi-categories $\sd(\pos)^{\op} \to \mathcal N ( \sSet ^o)$. We have constructed such an isomorphism between $\mathcal{N}(\Sing\circ \NDiagT(\snSys))$ and $\mathcal{N}(\Sing\circ \NDiagH(\snSys))$ in
     \cref{con:coherent_evalutation}.
\end{proof}
\section{Regular neighborhoods and homotopy link models}\label{subsec:def_aspire}
In the previous section, we have constructed strata-neighborhood systems for stratified simplicial sets and stratified cell complexes. For a proof of \cref{thm:hol_main_result}, in light of \cref{prop:models_model_globally}, it remains to show that these strata-neighborhood systems are homotopy link models. To do so, we develop a generalized notion of regular neighborhoods for stratified spaces, which also applies to flags $\I \in \sd(\pos)$ of length greater equal to two. 
Recall from \cite[A]{friedman2003stratified} the notion of a nearly stratum-preserving deformation retraction (introduced in similar form in \cite{quinn1988homotopically}). 
The following generalizes this notion to the case of more than two strata. In the following sections, we will generally omit the index from the stratification maps $\sstr \colon \str \to \pos$ and just write $s(x)\in \pos$, for $x \in \str$.
\begin{definition}\label{def:aspire}
    Let ${\tstr[X]}\in \TopPN$ be a stratified space and let $\I = [p_0 < \cdots < p_n]$ be a regular flag in $\pos$.
    We say that ${\tstr[X]}$ admits \define{an almost\footnote{The usage of 'almost' instead of 'nearly' is purely for the sake of having a phonetically pleasant acronym.} stratum-preserving $\I$-retraction} - \aspire{}  for short - if the following holds: 
    There exists a stratum-preserving map $R \colon \utstr_{p_n} \times \sReal{\Delta^{\I}} \to \tstr$  such that, for each $p \in \I $, the map of (general topological) spaces
   \begin{align*}
    (\utstr_{p_n} \cup \utstr_{\leq p}) \times \sReal{\Delta^{\I_{\geq p}}}  \to \utstr \\
    (x,u) \mapsto \begin{cases}
        R(x,u) & s(x) = p_n \\
        x & s(x) \leq p
    \end{cases}
   \end{align*}
    is well defined and continuous.
\end{definition}
\begin{remark}\label{rem:aspire_two_strata}
    To get a first intuition for \cref{def:aspire}, let us decode what the requirements in \cref{def:aspire} mean in the case where $\I = [p_0 < p_1] = \pos$. Then, we may identify $\sReal{\Delta^\I}$ with the (stratified) interval $[0,1]$. Suppose a (stratified) neighborhood $\tstr[N] \subset \tstr$ of $\utstr_{p_0}$ admits an \aspire{} $R$.
    Then, equivalently $R$ is a stratum-preserving map
    \[
    R \colon N_{p_1} \times [0,1] \cong N_{p_1} \times \sReal{\Delta^{\I}} \to N
    \]
    which extends to
    \[
    N \times [0,1] \to N
    \]
    by taking the constant homotopy of the inclusion on $\utstr_{p_0} \hookrightarrow N$, and furthermore $R$ restricted to $N_{p_n}\times \{1\}$ is given by the inclusion $N_{p_n} \hookrightarrow N$.\\ We may summarize this information as $R$ defining a strong deformation retraction from $N$ to $\utstr_{p_0}$, which is stratum-preserving, except at time $0$ when all of $N$ is mapped into $\utstr_{p_0}$. Note that this is (up to a slight but inessential variation in target space) the definition of a \textit{nearly stratum-preserving strong deformation retraction} given in \cite[A]{friedman2003stratified} (adapted from \cite{quinn1988homotopically}). It is a consequence of \cite[Prop. A.1]{friedman2003stratified} that (under some additional conditions on $\tstr$) the existence of such a deformation retraction guarantees that the map
    \[
    \HolIP[p_0 < p_1](N) \xrightarrow{\ev_{p_1}} N_{p_1}
    \]
    is a homotopy equivalence. Note that this is half the condition required for $N$ to be part of a homotopy link model for $\tstr$. This already makes it plausible that \aspires{} may be used to verify that certain strata-neighborhood systems are homotopy link models.
\end{remark}
Another technical remark on questions of set theoretic topology is in order.
\begin{remark}
    In \cref{def:aspire}, we required the map $(\utstr_{p_n} \cup \utstr_{\leq p}) \times \sReal{\Delta^{\I_{\geq p}}}  \to X$ to be a continuous map of \textit{general} topological spaces. In particular, we take $(\utstr_{p_n} \cup \utstr_{\leq p}) \subset X$ to have the classical relative topology, not the $\Delta$-generated or compactly generated one. Indeed, since $(\utstr_{p_n} \cup \utstr_{\leq p})$ is not open in $X$, this is generally a stronger requirement. For example, this subtlety will be important in the proof of \cref{prop:properties_of_star_homotopy}.
\end{remark}
\begin{remark}
    We are often going to treat an \aspire{} $R \colon \utstr_{p_n} \times \sReal{\Delta^\I} \to \utstr$ as a (not-necessarily continuous) map 
    \[
     \tstr_{p_n} \times \sReal{\Delta^\I}  \cup \bigcup_{p \in \I} \utstr_{\leq p} \times \sReal{\Delta^{\I_{\geq p}}} \to X. 
    \]
     In this sense, we also write
     \[
     R(x,u):= x
     \]
     for $x \in \utstr_{p}$ and $u \in \sReal{\Delta^{\I_{\geq p}}}$. Furthermore, under the adjunction $- \times \sReal{\Delta^\I} \dashv \HolIP$ it can be useful to treat an \aspire{} $R$ as a map $\ustr_{p_n} \to \HolIP(\str)$, the value of which at $x\in \ustr$ we denote by $R_{x}$.
\end{remark}
Finally, let us give another characterization of \aspires{} in the case where $X$ is a metric space, which may be somewhat more intuitive.
\begin{remark}
    When $X$ is metrizable, we may equivalently require the stratum-preserving map $R$ as in \cref{def:aspire} to have the following property.
    Whenever a sequence $x_m \in \utstr_{p_n}$ converges to $x \in \utstr_{p}$, then the sequence of stratum-preserving simplices
    \[
    R_{x_m}|_{\sReal{\Delta^{\I_{\geq p}}}} \colon \sReal{\Delta^{\I_{\geq p}}} \to \tstr,
    \]
    converges uniformly to the constant map \[
    c_x \colon \real{\Delta^{\I_{\geq p}}} \to \ustr
    \]
    of value $x$.  In particular, if we denote by $v_n$ the maximal vertex of $\sReal{\Delta^\I}$, then $R(x,v_n) = x$. 
\end{remark}
\begin{remark}
    The question may arise, why we have chosen to use the more technical condition, to only require \aspires{} to extend continuously to certain subspaces of $\utstr_{\leq p_n} \times \sReal{\Delta^\I}$, and not to the whole space.
    For realizations of standard neighborhoods of stratified simplicial sets one can indeed produce \aspires{} which extends to the whole space (see \cref{prop:stan_aspire_is_aspire}).
    For stratified cell complexes, however, this is not the case (see \cref{ex:aspire_gen_not_ext}). This is ultimately due to the fact that stratified cell complexes allow for vastly pathological gluing maps, which are generally far from being piecewise linear. Nevertheless, the more general definition of \aspires{} we have chosen here also applies to stratified cell complexes.
\end{remark}
\cref{rem:aspire_two_strata} already suggests the following proposition.
\begin{proposition}\label{prop:computing_holinks_via_aspire}
    Let $\tstr \in \TopPN$ and let $\I =  \standardFlag \subset P$ be a regular flag. If $\tstr$ admits an \aspire, then 
        \[ \HolIP(\tstr) \xrightarrow{\ev} \utstr_{p_n}\]
    is a weak homotopy equivalence in $\TopN$.
\end{proposition}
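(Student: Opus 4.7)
The plan is to use the \aspire{} $R$ to exhibit an explicit homotopy inverse to $\ev$.

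First, define the section $s \colon \utstr_{p_n} \to \HolIP(\tstr)$ by $s(x) := R(x, -)$. Stratum preservation of $R$ in its second argument (built into the definition of an \aspire{}) together with continuity ensure, via the adjunction $- \times \sReal{\Delta^{\I}} \dashv \HolIP$, that $s$ is a well-defined continuous map into the homotopy link. Moreover, well-definedness of the extension at $p = p_n$ in \cref{def:aspire} forces $R(x, v_n) = x$, whence $\ev \circ s = \mathrm{id}_{\utstr_{p_n}}$. It remains to construct a stratified homotopy $s \circ \ev \simeq \mathrm{id}_{\HolIP(\tstr)}$. By adjunction, this amounts to a stratum preserving map
\[
H \colon [0,1] \times \HolIP(\tstr) \times \sReal{\Delta^{\I}} \to \utstr
\]
(with the source stratified via projection to $\sReal{\Delta^{\I}}$) satisfying $H(0, \sigma, u) = \sigma(u)$, $H(1, \sigma, u) = R(\sigma(v_n), u)$, and ideally $H(s, \sigma, v_n) = \sigma(v_n)$, so that the homotopy is fiberwise over $\utstr_{p_n}$.

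The natural candidate generalizes the classical two-strata formula from \cref{rem:aspire_two_strata}, using the join decomposition $\sReal{\Delta^{\I}} = \sReal{\Delta^{\I_{< p_n}}} \star \{v_n\}$. Writing $u = (1-t) y + t v_n$ in the corresponding coordinates, one is led to set
\[
H(s, \sigma, u) = R\bigl(\sigma((1-t')y + t'v_n),\; (1-\tau)y + \tau v_n\bigr), \quad t' = t + s(1-t), \quad \tau = t/t'.
\]
For $u$ in the open $p_n$-stratum of $\sReal{\Delta^{\I}}$, stratum preservation of $R$ immediately yields stratum preservation of $H$, and the endpoint and fiberwise conditions hold by inspection.

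The principal obstacle is verifying continuity (and stratum-preservation) of $H$ as $s \to 0$ at points $u$ lying in a lower stratum $p_k$, $k < n$. In this limit $\sigma((1-t')y + t'v_n) \to \sigma(u) \in \utstr_{p_k}$, and one would like to invoke the continuous extension $(\utstr_{p_n} \cup \utstr_{\leq p_k}) \times \sReal{\Delta^{\I_{\geq p_k}}} \to \utstr$ guaranteed by \cref{def:aspire}. However, this extension only controls the second argument of $R$ when it lies in the sub-simplex $\sReal{\Delta^{\I_{\geq p_k}}}$, whereas for a generic $u \in \utstr_{p_k}$ the point $(1-\tau) y + \tau v_n$ tends to $u$, which need not lie in that face. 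Resolving this will likely require refining the formula, for instance by a piecewise interpolation that routes the second argument through $\sReal{\Delta^{\I_{\geq p_k}}}$ in a neighborhood of $s = 0$ before transitioning into the stratum of $u$, so as to fully exploit the extension hypothesis of \cref{def:aspire} at every $p \in \I$. Once such an $H$ is constructed, it together with $\ev \circ s = \mathrm{id}$ exhibits $\ev$ as a homotopy equivalence and hence in particular as a weak homotopy equivalence.
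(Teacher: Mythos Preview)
Your overall strategy matches the paper's exactly: define a section $s(x) = R(x,-)$, check $\ev \circ s = \mathrm{id}$, and then build a homotopy $s \circ \ev \simeq \mathrm{id}$. You also correctly identify the genuine obstacle: your candidate homotopy, based only on the single join decomposition $\sReal{\Delta^{\I}} = \sReal{\Delta^{\I_{<p_n}}} \star \{v_n\}$, fails to keep the second argument of $R$ inside $\sReal{\Delta^{\I_{\geq p_k}}}$ when the limit point lies in a stratum $p_k$ with $k < n-1$. So as written the proof is incomplete for $|\I| \geq 3$.

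The paper resolves this precisely along the lines you suggest, via a piecewise formula. The key observation is that the prism $\sReal{\Delta^{\I}} \times [0,1] \cong \sReal{\Delta^{\I} \times \Delta^1}$ embeds into $\real{\Delta^{\I} * \Delta^{\I}}$ as the union $\bigcup_{p \in \I} \real{\Delta^{\I_{\leq p}} * \Delta^{\I_{\geq p}}}$. This induces join coordinates $(u,t) \leftrightarrow [y_0,y_1,t]$ on the prism with the crucial feature that on the piece indexed by $p$ one has $y_1 \in \sReal{\Delta^{\I_{\geq p}}}$. The homotopy is then
\[
(\sigma \star_t R)(u) := R\bigl(\sigma((1-t)y_0 + t v_n),\, y_1\bigr),
\]
so on each piece the second argument of $R$ lands exactly in the face where the \aspire{} extension is available. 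Your formula is what this specializes to on the single piece $p = p_n$; the remaining pieces are what make continuity at lower strata go through. Verifying well-definedness and continuity across the pieces (the content of \cref{prop:properties_of_star_homotopy}) is still a nontrivial check.

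One further point you do not address: the definition of an \aspire{} is phrased in terms of the \emph{general} (non--$\Delta$-generated) subspace topology on $\utstr_{p_n} \cup \utstr_{\leq p}$, while $\HolIP(\tstr)$ need not carry the compact-open topology in all three ambient categories $\TopN$. The paper handles this by proving continuity of $-\star R$ with respect to the Kelleyfication of the compact-open topology, and then deducing the weak equivalence for all three choices of $\TopN$ by comparing with the $\Delta$-ification. This point-set bookkeeping is not difficult but must be tracked.
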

To prove this proposition, we need the following construction: 
\begin{construction}\label{con:def_star_R}
     Let $\tstr \in \TopPN$ and let $\I =  \standardFlag \subset P$ be a regular flag. Let $R \colon \utstr[A]_{p_n} \times \sReal{\Delta^{\I}} \to \tstr[A]$ define an \aspire{} on a closed subspace $\tstr[A]$ of $\tstr$. Then, for any $p \in P$, it follows from $\tstr[A] \subset \tstr[X]$ being closed that the restriction of $R$ to $\utstr[A]_{p_n} \times \sReal{\Delta^{\I_{\geq p}}}$ extends continuously to a map 
     \[ (\utstr[A]_{p_n} \cup \utstr_{\leq p}) \times \sReal{\Delta^{\I_{\geq p}}} \to \utstr \] by mapping $(x,u)$ to $x$, whenever $s(x) \leq p$. 
     For notational simplicity, we consider $R$ as a (not necessarily continuous) map \[
     R \colon \bigcup_{p \in \I} (\utstr[A]_{p_n} \cup \utstr_{\leq p}) \times \sReal{\Delta^{\I_{\geq p}}} \to \utstr
     \]
     in this fashion. We may identify $\sReal{\Delta^{\I}} \times [0,1]  = \sReal{\Delta^{\I} \times \Delta^{1}}$. Having done so, we can consider the natural embedding 
     \begin{align*}
         \sReal{\Delta^{\I} \times \Delta^{1}} \hookrightarrow \real{\Delta^\I * \Delta^\I} \\
     \end{align*}
     under which $\sReal{\Delta^{\I} \times \Delta^{1}}$ corresponds to the union of joins $\real{\Delta^{\I_{\leq p}}*\Delta^{\I_{\geq p}}}$, $p\in \I$. This embedding induces join coordinates $(u,t) \estimates [y_{0},y_{1},t]$ on $\sReal{\Delta^{\I} \times \Delta^{1}}$. \\
     Now, denote by $v_n$ the maximal vertex of $\real{\Delta^{\I}}$. Let $\sigma \colon \sReal{\Delta^{\I}} \to \tstr$ be a stratum-preserving map, and $t \in [0,1]$ such that $\sigma(u) \in \utstr[A]\cup \utstr_{<p_n}$, if $u_{p_n} = t$.
    We define
     \begin{align*}
       \sigma \star_t R \colon  \sReal{\Delta^\I} &\to \tstr           \\
        u &\mapsto R( \sigma((1-t)y_0 + tv_n),y_1) \spaceperiod
     \end{align*}
     \begin{figure}
         \centering
         \begin{tikzpicture}[scale=3]
        \coordinate (A) at (0,0);
        \coordinate (B) at (2,0);
        \coordinate (C) at (1,{sqrt(3)});
        \coordinate (AC) at (barycentric cs:A=1,C=1);
        \coordinate (AB) at (barycentric cs:A=1,B=1);
        \coordinate (BC) at (barycentric cs:B=1,C=1);
        \coordinate (ABB) at (barycentric cs:AB=1,B=1);
        \coordinate (ACBC) at (barycentric cs:AC=1,BC=1);
        \draw (A) -- (B) -- (C) -- cycle;
        \filldraw[pattern=dots] (AC) -- (BC) -- (C) -- cycle;
        \filldraw[pattern=crosshatch] (A) -- (AB) -- (AC) -- cycle;
        \draw (AB) -- (AC);
        \draw (AC) -- (BC);      
        \draw[dashed] (ABB) -- (ACBC);
        \node[fill = white, rounded corners = 3mm,  inner sep = 2pt] at (barycentric cs:A=1,AB=1,AC=1){$R_{\sigma( \frac{1}{2}, 0, \frac{1}{2})}$};
        \node[fill = white, rounded corners = 3mm,  inner sep = 2pt] at (barycentric cs:AC=1,C=1,BC=1){$\sigma|_{u_2 \geq \frac{1}{2}}$};
        \node[left] at (AC){$\sigma(\frac{1}{2}, 0, \frac{1}{2})$}; 
        \node[right = 4pt, fill = white, rounded corners = 3mm, inner sep = 2pt] at (barycentric cs:ABB=1,ACBC=1){$R_{\sigma(\frac{1}{4}, \frac{1}{4}, \frac{1}{2})}|_{u_0 = 0}$};
        \node[below right = 4pt, fill = white, rounded corners = 3mm,  inner sep = 2pt] at (ACBC){$\sigma(\frac{1}{4},\frac{1}{4},\frac{1}{2})$};
        \node[circle, fill, inner sep=1.5pt] at (ACBC){};
        \node[circle, fill, inner sep=1.5pt] at (C){};
        \node[circle, fill, inner sep=1.5pt] at (AC){};
        \node[above] at (C) {$\sigma(0,0,1)$};
    \end{tikzpicture}
      \begin{tikzpicture}[scale=3]
        \coordinate (A) at (0,0);
        \coordinate (B) at (2,0);
        \coordinate (C) at (1,{sqrt(3)});
        \coordinate (AC) at (barycentric cs:A=1,C=3);
        \coordinate (AB) at (barycentric cs:A=1,B=3);
        \coordinate (BC) at (barycentric cs:B=1,C=3);
        \coordinate (ABB) at (barycentric cs:AB=1,B=1);
        \coordinate (ACBC) at (barycentric cs:AC=1,BC=1);
        \draw (A) -- (B) -- (C) -- cycle;
        \filldraw[pattern=dots] (AC) -- (BC) -- (C) -- cycle;
        \filldraw[pattern=crosshatch] (A) -- (AB) -- (AC) -- cycle;
        \draw (AB) -- (AC);
        \draw (AC) -- (BC);      
        \draw[dashed] (ABB) -- (ACBC);
        \node[fill = white, rounded corners = 3mm,  inner sep = 2pt] at (barycentric cs:A=1,AB=1,AC=1){$R_{\sigma( \frac{1}{4}, 0, \frac{3}{4})}$};
        \node[fill = white, rounded corners = 3mm,  inner sep = 2pt] at (barycentric cs:AC=1,C=1,BC=1){$\sigma|_{u_2 \geq \frac{3}{4}}$};
        \node[left] at (AC){$\sigma(\frac{1}{4}, 0, \frac{3}{4})$}; 
        \node[right = 4pt, fill = white, rounded corners = 3mm, inner sep = 2pt] at (barycentric cs:ABB=1,ACBC=1){$R_{\sigma(\frac{1}{8}, \frac{1}{8}, \frac{3}{4})}|_{u_0 = 0}$};
        \node[below right, fill = white, rounded corners = 3mm,  inner sep = 2pt] at (ACBC){$\sigma(\frac{1}{8},\frac{1}{8},\frac{3}{4})$};
        \node[above] at (C) {$\sigma(0,0,1)$};
        \node[circle, fill, inner sep=1.5pt] at (ACBC){};
        \node[circle, fill, inner sep=1.5pt] at (C){};
        \node[circle, fill, inner sep=1.5pt] at (AC){};
        \node[above] at (C) {$\sigma(0,0,1)$};
    \end{tikzpicture}
         \caption{Illustration in barycentric coordinates of the maps $\sigma \star_t R$ for $t=\frac{1}{2}$, $t=\frac{3}{4}$ and $\I= [0<1<2]$. As $t$ increases from $0$ to $1$, the stratified simplex $\sigma$ is gradually replaced by simplices of the form $R_{\sigma(u)}$, ending in $R_{\sigma(v_n)}$, for $t=1$.}
         \label{fig:illustration_R_star_sigma}
     \end{figure}
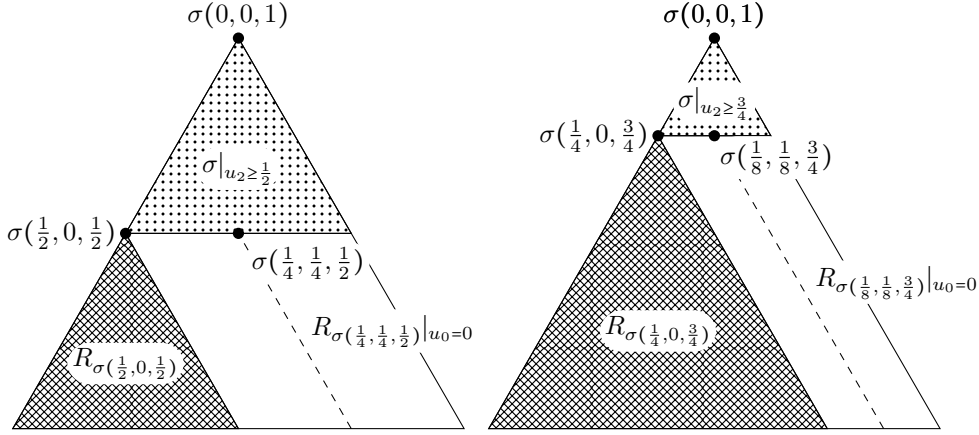
     See also the illustration of  $\sigma \star_t R$ in \cref{fig:illustration_R_star_sigma}.
     If $\sigma(u) \in \utstr[A]$, for all $u \in \sReal{\Delta^\I}$, then this construction extends to a homotopy 
     \begin{align*}
         \sigma \star R \colon \sReal{\Delta^{\I}} \times [0,1] &\to \tstr \\
         (u,t) &\mapsto \sigma \star_{t} R (u).
     \end{align*}
\end{construction}
\begin{proposition}\label{prop:properties_of_star_homotopy}
    Using the notation of \cref{con:def_star_R}, $\sigma \star R$ is well defined and has the following properties:
    \begin{enumerate}
        \item $\sigma \star_t R$ is stratum-preserving.
        \item $(\sigma \star R)_0 = \sigma$ and $(\sigma \star R)_1 = R(\sigma(v_n),-).$  
        \item Consider $\utstr_{\leq p_0}$ as a subspace of the space of continuous maps from $\real{\Delta^{\I}}$ to $\utstr$, $C^0( \real{\Delta^{\I}},\utstr)$, equipped with the compact open topology, by mapping $x$ to the constant map $c_x$ with value $x$. Furthermore, let $S$ denote the union of
         \[
        \{ (\sigma,t) \mid \sigma \in \HolIP(\tstr) \land  \forall u\in \sReal{\Delta^{\I}}: u_{p_n} = t \implies \sigma(u) \in \utstr[A] \}
         \]
         with  $\utstr_{\leq p_0} \times [0,1]$ in $C^0( \real{\Delta^{\I}},\utstr) \times [0,1]$.
         We equip $S$ with the compactly generated topology, that is, the Kelleyfication of the subspace topology in $C^0( \real{\Delta^{\I}},\utstr) \times [0,1]$ with respect to compact Hausdorff spaces. \\
        Then, the map 
        \begin{align*}
            - \star R \colon S  &\to \HolIP(\tstr) \cup \utstr_{\leq p_0} \\
            (\sigma,t) &\mapsto \sigma \star_t R. \\
            (x,t) &\mapsto x
        \end{align*}
        is continuous.
    \end{enumerate}
    In particular, if $\tstr = \tstr[A]$, then we obtain a homotopy
    \begin{align*}
        - \star R \colon \HolIP(\tstr) \times [0,1] &\to \HolIP(\tstr) \\
        (\sigma,t) &\mapsto \sigma \star_t R. 
    \end{align*}
    (with respect to the Kelleyfication topology)
    between the identity and $\sigma \mapsto R_{\sigma(v_n)}$.
\end{proposition}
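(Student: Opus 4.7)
The plan is to verify well-definedness of the assignment $(u,t) \mapsto (\sigma \star_t R)(u)$ first, then dispatch (1) and (2), and finally attack (3), which carries essentially all the technical weight. For well-definedness, I fix $(u,t)$ and observe that the prism $\sReal{\Delta^\I}\times[0,1]$ decomposes into simplices $\real{\Delta^{\I_{\leq p}} * \Delta^{\I_{\geq p}}}$ for $p \in \I$, meeting along codimension-one faces; on each one writes $u = (1-t)y_0 + t y_1$ with $y_0 \in \sReal{\Delta^{\I_{\leq p}}}$, $y_1 \in \sReal{\Delta^{\I_{\geq p}}}$. Independence of $R(\sigma((1-t)y_0+tv_n),y_1)$ from the choice of $p$ reduces to compatibility on overlaps, where either $y_1$ already lies in a smaller face $\sReal{\Delta^{\I_{\geq p'}}}$ (so the two prescriptions collapse into the same $p'$-formula) or $\sigma((1-t)y_0+tv_n)$ drops into $\utstr_{\leq p}$, in which case the \aspire{} extension collapses $R(\sigma(\cdots),y_1)$ to $\sigma(\cdots)$ uniformly in $y_1$. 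Property (1) then follows from two facts: $s(u)=s(y_1)$ whenever $t>0$, since the top-stratum coordinate of $u$ is picked up only from $y_1$ and the stratum of $y_1$ dominates that of $y_0 \in \sReal{\Delta^{\I_{\leq p}}}$; and $R$ is stratum preserving in its second variable on $\utstr[A]_{p_n}\times\sReal{\Delta^\I}$, while the extension across $\utstr_{\leq p}$ is trivially stratum preserving. For (2), at $t=0$ substituting $u=y_0$ forces $\sigma(y_0) \in \utstr_{\leq s(y_0)}$, so the extension gives $R(\sigma(y_0),y_1)=\sigma(y_0)=\sigma(u)$; at $t=1$ one has $u=y_1$ and $(1-t)y_0+tv_n=v_n$, yielding $R(\sigma(v_n),u)$.

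The real work is (3). I would split the source as $S = S_{\mathrm{link}} \cup \bigl(\utstr_{\leq p_0} \times [0,1]\bigr)$, where $S_{\mathrm{link}}$ consists of pairs $(\sigma,t)$ with $\sigma \in \HolIP(\tstr)$ sending the slice $\{u_{p_n}=t\}$ into $\utstr[A]$. Continuity on $S_{\mathrm{link}}$ is reasonably formal: the Kelleyfied compact-open topology combined with the compactness of $\sReal{\Delta^\I}$ reduces the claim to joint continuity of the evaluation $\HolIP(\tstr) \times \sReal{\Delta^\I} \to \utstr$, postcomposed with the continuous map $R$ on the closed subspace $\utstr[A]_{p_n} \times \sReal{\Delta^\I}$ and with the continuous prism parameterization $(u,t) \mapsto (y_0,y_1)$ over each of the finitely many prism simplices, whose contributions are glued unambiguously by well-definedness. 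The delicate case is continuity at a constant-map point $(c_x,t_0)$ with $x \in \utstr_{\leq p_0}$, where the limit value $c_x \star_{t_0} R = c_x$ and the approximating $\sigma_n \star_{t_n} R$ are governed by genuinely distinct branches of the formula.

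Here the indispensable ingredient is the \aspire{} axiom at $p=p_0$, which supplies a continuous extension of $R$ to $(\utstr[A]_{p_n} \cup \utstr_{\leq p_0}) \times \sReal{\Delta^\I}$ in the category of general topological spaces. Given $(\sigma_n,t_n) \to (c_x,t_0)$ in $S$, uniform convergence of $\sigma_n \to c_x$ on the compactum $\sReal{\Delta^\I}$ yields $\sigma_n((1-t_n)y_0 + t_n v_n) \to x$ uniformly in $y_0$; feeding this into the continuous extension of $R$ then gives $R(\sigma_n(\cdots),y_{1}) \to x$ uniformly in $y_{1}$ as well, which is exactly what is needed for the image in $C^0(\real{\Delta^\I},\utstr)$ to converge to $c_x$. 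The main obstacle will be reconciling the subspace topology on $S \subset C^0(\real{\Delta^\I},\utstr) \times [0,1]$ with its Kelleyfication, using compactness to promote pointwise-over-$\sReal{\Delta^\I}$ convergence to uniform convergence and running a finite diagonal over the prism pieces indexed by $p \in \I$. This is precisely the step where continuity, rather than mere pointwise agreement, of the extension across non-top strata becomes indispensable, matching the technical remark on general topological spaces flagged immediately before the proposition.
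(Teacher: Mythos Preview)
Your treatment of well-definedness and of parts (1) and (2) is essentially the paper's argument and is fine.

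For part (3), your outline has the right shape but two of the steps you call ``reasonably formal'' are exactly where the work lies, and your sketch does not close them.

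First, on $S_{\mathrm{link}}$ you write that the formula is obtained by ``postcomposing with the continuous map $R$ on the closed subspace $\utstr[A]_{p_n}\times\sReal{\Delta^\I}$'' together with the ``continuous prism parameterization $(u,t)\mapsto(y_0,y_1)$''. Neither of these is quite right. The first argument $\sigma((1-t)y_0+tv_n)$ lands in $\utstr[A]_{p_n}$ only when $t>0$; at $t=0$ it lands in $\utstr_{\leq p}$ for the relevant $p\in\I$, so one needs the continuous \aspire{} extension on $(\utstr[A]_{p_n}\cup\utstr_{\leq p})\times\sReal{\Delta^{\I_{\geq p}}}$ for every $p$, not just $p_0$. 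More seriously, $(u,t)\mapsto(y_0,y_1)$ is not a continuous map: join coordinates are only defined up to the join equivalence relation, so on each prism simplex you must argue that the formula descends continuously to the quotient. Well-definedness of the \emph{value} is not the same as continuity of the descended map.

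Second, at constant-map points you test continuity with sequences $(\sigma_n,t_n)\to(c_x,t_0)$. Kelleyfication with respect to compact Hausdorff spaces means continuity must be checked on maps from arbitrary compact Hausdorff $D$, not merely on convergent sequences; you have not argued that $S$ is sequential or metrizable.

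The paper resolves both issues simultaneously by passing at the outset, via the mapping-space adjunction, to an arbitrary compact Hausdorff $D$ equipped with $f\colon D\times\sReal{\Delta^\I}\to\tstr$ and $\tau\colon D\to[0,1]$, and showing the adjoint $f'\colon D\times\sReal{\Delta^\I}\to\utstr$ is continuous. One covers $D\times\sReal{\Delta^\I}$ by the closed pieces $D^p=\{(a,u):(u,\tau(a))\in\real{\Delta^{\I_{\leq p}}*\Delta^{\I_{\geq p}}}\}$ and, for each $p$, factors $f'|_{D^p}$ through the pushout $T^p=(\utstr_{\leq p}\cup\utstr[A]_{p_n})\times\sReal{\Delta^{\I_{\geq p}}}\cup_{\utstr_{\leq p}\times\sReal{\Delta^{\I_{\geq p}}}}\utstr_{\leq p}$, on which the \aspire{} extension furnishes a continuous $R^p\colon T^p\to\utstr$. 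The join-coordinate ambiguity is then handled by observing that the fibre product $D^p\times_{\tau,t}\real{\Delta^{\I_{\leq p}}*\Delta^{\I_{\geq p}}}$ carries two natural topologies (as a pullback, needed for continuity of the first map; as a pushout, needed for continuity of the second), and these agree because the former is Hausdorff and the latter compact. This single compact-Hausdorff argument replaces both your sequential limit at constant points and your glued prism-piece analysis on $S_{\mathrm{link}}$.
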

\begin{proof}
Let us first verify that $\sigma  \star_t R$ is indeed well defined on each join $\real{\Delta^{\I_{\leq p}}*\Delta^{\I_{\geq p}}}$. For $p = p_n$, and $u\in \real{\Delta^{\I_{\leq p}}*\Delta^{\I_{\geq p}}}$ the coordinate $y_1(u)$ is given by $v_n$. It follows that $R(-,y_1)$ is given by the identity on $\tstr$ and there is nothing to show. \\
For any $t > 0$ and $p < p_n$ the point $(1-t)y_0 + tv_n$ satisfies $((1-t)y_0 + tv_n))_{p_n} =t$. As $\sigma$ is stratum-preserving, this also implies $\sigma((1-t)y_0 + tv_n) \in \utstr[A]_{p_n}$, making $R( \sigma((1-t)y_0 + tv_n),y_1)$ a well-defined expression, as long as we show independence from a choice of representatives in join coordinates. 
If $t=0$, then $(1-t)y_0 + tv_n = y_0 = u$, and hence $\sigma((1-t)y_0 + tv_n)) \in \utstr_{p_k}$, for some $p_k \leq p$. 
As $y_1 \in \sReal{\Delta^{\I_{\geq p}}} \subset \sReal{\Delta^{\I_{\geq {p_k}}}}$, it follows
that then the expression $R( \sigma((1-t)y_0 + tv_n),y_1)$ is independent of $y_1$, hence well defined in join coordinates. Precisely, we have 
\[ 
R( \sigma((1-t)y_0 + tv_n),y_1) = \sigma(u).
\]
Conversely, if $t=1$, $R( \sigma((1-t)y_0 + tv_n),y_1)$ is clearly independent of $y_0$ and given by 
\[
R( \sigma((1-t)y_0 + tv_n),y_1) = R(\sigma(v_n),y_1).
\]
Next, note that $\sigma \star_t R$ is stratum-preserving. We only need to check the case $t >0$. Then, $(1-t)x + tv_n\in (\sReal{\Delta^{\I}})_{p_n}$. Hence, as $\sigma$ was assumed to be stratum-preserving, it also follows that $\sigma((1-t)x + tv_n) \in \utstr_{p_n}$. Now, the stratum of $[y_0,y_1,t]$ (in join coordinates) is given by $s(y_1)$, whenever $t >0$. Hence, it follows from the assumption that $R$ is stratum-preserving that we indeed have 
\[
s(R( \sigma((1-t)y_0 + tv_n),y_1)) = s(y_1) = s(y_0,y_1,t),
\]
as was to be shown. 
It remains to verify the continuity of 
\begin{align*}
            - \star R \colon S  &\to \HolIP(\tstr) \cup \utstr_{\leq p_0} \\
            (\sigma,t) &\mapsto \sigma \star_t R\\
            (x,t) &\mapsto x .
        \end{align*}
 Using mapping space adjunctions, it suffices to verify the following statement: 
 Let $D$ be a compact Hausdorff space and let $f \colon D \times \sReal{\Delta^\I} \to \tstr$ and $\tau: D \to [0,1]$ be a pair of maps such that for all $a \in D$ either $f(a,-)$ is stratum-preserving and $f(a,u) \in \utstr[A]$ whenever $u_{p_n}= \tau(a)$, or $f$ is constant with value in $\utstr_{\leq p_0}$. Then the map 
    \begin{align*}
         f' \colon D \times \sReal{\Delta^\I} &\to \tstr\\ 
        (a,u) &\mapsto R\Big ( f\big (a,(1- \tau(a)) y_0(u, \tau(a)) + \tau(a)v_n, \tau(a) \big),y_1(u, \tau(a) \big) \Big)
    \end{align*}
    is continuous. \\
    For $p\in \I$, denote by $T^{p}$ the pushout of general topological spaces
    \[
    (\utstr_{\leq p} \cup \utstr[A]_{p_n}) \times \sReal{\Delta^{\I_{\geq p}}} \cup_{\utstr_{\leq p}  \times \sReal{\Delta^{\I_{\geq p}}}} \utstr_{\leq p}.
    \] Note that $R$ induces a continuous maps
    \[
    R^p \colon T^p \to \utstr \spaceperiod
    \]
    We obtain a closed covering of $D \times \sReal{\Delta^\I}$ by the sets $D^p$, for $p \in \I$, where
    \[
    D^p = \{ (a,u) \in D \times \sReal{\Delta^\I} \mid (u,\tau(a)) \in \real{\Delta^{\I_{\leq p}}*\Delta^{\I_{\geq p}}}\},
    \]
    and verify continuity of $f'$ separately on these pieces.
    Now, on each $D^p$, $f'$ is given by a composition
    \begin{align*}
        D^p \to  D^p \times_{\tau, t} \real{\Delta^{\I_{\leq p}}*\Delta^{\I_{\geq p}}} \to T^p \xrightarrow{R^p} \tstr
        \end{align*}
        with the respective maps defined by
        \begin{align*}
            (a,u) &\mapsto (( a,u), [y_0(a,u), y_1(a,u), \tau(a)]) \\
            ((a,u), [y_0, y_1, t]) & \mapsto [f(a,(1-t)y_0 + tv_n   ),y_1] \\
            [z,y] &\mapsto R^p[z,y].
        \end{align*}
        To verify the continuity of the first of these maps, one needs to treat the set $D^p \times_{\tau, t} \real{\Delta^{\I_{\leq p}}*\Delta^{\I_{\geq p}}}$ as a pullback, while for the second, one needs to use the topology given by taking the pushout of
        \begin{diagram}
        D^p \times_{\tau, \pi_{\{0,1\}}} ( \real{\Delta^{\I_{\leq p}}} \times \real{\Delta^{\I_{\geq p}}} \times \{ 0, 1\} )  \arrow[r] \arrow[d] &  D^p \times_{\tau, \pi_{[0,1]}} ( \real{\Delta^{\I_{\leq p}}} \times \real{\Delta^{\I_{\geq p}}} \times [0, 1] )  \\
            D^p \times_{\tau, \pi_{\{0\}}} \real{\Delta^{\I_{\leq p}}} \times \{0 \} \sqcup D^p \times_{\tau, \pi_{\{1\}}} \real{\Delta^{\I_{\geq p}}} \times \{1 \} &  \spaceperiod
        \end{diagram}
        The latter is, a priori, finer than the former. Since $D^p \times_{\tau, t} \real{\Delta^{\I_{\leq p}}*\Delta^{\I_{\geq p}}}$ is Hausdorff, with respect to the former topology, and compact, with respect to the latter, the two topologies do in fact agree. Summarizing, we have shown continuity of $f'$ on each $D^p$, and hence continuity of $f'$. \\
\end{proof}
We can now prove \cref{prop:computing_holinks_via_aspire}.
\begin{proof}[Proof of \cref{prop:computing_holinks_via_aspire}]
    We are going to show that $\ev$ is a homotopy equivalence, if we pass to the $\Delta$-generated topology. Note that since the $\Delta$-generated topology on $S$, as in \cref{prop:properties_of_star_homotopy}, is finer than the Kelleyfication with respect to compact Hausdorff spaces, $- \star R$ is also continuous with respect to the $\Delta$-generated topology.
    Since any space is naturally weakly equivalent to its $\Delta$-ification, this shows the result also for the case of compactly generated and general topological spaces.
    Let $R \colon \utstr_{p_n} \times \sReal{\Delta^{\I}} \to \str$ define an \aspire{} on $\tstr$.
    Consider the map 
    \begin{align*}
            \iota: \utstr_{p_n} &\to \HolIP{\tstr} \\
            x &\mapsto \{u \mapsto R(x,u)\}.
    \end{align*}
    Since $R$ is stratum-preserving, this map is indeed well defined.
    Furthermore, since $R(x,v_n) =x$, we have 
    \[
    \ev \circ \iota = 1.
    \]
    By \cref{prop:properties_of_star_homotopy}, the map
    \begin{align*}
        - \star R \colon \HolIP(\tstr) \times [0,1] &\to \HolIP(\tstr) \\
        (\sigma,t) &\mapsto \sigma \star_t R. 
    \end{align*}
    defines a homotopy between the identity and $\iota \circ \ev$.
\end{proof}
\subsection{\texorpdfstring{\aspires{}}{Aspires} of standard neighborhoods}\label{subsec:aspire_of_simplicial}
    Now, let us construct \aspires{} for the standard neighborhoods of stratified simplicial sets of \cref{con:standard_neighborhoods}. To accomplish this, let us first describe a class of retracts of the inclusions $\utstr_{\leq p} \hookrightarrow \stanHood[p]{\str}$.
\begin{construction}\label{con:retract_to_strata}
   Let $p \in P$ and $\J \subset P$ be a flag.
   We use coordinates $y_{\leq p}$, $y_{ \not \leq  p}$ and $s_{\leq p}$ (as in \cref{con:final_coordinates}) on $\sReal{\Delta^\J}$. 
   Consider the map 
   \begin{align*}
       \rho^p \colon \stanHood{\Delta^\J} &\to (\sReal{\Delta^{\J}})_{\leq p} \\
       [y_{\leq p}, y_{ \not \leq p}, s_{\leq p}] &\mapsto [y_{\leq p},y_{ \not \leq p} ,1] = [y_{\leq p}].
   \end{align*}
   Note that since $s_{\leq p} \geq \frac{1}{2}$, for $x\in \stanHood{\Delta^{\J}}$, this map is indeed well defined. Under left Kan extension, $\rho^p$ extends to a natural transformation
   \[
   \rho^p \colon \stanHood{\str} \to (\sReal{\str})_{\leq p} 
   \]
   which defines a retract to the natural inclusion
   \[
   (\sReal{\str})_{\leq p}   \hookrightarrow \stanHood{\str}
   \]
   of functors $\sSetPN \to \TopN$. 
   In fact, $\rho^p$ extends to a strong deformation through the natural homotopy defined simplexwise by
   \[
   ([y_{\leq p}, y_{ \not \leq p}, s_{\leq p}], t) \mapsto [y_{\leq p}, y_{ \not \leq p}, (1-t)s_{\leq p} + t].
   \]
   If we consider $\stanHood{\str}$ as stratified over $P_{\leq p}$ via \[
    x \mapsto \begin{cases}
        s(x) & s(x) \leq p \\
        p & s(x) \not \leq p 
    \end{cases}
    \]
    then this construction, in fact, defines a natural stratum-preserving strong deformation retraction of functors
    $\sSetPN \to \TopPN[P_{\leq p}]$.
\end{construction}
Next, we verify that the retractions $\rho^p$ are compatible with intersections of $p$-standard neighborhoods. 
\begin{lemma}\label{lem:retraction_preserve_nbhds}
     Let $\str \in \sSetPN$. Then, for any $q \leq p$, the inclusion \[
     \rho^p ( \stanHood[q]{\str} \cap \stanHood[p]{\str}) \subset \stanHood[q]{\str} 
     \]
    holds. 
\end{lemma}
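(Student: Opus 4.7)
The plan is a direct coordinate computation using the join coordinates of \cref{con:join_coordinates,con:new_coordinates}. Since the $s$-coordinates are invariant under stratified face and degeneracy maps (cf.~\cref{remark:role_of_coordinates}), and both $\rho^p$ and the standard neighborhoods are defined cell-by-cell in terms of these coordinates, it suffices to prove the inclusion in the case $\str = \Delta^\J$ for an arbitrary flag $\J$ of $P$. I would then fix a point $x \in \stanHood[q]{\Delta^\J} \cap \stanHood[p]{\Delta^\J}$ and work in $\mathbb R^\J$.

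Since $x \in \stanHood[p]{\Delta^\J}$, we have $s_{\leq p}(x) \geq 1/2 > 0$, so the formula for $\rho^p$ in \cref{con:retract_to_strata} (which sets $s_{\leq p}$ to $1$) gives, for every $r \in \J$,
\[
s_r(\rho^p(x)) = \begin{cases} s_r(x)/s_{\leq p}(x) & \text{if } r \leq p, \\ 0 & \text{if } r \not\leq p. \end{cases}
\]
In particular, since $q \leq p$ implies $s_q(\rho^p(x)) = s_q(x)/s_{\leq p}(x)$, summing the first case over $r \not\leq q$ yields
\[
s_{\not \leq q}(\rho^p(x)) \;=\; \frac{1}{s_{\leq p}(x)} \sum_{\substack{r \in \J \\ r \not\leq q,\ r \leq p}} s_r(x).
\]

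The key (essentially trivial) observation is the set-theoretic inclusion $\{r \in \J \mid r \not\leq q \text{ and } r \leq p\} \subset \{r \in \J \mid r \not\leq q\}$; combined with the defining inequality $s_{\not\leq q}(x) \leq s_q(x)$ of $\stanHood[q]{\Delta^\J}$, this yields
\[
s_{\not\leq q}(\rho^p(x)) \;\leq\; \frac{s_{\not\leq q}(x)}{s_{\leq p}(x)} \;\leq\; \frac{s_q(x)}{s_{\leq p}(x)} \;=\; s_q(\rho^p(x)),
\]
which is exactly the condition $\rho^p(x) \in \stanHood[q]{\Delta^\J}$. I expect no genuine obstacle: the whole argument reduces to unpacking the join coordinates and combining the elementary set inclusion above with the hypothesis that $x$ already lies in $\stanHood[q]{\str}$.
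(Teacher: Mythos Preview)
Your proof is correct and essentially identical to the paper's: both reduce to a simplex, compute $s_q(\rho^p(x)) = s_q(x)/s_{\leq p}(x)$ and $s_{\not\leq q}(\rho^p(x)) = (s_{\leq p}(x) - s_{\leq q}(x))/s_{\leq p}(x)$, and then bound the latter by $s_{\not\leq q}(x)/s_{\leq p}(x) \leq s_q(x)/s_{\leq p}(x)$. The only difference is cosmetic: the paper phrases the first inequality as $s_{\leq p}(x) \leq 1$, while you phrase it as the set inclusion $\{r \leq p,\ r \not\leq q\} \subset \{r \not\leq q\}$.
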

\begin{proof}
  Similarly to the proof of \cref{prop:universality_of_phi_stan_hood} one may easily verify that 
    \begin{align}\label{equ:proof_lem_retraction_preserve_nbhds_1}
        s_q(\rho^p(x)) &= \frac{s_q(x)}{s_{\leq p}(x)}, \\
        \label{equ:proof_lem_retraction_preserve_nbhds_2} s_{\not \leq q}(\rho^p(x)) = 1-s_{\leq q}(\rho^p(x)) &= 1- \frac{s_{\leq q}(x)}{s_{\leq p}(x)} .         
    \end{align}
    Let $x \in \stanHood[q]{\str} \cap \stanHood[p]{\str}$.
    Then by \cref{equ:proof_lem_retraction_preserve_nbhds_1,equ:proof_lem_retraction_preserve_nbhds_2} 
    \begin{align*}
        s_{\not \leq q}(\rho^p(x)) & = 1- \frac{s_{\leq q}(x)}{s_{\leq p}(x)}  \\
                                    & = \frac{1}{s_{\leq p}(x)} ( s_{\leq p (x)} - s_{\leq q}(x)) \\
                                    & \leq \frac{1}{s_{\leq p}(x)} (1-s_{\leq q}(x))\\
                                    & \leq \frac{s_q(x)}{s_{\leq p}(x)}\\
                                    & = s_q(\rho^p(x)),
    \end{align*}
    that is, $\rho^p(x) \in \stanHood[q]{\str}$, as was to be shown.
\end{proof}
Using the simplex-wise convexity of the standard neighborhoods, we immediately obtain.
\begin{corollary}\label{cor:nbhd_retraction}
    For any regular flag $\I = \standardFlag \subset P$ and any $\str \in \sSetPN$ the natural transformation 
    $\rho^{p_n} \colon \stanHood[{p_n}]{\str} \to (\sReal{\str})_{\leq p_n}$ 
    restricts to a natural transformation
    \[
    \rho^\I \colon \stanHood[\I]{\str} \to \stanHood[\I]{\str}_{\leq p_n}.
    \]
    Even more, $\rho^\I$ is part of a natural strong deformation retraction (over $P_{\leq p_n}$) of the inclusion
    \[
    \stanHood[\I]{\str}_{\leq p_n} \hookrightarrow \stanHood[\I]{\str}.
    \]
\end{corollary}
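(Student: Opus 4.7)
The plan is to obtain $\rho^\I$ and its accompanying deformation by direct restriction of $\rho^{p_n}$ and the strong deformation retraction from \cref{con:retract_to_strata} to the sub-neighborhood $\stanHood[\I]{\str}$. The key observation is that $p_n$ is the maximum of $\I$, so every $q \in \I$ satisfies $q \leq p_n$, which makes \cref{lem:retraction_preserve_nbhds} applicable in each coordinate.

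First I would verify the pointwise containment $\rho^{p_n}(\stanHood[\I]{\str}) \subset \stanHood[\I]{\str}_{\leq p_n}$. For $x \in \stanHood[\I]{\str} = \bigcap_{q \in \I} \stanHood[q]{\str}$, since $\I$ contains $p_n$, one has $x \in \stanHood[q]{\str} \cap \stanHood[p_n]{\str}$ for each $q \in \I$. By \cref{lem:retraction_preserve_nbhds} this yields $\rho^{p_n}(x) \in \stanHood[q]{\str}$ for each $q \in \I$, and intersecting over $q$ gives $\rho^{p_n}(x) \in \stanHood[\I]{\str}$. Combined with $\rho^{p_n}(x) \in (\sReal{\str})_{\leq p_n}$ from \cref{con:retract_to_strata}, this produces the restricted map $\rho^\I$.

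Second, I would upgrade this to the full deformation. Written in join coordinates simplexwise, the homotopy from \cref{con:retract_to_strata} is
\[
H_t(x) = [y_{\leq p_n}, y_{\not \leq p_n}, (1-t)s_{\leq p_n}(x) + t].
\]
Unfolding the join parametrization of \cref{con:join_coordinates} inside the ambient affine structure of $\sReal{\Delta^\J} \subset \mathbb{R}^{\J}$, this is simply the convex combination
\[
H_t(x) = (1-t)\, x + t\, \rho^{p_n}(x).
\]
Now for any $q \in P$ and any flag $\J$, the subset $\stanHood[q]{\Delta^\J} \subset \sReal{\Delta^\J}$ is cut out by the linear inequality $s_{\not \leq q} \leq s_q$ (both coordinates being linear on each simplex) and is therefore convex. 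Since both $x$ and $\rho^{p_n}(x)$ lie in $\stanHood[q]{\str}$ for each $q \in \I$ by the first step, the convex combination $H_t(x)$ does too. Intersecting over $q \in \I$ gives $H_t(\stanHood[\I]{\str}) \subset \stanHood[\I]{\str}$, and the retraction and naturality properties, as well as the stratum-preserving condition over $P_{\leq p_n}$, are inherited verbatim from $H$ on $\stanHood[p_n]{\str}$.

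There is no serious obstacle here: the only nontrivial points are the identification of the join-coordinate homotopy with a convex combination (a direct calculation in \cref{con:join_coordinates}) and the convexity of $\stanHood[q]{\Delta^\J}$ (immediate from the linearity of the $s$-coordinates on each simplex). Naturality in $\str$ descends from naturality of $\rho^{p_n}$ together with the fact that stratum preserving simplicial maps send $\stanHood[q]{-}$ to $\stanHood[q]{-}$ by \cref{cor:factorization_through_system}.
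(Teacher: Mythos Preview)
Your proof is correct and follows exactly the approach the paper indicates: apply \cref{lem:retraction_preserve_nbhds} to each $q\in\I$ (using $q\le p_n$) to see that $\rho^{p_n}$ lands in $\stanHood[\I]{\str}$, then use the simplexwise convexity of the standard neighborhoods to conclude that the straight-line homotopy $H_t(x)=(1-t)x+t\rho^{p_n}(x)$ stays inside $\stanHood[\I]{\str}$. The paper's own proof is just the one-line remark ``using the simplexwise convexity of the standard neighborhoods, we immediately obtain,'' so you have simply spelled out the details.
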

As a first consequence of \cref{cor:nbhd_retraction} we obtain that the standard neighborhood systems $\stanHoodSys$, for $\str \in \sSetPN$, fulfill the second requirement of being a homotopy link model:
\begin{corollary}\label{cor:stanhood_are_model_half}
    For any $\str \in \sSetPN$ and $\I = \standardFlag \in \sd(\pos)$ the inclusion
    \[
    \stanHood[\I]{\str}_{p_n} \hookrightarrow \stanHood[\I]{\str}_{ \geq {p_n}}
    \]
    is a homotopy equivalence of topological spaces.
\end{corollary}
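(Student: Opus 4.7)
The plan is to deduce the statement directly from \cref{cor:nbhd_retraction} by restricting the natural strong deformation retraction provided there to the subspace $\stanHood[\I]{\str}_{\geq p_n}$ and observing that its image lands in $\stanHood[\I]{\str}_{p_n}$.

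First I would unpack the restratification used in \cref{con:retract_to_strata}: $\stanHood[\I]{\str}$ is regarded as stratified over $P_{\leq p_n}$ by sending any $x$ with $s(x) \not\leq p_n$ to $p_n$ and leaving the others unchanged. Under this restratification, the subset $\stanHood[\I]{\str}_{\geq p_n}$ of points of original $P$-stratum $\geq p_n$ coincides exactly with the $p_n$-stratum of the restratification. On the target side, $\stanHood[\I]{\str}_{\leq p_n}$ already sits inside the restratification unchanged, and its $p_n$-stratum equals $\stanHood[\I]{\str}_{p_n}$. Hence the intersection $\stanHood[\I]{\str}_{\leq p_n} \cap \stanHood[\I]{\str}_{\geq p_n}$ equals $\stanHood[\I]{\str}_{p_n}$.

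Next, by \cref{cor:nbhd_retraction}, the retraction $\rho^{\I}$ and the accompanying homotopy are stratum preserving over $P_{\leq p_n}$. This means they send the $p_n$-stratum (in the restratified sense) of $\stanHood[\I]{\str}$ into the $p_n$-stratum of $\stanHood[\I]{\str}_{\leq p_n}$. Translating back via the identifications above, this says precisely that $\rho^\I$ restricts to a retraction
\[
\stanHood[\I]{\str}_{\geq p_n} \to \stanHood[\I]{\str}_{p_n},
\]
and that the homotopy between the identity and the composite $\iota \circ \rho^\I$ restricts to a homotopy on $\stanHood[\I]{\str}_{\geq p_n}$ that keeps $\stanHood[\I]{\str}_{p_n}$ pointwise fixed. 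Thus $\stanHood[\I]{\str}_{p_n} \hookrightarrow \stanHood[\I]{\str}_{\geq p_n}$ is a strong deformation retract of topological spaces, and in particular a homotopy equivalence.

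There is no real obstacle here; the only subtle point is verifying the set-theoretic identifications of strata before and after restratification, which turns on the fact that $s(x) \geq p_n$ splits into the two disjoint cases $s(x) = p_n$ and $s(x) > p_n$, corresponding respectively to $s(x) \leq p_n$ and $s(x) \not\leq p_n$ in the original stratification.
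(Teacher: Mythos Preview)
Your proof is correct and follows the same route as the paper's: both deduce the statement from \cref{cor:nbhd_retraction} by restricting the $P_{\leq p_n}$-stratum-preserving strong deformation retraction to the $p_n$-stratum of the restratification. The one point neither you nor the paper makes fully explicit is why the restratified $p_n$-stratum equals $\stanHood[\I]{\str}_{\geq p_n}$ (i.e.\ why no $x\in\stanHood[\I]{\str}$ has $s(x)$ incomparable to $p_n$); this holds simplex-wise since flags are chains and, when $p_n\notin\J$, by \cref{prop:desc_neighb_notin}.
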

\begin{proof}
    By \cref{cor:nbhd_retraction}, the inclusion $\stanHood[\I]{\str}_{\leq p_n} \hookrightarrow \stanHood[\I]{\str}$ is a stratum-preserving homotopy equivalence over $P_{\leq p_n}$. Consequently, the restriction of this inclusion to the $p_n$-stratum is a homotopy equivalence, as was to be shown.
\end{proof}
Next, we use the retractions $\rho^p$ to define \aspires{} for standard neighborhoods.
\begin{construction}\label{con:standard_aspire}
    Let $\I = \standardFlag$ be a regular flag in $P$ and let $\J$ be some other flag. It follows from \cref{lem:retraction_preserve_nbhds} and the convexity of standard neighborhoods that the map
    \begin{align*}
        \stanHood[\I]{\Delta^\J} \times \sReal{\Delta^\I} &\to \stanHood[\I]{\Delta^\J} \\
        (x,t) &\mapsto \sum_{p \in \I} t_p \rho^p(x)
    \end{align*}
    is well defined. One may easily verify that this construction is natural in $\J$, and thus induces a natural transformation
    \[
    R_{\I} \colon \stanHood[\I]{-} \times \sReal{\Delta^\I} \to \stanHood[\I]{-},
    \]
   of functors $\sSetPN \to \TopN$.
\end{construction}
\begin{proposition}\label{prop:stan_aspire_is_aspire}
    For any $\str \in \sSetPN$, the natural transformation $R_{\I} \colon \stanHood[\I]{\str} \times \sReal{\Delta^\I} \to \stanHood[\I]{\str}$ restricts to an \aspire{} on $\stanHoods[\I]{\str}$.
\end{proposition}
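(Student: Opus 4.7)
The plan is to verify the two conditions of \cref{def:aspire}. The organising observation that makes everything fall into place is that the defining formula $R_\I(x,t) = \sum_{p \in \I} t_p \rho^p(x)$ in fact provides a \emph{single} continuous formula that simultaneously handles both pieces of the extended domain, thanks to the identity $\rho^{p'}(x) = x$ whenever $s(x) \leq p'$ (immediate from $s_{\leq p'}(x) = 1$).

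As a first step I would check that $R_\I$ does take values in $\stanHood[\I]{\str}$ and is stratum preserving on $\stanHood[\I]{\str}_{p_n} \times \sReal{\Delta^\I}$. The containment $\rho^p(x) \in \stanHood[\I]{\str}$ follows from \cref{lem:retraction_preserve_nbhds} for strata $q \leq p$ in $\I$, and from the trivial inclusion $(\sReal{\str})_{\leq p} \subset \stanHood[q]{\str}$ for strata $q \geq p$ in $\I$; since $\stanHood[\I]{\Delta^\J}$ is cut out simplex-wise by the linear inequalities $s_{\not \leq q} \leq s_q$, convex combinations of such points stay inside. For $x \in \stanHood[\I]{\str}_{p_n}$ the constraints $x \in \stanHood[p]{\str}$ for $p \in \I\setminus\{p_n\}$ combined with $s_{\not\leq p}(x) > 0$ force $s_p(x) > 0$, so $x$ touches every stratum of $\I$. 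A direct coordinate computation then shows that a vertex of stratum $q$ contributes positively to $R_\I(x,t)$ only when some $p \in \I$ with $p \geq q$ satisfies $t_p > 0$, from which $s(R_\I(x,t)) = s(t)$ follows.

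The main obstacle I anticipate is the continuity of the piecewise extension, because it requires reconciling the value formula on the $p_n$-stratum with the constant-in-$u$ behaviour on the $\leq p$-stratum. The cleanest way to handle this will be to exploit the identity $\rho^{p'}(x) = x$ (for $s(x) \leq p \leq p'$) together with the fact that $u_{p'} = 0$ for $p' < p$ on $\sReal{\Delta^{\I_{\geq p}}}$: the formula
\[
E_p(x,u) := \sum_{p' \in \I,\, p' \geq p} u_{p'}\,\rho^{p'}(x)
\]
then agrees with $R_\I$ on $\utstr_{p_n} \times \sReal{\Delta^{\I_{\geq p}}}$ and collapses to $x$ on $\utstr_{\leq p} \times \sReal{\Delta^{\I_{\geq p}}}$, i.e.\ $E_p$ is exactly the required extension. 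But $E_p$ is patently continuous on the larger ambient set $\stanHood[\I_{\geq p}]{\str} \times \sReal{\Delta^{\I_{\geq p}}}$: each $\rho^{p'}$ is a natural continuous normalization on $\stanHood[p']{\str}$ with denominator $s_{\leq p'} \geq 1/2$ bounded away from $0$, assembled by left Kan extension from continuous formulas on individual simplices. The subtle point to keep in mind is that the \aspire{} definition requires continuity with respect to the \emph{general} topology on $(\utstr_{p_n} \cup \utstr_{\leq p}) \times \sReal{\Delta^{\I_{\geq p}}}$ rather than its Kelleyfication, but this costs nothing here since the argument works at the level of the ambient subspace $\stanHood[\I_{\geq p}]{\str} \times \sReal{\Delta^{\I_{\geq p}}}$ in the same topological category.
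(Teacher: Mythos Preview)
Your proposal is correct and follows essentially the same approach as the paper. Both arguments hinge on the fact that $R_\I$ is already defined continuously on all of $\stanHood[\I]{\str} \times \sReal{\Delta^\I}$, so the required extension is simply a restriction; the paper computes directly $R_\I(x,t) = \sum_{q \in \I_{\geq p}} t_q \rho^q(x) = \sum_{q \in \I_{\geq p}} t_q x = x$ for $x \in \stanHood[\I]{\str}_{\leq p}$ and $t \in \sReal{\Delta^{\I_{\geq p}}}$, while your $E_p$ reformulation is the same computation with the vanishing terms dropped in advance. Your additional remark on the general versus $\Delta$-generated topology is a helpful clarification that the paper leaves implicit.
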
\label{prop:aspire_for_stanhood}
\begin{proof}
    Denote $U:= \stanHood[\I]{\str}$.
    Note that by construction $R_{\I}$ may even be defined continuously on all of $\stanHood[\I]{\str} \times \sReal{\Delta^\I}$.
    Let us first verify that the restriction $R_{\I}|_{U_{p_n} \times \sReal{\Delta^\I}}$ is stratum-preserving. First, note that for $x \in U_{p_n}$, and $p < p_n$
    we have 
    \[
    s_p \geq s_{\not \leq p} \geq s_{p_n} > 0.
    \]
    It follows that $\rho^p(x) \in U_{p}$, for all $p \in \I$. It follows from this that 
    $\sum_{p \in \I} t_p \rho^p(x)$
    lies in the stratum corresponding to the maximal $p$ with $t_p > 0$, as was to be shown.
    Furthermore, whenever $x \in U_{\leq p}$ and $t \in  \sReal{\Delta^{\I_{\geq p}}}$, then 
    \[
    R_\I(x,t) = \sum_{q  \in \I} t_q \rho^q(x) = \sum_{q \in \I_{\geq p}} t_q \rho^q(x) = \sum_{q \in \I_{\geq p}} t_q x = x
    \]
    as required.
\end{proof}
We may now summarize \cref{prop:computing_holinks_via_aspire,prop:aspire_for_stanhood,cor:stanhood_are_model_half} as:
\begin{corollary}
 For any $\str \in \sSetPN$, the standard neighborhood system $\stanHoodSys$ is a homotopy link model for $\sReal{\str}$.   
\end{corollary}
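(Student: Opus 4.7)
The plan is to unpack the definition of a homotopy link model and verify its two required conditions by directly invoking the three results just established. Recall that to show $\stanHoodSys$ is a homotopy link model for $\sReal{\str}$, I must verify that for every regular flag $\I = \standardFlag \subset P$:
\begin{enumerate}
    \item the evaluation map $\HolIP(\stanHoods[\I]{\str}) \xrightarrow{\ev} \stanHood[\I]{\str}_{p_n}$ is a weak homotopy equivalence; and
    \item the inclusion $\stanHood[\I]{\str}_{p_n} \hookrightarrow \stanHood[\I]{\str}_{\geq p_n}$ is a weak homotopy equivalence.
\end{enumerate}

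For condition (2), there is nothing new to do: this is precisely the content of \cref{cor:stanhood_are_model_half}, which was already deduced from the retraction $\rho^\I$ of \cref{cor:nbhd_retraction} by restricting the stratum preserving deformation retraction to the $p_n$-stratum.

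For condition (1), I would first apply \cref{prop:stan_aspire_is_aspire} to the stratified space $\stanHoods[\I]{\str}$, which states that the natural transformation $R_\I$ of \cref{con:standard_aspire} restricts to an \aspire{} on $\stanHoods[\I]{\str}$. Having produced such an \aspire{}, I then invoke \cref{prop:computing_holinks_via_aspire}: whenever a stratified space admits an \aspire{} for a regular flag $\I$, the evaluation at the top vertex $\ev\colon \HolIP(-) \to (-)_{p_n}$ is a weak homotopy equivalence. This furnishes condition (1). No further calculation is required --- all the work has been done in the preceding propositions --- so the proof is simply an assembly of \cref{cor:stanhood_are_model_half}, \cref{prop:stan_aspire_is_aspire}, and \cref{prop:computing_holinks_via_aspire}, as indicated in the statement's citation. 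The only subtlety worth flagging is the implicit use of naturality: since both the \aspire{} $R_\I$ and the retraction $\rho^\I$ are defined via natural transformations on $\sSetPN$, the argument applies uniformly to every $\str \in \sSetPN$.
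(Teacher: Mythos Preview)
Your proposal is correct and follows precisely the same approach as the paper: the corollary is stated there as a direct summary of \cref{prop:computing_holinks_via_aspire}, \cref{prop:stan_aspire_is_aspire}, and \cref{cor:stanhood_are_model_half}, which is exactly the assembly you describe.
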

 \subsection{\texorpdfstring{\aspires{}}{Aspires} for stratified cell complexes}\label{subsec:aspire_of_cell}
 The problem with extending the construction of an \aspire{} as in \cref{con:standard_aspire} to stratified cell complexes is of course that \aspires{} may generally not be compatible with gluing. This is circumvented by the following construction.
 \begin{construction}\label{con:gluing_aspires} Let $\I = \standardFlag $ be  a regular flag in $P$ and
    suppose we are given a pushout diagram of finite stratified cell complexes in $\TopPN$
    \begin{diagram}
        {\tstr[A]} \arrow[d, "f"] \arrow[r, hook] & \tstr[B] \arrow[d , "g"] \\
        \tstr[X] \arrow[r, hook, "i"]& \tstr[X] \cup_{\tstr[A]} \tstr[B] ={{\tstr[Y]}} \spaceperiod
    \end{diagram}
    where $\tstr[A] \hookrightarrow \tstr[B]$ is the inclusion of a subcomplex. Furthermore, suppose we are given the following data:
    \begin{enumerate}
        \item A function $\psi \colon \utstr[B]_{p_n} \to [0,1]$ such that $\psi^{-1}(1) = \utstr[A]_{p_n}$, which we consider as extended by $1$ to ${{\utstr[Y]}}_{p_n}$;
        \item An \aspire{} $R_{\tstr} \colon \utstr_{p_n} \times \sReal{\Delta^\I} \to \tstr$;
        \item An \aspire{} $R_{\tstr[B]} \colon \utstr[B]_{p_n} \times \sReal{\Delta^\I} \to \str[B]$ such that $R_{\tstr[B]}(x, u) \in \utstr[A]$, whenever $u_{p_n} = \psi(x)$.
    \end{enumerate}
    Then, we denote by $R_{{\tstr[Y]}}$ the map
        \begin{align*}
            R_{{\tstr[Y]}} \colon \utstr[Y]_{p_n} \times \sReal{\Delta^\I} &\to \utstr[Y] & \\
            ([x],u) &\mapsto \big ( (g \circ R_{\tstr[B],x}) \star_{\psi(x)} R_{\tstr[X]})(u) & \textnormal{, for } x \in \utstr[B] \\
             ([x],u) &\mapsto R_{\tstr[X]}(x,u) & \textnormal{, for } x \in X .
        \end{align*}
\end{construction}
\begin{lemma}\label{lem:gluing_aspires}
$R_{\tstr[Y] }$ as in \cref{con:gluing_aspires} defines an \aspire{} on $\tstr[Y]$, which extends $R_{\tstr[X]}$. 
\end{lemma}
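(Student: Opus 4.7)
The plan is to verify the defining properties of an aspire (Definition \ref{def:aspire}) for $R_{\tstr[Y]}$ by reducing each statement to the corresponding property of $R_{\tstr[X]}$, $R_{\tstr[B]}$, and the star operation of Proposition \ref{prop:properties_of_star_homotopy}, then gluing the pieces via the pushout (final) topology on $\tstr[Y] = \tstr[X] \cup_{\tstr[A]} \tstr[B]$.

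First I would confirm well-definedness and the extension claim. The only potential ambiguity in the two-case definition of $R_{\tstr[Y]}$ is at points $x \in \utstr[A]_{p_n}$, where by hypothesis $\psi(x) = 1$ and by the aspire property $R_{\tstr[B]}(x, v_n) = x$. Using the endpoint identity $(\sigma \star R)_1 = R(\sigma(v_n), -)$ from Proposition \ref{prop:properties_of_star_homotopy}(2), one computes
\[
\big((g \circ R_{\tstr[B], x}) \star_1 R_{\tstr[X]}\big)(u) = R_{\tstr[X]}\big(g(R_{\tstr[B]}(x, v_n)), u\big) = R_{\tstr[X]}(i(f(x)), u),
\]
which agrees with the $\tstr[X]$-formula under the pushout identification $[x] = [f(x)]$. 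That $R_{\tstr[Y]}$ restricts to $R_{\tstr[X]}$ on $\utstr[X]_{p_n} \times \sReal{\Delta^\I}$ is then immediate from the definition.

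Next I would handle continuity and stratum preservation on $\utstr[Y]_{p_n} \times \sReal{\Delta^\I}$. The hypothesis on $R_{\tstr[B]}$ (that $R_{\tstr[B]}(x, u) \in \utstr[A]$ whenever $u_{p_n} = \psi(x)$), combined with the inclusion $g(\tstr[A]) \subset i(\tstr[X])$, ensures that the pair $(g \circ R_{\tstr[B], x}, \psi(x))$ lies in the subspace $S$ of Proposition \ref{prop:properties_of_star_homotopy}(3), taken with $\tstr \rightsquigarrow \tstr[Y]$, $\tstr[A] \rightsquigarrow i(\tstr[X])$, and $R \rightsquigarrow R_{\tstr[X]}$. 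The adjoint map $\utstr[B]_{p_n} \to S$, $x \mapsto (g \circ R_{\tstr[B], x}, \psi(x))$, is continuous by adjointness of the aspire $R_{\tstr[B]}$ and continuity of $\psi$ and $g$; postcomposing with the continuous $- \star R_{\tstr[X]}$ of Proposition \ref{prop:properties_of_star_homotopy}(3) and re-adjointing yields continuity of $R_{\tstr[Y]}$ on the $\tstr[B]$-piece, while part (1) of the same proposition provides stratum preservation. On the $\tstr[X]$-piece both properties are direct, and the pushout topology then glues the two sides.

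The main obstacle is the continuous extension across lower strata, i.e.\ showing that for each $p \in \I$ the map
\[
(\utstr[Y]_{p_n} \cup \utstr[Y]_{\leq p}) \times \sReal{\Delta^{\I_{\geq p}}} \to \utstr[Y],
\]
acting as the identity on $\utstr[Y]_{\leq p}$, is continuous. Again I would decompose along the pushout. On the $\tstr[X]$-side this is the aspire axiom for $R_{\tstr[X]}$. On the $\tstr[B]$-side I would extend the adjoint $x \mapsto (g \circ R_{\tstr[B], x}, \psi(x))$ to $\utstr[B]_{\leq p}$ by sending $x \mapsto (c_{g(x)}, \ast)$ into the constant-map locus of $S$; continuity of this extension into $S$ is precisely the aspire axiom for $R_{\tstr[B]}$, and the identity statement for $- \star R_{\tstr[X]}$ on the constant-map locus from Proposition \ref{prop:properties_of_star_homotopy}(3) then gives the required extension for $p = p_0$. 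For $p > p_0$ one re-invokes the same argument with the flag $\I$ replaced by $\I_{\geq p}$ (so that $p$ plays the role of the minimum element $p_0$); this is the delicate point, since the star construction must be re-interpreted on the sub-simplex $\sReal{\Delta^{\I_{\geq p}}}$, but the restriction of $R_{\tstr[X]}$ to this subsimplex is again an aspire (with respect to $\I_{\geq p}$) and the same machinery applies. Gluing the $\tstr[X]$- and $\tstr[B]$-pieces by the pushout topology completes the verification that $R_{\tstr[Y]}$ is an aspire extending $R_{\tstr[X]}$.
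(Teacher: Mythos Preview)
Your overall strategy matches the paper's: decompose along the pushout, use the adjoint map into the space $S$ of Proposition~\ref{prop:properties_of_star_homotopy}(3), and reduce the case of general $p\in\I$ to $p=p_0$ by restricting to $\I_{\geq p}$. The well-definedness and stratum-preservation paragraphs are fine.

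There is, however, a genuine gap in your extension step. You write that the extension of $x\mapsto (g\circ R_{\tstr[B],x},\psi(x))$ to $\utstr[B]_{\leq p_0}$ by $x\mapsto (c_{g(x)},\ast)$ is continuous into $S$ ``precisely [by] the aspire axiom for $R_{\tstr[B]}$''. This is not correct: the aspire axiom for $R_{\tstr[B]}$ only controls the first coordinate (the mapping-space component). The second coordinate is $\psi$, which is only defined on $\utstr[B]_{p_n}$ and need not extend continuously to $\utstr[B]_{p_n}\cup\utstr[B]_{\leq p_0}$. Since $S\subset C^0(\real{\Delta^\I},\utstr[Y])\times[0,1]$ carries (the Kelleyfication of) the product subspace topology, you cannot simply write ``$\ast$'' and ignore that coordinate. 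The paper resolves this by \emph{not} trying to factor the extended map through $S$ globally: it uses that $\utstr[B]_{p_n}\cup\utstr[B]_{\leq p_0}$ is metrizable (a consequence of the finite cell complex hypothesis via Proposition~\ref{prop:top_agree_for_finite_cell}), checks continuity sequentially, and for each sequence $b_m\to b$ passes to a subsequence along which $\psi(b_m)$ converges; on that compact set the map \emph{does} land continuously in $S$, and one then composes with $-\star R_{\tstr[X]}$.

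Two smaller points you also skip: first, the aspire definition demands continuity with respect to the \emph{general} (relative) topology on $\utstr[Y]_{p_n}\cup\utstr[Y]_{\leq p}$, so you must invoke Proposition~\ref{prop:top_agree_for_finite_cell} to know this agrees with the $\Delta$-generated one for finite cell complexes. Second, the gluing along the pushout requires that
\[
\utstr[A]_{\leq p_0}\cup\utstr[A]_{p_n}\;\to\;\utstr[B]_{\leq p_0}\cup\utstr[B]_{p_n},\qquad
\utstr[A]_{\leq p_0}\cup\utstr[A]_{p_n}\;\to\;\utstr[X]_{\leq p_0}\cup\utstr[X]_{p_n}
\]
still present $\utstr[Y]_{\leq p_0}\cup\utstr[Y]_{p_n}$ as a pushout; this is Lemma~\ref{lem:preservation_of_closed_pushout} and is not automatic.
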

\begin{proof}
    Note first that since all the spaces involved are finite cell complexes, we need not distinguish between the $\Delta$-generated topology and the relative topology on subspaces of the form $ T_{p_n} \cup T_{\leq_{p_i}}$ (see \cref{prop:top_agree_for_finite_cell}). In particular, both of these topologies also agree with the compactly generated topology.
    $R_{\tstr[Y]}$ may then equivalently be constructed as follows. 
    Consider the set $S \subset C^0(\real{\Delta^\I},Y) \times [0,1]$, defined as in \cref{prop:properties_of_star_homotopy} with respect to the closed inclusion $\tstr[X] \hookrightarrow \tstr[Y]$.  Furthermore, consider the map
    \begin{align*}
        R'^0 \colon \utstr[B]_{p_n} 
        &\to S &\xrightarrow{- \star R_{\str}} \HolIP(\tstr[Y]) \cup \utstr[Y]_{\leq p_0} \\
                    b & \mapsto (g \circ R_{\tstr[B],b}, \psi(b)) &\mapsto (g \circ R_{\tstr[B],b}) \star_{\psi(b)} R_{\tstr[X]}
    \end{align*}
    which is continuous by \cref{prop:properties_of_star_homotopy}. Note that, for $a \in A$, this map is given by 
    \[
    a \mapsto (g \circ R_{\tstr[B],a},1) \mapsto R_{\str}(g(a)) = i \circ R_{\tstr[X], f(a)}  .
    \]
    We claim that $R'^0$ extends to a continuous map
    \[
    R^0 \colon \utstr[B]_{p_n} \cup  \utstr[B]_{\leq p_0}
        \to \HolIP(\tstr[Y]) \cup \utstr[Y]_{\leq p_0} 
    \]
    by mapping $b \mapsto g(b)$, for $b \in B_{\leq p_0}$.
    Since $\utstr[B]_{p_n} \cup  \utstr[B]_{\leq p_0}$ is metrizable, it suffices to see that, for any sequence $b_m \in B_{p_n}$ converging to $b \in B_{\leq p_0}$, it also holds that $R'^0(b_m)$ converges to $R^0(b) = g(b)$ (with respect to the topology of uniform convergence on $\HolIP(\tstr[Y]) \cup \utstr[Y]_{\leq p_0}$). 
    Furthermore, we may without loss of generality assume that the sequence $\psi(b_m)$ converges. Indeed, this follows from the standard argument that a sequence $y_n$ converges to $y$, if and only if each of its subsequences has, in turn, a subsequence converging to $y$, together with compactness of $[0,1]$. For ease of notation, denote $\psi(b):= \lim_{m \to \infty}\psi(b_m)$.
    Let $D$ denote the subspace of $ \utstr[B]_{p_n} \cup  \utstr[B]_{\leq p_0}$, given by the elements of the sequence $b_m$ and $b$. Since $b_m$ converges to $b$, $D$ is a compact Hausdorff space. 
    It follows that the map 
    \begin{align*}
        D
        &\to S\\
                    b_m & \mapsto (g \circ R_{\tstr[B],b_m}, \psi(b_m)) \\
                    b   & \mapsto (g \circ R_{\tstr[B],b}, \psi(b)) 
    \end{align*}
    is continuous, both with respect to the subspace topology on $S$ as well as with respect to the compactly generated topology.
    In particular, the composition of the last map with $- \star R_{\str}$ is also continuous.
    It follows that 
    \[
    \lim_{m \to \infty}R'_0(b_m) = g \circ R_{\tstr[B],b} \star_{\psi(b)} R_{\str} = g(b) \star_{\psi(b)} R_{\str} = g(b)\] as was to be shown. \\
    It follows from the assumption on $i$ and $g$ being closed and \cref{lem:preservation_of_closed_pushout} that the square
    \begin{diagram}
        \utstr[A]_{\leq p_0} \cup \utstr[A]_{p_n} \arrow[d] \arrow[r, hook] & \utstr[B]_{\leq p_0} \cup \utstr[B]_{p_n} \arrow[d] \\
        \utstr_{\leq p_0} \cup \utstr_{p_n} \arrow[r, hook]& \utstr[Y]_{\leq p_0} \cup \utstr[Y]_{p_n}    \end{diagram}
        remains a pushout square.
    Hence, together with 
    \begin{align*}
    R^1 \colon \utstr_{p_n} \cup \utstr_{\leq p_0} &\to \HolIP(\tstr[Y]) \cup \utstr[Y]_{\leq p_0} \\
    x &\mapsto  i \circ R_{\tstr[X],x} \spacecomma
    \end{align*}
    $R^0$ glues to a map \[
    R \colon \utstr[Y]_{p_n} \cup \utstr[Y]_{\leq p_0} \to \HolIP(\tstr[Y]) \cup \utstr[Y]_{\leq p_0} 
    \]
    whose adjoint is the extension of $R_{\tstr[Y]}$ to $(\utstr[Y]_{p_n} \cup \utstr[Y]_{\leq p_0}) \times \sReal{\Delta^\I}$ as defined in the proposition. 
    This shows that $R_{\tstr[Y]}$ is indeed well-defined and stratum-preserving. It remains to verify that $R_{\tstr[Y]}$ interacts with lower strata, as required in the definition of an \aspire. We have already covered the case $ p=p_0$.  All other cases can be reduced to this one, by replacing $\I$ by $\I_{\geq p}$ and restricting $R_{\tstr[B]}$ and $R_{\tstr[X]}$ accordingly. That $R_{\tstr[Y]}$ extends $R_{\tstr[X]}$ is immediate by definition.
\end{proof}
Suppose now, for a second, that we have already shown the following lemma.
\begin{lemma}\label{lemma:extending_aspire_on_simplex}
    Let $\I = \standardFlag$ be a regular flag in $P$. Then, for any flag $\J$, there exists a function
    $\psi \colon \stanHood[\I]{\Delta^\J}_{p_n} \to [0,1]$, together with an \aspire{} $\overline{R} \colon \stanHood[\I]{\Delta^\J}_{p_n} \times \sReal{\Delta^\I} \to \stanHoods[\I]{\Delta^\J}$ such that \begin{enumerate}
        \item $\psi^{-1}(1) = \stanHood[\I]{\partial \Delta^\J}_{p_n}$;
        \item $\overline{R}$ restricts to the standard \aspire{} on $\stanHoods[\I]{\partial \Delta^\J}$ (see \cref{con:standard_aspire});
        \item For $u \in \sReal{\Delta^\I}$, and $x \in \stanHood[\I]{\Delta^\J}_{p_n}$ such that $u_{p_n} = \psi(x)$, we have $\overline{R}(x,u) \in \stanHood[\I]{\partial \Delta^\J}$.
    \end{enumerate}
\end{lemma}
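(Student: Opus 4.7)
My plan is to build $\psi$ and $\overline{R}$ from the standard aspire $R_\I$ of \cref{con:standard_aspire} by pre-composing with a continuous ``boundary retraction''. When $\stanHood[\I]{\partial \Delta^\J}_{p_n}$ is empty (for instance when $p_n \notin \J$ or $\J$ is a single $p_n$-vertex), set $\psi \equiv 0$ and $\overline{R} := R_\I$; then (1) and (2) are vacuous, and (3) holds because $R_\I(x,u)$ at $u_{p_n}=0$ lies in the face $\sReal{\Delta^{\J_{\leq p_{n-1}}}} \subset \partial\Delta^\J$ (using $p_n \in \J$). In the main case define $\psi(x) := 1 - N \min_v x_v$, where $N$ is the number of vertices of $\Delta^\J$ and $v$ runs over them; this is continuous, $[0,1]$-valued, and satisfies $\psi^{-1}(1) = \stanHood[\I]{\partial\Delta^\J}_{p_n}$.

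The heart of the construction is a continuous retraction $r \colon \stanHood[\I]{\Delta^\J}_{p_n} \to \stanHood[\I]{\partial\Delta^\J}_{p_n}$. Using the join decomposition $\sReal{\Delta^\J} = \sReal{\Delta^{\J_{<p_n}}} \star \sReal{\Delta^{\J_{p_n}}}$, I write $x = [y^{(0)},y^{(1)},s]$ with $s=s_{p_n}(x)>0$ and define $r(x)$ by projecting $y^{(0)}$ onto $\partial\sReal{\Delta^{\J_{<p_n}}}$ while keeping $y^{(1)}$ and $s$ fixed (with an analogous construction when $\J_{<p_n}$ is trivial). Preserving $s$ keeps $r(x)$ in the $p_n$-stratum, and the vanishing of a $<p_n$-coordinate places $r(x)$ on $\partial\Delta^\J$. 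The specific projection (uniform radial projection from the barycenter, or a weighted variant) is chosen so that $r(x)$ also remains in $\stanHood[\I]$, which boils down to preserving each inequality $s_{\not\leq p_i}(x) \leq s_{p_i}(x)$. With such an $r$ in hand, set
\[
\phi(x,t) := (1-\lambda(x,t))\, r(x) + \lambda(x,t)\, x, \qquad \lambda(x,t) := \max\!\Big(0, \tfrac{t-\psi(x)}{1-\psi(x)}\Big),
\]
extended by $\lambda \equiv 0$ when $\psi(x)=1$ (where $r(x)=x$ makes the formula unambiguous), and define $\overline{R}(x,u) := R_\I(\phi(x,u_{p_n}), u)$.

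The four required properties are then checked as follows. Stratum preservation follows from that of $R_\I$ together with convexity of $\stanHood[\I]_{p_n}$, which keeps $\phi(x,t)$ inside. The aspire extensibility condition holds because $x\to x_0$ in a lower stratum forces $\min_v x_v \to 0$, hence $\psi(x)\to 1$, $r(x)\to x_0$, and $\phi(x,t)\to x_0$. Property (3) uses the fact that each $\rho^p$ merely rescales barycentric coordinates and so preserves the vanishing coordinate of $r(x)$; thus $R_\I(r(x),u) \in \partial\Delta^\J$ and lies in $\stanHood[\I]$ by the self-map property of $R_\I$. Finally, (2) is immediate from $r$ being the identity on $\stanHood[\I]{\partial\Delta^\J}_{p_n}$. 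The principal technical obstacle is the explicit construction of $r$ with values genuinely inside $\stanHood[\I]$; this is a convex-geometric problem requiring a case analysis on how many entries of $\J$ fall in each subinterval of $\I$, with uniform radial projection sufficing in the balanced case and a weighted variant needed when the number of $p_i$-vertices exceeds the number of $\J$-vertices strictly between $p_i$ and $p_n$.
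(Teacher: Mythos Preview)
Your approach contains a genuine gap in the verification of the aspire extension condition, and the gap is not the one you flag as ``the principal technical obstacle''.

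The claim ``$x\to x_0$ in a lower stratum forces \dots $r(x)\to x_0$'' is false for your retraction. Take $\J=[p_0,p_0,p_1]$ and $\I=[p_0<p_1]$. A point $x_0=(\tfrac12,\tfrac12,0)$ lies in the $p_0$-stratum of $\stanHood[\I]{\Delta^\J}$; its join coordinate $y^{(0)}$ is the barycenter of $\Delta^{\J_{<p_1}}$, which is \emph{interior} to that face. Your map $r$ fixes $s_{p_n}$ and sends $y^{(0)}$ to $\partial\Delta^{\J_{<p_n}}$, so as $x\to x_0$ the limit of $r(x)$ (if it exists at all) lies on $\partial\Delta^{\J_{<p_1}}$, not at $x_0$. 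Consequently, for any $u$ with $u_{p_n}<1$ you have $\lambda(x,u_{p_n})\to 0$, hence $\phi(x,u_{p_n})\to\lim r(x)\neq x_0$, and therefore $\overline R(x,u)=R_\I(\phi(x,u_{p_n}),u)$ converges to $R_\I(\lim r(x),u)=\lim r(x)\neq x_0$. So $\overline R$ does not extend continuously as required by \cref{def:aspire}. No ``weighted variant'' of the radial projection on $\Delta^{\J_{<p_n}}$ can repair this: \emph{any} map of the form $y^{(0)}\mapsto\pi(y^{(0)})\in\partial\Delta^{\J_{<p_n}}$ will fail to return $x_0$ when $y^{(0)}(x_0)$ is interior. (Separately, radial projection from the barycenter is not even defined there; but that is secondary.)

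The paper's construction avoids exactly this problem by projecting from a point $x_0$ in the interior of $\sReal{\Delta^\J}$ itself (chosen with $s_{p_n}(x_0)>\tfrac12$ so that $x_0\notin\stanHood[\I]{\Delta^\J}$). Such a straight-line projection is continuous on all of $\sReal{\Delta^\J}\setminus\{x_0\}$ and is the identity on $\partial\Delta^\J$; since every lower stratum lies in $\partial\Delta^\J$, one gets $r(x)\to z$ for free whenever $x\to z$ in a lower stratum. The price is that this $r$ may drop the stratum of $r(x)$ down to $p_{n-1}$, so one cannot simply plug $r(x)$ into $R_\I$ as you do. The paper therefore builds a modified map $\hat R$ (agreeing with $R_\I$ on the boundary) and an interpolation $H$ between $\hat R$ and $R_\I\circ r$, with $\psi$ chosen so that $\psi^{-1}(0)$ is the locus where $r$ drops stratum; the final $\overline R$ is a time-reparametrised version of $H$. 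Your interpolation $\phi$ has the right shape but is applied to the wrong retraction.
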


Then we may proceed to show the following statement.
\begin{proposition}\label{prop:aspire_for_finite_cell}
    Let $\tstr$ be a finite stratified cell complex and $\Psi$ a barycentric subdivision of $\tstr$ that defines a standard neighborhood system of $\tstr$. Then, for any regular flag $\I \subset P$, $\PsiStanHood[\I]{\tstr}$ admits an \aspire that is compatible with subcomplexes, i.e. whenever $\tstr[B] \subset \tstr$ is a subcomplex of $\tstr$, then the \aspire{} on $\PsiStanHoods[\I]{\tstr}$ restricts to one on $\mathcal U^{\Psi|_{\tstr[B]}}_{\tstr[B]}(\I)$.
    Furthermore, if $\tstr[A] \subset \tstr[X]$ is a subcomplex, then for any such \aspire{} $R_{\tstr[A]}$ on $ \mathcal U^{\Psi|_{\tstr[A]}}_{\tstr[A]}(\I)$, the \aspire{} on $\PsiStanHoods[\I]{\tstr}$ may be taken to extend $R_{\tstr[A]}$.
\end{proposition}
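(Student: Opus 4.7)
The plan is a finite induction on the cells of $\tstr$ lying outside $\tstr[A]$, with \cref{lem:gluing_aspires} as the extension engine and \cref{lemma:extending_aspire_on_simplex} as the per-cell input. Since $\tstr$ is finite, I would first totally order its cells extending the precursor order $\prec$, placing all cells of $\tstr[A]$ first; this yields a filtration $\tstr[A] = \tstr^0 \subset \tstr^1 \subset \cdots \subset \tstr^N = \tstr$ in which each $\tstr^{\beta+1}$ is obtained from $\tstr^\beta$ by attaching a single cell $\sigma_\beta\colon \sReal{\Delta^{\J_\beta}} \to \tstr$ (see \cref{con:equ_versions_of_strat_cell_cplx} and \cref{rem:alternative_v_prec}). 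By \cref{prop:stan_hood_is_hood_cell}, this filtration restricts to a filtration of $\PsiStanHoods[\I]{\tstr}$ by subcomplexes $\mathcal U^{\Psi|_{\tstr^\beta}}_{\tstr^\beta}(\I)$. The inductive hypothesis will be that an aspire $R^\beta$ is already built on $\mathcal U^{\Psi|_{\tstr^\beta}}_{\tstr^\beta}(\I)$, extending $R_{\tstr[A]}$ and each previously constructed $R^{\gamma}$ for $\gamma < \beta$; the base case $\beta = 0$ is the given $R_{\tstr[A]}$.

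For the inductive step, I plan to feed $\J := \J_\beta$ into \cref{lemma:extending_aspire_on_simplex} to obtain a continuous function $\psi\colon \stanHood[\I]{\Delta^{\J}}_{p_n} \to [0,1]$ with $\psi^{-1}(1) = \stanHood[\I]{\partial \Delta^{\J}}_{p_n}$ together with an aspire $\overline{R}$ on $\stanHoods[\I]{\Delta^{\J}}$ satisfying $\overline{R}(x,u) \in \stanHood[\I]{\partial \Delta^{\J}}$ whenever $u_{p_n} = \psi(x)$. Using the stratum-preserving homeomorphism $\sReal{\simStanHoods[\I]{\Delta^{\J}}} \xrightarrow{\sim} \stanHoods[\I]{\Delta^{\J}}$ provided by \cref{prop:simplicial_standard_model} and then composing with $\sigma_\beta \circ \Psi_\beta$, I transport $\overline{R}$ and $\psi$ to the subcomplex $\sigma_\beta \circ \Psi_\beta\bigl(\sReal{\simStanHoods[\I]{\Delta^{\J}}}\bigr)$ of $\mathcal U^{\Psi|_{\tstr^{\beta+1}}}_{\tstr^{\beta+1}}(\I)$. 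Because $\Psi$ defines a standard neighborhood system in the sense of \cref{def:defines_stratum_nbhd_system} (reformulated via \cref{rem:alternative_v_prec} to account for the linear order chosen above), the image $\sigma_\beta \circ \Psi_\beta\bigl(\sReal{\simStanHoods[\I]{\partial \Delta^{\J}}}\bigr)$ is contained in $\mathcal U^{\Psi|_{\tstr^\beta}}_{\tstr^\beta}(\I)$, so the cell attachment produces a pushout of finite stratified cell complexes
\[
\begin{tikzcd}
\sigma_\beta \circ \Psi_\beta(\sReal{\simStanHoods[\I]{\partial \Delta^{\J}}}) \arrow[r] \arrow[d, hook] & \mathcal U^{\Psi|_{\tstr^\beta}}_{\tstr^\beta}(\I) \arrow[d] \\
\sigma_\beta \circ \Psi_\beta(\sReal{\simStanHoods[\I]{\Delta^{\J}}}) \arrow[r] & \mathcal U^{\Psi|_{\tstr^{\beta+1}}}_{\tstr^{\beta+1}}(\I)
\end{tikzcd}
\]
to which \cref{lem:gluing_aspires} applies, with $R_{\tstr[X]} := R^\beta$, $R_{\tstr[B]} := \overline{R}$, and the transported $\psi$. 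The hypotheses of the gluing lemma are exactly the $\psi^{-1}(1)$ condition and the third property of $\overline{R}$, both supplied by \cref{lemma:extending_aspire_on_simplex}. The output is an aspire $R^{\beta+1}$ on $\mathcal U^{\Psi|_{\tstr^{\beta+1}}}_{\tstr^{\beta+1}}(\I)$ extending $R^\beta$. Iterating to $\beta = N$ gives the desired aspire on $\PsiStanHoods[\I]{\tstr}$.

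Compatibility with any intermediate subcomplex $\tstr[B] \subset \tstr$ is automatic from the cell-by-cell construction: by reordering so that cells of $\tstr[B]$ appear consecutively right after those of $\tstr[A]$, the aspire restricted to $\mathcal U^{\Psi|_{\tstr[B]}}_{\tstr[B]}(\I)$ is precisely the one built at the corresponding intermediate stage of the induction, and subsequent gluing only modifies the aspire outside $\tstr[B]$. The genuinely hard step — producing an aspire on a single simplex that lands in the boundary at the prescribed level $\psi$ while restricting to the standard aspire on the lower-dimensional faces — is isolated in \cref{lemma:extending_aspire_on_simplex}; once that lemma is in hand, the present proposition is bookkeeping plus a direct application of \cref{lem:gluing_aspires}, with the only subtlety being the verification, via \cref{def:defines_stratum_nbhd_system}, that the standard neighborhoods on successive cells actually assemble into a pushout diagram at each stage.
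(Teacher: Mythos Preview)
Your proposal is correct and follows essentially the same route as the paper: induction over the cells outside $\tstr[A]$, with \cref{lemma:extending_aspire_on_simplex} supplying the per-cell data $(\psi,\overline{R})$ and \cref{lem:gluing_aspires} performing the extension step; the paper's proof is in fact more terse than yours on the bookkeeping.

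One small correction: in your pushout square the corners should be the \emph{abstract} neighborhoods $\stanHoods[\I]{\partial \Delta^{\J}} \hookrightarrow \stanHoods[\I]{\Delta^{\J}}$ (as in the paper's diagram), not their images under $\sigma_\beta \circ \Psi_\beta$. The attaching map $\sigma_\beta|_{\sReal{\partial \Delta^{\J}}}$ need not be injective, so $\psi$ and $\overline{R}$ do not in general descend to the image; they live on the abstract simplex, which is exactly what \cref{con:gluing_aspires} requires of $\tstr[B]$. Your assignment $R_{\tstr[B]} := \overline{R}$ already implicitly uses the abstract version, so this is only a slip in how you labeled the diagram.
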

\begin{proof}
    Via induction over the number of cells, it suffices to consider the case where $\tstr[A] \subset \tstr[X]$ differ only in one cell. Then, using \cref{prop:simplicial_standard_model}, we have a pushout diagram
        \begin{diagram}
       {   \stanHoods[\I]{\partial \Delta^{\J_i}} }\arrow[r, hook]\arrow[d] &  {\stanHoods[\I]{ \Delta^{\J_i}} \arrow[d]}\\
            {\mathcal U^{\Psi|_{\tstr[A]}}_{\tstr[A]}(\I)} \arrow[r, hook]& {\PsiStanHoods[\I]{\tstr}} 
        \end{diagram}
    of finite stratified cell complexes.
    We may then use \cref{con:gluing_aspires} together with \cref{lemma:extending_aspire_on_simplex} to extend the \aspire{} on ${\PsiStanHoods[\I]{\tstr[A]}[\Psi|_{\tstr[A]}]}$ to one on $\PsiStanHoods[\I]{X}$. One may verify directly from the construction in \cref{con:gluing_aspires} that the \aspires{} defined inductively in this fashion are compatible with subcomplexes.
\end{proof}
\begin{remark}
    One may hope that the construction in \cref{prop:aspire_for_finite_cell} generalizes to arbitrary cell complexes via transfinite composition. While it is true that the construction goes through, note that \cref{con:gluing_aspires} requires $X$ to be a finite cell complex. This assumption was needed to circumvent the subtle differences between $\Delta$-generated topology and relative topology described in \cref{ex:substrata_not_delta_gen}. Note, however that the main purpose of \aspires{} in this work is to compute homotopy links. For this, existence of \aspires{} on finite subcomplexes is sufficient.
\end{remark}
The analogue of \cref{prop:aspire_for_finite_cell} fails, if one instead changes the definition of \aspires{} such that they are required to extend continuously to $\utstr[U]_{\I} \times \sReal{\Delta^\I} \to \tstr[U]$. Let us give an example to illustrate this:
\begin{example}\label{ex:aspire_gen_not_ext}
    Let $\I = P = \{ p< p_1 <p_2\}$. Consider the flag $\J= [p_0 \leq p_1 \leq p_1 \leq p_2]$. Now, we may glue $\sReal{\Delta^\I}$ to $\sReal{\Delta^\J}$, along any stratum-preserving map $\xi \colon \sReal{\Delta^{[p_0 < p_1]}} \to \sReal{\Delta^\J}$. Denote the resulting stratified cell complex by $\tstr$ and let $\tstr[A]$ be the subcomplex defined by $\sReal{{\Delta^\J}}$. 
    Next, fix any barycentric subdivision $\Psi$ of the stratified cell complex $\tstr$, and denote by $\Phi$ the induced subdivision of $\sReal{\Delta^\I}$ (by treating the latter as a cell of $\tstr$). Furthermore, denote $U:=\snVal[\str]^{\Psi}(\I)$ and $V:=\snVal[{\tstr[A]}]^{\Psi_{\tstr[A]}}(\I) \subset U$, and by $V'$ the image of $\tstr[U]^{\Phi}_{\sReal{\Delta^\I}}$ in $\tstr$. 
    Suppose we are given a map
    \[
    R \colon U \times \sReal{\Delta^\I} \to U,
    \]
    which is stratum-preserving when restricted to $(U \cap \tstr_{p_n}) \times \sReal{\Delta^\I}$, and fulfills $R(x,v_i) =x$, for $i \in [2]$, $v_i$ the vertex of $\sReal{\Delta^\I}$ corresponding to $p_i \in \I$ and $x \in \str_{p_i}$.
    In particular, all of these properties are consequences of the altered definition of \aspires{} we are investigating.
    For connectivity reasons, using the fact that the $p_2$ stratum consists of two disjoint cells, $R$ must also fulfill $R(x,u) \in V$, for any $x \in V$, as well as $R(x,u) \in V'$, for any $x \in V'$ and all $u \in \sReal{\Delta^\I}$.
    Consequently, it follows that $R(V \cap V' \times \sReal{\Delta^\I}) \subset V \cap V'$. In particular, it follows that $R$ restricts to a map 
    \[
    R' \colon (V \cap V' ) \times \sReal{\Delta^{[p_0 < p_1]}} \to  V \cap V' .
    \]
    By identifying $\sReal{\Delta^{[p_0 < p_1]}}$ with the interval $[0,1]$, it follows that $R'$ defines a homotopy between the identity and the constant map with value the unique point $y$ in the $p_0$ stratum of $\tstr$. 
    Note that $V \cap V'$ is of the form $\xi([0,a])$, for some $a>0$. Since $\xi$ was allowed to be arbitrarily complicated,  there is no reason to assume that the image $\xi([0,a])$ is contractible, for any choice of $a$ (think of a spiral converging to $y$ which intersects itself infinitely often, as it does so). Note that if $\xi$ was piecewise linear, we could indeed assume contractibility for sufficiently small $a$.
\end{example}
We may now finally prove the following theorem:
\begin{theorem}\label{thm:cell_stanhood_is_holink}
    Let $\tstr$ be a $P$-stratified cell complex and let $\Psi$ be any subdivision of $\tstr$ that induces a strata-neighborhood system. Then $\PsiStanHoodSys$ defines a homotopy link model for $\tstr$.
\end{theorem}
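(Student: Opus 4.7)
The plan is to verify both clauses of the definition of a homotopy link model for $\PsiStanHoodSys$. These split naturally: the inclusion $\PsiStanHood[\I]{\tstr}_{p_n} \hookrightarrow \PsiStanHood[\I]{\tstr}_{\geq p_n}$ can be handled by transporting the simplicial retractions of \cref{con:retract_to_strata} cellwise, while the evaluation equivalence $\HolIP(\PsiStanHoods[\I]{\tstr}) \to \PsiStanHood[\I]{\tstr}_{p_n}$ is obtained by combining the \aspire{}-existence result on finite subcomplexes (\cref{prop:aspire_for_finite_cell}) with \cref{prop:computing_holinks_via_aspire} via a filtered colimit argument.

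For the first clause, \cref{prop:stan_hood_is_hood_cell} guarantees that $\PsiStanHood[\I]{\tstr}$ is a subcomplex of $\sd_{\Psi}\tstr$, and by \cref{prop:simplicial_standard_model} together with \cref{rem:Psi_stan_hood_for_cplx}, the maps $\sigma_i \circ \Psi_i$ identify each of its cells with a cell of $\stanHood[\I]{\Delta^{\J_i}}$. \cref{cor:nbhd_retraction} provides a natural strong deformation retraction, over $P_{\leq p_n}$, of $\stanHood[\I]{\Delta^{\J_i}}$ onto its $\leq p_n$-substratified space; its restriction to $(\stanHood[\I]{\Delta^{\J_i}})_{\geq p_n}$ defines a strong deformation retraction onto $(\stanHood[\I]{\Delta^{\J_i}})_{p_n}$. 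Naturality in $\J$ ensures compatibility with face maps, so that these cellwise retractions glue along the attaching data of $\sd_{\Psi} \tstr$ to a strong deformation retraction of $\PsiStanHood[\I]{\tstr}_{\geq p_n}$ onto $\PsiStanHood[\I]{\tstr}_{p_n}$.

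For the evaluation equivalence, write $\PsiStanHoods[\I]{\tstr} = \colim_\alpha \tstr[V]^\alpha$ as a filtered colimit of its finite stratified subcomplexes. By \cref{prop:aspire_for_finite_cell}, each $\tstr[V]^\alpha$ carries an \aspire{} that can be chosen compatibly along inclusions, and \cref{prop:computing_holinks_via_aspire} then yields a weak equivalence $\HolIP(\tstr[V]^\alpha) \to (\tstr[V]^\alpha)_{p_n}$ for each $\alpha$. \cref{lem:finite_subcomplex} implies that every stratum preserving map $\sReal{\Delta^\I} \to \PsiStanHoods[\I]{\tstr}$ factors through some $\tstr[V]^\alpha$; a point-set argument then identifies $\HolIP(\PsiStanHoods[\I]{\tstr})$ with $\colim_\alpha \HolIP(\tstr[V]^\alpha)$, and similarly $\PsiStanHood[\I]{\tstr}_{p_n} = \colim_\alpha (\tstr[V]^\alpha)_{p_n}$. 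Passing to the filtered colimit of weak equivalences along closed cofibrations completes the argument.

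The main obstacle is verifying that $\HolIP$ commutes with this filtered colimit in the chosen category of topological spaces, i.e.\ that the compact-open topology on the homotopy link agrees with the colimit topology transported from the finite pieces. This is straightforward in the $\Delta$-generated or compactly generated settings of \cref{not:notation_top_spaces}, but requires some care for general topological spaces; compactness of $\sReal{\Delta^\I}$ and closedness of the transition inclusions $\tstr[V]^\alpha \hookrightarrow \tstr[V]^\beta$ are what make the identification go through.
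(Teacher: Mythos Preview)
Your argument for the evaluation equivalence $\HolIP(\PsiStanHoods[\I]{\tstr}) \to \PsiStanHood[\I]{\tstr}_{p_n}$ is essentially the paper's: both reduce to the finite case by compactness and then invoke \cref{prop:aspire_for_finite_cell} together with \cref{prop:computing_holinks_via_aspire}. The filtered-colimit phrasing you give is a reasonable way to make the reduction explicit.

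However, your treatment of the inclusion $\PsiStanHood[\I]{\tstr}_{p_n} \hookrightarrow \PsiStanHood[\I]{\tstr}_{\geq p_n}$ has a genuine gap. The retraction $\rho^{p_n}$ of \cref{cor:nbhd_retraction} is natural \emph{only} with respect to realizations of maps of stratified simplicial sets, i.e.\ maps that are cellwise affine. The attaching maps $\sigma_i|_{\sReal{\partial\Delta^{\J_i}}}$ of a stratified cell complex are arbitrary stratum-preserving continuous maps, and the subdivision homeomorphisms $\Psi_i$ do not render them simplicial; the defining condition in \cref{def:defines_stratum_nbhd_system} only demands set-theoretic nesting of neighborhoods, not compatibility of retractions. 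Consequently, the cellwise retractions on $\stanHood[\I]{\Delta^{\J_i}}$ need not agree on overlaps and cannot in general be glued to a global retraction of $\PsiStanHood[\I]{\tstr}_{\geq p_n}$ onto $\PsiStanHood[\I]{\tstr}_{p_n}$. This is precisely the difficulty that \cref{ex:aspire_gen_not_ext} is designed to illustrate (for \aspires{}, but the same obstruction applies here).

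The paper avoids this by abandoning the attempt to produce a compatible retraction and instead arguing inductively on the number of cells: one forms the pushout square of $\PsiStanHoods[\I]$ arising from attaching a cell, observes via \cref{lem:cell_cplxs_are_preserved} and \cref{lem:preservation_of_closed_pushout} that applying $(-)_{p_n}$ and $(-)_{\geq p_n}$ both yield homotopy pushout squares, and concludes by induction that the comparison map is a weak equivalence. This cube argument only needs the weak equivalence on each simplex and its boundary, which follows from \cref{cor:stanhood_are_model_half}; no gluing of the retractions themselves is required.
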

\begin{proof} 
    By the standard compactness arguments, it suffices to show the case where $\tstr$ is a finite stratified cell complex.
    First, let us show that the maps $\PsiStanHood[\I]{\tstr}_{p_n} \hookrightarrow \PsiStanHood[\I]{\tstr}_{\geq p_n}$ are weak equivalences. Let us first note that the result holds when $\tstr$ is the realization of a stratified simplicial complex $\str[K] = \partial \Delta^\J, \Delta^\J$, for some flag $\J$ in $\pos$ and $\Psi$ is the subdivision given by \cref{con:simplicial_model_of_standard}.
    Indeed, then we have $\PsiStanHoods[\I]{\sReal{\str[K]}} = \stanHoods[\I]{\str[K]}$, for which the result holds by \cref{prop:simplicial_standard_model}. Next, let us proceed to show the result for $\tstr$ a finite cell complex, via induction over the number of cells.
    Suppose $\tstr$ is obtained by gluing a cell $\sigma \colon \sReal{\Delta^{\J}} \to \tstr$ along $\sReal{\partial \Delta^\J} \to \tstr[A]$, for some finite complex $\str[A]$. Using \cref{prop:simplicial_standard_model} it follows that there is a pushout diagram of $P$-stratified spaces
    \begin{diagram}
       { \stanHoods[{\I}]{\partial \Delta^\J}} \arrow[r, hook] \arrow[d]& { \stanHoods[\I]{\Delta^\J}} \arrow[d]\\
       {\PsiStanHoods[\I]{\tstr[A]}[\Psi|_{\tstr[A]}]} \arrow[r] & {\PsiStanHoods[\I]{\tstr}.}
    \end{diagram}
    From this, we obtain the following commutative cube.
    \begin{diagram}
       {} && {\stanHood[{\I}]{\partial \Delta^\J}}_{\geq p_n} \arrow[r, hook] \arrow[dd]&   \stanHood[\I]{\Delta^\J}_{\geq p_n} \arrow[dd] \\
         {\stanHood[{\I}]{\partial \Delta^\J}}_{p_n} \arrow[rru, "\simeq"] \arrow[r, hook] \arrow[dd] & {\stanHood[{\I}]{\Delta^\J}}_{p_n}\arrow[rru, "\simeq"] \arrow[dd]&  \\
        &  &  {\PsiStanHood[\I]{\tstr[A]}[\Psi|_{\tstr[A]}]}_{\geq p_n} \arrow[r, hook] &  \PsiStanHood[\I]{\tstr}_{ \geq p_n}  \\
        {\PsiStanHood[\I]{\tstr[A]}[\Psi|_{\tstr[A]}]}_{p_n} \arrow[rru, "\simeq"] \arrow[r, hook]& \PsiStanHood[\I]{\tstr}_{p_n} \arrow[rru] && \spaceperiod
    \end{diagram}
    By inductive assumption all the diagonal maps but the lower vertical one are known to be weak homotopy equivalences in $\TopN$. By the standard properties of homotopy pushouts (see for example \cite[Prop 13.5.4]{hirschhornModel}) it suffices to show that the front and the back face of this cube are homotopy cocartesian. 
    This follows from \cref{lem:cell_cplxs_are_preserved} together with \cref{lem:preservation_of_closed_pushout} and the characterization of homotopy cocartesian squares in a model category in \cite[Prop. A.2.4.4]{HigherTopos}. \\
    Next, we need to show that for any regular flag $\I = \standardFlag \subset P$, the natural map
    \[
    \HolIP(\PsiStanHoods[\I]{\str[X]}) \xrightarrow{\ev} \PsiStanHood[\I]{\str[X]}_{p_n}
    \]
    is a weak equivalence. This follows directly from \cref{prop:aspire_for_finite_cell} together with \cref{prop:computing_holinks_via_aspire}.
\end{proof}
As a corollary of \cref{thm:cell_stanhood_is_holink}, we obtain the following result, which is central to our investigation of the stratified homotopy hypothesis in \cite{TSHHWa}. 
\begin{corollary}\label{cor:hol_diag_is_ho_pushout}
    Let $\I = \standardFlag $ be  a regular flag in $P$ and
    suppose we are given a pushout diagram of stratified cell complexes in $\TopPN$
    \begin{diagram}\label{diag:cor:cell_pushout}
        {\tstr[A]} \arrow[d, "f"] \arrow[r, hook] & \tstr[B] \arrow[d , "g"] \\
        \tstr[X] \arrow[r, hook, "i"]& \tstr[X] \cup_{\tstr[A]} \tstr[B] ={{\tstr[Y]}} 
    \end{diagram}
    where $\tstr[A] \hookrightarrow \tstr[B]$ is the inclusion of a subcomplex. Then the image of this square under $\HolIP$
    \begin{diagram}\label{diag:holink_diag_proof}
     \HolIP{\tstr[A]} \arrow[r] \arrow[d] & \HolIP{\tstr[B]} \arrow[d] \\
     \HolIP{\tstr[X]} \arrow[r] & \HolIP{\tstr[Y]}
    \end{diagram}
    is homotopy cocartesian in $\TopN$.
\end{corollary}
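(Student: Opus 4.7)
The plan is to combine three technical results already established in the paper: the lifting of a pushout diagram of stratified cell complexes to a diagram of strata-neighborhood systems (\cref{cor:models_for_homotopy_pushouts_half}), the fact that the resulting standard neighborhood systems are homotopy link models (\cref{thm:cell_stanhood_is_holink}), and the homotopy cocartesianness of the regular complement square at each flag $\I$ (\cref{lem:comp_nbhd_preserves_cell_cplx}). The comparison result \cref{prop:models_model_globally} then transfers the homotopy pushout property from the regular complement side to the homotopy link side.

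First, I would invoke \cref{cor:models_for_homotopy_pushouts_half} to choose barycentric subdivisions $\hat \Phi$ of $\tstr[B]$ and $\Psi$ of $\tstr[X]$, compatible on the subcomplex $\tstr[A]$, such that the induced subdivision $\hat \Psi$ of $\tstr[Y]$ defines a strata-neighborhood system, and such that the original pushout \cref{diag:cor:cell_pushout} lifts to the square \cref{diag:lift_of_pushout_diag} in $\NCat$. By \cref{thm:cell_stanhood_is_holink}, each of the four standard neighborhood systems appearing in this lift is a homotopy link model, so the lifted square actually sits inside the full subcategory $\HoModCat$.

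Next, apply the functor $\NDiagT$ and evaluate at the fixed flag $\I$. The resulting diagram \cref{diag:NDiagTModel} is, by \cref{lem:comp_nbhd_preserves_cell_cplx}, a pushout of cell complexes in $\TopN$ whose horizontals are relative cell complexes; in particular, it is homotopy cocartesian.

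Finally, \cref{prop:models_model_globally} supplies a natural weak equivalence (via the coherent evaluation of \cref{con:coherent_evalutation}) between $\NDiagT$ and $\HolIP[] \circ (\text{underlying})$ on $\HoModCat$. Applied corner-by-corner, it produces a pointwise weak equivalence between the square \cref{diag:NDiagTModel} and the image of \cref{diag:cor:cell_pushout} under $\HolIP$. Since homotopy pushouts are preserved under objectwise weak equivalences, \cref{diag:holink_diag_proof} is itself homotopy cocartesian, as claimed. The bulk of the technical difficulty has been absorbed into the earlier results, so the only genuine obstacle here is the bookkeeping needed to ensure that the zigzag of \cref{prop:models_model_globally} is sufficiently natural along morphisms in $\HoModCat$ to permit termwise transfer of the homotopy pushout property; this is transparent from the explicit construction of that comparison.
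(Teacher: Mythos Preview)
Your proposal is correct and follows essentially the same approach as the paper's own proof: lift the pushout to strata-neighborhood systems via \cref{cor:models_for_homotopy_pushouts_half}, observe these are homotopy link models by \cref{thm:cell_stanhood_is_holink}, use \cref{prop:models_model_globally} to compare $\HolIP$ with $\NDiagT$, and conclude by \cref{lem:comp_nbhd_preserves_cell_cplx} that the $\NDiagT$-square is homotopy cocartesian. The paper's argument is slightly more terse but the logical structure is identical.
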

\begin{proof}
    By \cref{cor:models_for_homotopy_pushouts_half}, \cref{diag:cor:cell_pushout} lifts to a diagram of strata-neighborhood systems 
    \begin{diagram}\label{diag:lift_of_pushout_diag_proof}
        \PsiStanHoodSys[\str [A]][\Phi] \arrow[r] \arrow[d] &  \PsiStanHoodSys[\str [B]][\hat \Phi] \arrow[d] \\
        \PsiStanHoodSys[\str [X]][\Psi]  \arrow[r] & \ \PsiStanHoodSys[\str[Y]][\hat \Psi]  ,
    \end{diagram}
    for appropriate choice of subdivisions $\Phi, \hat \Phi, \Psi, \hat \Psi$. By \cref{thm:cell_stanhood_is_holink}, \cref{diag:lift_of_pushout_diag_proof} is a diagram of homotopy link models. Thus, by \cref{prop:models_model_globally}, \cref{diag:holink_diag_proof} is weakly equivalent to the image of \cref{diag:lift_of_pushout_diag_proof} under $\NDiagT$ at $\I$. That the latter is homotopy cocartesian is the content of \cref{lem:comp_nbhd_preserves_cell_cplx}.
\end{proof}
Finally, to finish this section, we still need to provide a proof of \cref{lemma:extending_aspire_on_simplex}.
\begin{proof}[Proof of \cref{lemma:extending_aspire_on_simplex}]
Using \cref{prop:simplicial_standard_model} we may identify $\stanHood{\Delta^\J}$ with the realization of $\simStanHood[\I]{\Delta^{\J}} = \bigcap_{p \in \I} \simStanHood{\Delta^{\J}}$ as defined in \cref{con:simplicial_model_of_standard} and proceed analogously with $\partial \Delta ^\J$. As full subcomplexes of $\sd \Delta^\J$ the two complexes $\simStanHood[\I]{\Delta^{\J}}$ and $\simStanHood[\I]{\partial \Delta^{\J}}$ only differ in the vertex corresponding to the maximal simplex of $\Delta^{\J}$, $x_\J$. 
 If $x_\J \in S_{\Delta^\J}(\I)$, then either $\I \subset \J$ or $\max \J < p_k$, for some $k \in [n]$. If $\max \J < p_k < p_n$, then $\stanHood[\I]{\Delta^\J}_{p_n} = \emptyset$ and there is nothing to show. Hence, we may assume that $\max \J = p_n$ and $\I \subset \J $. \\
 \textbf{Step 1: }Let $x_0 \in \sReal{\Delta^\J} \setminus \bigcup_{p \in \I, p < p_n}\stanHood[p]{\Delta^\J} \cup \sReal{\partial \Delta^\J}$. That such a point exists is a consequence of the inclusion $\I \subset \J$. Indeed, any point $x$ in the interior of $\sReal{\Delta^\J}$, with $s_{p_n}(x) > \frac{1}{2}$ will do. Next, consider the straight line projection through $x_0$
 \begin{align*}
     r \colon \sReal{\Delta^\J} \setminus \{x_0\} \to \sReal{\partial \Delta^\J}. 
 \end{align*}
Let us show that $r$ maps $\stanHood[\I]{\Delta^\J}$ to $\stanHood[\I]{\partial \Delta^\J}$.
By definition of $\stanHood[\I]{-}$, and using that $\max \J = p_n$, we may instead show that $r$ maps $\stanHood{\Delta^\J}$ to $\stanHood{\partial \Delta^\J}$ for all $p \in \I_{< p_n}$.
The map $r$ maps $x$ to the intersection point of the ray \[
 \{ x + \alpha(x - x_0) \mid \alpha \geq 0
\}
\]
with $\stanHood{\partial \Delta^\J}$.
Let $\alpha_x$ be the unique value in $[0, 1]$, specifying this intersection point.
In particular, for any $p \in \I_{< p_n}$, we may compute
\begin{align*}
    s_{ \not \leq p}(r(x))  &= s_{ \not \leq p}(  x + \alpha_x(x - x_0) ) &= (1 + \alpha_x) s_{\not \leq p}(x) - \alpha_x s_{ \not \leq p}(x_0) \\
     s_{ p}(r(x))  &=  \cdots &= (1 + \alpha_x) s_{p}(x) - \alpha_x s_{p}(x_0).
\end{align*}
By assumption, $s_{\not \leq p}(x_0) > s_{p}(x_0)$   and $s_{ \not \leq p}(x) \leq s_p(x)$. 
Since, $\alpha_x \geq 0$, it follows that
\[ s_{\not \leq p} (r(x)) = (1+\alpha_x) s_{ \not \leq p}(x) - \alpha_x s_{\not \leq p}(x_0) \leq (1+\alpha_x) s_{ p}(x) -\alpha_x  s_{p}(x_0) = s_p(r(x)),\]
as was to be shown. \\
\textbf{Step 2: } We need to verify an additional property of $r$, namely that it is close to being stratum-preserving.
We show that for $x \in \stanHood[\I]{\Delta^\J}_{p_n}$
\begin{equation}\label{equ:r_is_not_evil}
    s(r(x)) \geq p_{n-1}.
\end{equation}
Assume, to the contrary that $\alpha_x> 0$ and $s_{q}(r(x)) = 0$ , for all $q \geq p_{n-1}$. Then, for such $q$, we have
\[
 s_{q}(x) = \frac{\alpha_x}{1+ \alpha_x} s_q(x_0)
\]
and 
\[
s_{\not \leq q}(x)  =\frac{\alpha_x}{1+ \alpha_x} s_{\not \leq q}(x_0)
\]
and obtain
\[
s_{p_{n-1}}(x) =  \frac{\alpha_x}{1+ \alpha_x} s_{p_{n-1}}(x_0) <  \frac{\alpha_x}{1+ \alpha_x} s_{ \not \leq p_{n-1}}(x_0) = s_{\not \leq p_{n-1}}(x)
\]
in contradiction to the assumption that $x \in \stanHood[\I]{\Delta^\J}$. \\
\textbf{Step 3: }Denote by $R$ the standard \aspire{} on $\stanHood[\I]{\Delta^\J}$ (see \cref{con:standard_aspire}). Furthermore, denote by $\hat R \colon \stanHood[\I]{\Delta^\J} \times \sReal{\Delta^\I}$ the (extended) \aspire{} obtained by affinely extending 
\begin{align*}
    (x,v_k) &\mapsto x & \textnormal{, for } k=n \\
    (x,v_k) &\mapsto \rho^{p_k}(r(x))  & \textnormal{, for } k<n
\end{align*}
where $v_k$ denotes the $k$-th vertex of $\sReal{\Delta^\I}$ and $\rho^p$ are as in \cref{con:retract_to_strata}.
$\hat R$ is well defined by \cref{lem:retraction_preserve_nbhds} and the fact that $r$ maps into $\stanHood[\I]{\Delta^\J}$. If $x \in \stanHood[\I]{\partial \Delta^\J}_{p_n}$, then $r(x) =x$ and hence $\hat R_{x}$ agrees with $R$.
Furthermore, it follows from the inclusion $\stanHood[\I]{\Delta^\J}_{< p_n} \subset \stanHood[\I]{\partial \Delta^\J}$ that $\hat R$ agrees with $R$ on $\stanHood[\I]{\Delta^\J}_{\leq p} \times \sReal{\Delta^{\I_{\geq p}}}$, for any $p \in \I$, $p<p_n$. Clearly, also $\hat R(x, v_n)=x$. Hence, to see that $\hat R$ does indeed define an \aspire{}, we only need to verify that $\hat R_x$ is stratum-preserving for $x \in \stanHood[\I]{\Delta^\J}_{p_n}$. Just as for the proof of the analogous statement in \cref{prop:stan_aspire_is_aspire}, one shows that $\hat R_x$ being stratum-preserving is equivalent to showing that $r(x) \in \stanHood[\I]{\Delta^\J}_{\geq p_{n-1}}$ which is the content of \cref{equ:r_is_not_evil}. \\
\textbf{Step 4: }The idea of the remainder of the proof is to now combine $\hat R$ and $R$. To do so, we will make use of a (continuous) function $\psi \colon \stanHood[\I]{\Delta^\J}_{p_n} \to [0,1]$, with the properties that
\begin{enumerate}
    \item $\psi^{-1}(0) = r^{-1}(\stanHood[\I]{\partial \Delta^\J}_{< p_n}) \cap \stanHood[\I]{\Delta^\J}_{p_n}$;
    \item $\psi^{-1}(1) = \stanHood[\I]{\partial \Delta^\J}_{p_n}$.
\end{enumerate}
Notice that both of these sets are closed subsets of $\stanHood[\I]{\Delta^\J}_{p_n}$  and that since $r(x)=x$, for $x \in \stanHood[\I]{\partial \Delta^\J}$, they are also disjoint. Hence, such a function $\psi$ exists. 
Furthermore, we are going to need another interpolation function
\begin{align*}
    H \colon \stanHood[\I]{\Delta^\J} \times \sReal{\Delta^\I} \times [0,1] &\to \stanHood[\I]{\Delta^\J} \\
    (x,u,t) &\mapsto (1-t) \hat R(x,u) + t R(r(x),u).
\end{align*}
Then one may verify the following properties of $H$:
\begin{HProps}
    \item \label{proof:extending_aspire_on_simplex:item1} $H_0 = \hat R$ and $H_1$ has value in $\stanHood[\I]{\partial \Delta^\J}_{p_n}$.
    \item \label{proof:extending_aspire_on_simplex:item2} If $x \in \stanHood[\I]{\partial \Delta^\J}$, then $H(x,-,-)$ is the constant homotopy with value $R_{x}$.
    \item \label{proof:extending_aspire_on_simplex:item3}  If $s(x) = p_n$, then for any $t < 1 $, $H(x,-,t)$ is stratum-preserving.
    \item \label{proof:extending_aspire_on_simplex:item4} If $\psi(x) >0$ and $s(x) = p_n$, then $H(x,-,-)$ is stratum-preserving. 
    \item \label{proof:extending_aspire_on_simplex:item4.5} Restricted to $\stanHood[\I]{\Delta^\J} \times \sReal{\Delta^{\I_{<p_n}}}$, $H$ is given by the constant homotopy of value $R_{r(x)}|_{\sReal{\Delta^{\I_{<p_n}}}}$.
    \item \label{proof:extending_aspire_on_simplex:item5} If $s(x) = p_n$ and $\psi(x) = 1$, then $H(x,-,-)$ is the constant homotopy with value $ R_{r(x)} = R_{x}$. 
\end{HProps}
\textbf{Step 5: }We may now finally define the \aspire{} promised in the statement of the proposition.
Consider the map
\begin{align*}
    \overline{R} \colon \stanHood[\I]{\Delta^\J} \times \sReal{\Delta^\I} &\to \stanHood[\I]{\Delta^\J} \\
            (x,u) &\mapsto H(x,u, 1) & \textnormal{, for } x \in  \stanHood[\I]{\Delta^\J}_{<p_n} \\
            (x,u) &\mapsto H(x,u,1) & \textnormal{, for } x\in \stanHood[\I]{\Delta^\J}_{p_n} \textnormal{ and } u_{p_n} \leq \psi(x) \\
            (x,u) &\mapsto H(x,u, \frac{1-u_{p_n}}{1- \psi(x)} ) & \textnormal{, for } x\in \stanHood[\I]{\Delta^\J}_{p_n} \textnormal{ and } u_{p_n} > \psi(x).
 \end{align*}
Let us verify the continuity of $\overline{R}$. Notice that the first two conditions on $(x,t)$ define a closed subspace $D \subset \stanHood[\I]{\Delta^\J} \times \sReal{\Delta^\I}$. Hence, the only thing to check is that for any sequence $(x_i,u_i)$, $i \in \mathbb N$, with $x_i \in \stanHood[\I]{\Delta^\J}_{p_n}$ and $(u_i)_{p_n} > \psi(x_i)$, converging to $(x,u) \in D$, it also follows that $H(x_i,u_i,\frac{1-(u_i)_{p_n}}{1- \psi(x_i)} )$ converges to $H(x,u,1)$. In the following, by convergence of functions we will always mean uniform convergence. There are two cases to consider. If $x \in \stanHood[\I]{\Delta^\J}_{<p_n} \subset \stanHood[\I]{\partial \Delta^\J}$, then by \cref{proof:extending_aspire_on_simplex:item2} $H(x_i, - ,-)$ converges to a constant homotopy and hence 
$
H(x_i,u_i,\frac{1-(u_i)_{p_n}}{1- \psi(x_i)} )
$
converges to $H(x,u,1)$. If $x \in \stanHood[\I]{\Delta^\J}_{p_n}$, then, by assumption, $u_{p_n} = \psi(x)$ and thus if $\psi(x) < 1$ continuity is immediate from the definition. It remains to consider the case $u_{p_n} = \psi(x) = 1$.
In this case, it follows from \cref{proof:extending_aspire_on_simplex:item5} that $H(x_i,-, -)$ converges to the constant homotopy with value $R_{x}$. Hence, again it follows that $H(x_i,u_i,\frac{1-(u_i)_{p_n}}{1- \psi(x_i)} )$ converges to $H(x,u,1)$. \\
\textbf{Step 6: }
Let us now verify that $\overline{R}$ restricts to an \aspire. 
If $x \in \stanHood[\I]{\Delta^\J}_{p_n}$ and $\psi(x) > 0$, then $\overline{R}_x \colon \sReal{\Delta^\I} \to \stanHood[\I]{\Delta^\J}$ is stratum-preserving by \cref{proof:extending_aspire_on_simplex:item4}. If $\psi(x) = 0$, then $\overline{R}_x(s) = H(x,u,1-u_{p_n})$. Hence, by \cref{proof:extending_aspire_on_simplex:item3}, we obtain preservation of strata for $u_{p_n} > 0$. For $u_{p_n} = 0$  and $\psi(x) =0$, it follows by \cref{proof:extending_aspire_on_simplex:item4.5} that then $H(x,u,1)) = R(r(x),u)$. Since $s(r(x)) \geq p_{n-1}$, by \cref{equ:r_is_not_evil}, it follows that $R_{r(x)}|_{\sReal{\Delta^{\I_{<p_{n-1}}}}}$ is stratum-preserving. 
This shows that the restriction of $\overline{R}$ 
\[
R \colon \stanHood[\I]{\Delta^\J}_{p_n} \times \sReal{\Delta^\I} \to \stanHood[\I]{\Delta^\J} 
\]
is stratum-preserving. Finally, let $p \in \I$, $u \in \sReal{\Delta^{\I_{\geq p}}}$ and $x \in \stanHood[\I]{\Delta^\J}_{\leq p}$.
Note first that whenever $s(x) < p_n$ or $\psi(x) = 1$, then $x \in \stanHood[\I]{\partial \Delta^\J}$ and thus 
\[ \overline{R}(x,u) = H(x,u,1) = R(x,u) =x \] by \cref{proof:extending_aspire_on_simplex:item2}. Furthermore, If $s(x) = p_n$ and $\psi(x) <1$ then, by the assumption that $s(x) \leq p$ and $u \in  \sReal{\Delta^{\I_{\geq p}}}$, it follows that $u_{p_n} = 1$ and hence \[\overline{R}(x,u)= H(x,u,0) = \hat R(x,u)  = x\] by \cref{proof:extending_aspire_on_simplex:item1}. To summarize, we have shown that $\overline R$ is an \aspire{}. By \cref{proof:extending_aspire_on_simplex:item2}, $R$ defines an extension of the standard \aspire{} on $\stanHood[\I]{\partial \Delta^\I}$. Finally, if $\psi(x) =u_{p_n}$, for $(x,u) \in \stanHood[\I]{\Delta^\I}_{p_n} \times \sReal{\Delta^\I}$, then $R(x,u) = H(x,u,1) \in \stanHood[\I]{\partial \Delta^\J}$, by \cref{proof:extending_aspire_on_simplex:item1}, which finishes the proof.
\end{proof}
\section*{Acknowledgments}
The author is supported by a PhD-stipend of the Landesgraduiertenförderung Baden-Württemberg. 
 \printbibliography
\appendix
    \section{A series of tools from point-set topology}
In this section, we list a series of elementary results in point-set topology.
\begin{lemma}\label{lemma:neighborhoods_of_pushouts}
    Consider a pushout diagram of compact Hausdorff spaces
    \begin{diagram}
         A \arrow[r] \arrow[d] & B \arrow[d, "f'"] \\
       X \arrow[r, "g'" ]& Y \spaceperiod
    \end{diagram}
     Let $Z \subset U \subset Y$. If both $g'^{-1}(U)$ is a neighborhood of $g'^{-1}(Z)$ in $X$ and $f'^{-1}(U)$ is a neighborhood of $f'^{-1}(Z)$ in $B$, then $U$ is a neighborhood of $Z$ in $Y$. 
\end{lemma}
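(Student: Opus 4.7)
The plan is to directly produce an open set $V \subset Y$ with $Z \subset V \subset U$. Set
\[
V_X := \operatorname{int}_X(g'^{-1}(U)) \quad\text{and}\quad V_B := \operatorname{int}_B(f'^{-1}(U)).
\]
These are open, and by the neighborhood hypotheses satisfy $g'^{-1}(Z) \subset V_X$ and $f'^{-1}(Z) \subset V_B$. The strategy is then to define $V$ as the complement in $Y$ of
\[
C := g'(X \setminus V_X) \cup f'(B \setminus V_B).
\]
Since $X$ and $B$ are compact, the sets $X \setminus V_X$ and $B \setminus V_B$ are compact, hence so are their continuous images $g'(X \setminus V_X)$ and $f'(B \setminus V_B)$. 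Thus $C$ is compact.

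Granted that $C$ is closed in $Y$, the remaining verifications are immediate. For $Z \subset V$: if $z \in Z$ and $z = g'(x)$ for some $x \in X \setminus V_X$, then $x \in g'^{-1}(Z) \subset V_X$, a contradiction; the analogous argument rules out $z \in f'(B \setminus V_B)$. For $V \subset U$: any $y \in V$ equals $g'(x)$ or $f'(b)$ since $Y = g'(X) \cup f'(B)$; in the first case $x \notin X \setminus V_X$, so $x \in V_X \subset g'^{-1}(U)$ and thus $y \in U$, and symmetrically in the second case.

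The main obstacle is therefore exactly the assertion that $C$ is closed in $Y$. A compact subset of a topological space need not be closed in general, so one needs to know that $Y$ is Hausdorff. Under the interpretation that the pushout is taken in the category of compact Hausdorff spaces, this is automatic; in the intended application \cref{lemma:neighborhoods_of_finite_complexes}, $Y$ is a finite stratified cell complex, hence compact Hausdorff as required. In either reading, $Y$ is Hausdorff, so compact sets are closed, $V = Y \setminus C$ is open, and the above verifications conclude the proof.
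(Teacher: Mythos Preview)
Your proof is correct and follows essentially the same idea as the paper's. The paper phrases it as a general fact about quotient maps $\pi\colon T\to \tilde Y$ of compact Hausdorff spaces, constructing the saturated open set $T\setminus \pi^{-1}(\pi(T\setminus O))$ with $T=X\sqcup B$ and $O=V_X\sqcup V_B$; its image in $Y$ is exactly your $V=Y\setminus C$. Both arguments rely on $Y$ being Hausdorff to conclude that $C$ (equivalently $\pi(T\setminus O)$) is closed, and you correctly flag this point.
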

\begin{proof}
    It is generally true that for any quotient map $\pi \colon T \to \tilde Y$ of compact Hausdorff spaces the image of any neighborhood $\tilde V$ of $\pi^{-1}(\tilde Z)$ is a neighborhood of $\tilde Z$. Indeed, for any open $O \subset \tilde V$, containing $\pi^{-1}(Z)$, the set $ T \setminus \pi^{-1}( \pi( T \setminus O))$ is a saturated open set (this uses that $T \setminus O$ is closed, due to the compact-Hausdorff assumptions), which contains $\pi^{-1}(\tilde Z)$ and is contained in $\tilde V$. Since the map $X \sqcup B \to Y$ is such a quotient map and by assumption $g'^{-1}(U) \sqcup f'^{-1}(U)$ is a neighborhood of $g'^{-1}(Z) \sqcup f'^{-1}(Z)$, it follows that $U = g'(g'^{-1}(U)) \cup f'(f'^{-1}(U))$ is a neighborhood of $Z$.
\end{proof}
\begin{lemma}\label{lem:preservation_of_closed_pushout}
    Let \begin{diagram}
        \tstr[A] \arrow[r] \arrow[d] & \tstr[B]\arrow[d] \\
        \tstr[X] \arrow[r] & \tstr[Y]
    \end{diagram}
    be a cocartesian square in $\TopPN$ such that all arrows pointing into $\utstr[Y]$ are closed maps (or open maps). Then, for any subset $Q \subset P$, the square of general topological spaces
    \begin{diagram}
        \utstr[A]_Q \arrow[r] \arrow[d] & \utstr[B]_Q \arrow[d] \\
        \utstr_Q \arrow[r] & \utstr[Y]_Q
    \end{diagram}
    remains cocartesian. Furthermore, if $Q$ is open or closed in $P$, then the square remains cocartesian without any assumptions on the maps.
\end{lemma}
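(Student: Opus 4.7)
The plan is to proceed in two stages: first check that the identification $\utstr[Y]_Q = \utstr[X]_Q \sqcup_{\utstr[A]_Q} \utstr[B]_Q$ holds at the set-theoretic level, then compare the pushout topology on the right to the subspace topology inherited from $\utstr[Y]$. For the first stage, I would use that the forgetful functor $\TopPN \cong \TopN_{/P} \to \TopN$ is a left adjoint (right adjoint being $T \mapsto (T\times P \xrightarrow{\pi_2} P)$), hence preserves the pushout, so $\utstr[Y] = \utstr[X] \sqcup_{\utstr[A]} \utstr[B]$. Since $f,c$ are stratum-preserving, the equivalence classes generating this set-theoretic pushout are constant on strata, so the classes contained in $\utstr[X]_Q \sqcup \utstr[B]_Q$ biject with $\utstr[Y]_Q$ and the induced map $\pi_Q \colon \utstr[X]_Q \sqcup \utstr[B]_Q \twoheadrightarrow \utstr[Y]_Q$ is surjective.

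For the topological comparison, one inclusion is automatic: if $V = U \cap \utstr[Y]_Q$ with $U$ open in $\utstr[Y]$, then $i_X^{-1}(V) = i_X^{-1}(U) \cap \utstr[X]_Q$ is open in $\utstr[X]_Q$ (and likewise for $\utstr[B]_Q$), so every subspace-open set is pushout-open. The converse uses the map hypothesis. Given $V$ pushout-open with preimages $W_X = U_X \cap \utstr[X]_Q$, $W_B = U_B \cap \utstr[B]_Q$ and $U_X, U_B$ open, I would define the lift $U \subset \utstr[Y]$ in two different ways:
\begin{itemize}
    \item \textbf{Closed case:} Set $U := \utstr[Y] \setminus \bigl(i_X(\utstr[X]\setminus U_X) \cup i_B(\utstr[B]\setminus U_B)\bigr)$, which is open since the two removed images are closed by hypothesis.
    \item \textbf{Open case:} Set $U := i_X(U_X) \cup i_B(U_B)$, which is open since $i_X, i_B$ are open.
\end{itemize}

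The key step is the verification $U \cap \utstr[Y]_Q = V$; this is where stratum preservation of $i_X, i_B$ is crucial. For the closed-case formula, $y \in U$ means no preimage of $y$ lies in the complements $\utstr[X]\setminus U_X$ or $\utstr[B]\setminus U_B$; combined with $y \in \utstr[Y]_Q$, stratum preservation forces every preimage of $y$ into $\utstr[X]_Q \sqcup \utstr[B]_Q$, hence into $W_X \sqcup W_B = \pi_Q^{-1}(V)$, so $y \in V$. The reverse inclusion is immediate because $\pi_Q^{-1}(V) \subset U_X \sqcup U_B$. The open case is dually straightforward using that $W_X \subset U_X$. I expect this saturation-versus-stratum-preservation argument in the closed case to be the main subtlety, since images along a closed map may glue points across the two summands. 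Finally, for the unconditional statement when $Q$ is an upward (resp.\ downward) closed subset of $P$, I would argue that then $\utstr[X]_Q$, $\utstr[B]_Q$, and $\utstr[Y]_Q$ are open (resp.\ closed) in the ambient spaces, so a pushout-open $V$ has preimages open in the ambient $\utstr[X], \utstr[B]$ directly, making $V$ open (resp.\ closed after passing to complements) in $\utstr[Y]$ itself, bypassing the need to construct the lift $U$ via images.
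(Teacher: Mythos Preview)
Your proposal is correct and takes essentially the same approach as the paper: both arguments establish the set-theoretic bijection and then compare the pushout and subspace topologies by lifting closed (respectively open) subsets of $\utstr[X]_Q \sqcup \utstr[B]_Q$ to closed subsets of $\utstr[X] \sqcup \utstr[B]$, pushing forward to $\utstr[Y]$ via the closed-map hypothesis, and restricting back to $\utstr[Y]_Q$. The paper works directly with closed sets and is more terse (asserting the key equality $\phi(Z) = \tilde Z_Q$ without spelling out the stratum-preservation argument that you give explicitly), while you phrase the same computation in terms of open sets and their complements; these are dual formulations of the same idea.
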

\begin{proof}
We cover the closed cases. 
    We need to verify that the bijection
    \[
    \phi \colon X_Q \cup_{A_Q} B_Q \to Y_Q
    \]
    is a closed map. Now, any closed set $ Z \subset X_Q \cup_{A_Q} B_Q$ is given by the image of some closed set $Z_X \sqcup Z_B \subset X_Q \sqcup B_Q$. The latter is given by the restriction to $Q$, of some closed set $\tilde Z_X \sqcup \tilde Z_B \subset X \sqcup B$. Denote by $\tilde Z$ the image of $\tilde Z_X \sqcup \tilde Z_B$ in $Y=X \cup_{A} B$. By assumption, $\tilde Z$ is again closed, and by construction we have $\phi(Z) = \tilde Z_Q$. The second statement follows similarly, using the fact that then $Y_{Q} \hookrightarrow Y$ is given by a closed inclusion. 
\end{proof}
\begin{lemma}\label{lem:colim_of_upperset}
    Let $p \in P$. Furthermore, let $\TopN$ be any of the categories of topological spaces in \cref{not:notation_top_spaces}. The functor 
    \[
    (-)_{\geq p} \colon \TopPN \to \TopN
    \]
 preserves all colimits. 
\end{lemma}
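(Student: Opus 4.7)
The plan is to reduce the statement to the elementary fact that taking preimages under an open subset inclusion commutes with colimits in $\TopN$, exploiting that $P_{\geq p}$ is open in the Alexandrov topology on $P$.

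First I would recall that colimits in the slice $\TopPN = \TopN_{/P}$ are computed on the underlying topological spaces (with the induced stratifications). So let $\tstr = \colim_i \tstr_i$ be a colimit diagram in $\TopPN$, with structure maps $\iota_i \colon \utstr_i \to \utstr$ and total stratification $\ststr \colon \utstr \to P$. Since $P_{\geq p}$ is upward closed, it is open in the Alexandrov topology on $P$; therefore $\utstr_{\geq p} = \ststr^{-1}(P_{\geq p})$ is open in $\utstr$, and likewise each $(\utstr_i)_{\geq p}$ is open in $\utstr_i$. At the set-theoretic level it is immediate that $\utstr_{\geq p} = \bigcup_i \iota_i((\utstr_i)_{\geq p})$ and that this exhibits $\utstr_{\geq p}$ as the set-theoretic colimit of the $(\utstr_i)_{\geq p}$.

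The key step is to verify that the induced bijection is a homeomorphism, which amounts to the general fact that the subspace topology on an open subset of a colimit agrees with the colimit of the subspace topologies. Explicitly, for $V \subseteq \utstr_{\geq p}$, using that $\utstr_{\geq p}$ is open in $\utstr$ and $\utstr$ carries the final topology with respect to the $\iota_i$, one has $V$ open in $\utstr_{\geq p}$ iff $V$ open in $\utstr$ iff $\iota_i^{-1}(V)$ open in $\utstr_i$ for each $i$ iff $\iota_i^{-1}(V)$ open in $(\utstr_i)_{\geq p}$ for each $i$. The last condition is precisely the colimit topology on $\colim_i (\utstr_i)_{\geq p}$. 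I do not expect any obstacle here beyond carefully chasing topologies.

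Finally, for the compactly generated and $\Delta$-generated variants of $\TopN$, the inclusion into the category of general topological spaces is coreflective (via Kelleyfication, resp.\ $\Delta$-fication, as a right adjoint), so colimits of diagrams of compactly generated (resp.\ $\Delta$-generated) spaces agree in both categories. Moreover, an open subspace of a compactly generated (resp.\ $\Delta$-generated) space remains compactly generated (resp.\ $\Delta$-generated) with the subspace topology, since any continuous map from a compact Hausdorff space (resp.\ a simplex) into such an open subset factors through the ambient space. Combining these observations transports the argument for general topological spaces verbatim into the other two settings, completing the proof.
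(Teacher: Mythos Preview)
Your proof is correct and takes essentially the same approach as the paper: both use that $P_{\geq p}$ is Alexandrov-open (so $(-)_{\geq p}$ is restriction to an open subspace), handle general topological spaces first (you unfold the final-topology argument explicitly, the paper phrases it via slice categories and the right adjoint to the forgetful functor), and then transport to the convenient categories via the colimit-preserving inclusion together with the fact that open subspaces stay in the convenient category. One small caveat: your stated reason for this last fact --- that maps from compacta or simplices into the open subset factor through the ambient space --- is automatic for any subspace and does not by itself establish the claim; the fact is nonetheless standard (the paper simply cites \cite{GaucherDelta} for the $\Delta$-generated case), and the real argument uses local compactness of the test objects.
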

\begin{proof}
    First, let us show that $(-)_{\geq p}$ preserves all colimits of general topological spaces. It is an immediate consequence of the more general statement on over-categories of topological spaces, which one may easily verify using the elementary construction of colimits via final topologies. For any space $T \in \TopN$ and any open subspace $U \subset T$, the restriction functor
    \[
    \TopN_{/T} \to \TopN_{/U}
    \]
    preserves all colimits. Since $\{ q \geq p \} \subset \Pos$ is an open subset in the Alexandrow topology and the forgetful functor $\TopN_{/U} \to \TopN$ admits a right adjoint, the result follows.
    Next, let us cover the case of $\Delta$-generated spaces. The one of compactly generated spaces is analogous. If $D \colon I \to \TopPN$ is a diagram of $\Delta$-generated stratified spaces, then since the inclusion into general topological spaces is left adjoint (see, for example, \cite{GaucherDelta}), the colimit of $D$ in $\Delta$-generated spaces also defines the colimit in general topological spaces. Hence, we may apply the previous case, to see that $\colim D_{\geq p} = (\colim D)_{\geq p}$ in general topological spaces. Now, since $(T)_{\geq p} \subset T$ always defines an open subspace, and open subspaces of $\Delta$-generated subspaces are again $\Delta$-generated (see \cite[Sec. 2]{GaucherDelta}), the diagram $D_{\geq p}$ lives in the category of $\Delta$-generated spaces. Since the inclusion into general spaces preserves all colimits, we also have $\colim D_{\geq p} = (\colim D)_{\geq p}$ in $\Delta$-generated spaces, as was to be shown. 
\end{proof}
\begin{lemma}\label{lem:cell_cplxs_are_preserved}
    Let $p \in \pos$. The functors
    \[
    (-)_{\geq p}, (-)_{p} \colon \TopPN \to \TopN
    \]
    and
    \[
    (-)_{\leq p} \colon \TopPN \to \TopPN
    \]
    send relative stratified cell complexes into relative (stratified) cell complexes.
\end{lemma}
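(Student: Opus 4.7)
The overall plan is to reduce to individual cells via colimit preservation, and then to construct cell structures on the restrictions of a single stratified simplex. Every relative stratified cell complex is a transfinite composition of pushouts of the generating cofibrations $\sReal{\partial\Delta^\J}\hookrightarrow\sReal{\Delta^\J}$, and the functor $(-)_{\geq p}$ preserves all such colimits by \cref{lem:colim_of_upperset}; $(-)_{\leq p}$ preserves the relevant pushouts and transfinite compositions by \cref{lem:preservation_of_closed_pushout}, using that cell attachments are closed inclusions; and $(-)_p = (-)_{\geq p}\circ(-)_{\leq p}$ as functors $\TopPN\to\TopN$. It therefore suffices to verify the claim on a single stratified simplex.

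For $(-)_{\leq p}$ the argument is essentially formal. Since $\J$ is totally ordered, the entries satisfying $q\leq p$ form a prefix $\J_{\leq p}\subseteq\J$, and accordingly $\sReal{\Delta^\J}_{\leq p}=\sReal{\Delta^{\J_{\leq p}}}$ is a (possibly empty, possibly improper) face of $\sReal{\Delta^\J}$. If this face is proper then it lies entirely in $\sReal{\partial\Delta^\J}$, so $\sReal{\partial\Delta^\J}_{\leq p}=\sReal{\Delta^\J}_{\leq p}$ and the restricted map is an identity; if $\J_{\leq p}=\J$ the restricted map is the stratified boundary inclusion itself; if $\J_{\leq p}=\emptyset$ both sides are empty. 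Each case is trivially a relative stratified cell complex.

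The main obstacle is $(-)_{\geq p}$. Writing $\J=K*L$ with $K=\J_{\not\geq p}$ the prefix of entries not above $p$ and $L=\J_{\geq p}$ the complementary suffix, one has $\sReal{\Delta^\J}_{\geq p}=\real{\Delta^\J}\setminus\real{\Delta^K}$, which in join coordinates $(y_0,y_1,t)$ (with $t=0$ on $\real{\Delta^K}$ and $t=1$ on $\real{\Delta^L}$) becomes the \emph{open} region $\{t>0\}$ — not itself a closed cell. My strategy is to build a countable relative cell structure by slicing in $t$: the nested compact subpolyhedra $N_n:=\{t\geq 1/n\}$ exhaust $\sReal{\Delta^\J}_{\geq p}$, each $N_n$ is homeomorphic to the mapping cylinder of the projection $\real{\Delta^K}\times\real{\Delta^L}\to\real{\Delta^L}$ and so is a finite PL polyhedron with a finite CW structure, and the transition $N_n\hookrightarrow N_{n+1}$ attaches the closed slab $\{1/(n+1)\leq t\leq 1/n\}\cong\real{\Delta^K}\times\real{\Delta^L}\times[0,1]$ along its top facet $\real{\Delta^K}\times\real{\Delta^L}\times\{1\}$, which is itself a subcomplex. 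Iterating from $N_1=\real{\Delta^L}$ thus exhibits $\sReal{\Delta^\J}_{\geq p}$ as a relative cell complex.

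To ensure that $\sReal{\partial\Delta^\J}_{\geq p}$ appears as a subcomplex, I would proceed by induction on $\dim\J$: each facet of $\Delta^\J$ is a stratified simplex of strictly smaller dimension whose $\geq p$-part, by the inductive hypothesis, has a compatible relative cell structure, and one may choose the slab decompositions coherently so that the induced structure on each facet agrees with the inductive one. The chief technical burden is precisely this coordinated choice — keeping the CW structures on the product slabs $\real{\Delta^K}\times\real{\Delta^L}\times[0,1]$ consistent across all facets and all levels $n$, so that the restrictions to boundary facets glue into a subcomplex. Once this is done, the $(-)_p$ case follows immediately from the factorization $(-)_p=(-)_{\geq p}\circ(-)_{\leq p}$.
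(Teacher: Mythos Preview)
Your overall structure matches the paper's: reduce to a single stratified boundary inclusion via colimit preservation, handle $(-)_{\leq p}$ by identifying $(\sReal{\Delta^\J})_{\leq p}$ with a face, and factor $(-)_p = (-)_{\geq p}\circ(-)_{\leq p}$. The divergence is in the treatment of $(-)_{\geq p}$ on a single simplex. The paper observes that $(\sReal{\Delta^\J})_{\geq p}$ is obtained by deleting a closed face and then invokes piecewise-linear regular neighborhood theory as a black box: for closed PL subspaces $A\subset T\subset T'$, the inclusion $T\setminus A\hookrightarrow T'\setminus A$ admits a compatible triangulation, hence is a relative cell complex. Your approach instead builds an explicit countable exhaustion $N_1\subset N_2\subset\cdots$ by slicing in the join coordinate $t$, starting from $\real{\Delta^L}$ and attaching product slabs; this is more elementary and entirely self-contained, avoiding any appeal to PL regular neighborhoods. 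The cost is the coherence bookkeeping you flag: one must check that the slab decomposition restricts to the corresponding slab decomposition on each facet (it does, since the join coordinate $t=|x_L|$ is natural under face inclusions) and that product CW structures on the slabs make these restrictions into subcomplexes. This is routine but not carried out in your sketch. The paper's route is shorter; yours is more transparent and avoids external input.
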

\begin{proof}
    Let us begin with $(-)_{\geq p}$.
    By \cref{lem:colim_of_upperset}, it suffices to show that $(-)_{\geq p}$ sends stratified boundary inclusions $\sReal{\partial \Delta^{\J}} \hookrightarrow \sReal{\Delta^{\J}}$ into a relative cell complex of topological spaces. Let us assume that not all elements of $\J$ are smaller then $p$, otherwise both spaces are empty after applying $(-)_{\geq p}$, and there is nothing to be shown. Then, applying $(-)_{\geq p}$ corresponds to removing a face of the simplex $\sReal{\Delta^{\J}}$ from both spaces. In particular, we may reduce to the following general statement:
    Let $T \hookrightarrow T'$ be an inclusion of a piecewise linear closed subspace into a piecewise linear space $T$. Let $A \subset T$ be a further inclusion of a piecewise linear closed subspace. Then, 
    $T \setminus A \hookrightarrow T' \setminus A$ also admits the structure of a closed inclusion of a piecewise linear subspace (this is ultimately a consequence of the existence of piecewise linear regular neighborhoods).
    In particular, there is a compatible triangulation of $T\setminus A$ and $T' \setminus A$, which makes $T \setminus A \hookrightarrow T' \setminus A$ a relative cell complex. 
    The case of $(-)_{\leq p}$ follows similarly by the natural isomorphisms $(\sReal{\str})_{\leq p} \cong \sReal{\str_{\leq p}}$, for $\str \in \sSetPN$. Finally, the case of $(-)_p$ follows from the equality $(-)_{p} = (-)_{\geq p} \circ (-)_{\leq p}$.
\end{proof}
\begin{proposition}\label{prop:top_agree_for_finite_cell}
    Let $\tstr \in \TopPN$ be a finite stratified cell complex. Then for any $Q \subset P$ the relative topology on $\utstr_Q \subset \utstr$ makes $\utstr_Q$ a $\Delta$-generated space.
\end{proposition}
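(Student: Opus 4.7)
The plan is to establish that $\utstr_Q$ is metrizable and locally path-connected, from which $\Delta$-generatedness will follow by a standard path-concatenation argument realizing convergent sequences as paths.

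First, I would reduce to the case where $P$ is finite: since $\tstr$ is a finite stratified cell complex, only finitely many elements of $P$ occur as entries in the flags $\J_i$ of the cells, and $\utstr_Q$ depends only on the intersection of $Q$ with this finite subset. The underlying space $\utstr$ is then a finite CW complex (via \cref{con:equ_versions_of_strat_cell_cplx}), in particular compact Hausdorff and hence metrizable; consequently $\utstr_Q$ inherits a metric from any compatible metric on $\utstr$.

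The heart of the proof is to verify local path-connectedness of $\utstr_Q$ in the strong (metric) sense: for every $x \in \utstr_Q$ and every $\epsilon > 0$, there is $\delta > 0$ such that any $a \in \utstr_Q$ with $d(a, x) < \delta$ is joined to $x$ by a path in $B(x, \epsilon) \cap \utstr_Q$. The key geometric observation is that on each simplex $\sReal{\Delta^{\J}}$ the stratum function is lower semi-continuous, so that for $\hat x$ fixed and $\hat y$ near $\hat x$ we have $\textnormal{str}(\hat y) \geq \textnormal{str}(\hat x)$; the interior of the straight-line segment from $\hat x$ to $\hat y$ then has stratum $\max(\textnormal{str}(\hat x), \textnormal{str}(\hat y)) = \textnormal{str}(\hat y)$, and so if both endpoints lie in $(\sReal{\Delta^{\J}})_Q$, the whole segment does. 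To transfer this to $\utstr_Q$, fix $x \in \utstr_Q$ in the open cell $e_i$ of stratum $p_i$. For $a \in \utstr_Q$ close enough to $x$, the open cell $e_j$ containing $a$ must lie in the open star of $x$, so $e_i \subset \bar e_j$; this forces $p_i$ to occur in $\J_j$ and hence $p_j \geq p_i$. Lifting $a$ to $\hat a$ in the interior of $\sReal{\Delta^{\J_j}}$ and picking a preimage $\hat x_j$ of $x$ whose distance to $\hat a$ is small (possible because $\sigma_j$ is continuous on a compact simplex and restricts to a homeomorphism on the open part, so $\hat a$ approaches the preimage of $x$ in the corresponding face), the segment argument produces a path in $(\sReal{\Delta^{\J_j}})_Q$ whose image under $\sigma_j$ lies in $\utstr_Q$ and has diameter controlled by $d(a,x)$.

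Finally, in a metric, locally path-connected space, any $A \subset \utstr_Q$ such that $\sigma^{-1}(A)$ is closed in $\real{\Delta^n}$ for every continuous $\sigma \colon \real{\Delta^n} \to \utstr_Q$ must be sequentially closed: given $a_m \in A$ with $a_m \to x$, choose paths $\gamma_m \colon [0,1] \to \utstr_Q$ from $x$ to $a_m$ whose diameters tend to zero, and concatenate them by $\gamma(0) = x$ and $\gamma|_{[1/(m+1), 1/m]} = \gamma_m$ (appropriately reparametrized) to obtain a continuous $\gamma \colon [0,1] \to \utstr_Q$; closedness of $\gamma^{-1}(A)$ at the accumulation point $0$ then forces $x \in A$. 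Since metrizable sequentially closed sets are closed, this shows that $\utstr_Q$ has the final topology with respect to maps from simplices, i.e.\ is $\Delta$-generated. The main obstacle is the local path-connectedness step, specifically tracking the choice of preimage $\hat x_j$ across cells as $a$ varies; the finiteness of the cell decomposition together with the CW-topology of $\utstr$ reduces this to a finite case analysis.
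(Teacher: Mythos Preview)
Your approach is essentially correct and genuinely different from the paper's. The paper proceeds by first observing that $(\sReal{\Delta^\J})_Q$ is a convex subset of $\mathbb{R}^\J$ (this is your segment observation, stated globally), proves directly that convex sets are $\Delta$-generated via the same path-concatenation trick you use at the end, and then runs an induction over the number of cells: the pushout square defining $\tstr$ from $\tstr[A]$ restricts, by \cref{lem:preservation_of_closed_pushout}, to a pushout on $Q$-pieces, so $\utstr_Q$ is a quotient of $\utstr[A]_Q \sqcup (\sReal{\Delta^\J})_Q$, and quotients of $\Delta$-generated spaces are $\Delta$-generated. Your route instead establishes metric local path-connectedness of $\utstr_Q$ directly by lifting to cells; this yields a slightly stronger intermediate statement (local path-connectedness), at the cost of relying on metrizability rather than a clean categorical induction. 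Both arguments ultimately rest on the convexity of $(\sReal{\Delta^\J})_Q$ and the path-concatenation lemma.

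A few points in your write-up need tightening. First, ``compact Hausdorff and hence metrizable'' is false as stated; what you need is that a Hausdorff quotient of a compact metric space (here $\bigsqcup_i \real{\Delta^{\J_i}}$) is metrizable, which is a standard result. Second, the assertion $e_i \subset \bar e_j$ need not hold for the cell complexes in this paper (closures of cells are not subcomplexes in general), but you only need $x \in \bar e_j$, which does follow from the open-star argument for finite complexes: the set $\utstr \setminus \bigcup_{j: x \notin \bar e_j} \bar e_j$ is open, contains $x$, and forces $x \in \bar e_j$ for the cell containing any nearby $a$. Third, the existence of a preimage $\hat x_j$ close to $\hat a$ is not about ``the corresponding face'' but about compactness: if $d(\hat a, \sigma_j^{-1}(x))$ did not tend to zero as $a \to x$, a convergent subsequence of $\hat a$ would contradict continuity of $\sigma_j$. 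Finally, since the cell $j$ containing $a$ varies with $a$, your uniformity claim on path diameters needs the finiteness of the cell structure to pass to a minimum over the finitely many $j$ with $x \in \bar e_j$; you allude to this in the last sentence but it should be made explicit.
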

\begin{proof}
    First, let us show that for any flag $\J $ of $P$, the space $(\sReal{\Delta^\J})_Q$ with the relative topology is $\Delta$-generated. 
    $(\sReal{\Delta^\J})_Q \subset \sReal{\Delta^\J} \subset \mathbb R^\J$ may equivalently described by 
    \[
    \{ s \in \sReal{\Delta^\J} \mid \forall p \in P \setminus Q \colon ( \exists q \in Q  \colon  q > p \land s_q > 0) \lor s_p = 0 \}.
    \]
   It follows from this description that $(\sReal{\Delta^\J})_Q \subset \sReal{\Delta^\J} \subset \mathbb R^\J$ is a convex set. 
    It turns out that every convex subset $C$ of $\mathbb R^n$ is $\Delta$-generated. Indeed, let $A \subset C$ be such that $\sigma^{-1}(A)$ is closed, for every continuous map $\sigma \colon \real{\Delta^1} \to C$. Let $x_n$, $n \in \mathbb N$, be a sequence in $A$ which converges to $c \in C$. Since $C$ is convex, we may use affine interpolation to define a continuous map $\sigma \colon [0,1] \to C$ with $\sigma( 2^{-n}) = x_n$. In particular, the inverse image of $A$ under $\sigma$ contains $\{ 2^{-n} \mid n \in \mathbb N\}$. As $\sigma^{-1}(A)$ is closed, it follows that $0 \in \sigma^{-1}(A)$. By continuity of $\sigma$, we hence have $c = \sigma(0) \in A$, showing that $A$ is closed. \\
    We now proceed to show the case of a general complex $\tstr $ via induction over the number of cells. 
    The case $n=0$ is trivial, so let $\tstr$ admit the structure of a stratified cell complex with $n+1$ cells.
    In other words $\tstr$ fits into a pushout diagram
    \begin{diagram}
        \sReal{\partial \Delta^\J} \arrow[r, hook ]  \arrow[d] &\sReal{\Delta^\J} \arrow[d]\\
        \tstr[A] \arrow[r] & \tstr \spacecomma
    \end{diagram}
    with $\tstr[A]$ a stratified space admitting a cell structure with $n$ cells.  By  \cref{lem:preservation_of_closed_pushout}, the diagram
    \begin{diagram}
        (\real{\partial \Delta^\J})_Q \arrow[r, hook ]  \arrow[d] & (\real{\Delta^\J})_Q \arrow[d]\\
        \utstr[A]_Q \arrow[r] & \utstr[X]_Q
    \end{diagram}
    is a pushout diagram of general topological spaces. In particular $\utstr[X]_Q$ is a quotient of $A_Q \sqcup (\sReal{\Delta^\J})_Q$. We have already seen that $(\sReal{\Delta^\J})_Q$ is $\Delta$-generated.
    By the inductive assumption, the same holds for $\utstr[A]_Q$. Thus $\utstr[X]_Q$ is $\Delta$-generated as a quotient of $\Delta$-generated spaces. 
\end{proof}
\begin{example}\label{ex:substrata_not_delta_gen}
    The statement of \cref{prop:top_agree_for_finite_cell} is generally not true for infinite stratified cell complexes, even if $X$ is given by the realization of a stratified simplicial set. Let $P= \{ p_0 < p_1 < p_2\}$ and $Q = \{ p_0 < p_2\}$. Consider the realization of the stratified simplicial set given by gluing countably many $\Delta^P$ along $\Delta^{ \{p_0 < p_1 \}}$, i.e., there is a pushout diagram
    \begin{diagram}
        \bigsqcup_{n \in \mathbb N} \sReal{\Delta^{ \{p_0 < p_1 \}}} \arrow[r] \arrow[d] & \bigsqcup_{n \in \mathbb N} \sReal{\Delta^P} \arrow[d] \\
        \sReal{\Delta^{ \{p_0 < p_1\}}} \arrow[r, "i"] & \tstr \spaceperiod 
    \end{diagram}
    Then, $\utstr_Q$ is not $\Delta$-generated. In the following, we denote closures in the form $\overline{A}$. To see that $\utstr_Q$ is not $\Delta$-generated, consider the subset $S$ of $\utstr$ given by
    \[
    \bigcup_{n \in \mathbb N} S_n
    \]
    where 
    \[
    S_n=\{ s \in \sReal{\Delta^P} \mid  s_{p_0} \leq 1-\frac{1}{n} \}
    \]
    lies in the $n$-th copy of $\sReal{\Delta^{\pos}}$ in $X$. 
    The set $S_Q \subset X_Q$ is not closed in the relative topology on $X_Q$.
    To see this, let $A \subset X$ be any closed set containing $S_Q$. Then, as $(S_n)_Q$ is dense in $S_n$, it follows that $A$ contains $S$. Observe that $S$ contains the image of the $p_1$-stratum of $\sReal{\Delta^{\{p_0<p_1\}}}$ under $i \colon \sReal{\Delta^{\{p_0<p_1\}}} \to \str$, but it does not contain any point in the $p_0$-stratum of $X$. Denote by $x_0$ the unique element in the $p_0$-stratum of $\sReal{\Delta^{\{p_0<p_1\}}}$. It lies in the closure of ${(\sReal{\Delta^{\{p_0<p_1\}}})}_{p_1}$.
    As $A$ is closed and contains $S$, it follows that we have
    \[
    i(x_0) \in  i( \overline{ (\sReal{\Delta^{\{p_0<p_1\}}})_{p_1}}) = \overline{i((\sReal{\Delta^{\{p_0<p_1\}}})_{p_1})} \subset \overline{S} \subset A.
    \]
    To summarize, we have $i(x_0) \in A \cap X_Q$, for any closed subset $A \subset X$, which shows that the closure of $S_Q$ in $X_Q$ contains $i(x_0)$. As $ i(x_0) \notin S_Q$, $S_Q$ is not closed in $X_Q$.
    \\ However, $S_Q$ is $\Delta$-closed in $X_Q$. To see this, denote by $X_n$ the union of the first $n$ copies of $\sReal{\Delta^P}$ in $X$.
    Note that since $\real{\Delta^1}$ is compact, it follows that any map continuous $f \colon \real{\Delta^1} \to X_Q$ factors through some $(X_n)_Q$, for $n$ sufficiently large. Furthermore, we have
    \[
    S_Q \cap (X_n)_Q = (\bigsqcup_{m \leq n}S_m)_Q
    \]
    which shows that $S_Q \cap (X_n)_Q$ is a closed subset of $X_Q$. Consequently, $f^{-1}( S_Q) =f^{-1}(S_Q \cap (X_n)_Q)$ is closed in $\real{\Delta^1}$, which proves that $S_Q$ is $\Delta$-closed.
\end{example}
\section{A characterization of weak equivalences of topological spaces}
The following characterization of weak equivalences is certainly well known. For a lack of convenient reference, we nevertheless give a proof here.
\begin{lemma}\label{lem:criterion_for_weak_equivalence}
    A map $f \colon T \to T'$ is a weak homotopy equivalence in $\TopN$, if and only if for every solid commutative diagram  \begin{diagram}
        S^n \arrow[d] \arrow[r,"g^1"] & T \arrow[d, "f"] \\
        D^{n+1} \arrow[r, "g^0"] \arrow[ru, dashed, "l"] & T' \spacecomma
    \end{diagram}
    with $n \geq -1$, there exists a dashed arrow $l$ such that $(1_T,f) \circ (l|_{S^n},l)$ is homotopic to $(g^1, g^0)$ as a map of arrows (i.e. homotopic in the presheaf category $\Fun(([1]])^{\op}, \TopN)$ with respect to the cylinder given by the pointwise product with $[0,1]$).
\end{lemma}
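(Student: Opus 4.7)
The plan is to reduce, via the homotopy extension property of the closed CW cofibration $S^n \hookrightarrow D^{n+1}$, to the stronger \emph{strict rel-$S^n$ lifting property} --- existence of $\tilde l\colon D^{n+1} \to T$ with $\tilde l|_{S^n} = g^1$ strictly and $f \circ \tilde l \simeq g^0$ rel $S^n$ --- and then verify both directions of the equivalence of this property with $f$ being a weak equivalence directly on the level of homotopy groups.

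For the HEP reduction (which applies in each of the three categories of \cref{not:notation_top_spaces}, since $S^n \hookrightarrow D^{n+1}$ is a closed NDR pair), given the stated lift $l$ with compatible homotopies $H^1\colon S^n \times I \to T$ from $l|_{S^n}$ to $g^1$ and $H^0\colon D^{n+1} \times I \to T'$ from $f \circ l$ to $g^0$ satisfying $f \circ H^1 = H^0|_{S^n \times I}$, extending $l$ along $H^1$ via HEP produces $\tilde H\colon D^{n+1} \times I \to T$ with $\tilde H_0 = l$ and $\tilde H|_{S^n \times I} = H^1$. Setting $\tilde l := \tilde H_1$ gives $\tilde l|_{S^n} = g^1$ strictly, and concatenating $H^0$ with the reverse of $f \circ \tilde H$ provides the rel-$S^n$ homotopy from $f \circ \tilde l$ to $g^0$.

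Granted the strict rel-$S^n$ lifting, bijectivity of $\pi_n(f)$ at every basepoint is direct: the cases $n = -1$ and $n = 0$ yield surjectivity and injectivity of $\pi_0(f)$; for $n \geq 0$, applying the lifting to $g^1 = c_t$ and an arbitrary $g^0\colon (D^{n+1}, S^n) \to (T', f(t))$ representing a class in $\pi_{n+1}(T', f(t))$ yields surjectivity of $\pi_{n+1}(f)$, while applying the lifting one dimension higher to a glued map $\gamma \cup c_t\colon S^{n+1} \to T$ with $\gamma$ representing a class in $\ker(f_\ast) \subset \pi_{n+1}(T, t)$, together with a null-homotopy of $f \circ (\gamma \cup c_t)$ in $T'$, yields injectivity. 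Conversely, given $f$ a weak equivalence and a diagram $(g^1, g^0)$ with $n \geq 0$, the map $g^0$ exhibits $f \circ g^1$ as null-homotopic in $T'$; by injectivity of $\pi_n(f)$ the map $g^1$ is null-homotopic in $T$ and extends to some $l_0\colon D^{n+1} \to T$, while the two extensions $f \circ l_0$ and $g^0$ of $f \circ g^1$ glue along $S^n$ to a class in $\pi_{n+1}(T')$ which lifts by surjectivity of $\pi_{n+1}(f)$; gluing a representative into $l_0$ produces the strict rel-$S^n$ lift. The edge cases $n = -1, 0$ again reduce to bijectivity of $\pi_0(f)$.

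The main subtlety will be the careful tracking of basepoints and the compatibility of the two homotopies demanded by the map-of-arrows condition, especially at the small-dimensional edge cases where the usual $\pi_n$ language either does not directly apply or requires extra care; this is notational bookkeeping rather than conceptually deep, and all HEP invocations are standard for CW inclusions in any of the three topological categories under consideration.
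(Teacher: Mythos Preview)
Your overall strategy matches the paper's: both reduce the ``homotopy-commutative lift'' condition to the classical ``strict-on-$S^n$, rel-$S^n$-below'' lifting condition via the HEP for $S^n \hookrightarrow D^{n+1}$, and then identify the latter with being a weak equivalence. The paper simply cites May's \emph{Concise} (Ch.~9.6) for that last identification, whereas you unpack it by hand on $\pi_n$; that is a legitimate, more self-contained route, at the cost of the basepoint bookkeeping you flag.

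There is one genuine slip in your HEP reduction. The concatenation $(f\circ \tilde H)^{-1} * H^0$ is \emph{not} a homotopy rel $S^n$: on $S^n\times I$ it equals $(f\circ H^1)^{-1} * (f\circ H^1)$, which is a nonconstant loop (albeit one that is null-homotopic rel endpoints). So as written you only get $f\circ\tilde l \simeq g^0$, not $f\circ\tilde l \simeq g^0$ rel $S^n$, and the latter is what you need to feed into the classical criterion. The paper sidesteps this cleanly: rather than concatenating, it observes that $f\circ \tilde H$ and $H^0$ are two extensions of the \emph{same} map $f\circ(H^1\cup l)$ along the acyclic cofibration $S^n\times[0,1]\cup D^{n+1}\times\{1\}\hookrightarrow D^{n+1}\times[0,1]$, hence are homotopic rel that subspace; evaluating at time $0$ then yields $f\circ\tilde l \simeq g^0$ rel $S^n$ directly. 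You can repair your argument with one more HEP application to straighten the loop on $S^n$, but the paper's ``two extensions along an acyclic cofibration'' argument is the cleaner fix.
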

\begin{proof}
    We use the classical characterization of weak equivalences found for example in \cite[Ch. 9.6]{ConciseMay}.
    Indeed, the classical characterization of weak equivalences even guarantees a lift, where the homotopy may be taken constant on $S^1$. Conversely, if we are given such a lift $l$, together with a homotopy $(H^1, H^0): (g^1, g^0) \implies (1_T,f) \circ (l|_{S^n},l)$, then any extension 
    \begin{diagram}
        {S^n \times [0,1]} \times D^{n+1} \times \{1 \} \arrow[r, "H^1 \cup l" ] \arrow[d, hook]& T \\
        D^{n+1} \times [0,1] \arrow[ru, dashed, "\hat L"'] &
    \end{diagram}
    will provide $\hat l = \Hat L_0$ such that the upper left triangle in
        \begin{diagram}
        S^n \arrow[d] \arrow[r,"g^1"] & T \arrow[d, "f"] \\
        D^{n+1} \arrow[r, "g^0"] \arrow[ru, dashed, "\hat l"] & T'
    \end{diagram}
    commutes on the nose. Furthermore, then $f \circ \Hat L$ and $H^0$ both provide extensions 
    \begin{diagram}
        {S^n \times [0,1]} \times D^{n+1} \times \{1 \} \arrow[r, "f \circ (H^1 \cup l)" ] \arrow[d, hook]& T' \\
        D^{n+1} \times [0,1] \arrow[ru, dashed] &  \spaceperiod
    \end{diagram}
    Since the left hand vertical of the last diagram is an acyclic cofibration, any two such extensions are homotopic relative to ${S^n \times [0,1]} \times D^{n+1} \times \{1 \}$. It follows that $f \circ \hat l = (f \circ \Hat L)_0$ and $g^0 = (H^0)_0$ are homotopic relative to $S^n$.
\end{proof}

\end{document}